\newcommand{\A}{\mathbb{A}}
\renewcommand{\k}{\Bbbk}
\renewcommand{\O}{\mathcal{O}}
\renewcommand{\P}{\mathbb{P}}
\newcommand{\Z}{\mathbb{Z}}
\newcommand{\Spec}{\mathrm{Spec}\,}
\newcommand{\Ext}{\mathrm{Ext}}
\newcommand{\Hom}{\mathrm{Hom}}
\newcommand{\End}{\mathrm{End}}
\newcommand{\RHom}{\mathrm{RHom}}
\newcommand{\REnd}{\mathrm{REnd}}
\newcommand{\RGamma}{\mathrm{R}\Gamma\,}
\newcommand{\Pic}{\mathrm{Pic}\,}
\newcommand{\Coh}{\mathrm{Coh}}
\newcommand{\Dbcoh}{D^b_{\!\mathrm{coh}}}
\newcommand{\Dperf}{\mathrm{Perf}}
\newcommand{\Tot}{\mathrm{Tot}}
\newcommand{\PGL}{\mathrm{PGL}}
\newcommand{\mA}{\mathcal{A}}
\newcommand{\mB}{\mathcal{B}}
\newcommand{\mH}{\mathcal{H}}
\newcommand{\dual}{{\scriptstyle\vee}}
\newcommand{\iso}{\simeq}
\newcommand{\caniso}{\cong}
\newcommand{\isoarrow}{\xrightarrow{\sim}}
\newcommand{\monoarrow}{\hookrightarrow}
\newcommand{\epiarrow}{\twoheadrightarrow}
\newcommand{\supp}{\operatorname{supp}}
\newcommand{\Stab}{\operatorname{Stab}}
\newcommand{\Fit}{\operatorname{Fit}}
\newcommand{\mylength}{\ell}
\declaretheoremstyle[
headformat=\NUMBER.\,\NAME\NOTE,
postheadspace=.5em,
spaceabove=6pt,
headfont=\normalfont\small\scshape,
notefont=\normalfont\small\mdseries, notebraces={(}{)},
bodyfont=\normalfont\itshape
]{plainswap}
\declaretheoremstyle[
headformat=\NAME\NOTE,
postheadspace=.5em,
spaceabove=6pt,
headfont=\normalfont\small\scshape,
notefont=\normalfont\small\mdseries, notebraces={(}{)},
bodyfont=\normalfont\itshape
]{nonumplainswap}
\declaretheoremstyle[
headformat=\NUMBER.\,\NAME\NOTE,
postheadspace=.5em,
spaceabove=6pt,
headfont=\normalfont\small\scshape,
notefont=\normalfont\mdseries, notebraces={(}{)},
bodyfont=\normalfont
]{definitionswap}
\declaretheoremstyle[
headformat=\NAME\NOTE,
postheadspace=.5em,
spaceabove=6pt,
headfont=\normalfont\itshape,
notefont=\mdseries, notebraces={(}{)},
bodyfont=\normalfont
]{myremark}
\declaretheorem[style=plainswap, name=Theorem, sharenumber=subsection]{theorem}
\declaretheorem[style=plainswap, numberlike=theorem, name=Proposition]{proposition}
\declaretheorem[style=plainswap, numberlike=theorem, name=Lemma]{lemma}
\declaretheorem[style=plainswap, numberlike=theorem, name=Corollary]{corollary}
\declaretheorem[style=plainswap, numberlike=theorem, name=Conjecture]{conjecture}
\theoremstyle{definition}
\declaretheorem[style=definitionswap, numberlike=theorem, name=Definition]{definition}
\declaretheorem[style=definitionswap, numberlike=theorem, name=Setting]{setting}
\declaretheorem[style=definitionswap, numberlike=theorem, name=Example]{numberedexample}
\theoremstyle{myremark}
\newtheorem*{remark}{Remark}
\theoremstyle{remark}
\numberwithin{equation}{theorem}
\numberwithin{figure}{theorem}
\begin{document}

\title{Admissible subcategories of del Pezzo surfaces}
\author{Dmitrii Pirozhkov}
\address{Department of Mathematics, Columbia University, New York, New York, USA}
\email{dpirozhkov@math.columbia.edu}

\begin{abstract}
  We study admissible subcategories of derived categories of coherent sheaves on del Pezzo surfaces and rational elliptic surfaces. Using a relation between admissible subcategories and anticanonical divisors we prove the following results. First, we classify all admissible subcategories of the projective plane by showing that each is generated by a subcollection of a full exceptional collection. Second, we show that the derived categories of del Pezzo surfaces do not contain any phantom subcategories. This provides first examples of varieties of dimension larger than one that have some nontrivial admissible subcategories, but provably do not contain phantoms. We also prove that any admissible subcategory supported set-theoretically on a smooth $(-1)$-curve in a surface is generated by some twist of the structure sheaf of that curve.
\end{abstract}

\maketitle

\setcounter{tocdepth}{1}
\tableofcontents


\section{Introduction}

The derived category of coherent sheaves on an algebraic variety is a large and complicated invariant. It contains a lot of information about the variety, and many other invariants may be extracted out of the derived category. Working with this huge invariant directly is difficult, and thus an important notion in this field is the notion of a semiorthogonal decomposition. This is a particular way of decomposing the derived category into smaller pieces. Those pieces are called admissible subcategories.

We know many examples of semiorthogonal decompositions. For instance, a full exceptional collection is nothing but a semiorthogonal decomposition of a category with components equivalent to the derived category of vector spaces. The first example of such a decomposition was given in \cite{beil78} for projective spaces. Full exceptional collections are also known for Grassmannians, del Pezzo surfaces, and other varieties. The set of all semiorthogonal decompositions of a given category has a complicated structure; in particular, it carries the action of the braid group which acts by mutations, introduced in \cite{gorodentsev89,bond-kapr}. There are other tools of various complexity to produce new semiorthogonal decompositions, see, for instance, the survey \cite{kuznetsov-icm}.

Despite a large number of examples, we do not have good structural results for an arbitrary semiorthogonal decomposition. Most of the things we know are ``negative'': for instance, we know that the Jordan--H\"older property does not hold for semiorthogonal decompositions, with counterexamples given in \cite{jordan-holder,kuznetsov-jordanholder}. Another somewhat pathological behavior is the existence of so-called \emph{phantom subcategories}, shown in \cite{gorchinsky-orlov,BGKS-phantoms}, which are admissible subcategories which behave as zero subcategories on the level of $K$-theory.

Among the positive constraints on the structure of admissible subcategories, perhaps the most immediately useful result is proved in \cite{kawatani-okawa}: admissible subcategories are closed under small deformations of objects. The interaction of semiorthogonal decompositions with Hochschild (co)homology has been studied in \cite{kuznetsov-oldhochschild}. Some tools useful for the study of semiorthogonal decompositions are developed in the papers proving the non-existence of nontrivial semiorthogonal decompositions for some classes of varieties \cite{bridgeland,okawa,kawatani-okawa}.

It is generally expected that for sufficiently nice varieties, e.g., for projective spaces, there are more restrictions on the structure of admissible subcategories. For example, it is conjectured in \cite[Rem.~1.7]{kuznetsov-polishchuk} that there are no phantom subcategories in homogeneous spaces. It is surprisingly hard to check these expectations for any variety which is more complicated than the projective line $\P^1$.

In this paper we study admissible subcategories for some surfaces with many anticanonical divisors. Namely, we study the projective plane, rational elliptic surfaces, and del Pezzo surfaces. The strongest theorem is obtained in the case of projective plane. On $\P^2$ we are able to produce a full classification of admissible subcategories. As we prove in Section~\ref{sec: projective plane}, they all turn out to be generated by exceptional collections:

\begin{restatable*}{theorem}{statemaintheoremPlane}
  \label{thm: all admissible subcategories are standard}
  Any admissible subcategory in $\Dbcoh(\P^2)$ is generated by a subcollection of a mutation of the standard exceptional collection $\Dbcoh(\P^2) = \langle \O, \O(1), \O(2) \rangle$.
\end{restatable*}

This theorem gives a classification of admissible subcategories since mutations of the standard exceptional collection in the derived category $\Dbcoh(\P^2)$ have been classified by Gorodentsev and Rudakov in \cite{gorodentsev-rudakov}. In fact, we rely on this classification to prove the theorem above.

The case of $\P^2$ is quite special. In order to study admissible subcategories in other surfaces, we start with admissible subcategories supported set-theoretically on smooth $(-1)$-curves. By the support of an admissible subcategory we just mean the union of set-theoretic supports of all objects in the subcategory. In Section~\ref{sec: minus one curves} we prove a classification for admissible subcategories like that:

\begin{restatable*}{proposition}{statemaintheoremMinusOneCurves}
  \label{prop: new local classification on blow-ups}
  Let $S$ be a smooth proper surface, and let $j\colon E \monoarrow S$ be the embedding of a smooth $(-1)$-curve. Let $\mA \subset \Dbcoh(S)$ be a nonzero admissible subcategory supported on $E$. Then $\mA$ is generated by the exceptional sheaf $j_*\O_E(k)$ for some integer $k \in \Z$.
\end{restatable*}

As an application of this result, we produce some examples of surfaces without phantom subcategories in Corollary~\ref{cor: no phantoms in blow-ups of antifano}, where we show that blow-ups of distinct points on surfaces with globally generated canonical bundles do not contain any phantom subcategories.

In Section~\ref{sec: del pezzos} we consider admissible subcategories in rational elliptic surfaces. For us, as in the paper \cite{heckman-looijenga}, a rational elliptic surface is a smooth proper surface $S$ whose anticanonical linear system $|{-K_S}|$ defines a regular morphism $S \to \P^1$. Any such surface can be obtained by blowing up some subscheme of length nine in the projective plane. We do not have a full classification statement for admissible subcategories in rational elliptic surfaces. However, our methods are at least sufficient to put strong constraints on the possible phantom subcategories.

\begin{restatable*}{theorem}{statemaintheoremRES}
  \label{thm: rational elliptic surfaces}
  Let $\pi\colon S \to \P^1$ be a rational elliptic surface. If $\mB \subset \Dbcoh(S)$ is a phantom subcategory, then the support $\supp(\mB) \subset S$ is contained in the finite union of:
  \begin{itemize}
    \vspace{-\topsep}
    \item sections of the fibration $\pi\colon S \to \P^1$; and
    \item reducible fibers of the morphism $\pi$.
  \end{itemize}
\end{restatable*}

We proceed with a full classification of admissible subcategories, not necessarily phantom ones, whose support is contained in the union of sections of the anticanonical fibration. Note that any section of the fibration $\pi\colon S \to \P^1$ is a smooth $(-1)$-curve in the surface $S$, so Proposition~\ref{prop: new local classification on blow-ups} is the basic case of the classification. In fact, we show that this is essentially the only possibility, and there are no nontrivial admissible subcategories supported on several non-disjoint sections.

\begin{restatable*}{theorem}{statemaintheoremSections}
  \label{thm: admissible subcategories on sections}
  Let $\pi\colon S \to \P^1$ be a rational elliptic surface. Let $\mB \subset \Dbcoh(S)$ be an admissible subcategory such that $\supp(\mB)$ is contained in the union of sections of the map $\pi$. Then there exist finitely many pairwise disjoint sections $\{ E_i \subset S\}_{i \in I}$ and the corresponding integers~$\{ k_i \in \Z \}_{i \in I}$ such that $\mB$ is equal to the subcategory $\bigoplus_{i \in I} \langle \O_{E_i}(k_i E_i) \rangle \subset \Dbcoh(S)$.
\end{restatable*}

For any del Pezzo surface there exists a way to blow-up several points to obtain a rational elliptic surface.  We use the results above to deduce the following consequence for del Pezzo surfaces.

\begin{restatable*}{theorem}{statemaintheoremDelPezzo}
  \label{thm: no phantoms in del pezzos}
  There are no phantom subcategories in del Pezzo surfaces.
\end{restatable*}

The main technical tool that allows us to prove these results is a relation between admissible subcategories and autoequivalences of derived categories of anticanonical divisors, described in Section~\ref{sec: numerical lemmas}. This relation is due to Addington \cite[Prop.~2.1]{addington}. It generalizes the fact that the restriction of an exceptional object to an anticanonical divisor is a spherical object. Addington's result gives amazingly strong structural constraints for the semiorthogonal decompositions of surfaces, and we describe those constraints explicitly in Proposition~\ref{prop: numerical lemma without numbers}. 

\vspace{\baselineskip}

\textbf{Structure of the paper}.
In Section~\ref{sec: preliminaries} we state and prove miscellaneous lemmas about derived categories of coherent sheaves and admissible subcategories. Section~\ref{sec: numerical lemmas} is the technical core of the paper, containing Addington's result and its implications for the constraints that anticanonical divisors put on admissible subcategories. In Section~\ref{sec: projective plane} we prove the classification of admissible subcategories in the derived category of $\P^2$. Section~\ref{sec: minus one curves} contains a classification of admissible subcategories which are supported on a smooth $(-1)$-curve in a surface, and an application for phantom subcategories in some blow-ups. The classification result is used in Section~\ref{sec: del pezzos}, where we study admissible subcategories in rational elliptic surfaces and prove the non-existence of phantoms in del Pezzo surfaces.

\vspace{\baselineskip}

\textbf{Acknowledgements}.
This paper is an improved version of my PhD thesis \cite{mythesis}, which in particular contains a more lyrical list of acknowledgements. I thank my advisor Johan de~Jong for his guidance, enthusiasm, and suggestions. I thank Alexander Kuznetsov for many insightful questions and great expertise. I also thank the members of the thesis committee for their comments. Additionally, it was both pleasant and productive to discuss mathematics with many students in Columbia University and NRU HSE in Moscow. Especially I would like to thank
Raymond Cheng,
Remy van~Dobben de~Bruyn,
Roman Gonin,
Grigory Kondyrev,
Shizhang Li,
Grisha Papayanov,
Artem Prikhodko, and
Semon Rezchikov.



\section{Preliminaries}
\label{sec: preliminaries}

In this section we recall some facts about derived categories of coherent sheaves, exceptional objects, semiorthogonal decompositions, admissible subcategories, and Fourier--Mukai transforms. We recommend consulting the subsections as needed rather than reading this section through.

\subsection{Conventions and notation}

We work over an algebraically closed field $\k$ of characteristic zero. All varieties and triangulated categories in this paper are assumed to be over~$\k$. All functors are assumed to be derived, and a subcategory of a triangulated category is assumed to be a triangulated subcategory.

For an algebraic variety $X$ we denote by $\Dbcoh(X)$ the bounded derived category of coherent sheaves on $X$. We denote by $\Dperf(X)$ the triangulated category of perfect complexes on $X$. When $X$ is smooth, these two categories coincide.

For an algebraic variety $X$ and an object $F \in \Dbcoh(X)$ we denote by $\mH^i(F)$ the $i$'th cohomology sheaf of $F$. We also use the \emph{canonical truncation} $\tau_{\leq i}F$ which has the same cohomology sheaves as $F$ in degrees $\leq i$ and zero cohomology sheaves in degrees strictly greater than $i$. If an object $F \in \Dbcoh(X)$ is represented as a complex of sheaves, $\tau_{\leq i}F$ can be represented as a subcomplex. We define $\tau_{> i}F$ similarly.

\subsection{Exceptional objects and semiorthogonal decompositions}

In this subsection we fix the notation and cite several standard results about triangulated categories, exceptional objects, and semiorthogonal decompositions. These notations, definitions, and results are used throughout the paper. For a more detailed introduction, see, for example, \cite{bond-kapr}.

Until the end of this subsection, we work with an idempotent-complete triangulated category $T$.

For any two objects $A, B \in T$ we denote by $\RHom(A, B)$ the graded vector space $\oplus_{i \in \Z} \Hom_T(A, B[i])$. The graded components are referred as $\mathrm{R}^i\Hom(A, B)$ or $\Ext^i(A, B)$. Similarly, the symbol $\REnd(A)$ denotes $\RHom(A, A)$ and its graded components are referred to as $\End^i(A)$. Given any graded vector space $V^\bullet = \oplus_{i \in \Z} V^i$ and an object $F \in T$, the tensor product $V^\bullet \otimes F$ is an object of $T$ defined to be the direct sum of shifts $\bigoplus_{i \in \Z} F^{\oplus \dim V^i}[-i]$.

For an arbitrary object $F \in T$ we denote by $\langle F \rangle$ the smallest strictly full triangulated subcategory which contains $F$ and is closed under taking direct summands. We say that an object $F$ is a \emph{classical generator} of $T$ if $\langle F \rangle = T$. For any quasi-compact and quasi-separated scheme the category of perfect complexes has a classical generator \cite[Cor.~3.1.2]{bondal-vandenbergh}.

\begin{definition}
An object $E \in T$ is called \emph{exceptional} if $\REnd(E) \caniso \k[0]$. A sequence of exceptional objects $E_1, \ldots, E_n$ is called an \emph{exceptional collection} if $\RHom(E_j, E_i) = 0$ for any $j > i$. An exceptional collection is \emph{full} if the smallest strictly full triangulated subcategory containing every $E_i$ is all of $T$.
\end{definition}

\begin{definition}
  For a full subcategory $\mA \subset T$ we define the left and right \emph{orthogonal subcategories}:
  \begin{align*}
    {}^\perp \mA := \{ F \in T \,\, | \,\, \forall t \in \mA \,\,\, \RHom(F, t) = 0 \}, \\
    \mA^\perp := \{ F \in T \,\, | \,\, \forall t \in \mA \,\,\, \RHom(t, F) = 0 \}.
  \end{align*}
\end{definition}

\begin{lemma}[\cite{bondal-vandenbergh}]
  If $G \in \mA$ is a classical generator, then $F \in {}^\perp \mA$ if and only if $\RHom(F, G) = 0$, and similarly for $\mA^\perp$.
\end{lemma}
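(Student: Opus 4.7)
The proof plan is to set up a ``test subcategory'' inside $\mA$ and use the defining property of a classical generator to show it equals all of $\mA$. The $({}^\perp\mA)$ version is routine, and the $(\mA^\perp)$ version is formally dual, so I will focus on the left orthogonal.

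One direction is immediate: if $F \in {}^\perp\mA$, then $\RHom(F, t) = 0$ for every $t \in \mA$, in particular for $t = G$. For the converse, assume $\RHom(F, G) = 0$ and define
\[
  \mA' := \{ t \in \mA \,\,|\,\, \RHom(F, t) = 0 \} \subset \mA.
\]
I would then verify that $\mA'$ is a strictly full triangulated subcategory closed under direct summands. This is standard and follows because $\RHom(F, -)$ is a cohomological functor: the long exact sequence associated to a distinguished triangle shows $\mA'$ is closed under cones and shifts, and vanishing of $\RHom(F, -)$ is preserved by passing to direct summands (a retract of a zero graded vector space is zero).

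Now $G \in \mA'$ by hypothesis, so by minimality of the subcategory $\langle G \rangle$ we have $\langle G \rangle \subset \mA'$. But $G$ is a classical generator of $\mA$, meaning $\langle G \rangle = \mA$, so $\mA' = \mA$. This says $\RHom(F, t) = 0$ for every $t \in \mA$, i.e. $F \in {}^\perp\mA$.

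There is no real obstacle here; the only subtlety worth double-checking is that the idempotent-completeness of $T$ is what makes ``closed under direct summands'' a well-defined condition inside $T$ (so that $\langle G \rangle$ genuinely makes sense as the smallest triangulated subcategory with that closure property), and that $\RHom(F,-)$ does send summands of a zero object to zero. For the $\mA^\perp$ statement I would run the identical argument with $\RHom(-, G)$ in place of $\RHom(F, -)$; it too is cohomological in the relevant variable, so the same ``test subcategory'' trick applies verbatim.
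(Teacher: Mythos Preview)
Your argument is correct and is the standard proof of this fact. Note that the paper does not actually give a proof of this lemma; it is simply cited from \cite{bondal-vandenbergh} without argument, so there is nothing to compare against beyond observing that your proof is the usual one.
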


\begin{definition}
  A \emph{semiorthogonal decomposition} of a triangulated category $T$ is a sequence of strictly full triangulated subcategories $\mA_1, \ldots, \mA_n$ of $T$ such that $\mA_i \subset \mA_j^\perp$ for any $i < j$ and the smallest strictly full triangulated subcategory containing every $\mA_i$ is $T$. We denote this using angle brackets, i.e., by writing $T = \langle \mA_1, \ldots, \mA_n \rangle$.
\end{definition}

The key property of semiorthogonal decompositions is that any object of $T$ has a filtration whose associated graded components belong to the component subcategories $\mA_i \subset T$. In this paper we work mostly with semiorthogonal decompositions into two components, so to avoid introducing complicated notation, we only state this result for semiorthogonal decompositions like that.

\begin{definition}[\cite{bond-kapr}]
  \label{lem: projection triangles exist}
  Let $T = \langle \mA, \mB \rangle$ be a semiorthogonal decomposition. For any object $F \in T$ there exists a unique \emph{projection triangle} in $T$:
  \begin{equation}
    \label{eqn: prototypical projection triangle}
    \mB_R(F) \to F \to \mA_L(F) \to \mB_R(F)[1]
  \end{equation}
  such that the object $\mB_R(F)$ lies in $\mB$ and $\mA_L(F)$ lies in $\mA$. Moreover, the projection triangle is functorial in $F$, thus we obtain two functors: the \emph{right projection functor} $\mB_R\colon T \to \mB$ which is a right adjoint functor to the inclusion $\mB \monoarrow T$, and the \emph{left projection functor} $\mA_L\colon T \to \mA$ which is a left adjoint functor to the inclusion $\mA \monoarrow T$.
\end{definition}

\begin{corollary}
  \label{cor: universal property of projection triangles}
  Let $T = \langle \mA, \mB \rangle$ be a semiorthogonal decomposition. Let $F \in T$ be any object. The composition with the projection map $\mB_R(F) \to F$ from Definition~\textup{\ref{lem: projection triangles exist}} induces an isomorphism
  \(
    \REnd(\mB_R(F)) \isoarrow \RHom(\mB_R(F), F).
  \)
\end{corollary}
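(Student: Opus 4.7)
The plan is to apply the functor $\RHom(\mB_R(F), -)$ to the projection triangle
\[
  \mB_R(F) \to F \to \mA_L(F) \to \mB_R(F)[1]
\]
of Definition~\ref{lem: projection triangles exist} and read off the desired isomorphism from the resulting long exact sequence (equivalently, the induced distinguished triangle in the derived category of $\k$-vector spaces).

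Concretely, I would first observe that the map which the statement claims is an isomorphism,
\(
  \REnd(\mB_R(F)) \to \RHom(\mB_R(F), F),
\)
is by definition the map induced on $\RHom(\mB_R(F), -)$ by the first morphism of the projection triangle. Hence it fits into a distinguished triangle
\[
  \RHom\bigl(\mB_R(F), \mB_R(F)\bigr) \to \RHom\bigl(\mB_R(F), F\bigr) \to \RHom\bigl(\mB_R(F), \mA_L(F)\bigr) \to \cdots
\]
To conclude, I would invoke semiorthogonality: since the decomposition is $T = \langle \mA, \mB \rangle$, we have $\mA \subset \mB^\perp$, meaning $\RHom(B, A) = 0$ for every $B \in \mB$ and $A \in \mA$. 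Applying this to $B = \mB_R(F) \in \mB$ and $A = \mA_L(F) \in \mA$ shows that the third term vanishes, so the first map is an isomorphism.

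There is essentially no obstacle here; the argument is a one-line consequence of semiorthogonality applied to the projection triangle. The only thing worth double-checking is the direction of the semiorthogonality convention in the author's Definition of $T = \langle \mA, \mB \rangle$, so as to confirm that it is $\RHom(\mB_R(F), \mA_L(F))$ (and not the opposite) that vanishes; this matches the convention stated just above, where $\mA_i \subset \mA_j^\perp$ for $i < j$.
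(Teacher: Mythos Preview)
Your proof is correct and matches the paper's argument essentially verbatim: apply $\RHom(\mB_R(F),-)$ to the projection triangle and use semiorthogonality to kill the third term. The paper additionally remarks that this can be viewed as a consequence of $\mB_R$ being right adjoint to the inclusion $\mB \monoarrow T$, but the explicit proof given there is the same as yours.
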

\begin{proof}
  This follows from the fact that the functor $\mB_R$ is an adjoint functor to the inclusion functor $\mB \monoarrow T$. Alternatively, we can deduce the statement from semiorthogonality: an application of the functor $\RHom(\mB_R(F), -)$ to the triangle (\ref{eqn: prototypical projection triangle}) results in the triangle
  \[
     \REnd(\mB_R(F)) \to \RHom(\mB_R(F), F) \to \RHom(\mB_R(F), \mA_L(F))
   \]
   in the derived category of vector spaces. Since $\mA$ is semiorthogonal to $\mB$, the graded vector space $\RHom(\mB_R(F), \mA_L(F))$ vanishes. Therefore the first arrow is an isomorphism.
\end{proof}

Exceptional collections may be used to construct many examples of semiorthogonal decompositions. A common abuse of notation in this context is to write an exceptional object~$E$ as a component in the semiorthogonal decomposition, having in mind the triangulated subcategory $\langle E \rangle \subset T$ generated by that object.

\begin{lemma}[\cite{bond-kapr}]
  \label{lem: exceptional collections are admissible}
  Let $\langle E_1, \ldots, E_n \rangle$ be an exceptional collection in $T$. Suppose that for any two objects $F, G \in T$ the graded vector space $\RHom_T(F, G)$ has finite total dimension.
  \begin{itemize}
  \item Let $\mA$ be the right orthogonal subcategory $\langle E_1, \ldots, E_n \rangle^\perp$. Then the sequence
    \[
      \langle \mA, E_1, \ldots, E_n \rangle
    \]
    is a semiorthogonal decomposition of $T$. If the exceptional collection consists of one object $E \in T$, then the projection functor $R_E$ is given by $F \mapsto E \otimes \RHom_T(E, F)$ and the projection triangle for $T = \langle \mA, E \rangle$ is a cone of the evaluation morphism
    \[
      E \otimes \RHom_T(E, F) \xrightarrow{\mathrm{ev}} F \to L_{E^\perp}(F).
    \]
  \item  Let $\mA$ be the left orthogonal subcategory ${}^\perp \langle E_1, \ldots, E_n \rangle$. Then the sequence
    \[
      \langle E_1, \ldots, E_n, \mA \rangle
    \]
    is a semiorthogonal decomposition of $T$. If the exceptional collection consists of one object $E \in T$, then the projection functor $L_E$ is given by $F \mapsto \RHom_T(F, E)^\dual \otimes E$ and the projection triangle for $T = \langle E, \mA \rangle$ is a fiber of the coevaluation morphism
    \[
      R_{{}^\perp E}(F) \to F \xrightarrow{\mathrm{coev}} \RHom_T(F, E)^\dual \otimes E.
    \]
  \end{itemize}
\end{lemma}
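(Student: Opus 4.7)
The plan is to prove the first bullet in detail and obtain the second by a completely dual argument. Semiorthogonality of the sequence $\mA, E_1, \ldots, E_n$ is automatic from the definitions: by construction $\mA$ is orthogonal to each $E_i$, and $\RHom(E_j, E_i) = 0$ for $j > i$ is exactly the exceptional collection hypothesis. So the real content is (a) showing that $\mA$ together with the $E_i$ generate $T$, and (b) identifying the projection triangle explicitly in the case of a single exceptional object.

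First I would treat the single-object case. For an exceptional $E$ and arbitrary $F \in T$, the finiteness hypothesis ensures that $\RHom(E, F)$ is a bounded graded vector space of finite total dimension, so the tensor product $E \otimes \RHom(E, F)$ is an honest object of $T$. I would form the cone $L$ of the evaluation morphism $E \otimes \RHom(E, F) \to F$ and apply $\RHom(E, -)$ to the resulting triangle. The first arrow becomes the composition map $\REnd(E) \otimes \RHom(E, F) \to \RHom(E, F)$, which is the identity because $\REnd(E) \caniso \k[0]$. Hence $\RHom(E, L) = 0$, so $L \in E^\perp$; this simultaneously exhibits the projection triangle in the stated form and shows that $T$ is generated by $E$ together with $E^\perp$. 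Functoriality of the projection follows from the functoriality of evaluation, with uniqueness ensured by semiorthogonality.

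Next I would handle general $n$ by iterating the single-object construction: given $F$, form $F_n := L_{E_n^\perp}(F)$, then $F_{n-1} := L_{E_{n-1}^\perp}(F_n)$, and so on. The crucial point to verify is that each intermediate object continues to lie in every previously constructed orthogonal. This is where the semiorthogonality of the collection enters: for $i < j$ we have $E_i \in E_j^\perp$ because $\RHom(E_j, E_i) = 0$. Consequently the tensor product $E_i \otimes \RHom(E_i, F_{i+1})$ appearing in the next triangle already lies in $E_j^\perp$ for every $j > i$, and by the long exact sequence so does $F_i$. The final iterated object lies in $\bigcap_i E_i^\perp = \mA$, and concatenating the successive triangles realizes $F$ as an iterated extension with pieces in $\mA, \langle E_1 \rangle, \ldots, \langle E_n \rangle$, completing the decomposition.

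For the second bullet I would run the completely dual argument: replace the evaluation by the coevaluation map $F \to \RHom(F, E)^\dual \otimes E$, take the fiber instead of the cone, and use $\REnd(E) \caniso \k[0]$ together with biduality of the finite-dimensional space $\RHom(F, E)$ to see that the fiber lies in ${}^\perp E$. I do not anticipate a serious obstacle anywhere; the only non-formal ingredient is the finiteness of Hom-spaces, which is exactly what makes $E \otimes \RHom(E, F)$ and $\RHom(F, E)^\dual \otimes E$ into bona fide objects of $T$ rather than formal symbols.
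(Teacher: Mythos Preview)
The paper does not actually prove this lemma; it is stated with a citation to Bondal--Kapranov and no argument is given in the text. Your proposal is correct and is precisely the standard proof one finds in that reference: use the evaluation (resp.\ coevaluation) map for a single exceptional object, check via $\REnd(E)\cong\k[0]$ that the cone (resp.\ fiber) lands in the orthogonal, and then iterate using the semiorthogonality of the collection to ensure the successive orthogonals are preserved. There is nothing to compare against here beyond saying your argument matches the cited source.
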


\begin{remark}
  The projection triangles for longer exceptional collections may also be written explicitly, in terms of \emph{dual exceptional collections}, as in \cite{kapranov88}. We omit this since this is not necessary for our paper.
\end{remark}

\subsection{Derived categories of coherent sheaves}

We continue with several miscellaneous lemmas, mostly related to homological algebra. We will use the following definitions throughout the paper.

\begin{definition}
  \label{def: support of a complex}
  Let $X$ be an algebraic variety, and let $E \in \Dbcoh(X)$ be an object. The (set-theoretic) \emph{support} $\supp(E)$ of the object $E$ is the union $\cup_{i \in \Z} \supp(\mH^i(E))$ of supports of cohomology sheaves.
\end{definition}

\begin{lemma}[{\cite[Ex.~3.30]{HuybFM}}]
  \label{lem: support and derived fibers}
  Let $X$ be an algebraic variety, and let $E \in \Dbcoh(X)$ be an object. Then a point $p \in X$ lies in $\supp(E)$ if and only if $\RHom(E, \O_p) \neq 0$, where $\O_p$ is the skyscraper sheaf at the point $p$.
\end{lemma}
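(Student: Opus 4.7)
The plan is to handle the two directions separately; the ``only if'' direction is formal via locality, and the converse is proved by producing an explicit nonzero Ext class in one specific degree.

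For the ``only if'' direction: if $p \notin \supp E$, then each cohomology sheaf $\mH^i(E)$ vanishes in some open neighborhood of $p$, and since $E$ is bounded only finitely many sheaves are involved; intersecting their neighborhoods gives an open $U \ni p$ with $E|_U = 0$ in $\Dbcoh(U)$. Writing $j\colon U \monoarrow X$ for the inclusion we have $\O_p \caniso j_*(\O_p|_U)$, and the $(j^*, j_*)$-adjunction yields $\RHom_X(E, \O_p) \caniso \RHom_U(E|_U, \O_p|_U) = 0$.

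For the converse, suppose $p \in \supp E$ and set $j := \max \{ i \in \Z \mid p \in \supp \mH^i(E) \}$, which exists because $E$ is bounded. I would first reduce to the case where $E$ has no cohomology above degree $j$: applying the ``only if'' direction to $\tau_{>j}E$ (none of whose cohomology sheaves contain $p$ in their support) gives $\RHom(\tau_{>j}E, \O_p) = 0$, so the truncation triangle $\tau_{\leq j}E \to E \to \tau_{>j}E$ forces $\RHom(E, \O_p) \caniso \RHom(\tau_{\leq j}E, \O_p)$, and we may replace $E$ by $\tau_{\leq j}E$. For this $E$ we have $\tau_{\geq j}E = \mH^j(E)[-j]$, so the canonical truncation map $E \to \mH^j(E)[-j]$ induces the identity on $\mH^j$.

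To produce a nonzero class in $\RHom(E, \O_p)$, I would invoke Nakayama's lemma: since $\mH^j(E)_p \neq 0$, the fiber $\mH^j(E) \otimes \k(p)$ is nonzero, giving a surjection $\mH^j(E) \epiarrow \O_p$. Composing with the truncation map yields a morphism $\phi\colon E \to \O_p[-j]$ in $\Dbcoh(X)$ whose induced map on $\mH^j$ is this nonzero surjection; therefore $\phi$ is itself nonzero in the derived category, and $\Ext^{-j}(E, \O_p) \neq 0$. No real obstacle is expected: the only content is choosing the correct target degree $-j$, after which the argument is formal from Nakayama and the defining property of canonical truncation.
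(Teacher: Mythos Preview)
The paper does not supply its own proof of this lemma: it is stated with a citation to \cite[Ex.~3.30]{HuybFM} and no argument is given in the text. Your proof is correct and is essentially the standard one. The reduction step via the truncation triangle is clean, and the final nonvanishing class is exactly the one produced by the paper's Lemma~\ref{lem: morphisms from topmost cohomology sheaf extend} (specialized to $\mathcal{F} = \O_p$), though you argue it directly rather than invoking that lemma.
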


\begin{lemma}[{\cite[Lem.~3.9]{HuybFM}}]
  \label{lem: disjoint support forces splitting}
  Let $X$ be an algebraic variety, and let $E \in \Dbcoh(X)$ be an object. Suppose that $\supp(E)$ is a disjoint union $Z_1 \sqcup Z_2$ of two closed subsets of $X$. Then there exists a unique decomposition $E \iso E_1 \oplus E_2$ into a direct sum such that $\supp(E_1) = Z_1$ and $\supp(E_2) = Z_2$.
\end{lemma}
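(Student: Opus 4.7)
The plan is to run a Mayer--Vietoris argument adapted to the derived category. Consider the open cover $X = U_1 \cup U_2$ with $U_i := X \setminus Z_{3-i}$; by construction, $U_1 \cap U_2 = X \setminus \supp(E)$ misses the support of $E$, so $E|_{U_1 \cap U_2} \iso 0$, while $E|_{U_i}$ has support equal to $Z_i$.

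I then set $E_i := \mathrm{R}j_{i*}(E|_{U_i})$, where $j_i\colon U_i \monoarrow X$ denotes the open inclusion. The first key step is to verify that each $E_i$ is an object of $\Dbcoh(X)$ supported on $Z_i$. The essential point is that every cohomology sheaf $\mH^k(E|_{U_i})$ is coherent on $U_i$ and has support that is closed not merely in $U_i$, but in the whole of $X$, since that support lies inside $Z_i$ and $Z_i$ is closed in $X$ by hypothesis. A standard stalk-wise computation then shows that $\mathrm{R}j_{i*}$ of such a sheaf agrees with ordinary extension by zero and has vanishing higher direct images, so the output is coherent on $X$. Coherence and boundedness of the complex $E_i$ follow by induction on the length, using the triangles of canonical truncations $\tau_{\leq k}$.

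To produce the splitting, I invoke the Mayer--Vietoris triangle associated to the cover $X = U_1 \cup U_2$:
\[
  E \to \mathrm{R}j_{1*}(E|_{U_1}) \oplus \mathrm{R}j_{2*}(E|_{U_2}) \to \mathrm{R}j_{12*}(E|_{U_1 \cap U_2}) \to E[1].
\]
The third term vanishes by the first paragraph, so the first arrow becomes an isomorphism $E \isoarrow E_1 \oplus E_2$. Because $E_i|_{U_i} = E|_{U_i}$ has support exactly $Z_i$ by Lemma~\ref{lem: support and derived fibers}, we obtain the desired equality $\supp(E_i) = Z_i$.

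For uniqueness, suppose $E \iso F_1 \oplus F_2$ with $\supp(F_i) = Z_i$. The disjointness of the supports forces $F_2|_{U_1} = 0$ and $F_1|_{U_2} = 0$, so $F_i|_{U_i} \iso E|_{U_i}$. Pushing forward by $j_{i*}$ and using that $F_i$ is determined by its restriction to $U_i$ (since $F_i$ vanishes on $X \setminus Z_i \supset X \setminus U_i$) yields $F_i \iso E_i$. The main technical hurdle is the coherence verification for $\mathrm{R}j_{i*}$; once that is in place, both the existence and the uniqueness fall out of the Mayer--Vietoris formalism essentially for free.
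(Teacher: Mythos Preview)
The paper does not supply its own proof of this lemma; it merely cites \cite[Lem.~3.9]{HuybFM}. Your Mayer--Vietoris argument is correct, and the coherence of $\mathrm{R}j_{i*}(E|_{U_i})$ is indeed the only point requiring care; your justification (the support of each cohomology sheaf is already closed in $X$, so the pushforward agrees with extension by zero and has no higher direct images) is sound.

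For comparison, the argument in Huybrechts proceeds differently: by induction on the amplitude of the complex. For a single coherent sheaf the splitting is elementary. For the inductive step one uses the truncation triangle $\tau_{\leq n-1}E \to E \to \mH^n(E)[-n]$, splits the two outer terms by induction, and observes that the connecting morphism $\mH^n(E)[-n] \to (\tau_{\leq n-1}E)[1]$ has vanishing off-diagonal blocks because there are no nonzero $\Ext$'s between objects with disjoint support. Your approach trades that induction for the coherence check on $\mathrm{R}j_{i*}$ and is arguably more transparent geometrically, since it makes explicit the separating open cover; the inductive approach has the minor advantage of staying entirely inside $\Dbcoh(X)$ without ever passing through $\Dqcoh$.
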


\begin{definition}
  \label{def: locally free object}
  Let $X$ be an algebraic variety. An object $E \in \Dbcoh(X)$ is called \emph{locally free} if all cohomology sheaves of $E$ are locally free. Similarly, it is called a \emph{torsion object} if all cohomology sheaves are torsion sheaves.
\end{definition}

There are multiple ways to define locally free objects in a derived category. In the following lemma we show some equivalent characterizations. The lemma is well-known, but we include the proof due to the lack of a convenient reference.

\begin{definition}
  The \emph{length} of a graded vector space $V^\bullet$ is the number $l(V^\bullet) := \sum_i \dim V^i$. The length of a complex of vector spaces is the length of its cohomology viewed as a graded vector space.
\end{definition}

\begin{lemma}
  \label{lem: locally free has many definitions}
  Let $X$ be a smooth algebraic variety, and let $E \in \Dbcoh(X)$ be an object. The following are equivalent:
  \begin{enumerate}
    \item $E$ is a locally free object.
    \item For any point $x \in X$ there exists a Zariski-neighborhood $U \subset X$ containing $x$ such that the restriction $E|_U$ is isomorphic to $\O_U \otimes V^\bullet$ for some graded vector space $V^\bullet$.
    \item The length of the derived fiber of $E$ at each point $x \in X$ is the same.
  \end{enumerate}
\end{lemma}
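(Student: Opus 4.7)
The plan is to prove the three conditions cyclically as $(3) \Rightarrow (2) \Rightarrow (1) \Rightarrow (3)$, with essentially all the work in the first implication. The implication $(2) \Rightarrow (1)$ is immediate, since the identification $E|_U \iso \O_U \otimes V^\bullet$ gives $\mH^i(E)|_U \iso \O_U \otimes V^i$, which is free. For $(1) \Rightarrow (3)$, I would observe that local freeness of each $\mH^q(E)$ kills the higher $\Tor$ sheaves, so the hypertor spectral sequence
\[
  E_2^{p,q} = \Tor_{-p}(\mH^q(E), k(x)) \Longrightarrow \mH^{p+q}(E \otimes^L k(x))
\]
degenerates at $E_2$ and yields $\mH^n(E \otimes^L k(x)) \iso \mH^n(E) \otimes k(x)$. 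The length of the derived fiber is then $\sum_n \rk \mH^n(E)$, which is locally constant and hence constant on the irreducible variety $X$.

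For the main implication $(3) \Rightarrow (2)$, the plan is to work locally around a chosen point $x \in X$ and use the smoothness of $X$ to represent $E$ by a bounded complex $(P^\bullet, d^\bullet)$ of finite-rank free sheaves on some affine Zariski neighborhood. A direct computation of cohomology dimensions gives
\[
  l(E \otimes^L k(y)) \,=\, \sum_i \rk P^i \,-\, 2 \sum_i \rk d^i(y),
\]
so constancy of the length forces $\sum_i \rk d^i(y)$ to be constant. Since the rank of a morphism of vector bundles can only drop under specialization, each individual rank $\rk d^i(y)$ is itself locally constant; after possibly shrinking the neighborhood, every $d^i$ has constant rank.

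I would then invoke the standard linear-algebra fact that a morphism $\phi \colon \O^n \to \O^m$ of constant rank $r$ admits, in suitable local trivializations of source and target, the block form $\bigl(\begin{smallmatrix} I_r & 0 \\ 0 & 0 \end{smallmatrix}\bigr)$; this is obtained by row-column operations starting from an invertible $r \times r$ minor at $x$. Applied to the topmost nonzero differential $d^{b-1}$, this yields decompositions $P^{b-1} = A \oplus B$ and $P^b = A' \oplus B'$ with $d^{b-1}$ equal to the identity $A \isoarrow A'$ and zero on $B$; the relation $d^{b-1} d^{b-2} = 0$ then forces $d^{b-2}$ to land in $B$, so the pair $A \xrightarrow{\mathrm{id}} A'$ is a contractible direct summand of the complex. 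Peeling it off leaves a strictly shorter complex with the remaining differentials unchanged, and induction on the top degree reduces $P^\bullet$ to a complex with zero differentials, exactly of the form $\O_U \otimes V^\bullet$. The main technical care is the bookkeeping through the induction and verifying that the splitting of each differential actually preserves the complex structure; both are standard, so I expect no genuine conceptual obstacle.
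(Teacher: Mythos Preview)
Your proof is correct and takes a genuinely different route from the paper. The paper argues $(2)\Rightarrow(1),(3)$ trivially, then proves $(1)\Rightarrow(2)$ by noting that on an affine open where every $\mH^i(E)$ is trivial there are no higher $\Ext$'s between copies of $\O_U$, so the complex is formal; and proves $(3)\Rightarrow(1)$ via the spectral sequence for $\iota_x^*$: if $\mH^i(E)$ is the highest non--locally-free cohomology sheaf, that spectral sequence gives $L_i\iota_x^*(E)\cong L_0\iota_x^*(\mH^i(E))$, whose dimension is not constant. You instead go straight from $(3)$ to $(2)$ by building a minimal local model: constant fiber length forces every differential in a local free resolution to have constant rank, and constant-rank maps of free modules let you peel off contractible summands until all differentials vanish. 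Your argument is more explicit, actually producing the trivialization $E|_U\cong\O_U\otimes V^\bullet$, whereas the paper deduces formality abstractly; the paper's route is a bit shorter but less constructive. One small wording issue: after removing $A\xrightarrow{\sim}A'$ the complex is not literally ``strictly shorter'' in amplitude, but $\sum_i\rk d^i$ has strictly decreased, so inducting on that quantity (or on the number of nonzero differentials) makes the descent clean.
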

\begin{proof}
  It is clear that the condition $(2)$ implies both $(1)$ and $(3)$. It is enough to show that $(1)$ implies $(2)$, and that $(3)$ implies $(1)$.

  $(1) \implies (2)$: Let $U \subset X$ be an affine open neighborhood of $x \in X$ such that each cohomology sheaf of $E$ becomes trivial. Such a neighborhood exists since $E$ has only finitely many nonzero cohomology sheaves. Then each cohomology sheaf of $E|_U$ is a direct sum of several copies of the structure sheaf $\O_U$. Since $U$ is affine, there are no higher $\Ext$'s between copies of the structure sheaf, and hence the complex $E|_U$ is formal, i.e., quasiisomorphic to a direct sum of its cohomology sheaves.

  $(3) \implies (1)$: For each point $x \in X$ denote by $\iota_x\colon \Spec \k \monoarrow X$ the inclusion morphism. For any $k \in \Z$ the dimension of the $k$'th derived pullback functor $L_k \iota_x^*(E)$ is an upper semicontinuous function, so the total length of the object $\iota_x^*(E)$ is constant if and only if the dimension of each $L_k \iota_x^*(E)$ is constant as a function of $x \in X$. Assume that some cohomology sheaf $\mH^i(E)$ is not locally free. Without loss of generality we may assume that each cohomology sheaf $\mH^j(E)$ with $j > i$ is locally free. Consider the spectral sequence for the derived pullback $\iota_x^*$ \cite[(3.10)]{HuybFM}:
  \[
    E_2^{p, q} = L_{-q} \iota_x^*(\mH^p(E)), \,\, d_r^{p, q}\colon E_r^{p, q} \to E_r^{p - r + 1, q + r} \quad \Rightarrow \quad \mH^{p+q}(\iota_x^*E).
  \]
  For any $j > i$ by assumption we know that $L_q \iota_x^*(\mH^j(E)) = 0$ for $q > 0$. This implies that the cell $E_2^{i, 0} = L_0 \iota_x^*(\mH^i(E))$ survives to $E_\infty$. In particular, $L_i \iota_x^*(E) \iso L_0 \iota_x^*(\mH^i(E))$ for any point $x \in X$. Since $\mH^i(E)$ is not locally free, its (nonderived) rank $L_0 \iota_x^*(\mH^i(E))$ is not a constant function, but then the dimension of $L_i \iota_x^*(E)$ is also not constant, a contradiction.
\end{proof}

\begin{lemma}
  \label{lem: morphisms from topmost cohomology sheaf extend}
  Let $X$ be an algebraic variety, and let $F \in \Dbcoh(X)$ be an object concentrated in nonpositive cohomology degrees. Then for any coherent sheaf $\mathcal{F}$ on $X$ there is a canonical isomorphism
  \[
    \mathrm{R}^0\Hom(\mH^0(F), \mathcal{F}) \isoarrow \mathrm{R}^0\Hom(F, \mathcal{F}).
  \]
\end{lemma}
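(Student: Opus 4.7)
The plan is to compare $F$ and $\mH^0(F)$ via the canonical truncation triangle and show that the comparison error is invisible to $\Hom$ into a sheaf in degree zero. Concretely, since $F$ is concentrated in nonpositive cohomology degrees, there is a distinguished triangle
\[
\tau_{\leq -1}F \to F \to \mH^0(F) \to \tau_{\leq -1}F[1]
\]
in $\Dbcoh(X)$. Applying the contravariant functor $\RHom(-, \mathcal{F})$ and taking cohomology yields an exact sequence
\[
\mathrm{R}^{-1}\Hom(\tau_{\leq -1}F, \mathcal{F}) \to \mathrm{R}^0\Hom(\mH^0(F), \mathcal{F}) \to \mathrm{R}^0\Hom(F, \mathcal{F}) \to \mathrm{R}^0\Hom(\tau_{\leq -1}F, \mathcal{F}),
\]
in which the middle map is (up to sign) the one stated in the lemma. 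So it is enough to show the vanishing $\mathrm{R}^i\Hom(\tau_{\leq -1}F, \mathcal{F}) = 0$ for $i \in \{-1, 0\}$.

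For the vanishing, I would use the standard hypercohomology spectral sequence for $\RHom$ applied to the object $G := \tau_{\leq -1}F$, which has cohomology sheaves $\mH^{-q}(G)$ nonzero only for $q \geq 1$:
\[
E_2^{p,q} = \mathrm{Ext}^p(\mH^{-q}(G), \mathcal{F}) \,\Rightarrow\, \mathrm{R}^{p+q}\Hom(G, \mathcal{F}).
\]
On the $E_2$ page, the entry $E_2^{p,q}$ can be nonzero only when $p \geq 0$ (since $\mathcal{F}$ is a coherent sheaf in degree zero) and $q \geq 1$ (by the support condition on the cohomology of $G$). Therefore all potentially nonzero $E_2^{p,q}$ satisfy $p + q \geq 1$, which forces $\mathrm{R}^i\Hom(G, \mathcal{F}) = 0$ for all $i \leq 0$, and in particular for $i = -1, 0$, as required. (One could equivalently proceed by induction on the number of nonzero cohomology sheaves of $G$ using the triangles $\mH^{-q}(G)[q] \to \tau_{\leq -q}G \to \tau_{\leq -q-1}G[1]$ and the vanishings $\mathrm{Ext}^p(\mathcal{G}, \mathcal{F}) = 0$ for $p < 0$ and any sheaves $\mathcal{G}, \mathcal{F}$.)

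For canonicity of the resulting isomorphism, it is enough to note that the connecting morphism $\mH^0(F) \to \tau_{\leq -1}F[1]$ of the truncation triangle depends only on $F$, and therefore so does the map $\mathrm{R}^0\Hom(\mH^0(F), \mathcal{F}) \to \mathrm{R}^0\Hom(F, \mathcal{F})$ induced by the second arrow of the triangle. There is no real obstacle in this proof: the only thing one must be careful about is organizing the indices in the spectral sequence correctly so that both $i = -1$ and $i = 0$ are killed, but this is immediate from the fact that $G$ and $\mathcal{F}$ sit on opposite sides of cohomological degree zero.
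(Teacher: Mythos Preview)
Your proof is correct and follows essentially the same approach as the paper: both use the truncation triangle $\tau_{\leq -1}F \to F \to \mH^0(F)$ and apply $\RHom(-,\mathcal{F})$, reducing to the vanishing of $\mathrm{R}^i\Hom(\tau_{\leq -1}F,\mathcal{F})$ for $i\leq 0$. The only difference is that the paper simply invokes ``there are no negative $\Ext$'s between coherent sheaves'' for this vanishing, whereas you spell it out via the hypercohomology spectral sequence; this is just a more explicit version of the same one-line observation.
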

\begin{proof}
  Let $\tau_{\leq -1}F$ denote the canonical truncation of the complex $F$. There exists a truncation triangle
  \[
    \tau_{\leq -1}F \to F \to \mH^0(F) \to (\tau_{\leq -1}F) [1].
  \]
  The application of the cohomological functor $\mathrm{R}^0\Hom(-, \mathcal{F})$ together with the fact that there are no negative $\Ext$'s between coherent sheaves finishes the proof of the lemma.
\end{proof}

\begin{lemma}
  \label{lem: derived restriction to a divisor}
  Let $Y$ be a variety, and let $j\colon D \monoarrow Y$ be an embedding of a Cartier divisor. Let $F \in \Dperf(Y)$ be an object. Then for every $i \in \Z$:
  \begin{enumerate}
  \item there exists a short exact sequence
  \[
    0 \to L_0 j^* \mH^i(F) \to \mH^i(j^*F) \to L_1j^*\mH^{i+1}(F) \to 0.
  \]
  \item $\supp(\mH^i(F)) \cap D \subset \supp \mH^i(j^*F)$;
  \item If $\mH^i(j^*F) = 0$, then the support of $\mH^i(F)$ does not intersect $D$.
  \end{enumerate}
\end{lemma}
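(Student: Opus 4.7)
The plan is to exploit the fact that $D$ is a Cartier divisor, which gives a two-term locally free resolution $0 \to \O_Y(-D) \to \O_Y \to j_*\O_D \to 0$ of the structure sheaf of $D$ as an $\O_Y$-module. Since $F \in \Dperf(Y)$, the derived tensor product $F \otimes^L_{\O_Y} j_*\O_D$ fits into a distinguished triangle
\[
  F(-D) \xrightarrow{s} F \to j_*(j^*F) \to F(-D)[1],
\]
where the first arrow is multiplication by a local section $s$ cutting out $D$ (the identification of the third term uses the projection formula together with the observation that $j^\ast$ applied to this resolution computes $j^\ast F$).

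Step one: prove (1). Take the long exact sequence of cohomology sheaves of the triangle above. Since $j_*$ is exact and tensoring with a line bundle is exact, this reads
\[
  \cdots \to \mH^i(F)(-D) \xrightarrow{\,s\,} \mH^i(F) \to j_*\mH^i(j^*F) \to \mH^{i+1}(F)(-D) \xrightarrow{\,s\,} \mH^{i+1}(F) \to \cdots.
\]
Breaking this into short exact pieces at the middle term gives
\[
  0 \to \coker\!\bigl(\mH^i(F)(-D) \xrightarrow{s} \mH^i(F)\bigr) \to j_*\mH^i(j^*F) \to \ker\!\bigl(\mH^{i+1}(F)(-D) \xrightarrow{s} \mH^{i+1}(F)\bigr) \to 0.
\]
Applying $\otimes \mathcal{G}$ to the resolution of $j_*\O_D$ and computing Tor identifies the left cokernel with $j_* L_0 j^*\mH^i(F)$ and the right kernel with $j_* L_1 j^*\mH^{i+1}(F)$. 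Removing $j_*$ (which is fully faithful on quasi-coherent sheaves supported on $D$) yields (1).

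Step two: deduce (2) and (3). Let $p \in \supp\mH^i(F) \cap D$. The stalk $\mH^i(F)_p$ is a nonzero finitely generated $\O_{Y,p}$-module, and the local equation $s_p$ of $D$ lies in the maximal ideal; by Nakayama, $\mH^i(F)_p / s_p \mH^i(F)_p$ is nonzero, i.e.\ the fiber of $L_0 j^*\mH^i(F)$ at $p$ is nonzero. The injection $L_0 j^*\mH^i(F) \hookrightarrow \mH^i(j^*F)$ from (1) then forces $p \in \supp\mH^i(j^*F)$, proving (2). Statement (3) is the contrapositive of (2).

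I do not expect serious obstacles: the only non-formal ingredient is the availability of the length-two resolution of $j_*\O_D$, which is exactly the Cartier hypothesis, together with the perfectness of $F$ needed to apply $- \otimes^L \O_D$ termwise. The Nakayama step in the final paragraph is the only place where the geometric hypothesis on supports enters.
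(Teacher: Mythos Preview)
Your proof is correct and is essentially the same as the paper's, just phrased differently: the paper invokes the hypercohomology spectral sequence $E_2^{p,q} = L_{-q} j^* \mH^p(F) \Rightarrow \mH^{p+q}(j^*F)$ and observes that the Cartier hypothesis forces it to have only two nonzero rows, hence it degenerates at $E_2$ and yields the short exact sequences in (1). Your long exact sequence for the triangle $F(-D) \to F \to j_* j^* F$ is exactly the unwound form of that two-row degeneration, so the two arguments are the same computation in different packaging; your explicit Nakayama step for (2) is a useful addition over the paper's ``easily follows''.
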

\begin{proof}
  Consider the spectral sequence converging to the cohomology sheaves of the derived pullback $j^*F$:
  \[
    E_2^{p, q} = L_{-q}j^* \mH^p(F), \,\, d_r^{p, q}\colon E_r^{p, q} \to E_r^{p - r + 1, q + r} \quad \implies \quad \mH^{p+q}(j^*F).
  \]
  Since $j\colon D \monoarrow Y$ is an inclusion of a Cartier divisor, the $E_2$-page of that spectral sequence has only two rows, and therefore it degenerates at the second page by dimension reasons, producing a collection of short exact sequences as in the statement. The other two claims in the statement easily follow from this observation.
\end{proof}

The derived categories of coherent sheaves on curves and surfaces have some special convenient properties, which makes them easier to deal with than the derived categories for higher-dimensional varieties. We recall some of the properties in the following several well-known lemmas, and include the sketches of proofs for completeness.

\begin{lemma}
  \label{lem: objects on smooth curves split}
  Let $C$ be a smooth curve, and let $W \in \Dbcoh(C)$ be an object.
  \begin{enumerate}
  \item There is a decomposition $W \iso \bigoplus_{i \in \Z} \mathcal{H}^i(W)[-i]$ into a direct sum of shifts of cohomology sheaves.
  \item There is a direct sum decomposition $W \iso T \oplus V$ where $T$ is a torsion object and $V$ is a locally free object.
  \end{enumerate}
\end{lemma}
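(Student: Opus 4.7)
The plan is to treat part (1) by induction on the cohomological amplitude, using the fact that a smooth curve has global (homological) dimension one. For part (2), I would just combine (1) with the classical splitting of a coherent sheaf on a smooth curve into its torsion subsheaf and a locally free quotient.

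For part (1), suppose $W$ has nonzero cohomology in degrees contained in $[a,b]$. Using the canonical truncation, I would form the distinguished triangle
\[
\tau_{\leq b-1}W \to W \to \mH^b(W)[-b] \xrightarrow{\delta} (\tau_{\leq b-1}W)[1].
\]
It suffices to show that the connecting map $\delta$ vanishes, for then $W \iso \tau_{\leq b-1}W \oplus \mH^b(W)[-b]$ and we can induct on the amplitude $b-a$. The map $\delta$ is an element of $\Hom(\mH^b(W), (\tau_{\leq b-1}W)[b+1])$. Using the hypercohomology spectral sequence (or by repeatedly using triangles that split off one cohomology sheaf from $\tau_{\leq b-1}W$), this group is built out of $\Ext^p_C(\mH^b(W), \mH^{b+1-p}(\tau_{\leq b-1}W))$ for various $p$. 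Because only cohomology sheaves of $\tau_{\leq b-1}W$ in degrees $\leq b-1$ contribute, each relevant $p$ satisfies $p \geq 2$. Since $C$ is a smooth curve, it has global dimension one, so $\Ext^p_C(-,-) = 0$ between coherent sheaves whenever $p \geq 2$. Hence every contributing group vanishes, $\delta = 0$, and the triangle splits.

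For part (2), I would apply (1) to reduce to the case of a single coherent sheaf, since an arbitrary direct sum of shifts of torsion (resp.\ locally free) sheaves is again torsion (resp.\ locally free) in the sense of Definition~\ref{def: locally free object}. For a coherent sheaf $\mathcal{F}$ on the smooth curve $C$, the torsion subsheaf $T(\mathcal{F}) \subset \mathcal{F}$ fits in a short exact sequence $0 \to T(\mathcal{F}) \to \mathcal{F} \to V \to 0$ with $V$ torsion-free; on a smooth curve torsion-free coherent sheaves are locally free, so $V$ is a vector bundle. The extension is classified by an element of $\Ext^1_C(V, T(\mathcal{F}))$, which vanishes because $V$ is locally free and hence projective as an object of $\Coh(C)$ with respect to $\Ext^{\geq 1}$ into coherent sheaves in this setting (equivalently, the sequence splits because local splittings patch since $V$ is locally free and the sheaf $\mathcal{H}om(V, T(\mathcal{F}))$ is supported on a finite set of points where one can lift sections). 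This gives $\mathcal{F} \iso T(\mathcal{F}) \oplus V$, and applying this to each cohomology sheaf of $W$ finishes the proof.

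The main obstacle is the bookkeeping in part (1): one must be careful that the $\Hom$ group into the shifted truncation $(\tau_{\leq b-1}W)[1]$ receives contributions only from $\Ext^p$ with $p \geq 2$, so that smoothness of $C$ can kill them all. Everything else is standard homological algebra on a one-dimensional smooth variety.
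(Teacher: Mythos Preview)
Your approach matches the paper's: part (1) is the standard splitting via homological dimension one (the paper simply cites \cite[Cor.~3.15]{HuybFM} for this), and part (2) reduces to a single coherent sheaf and splits off the torsion subsheaf exactly as the paper does.

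One correction in part (2): the claim that a locally free sheaf $V$ is ``projective with respect to $\Ext^{\geq 1}$ into coherent sheaves'' is false on a proper curve; for instance $\Ext^1_{\P^1}(\O, \O(-2)) \neq 0$. The correct reason $\Ext^1_C(V, T(\mathcal{F})) = 0$ is that local freeness of $V$ gives $\Ext^1_C(V, T(\mathcal{F})) \iso H^1(C, V^\dual \otimes T(\mathcal{F}))$, and this vanishes because $V^\dual \otimes T(\mathcal{F})$ has zero-dimensional support. Your parenthetical remark is pointing in this direction, but the first justification should be dropped.
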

\begin{proof}
  The first claim follows from the fact that the category of coherent sheaves on a smooth curve has homological dimension one, see, e.g., \cite[Cor.~3.15]{HuybFM}. Consequently, it is enough to prove the second claim for coherent sheaves. Let $\mathcal{F}$ be a coherent sheaf on $C$. Denote by $\mathcal{T} \subset \mathcal{F}$ the torsion subsheaf. Then there is a short exact sequence
  \[
    0 \to \mathcal{T} \to \mathcal{F} \to \mathcal{F} / \mathcal{T} \to 0.
  \]
  The quotient sheaf $\mathcal{F} / \mathcal{T}$ is torsion-free on a smooth curve, so it is locally free. Then the space $\Ext^1(\mathcal{F}/ \mathcal{T}, \mathcal{T})$ vanishes and the extension splits.
\end{proof}

\begin{lemma}
  \label{lem: smooth torsion sheaves on curves}
  Let $C$ be a curve, and let $W$ be a coherent sheaf on $C$ supported at a smooth point $p \in C$.
  \begin{enumerate}
    \item There is a direct sum decomposition $W \iso \bigoplus_k \left(\O_C/\mathfrak{m}^k\right)^{\oplus w_k}$, where $\mathfrak{m}$ is the ideal sheaf of the point $p$ and $\{ w_k \}$ is some set of multiplicities.
    \item If $W \iso \O_C/\mathfrak{m}^n$ and $W^\prime \iso \O_C/\mathfrak{m}^m$ are two indecomposable torsion coherent sheaves on $C$ supported at point $p$, then $\dim \Hom_C(W, W^\prime) = \dim \Ext^1_C(W, W^\prime) = \min(m, n)$.
  \end{enumerate}
\end{lemma}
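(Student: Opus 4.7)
The plan is to reduce both statements to module theory over the local ring $R := \O_{C,p}$. Since $p$ is smooth, $R$ is a discrete valuation ring; pick a uniformizer $t \in \mathfrak{m}$. A coherent sheaf $W$ on $C$ supported only at $p$ is annihilated by $\mathfrak{m}^N$ for some large $N$, so it is pushed forward from $\Spec(R/t^N)$, a zero-dimensional affine scheme. Equivalently, $W$ is entirely determined by its stalk $W_p$, a finitely generated torsion $R$-module, and computing $\Hom$ and $\Ext$ sheaves commutes with passage to the stalk for such sheaves.

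For part (1), I would invoke the structure theorem for finitely generated modules over a principal ideal domain, applied to $R$. It gives a direct sum decomposition $W_p \iso \bigoplus_k (R/t^k)^{\oplus w_k}$ as $R$-modules; pushing forward along $\Spec(R/t^N) \to C$ produces the desired decomposition of $W$ into a direct sum of the sheaves $\O_C/\mathfrak{m}^k$.

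For part (2), with $W = \O_C/\mathfrak{m}^n$ and $W' = \O_C/\mathfrak{m}^m$, I would compute both $\Hom$ and $\Ext^1$ locally using the free resolution $0 \to R \xrightarrow{\cdot t^n} R \to R/t^n \to 0$ of $W_p$. Applying $\Hom_R(-, R/t^m)$ gives the four-term exact sequence
\[
  0 \to \Hom(R/t^n, R/t^m) \to R/t^m \xrightarrow{\cdot t^n} R/t^m \to \Ext^1(R/t^n, R/t^m) \to 0.
\]
Thus both $\Hom$ and $\Ext^1$ are identified with the kernel, respectively cokernel, of multiplication by $t^n$ on $R/t^m$. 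Splitting into cases $n \geq m$ (where $t^n$ acts as zero, so both kernel and cokernel equal $R/t^m$, of dimension $m$) and $n < m$ (where the image is $t^n R/t^m$, of dimension $m - n$, so kernel and cokernel each have dimension $n$), one sees in both cases that the dimension is $\min(m, n)$.

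There is no serious obstacle: both statements are standard results in commutative algebra once reduced to the DVR setting, and the only mild care is in noting that $\Hom$ and $\Ext$ computed in $\Coh(C)$ agree with the local computation because $W$ and $W'$ have zero-dimensional affine support.
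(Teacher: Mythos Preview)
Your proof is correct and follows essentially the same approach as the paper: reduce to the local ring at $p$, which is a DVR, invoke the structure theorem for finitely generated modules over a PID for part~(1), and use the two-term free resolution $0 \to R \xrightarrow{t^n} R \to R/t^n \to 0$ to compute $\Hom$ and $\Ext^1$ for part~(2). The paper merely sketches the second computation while you carry it out explicitly, but the method is identical.
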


\begin{proof}
  A local ring of $C$ at a smooth point $p$ is a discrete valuation ring \cite[Prop.~11.1]{eisenComAl}. In particular it is a principal ideal domain. The classification of finitely generated modules over a PID establishes the first claim. If $f \in \mathfrak{m}$ is a generator, then the sheaf $\O_C/\mathfrak{m}^n$ has a two-term locally free resolution
  \[
    0 \to \O_C \xrightarrow{f^n} \O_C \to \O_C/\mathfrak{m}^n \to 0,
  \]
  which lets us compute $\Hom$ and $\Ext$ for the second part of the statement.
\end{proof}

\begin{lemma}
  \label{lem: complexes on a surface are given by adjacent glueings}
  Let $S$ be a smooth surface. A choice of an object $M \in \Dbcoh(S)$ up to an isomorphism is the same as a choice of the following two pieces of information:
  \begin{enumerate}
  \item a collection of cohomology sheaves $F^i := \mathcal{H}^i(M)$;
  \item a collection of glueing maps $\xi_i\in \Ext^2(F^i, F^{i-1})$.
  \end{enumerate}
\end{lemma}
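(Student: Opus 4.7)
The plan is to induct on the cohomological amplitude of $M$, using canonical truncation. Given any $M \in \Dbcoh(S)$, for each $i$ there is a canonical truncation triangle
\[
\tau_{\leq i-1}M \to \tau_{\leq i}M \to F^i[-i] \xrightarrow{\eta_i} \tau_{\leq i-1}M[1],
\]
whose classifying map $\eta_i$ lives in $\Hom(F^i, \tau_{\leq i-1}M[i+1])$. The first key step is to identify this group with $\Ext^2(F^i, F^{i-1})$, which I would do via the hypercohomology spectral sequence
\[
E_2^{p, q} = \Ext^p(F^i, \mathcal{H}^q(\tau_{\leq i-1}M[i+1])) \Rightarrow \Ext^{p+q}(F^i, \tau_{\leq i-1}M[i+1]).
\]
The sheaf $\mathcal{H}^q(\tau_{\leq i-1}M[i+1])$ equals $F^{q+i+1}$ for $q \leq -2$ and vanishes for $q \geq -1$, while on a smooth surface $\Ext^p$ out of a coherent sheaf vanishes for $p \geq 3$. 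Consequently the only cell of total degree zero that contributes is $(p, q) = (2, -2)$, yielding $\Ext^2(F^i, F^{i-1})$. I set $\xi_i$ to be the image of $\eta_i$ under this isomorphism.

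For the reverse direction, starting from the data $(F^i, \xi_i)$, I build $M$ by iterated extensions: having constructed some object $M_{i-1}$ whose cohomology sheaves and truncation classes in degrees $\leq i-1$ match the given data, the inverse of the above isomorphism lifts $\xi_i$ to a morphism $F^i[-i-1] \to M_{i-1}$, and its cone is the next object $M_i$. By construction, the only new cohomology is $F^i$ in degree $i$, and the new truncation class matches $\xi_i$.

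The final point --- that this correspondence descends to a bijection on isomorphism classes --- I expect to be the main obstacle. Given $M, M'$ giving rise to the same data, I would inductively assume $\tau_{\leq i-1}M \iso \tau_{\leq i-1}M'$ and observe that, after transporting across this isomorphism, the classifying maps $\eta_i, \eta_i'$ agree as elements of $\Ext^2(F^i, F^{i-1})$. An application of the axiom TR3 then produces an isomorphism $\tau_{\leq i}M \iso \tau_{\leq i}M'$, though not canonically: automorphisms of $\tau_{\leq i-1}M$ which fix the cohomology and glueing data can perturb the chosen lift. This non-uniqueness is the reason for the informal wording ``up to an isomorphism'' in the statement, but it does not affect the bijection on isomorphism classes asserted by the lemma.
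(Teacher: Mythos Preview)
Your proposal is correct and follows essentially the same approach as the paper: both argue by induction on the cohomological amplitude via the canonical truncation triangle, and both reduce to identifying $\Hom(F^i,\tau_{\leq i-1}M[i+1])$ with $\Ext^2(F^i,F^{i-1})$ using that $\Ext^{\geq 3}$ between sheaves vanishes on a smooth surface. The only cosmetic difference is that the paper obtains this identification from one further truncation triangle $\tau_{\leq i-2}M \to \tau_{\leq i-1}M \to F^{i-1}[-(i-1)]$ and its long exact sequence, whereas you use the hypercohomology spectral sequence; these are equivalent computations, and your more explicit discussion of the reverse construction and the non-canonicity of the resulting isomorphism is a welcome elaboration of what the paper leaves implicit.
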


\begin{remark}
  In general, the glueing data for an object in the derived category also includes additional information related to higher $\Ext$'s, and it is not easy to describe explicitly. On a smooth surface all $\Ext$'s of degree larger than two between coherent sheaves vanish, and this gives us a simpler description.
\end{remark}

\begin{proof}
  If $M$ is concentrated in a single cohomological degree, this is clear. Assume that the claim is proved for complexes concentrated in at most $n$ degrees, and let $M$ be an object concentrated in exactly $n+1$ different cohomological degrees. Let $i$ be the largest integer such that $\mH^i(M) \neq 0$. Consider the truncation triangle
  \[
    \tau_{\leq i-1}M \to M \to \mH^i(M)[-i] \to (\tau_{\leq i-1}M)[1].
  \]
  The object $M$ is determined up to an isomorphism by its truncation $\tau_{\leq i-1}M$ and the glueing map $\xi \in \Ext^1(\mH^i(M)[-i], \tau_{\leq i-1}M)$. By induction the lemma holds for the truncation. Thus it remains to show that $\Ext^1(\mH^i(M)[-i], \tau_{\leq i-1}M) \iso \Ext^2(\mH^{i}(M), \mH^{i-1}(M))$.

  Consider the truncation triangle for $\tau_{\leq i-1}M$:
  \[
    \tau_{\leq i-2}M \to \tau_{\leq i-1}M \to \mH^{i-1}(M)[-i+1] \to (\tau_{\leq i-2}M) [1].
  \]
  An application of the functor $\Ext^\bullet(\mH^i(M)[-i], -)$ leads to a long exact sequence of vector spaces. Since the homological dimension of a smooth surface is two, both vector spaces $\Ext^1(\mH^i(M)[-i], \tau_{\leq i-2}M)$ and $\Ext^1(\mH^i(M)[-i], (\tau_{\leq i-2}M)[1])$ vanish by dimension reasons. Thus the induction step is established.
\end{proof}

\begin{lemma}
  \label{lem: torsion-free sheaves on a surface}
  Let $S$ be a smooth surface, and let $\mathcal{F}$ be a torsion-free coherent sheaf on $S$. Then there exists a unique up to a unique isomorphism short exact sequence
  \[
    0 \to \mathcal{F} \to \mathcal{E} \to \mathcal{Q} \to 0,
  \]
  where $\mathcal{E}$ is locally free, and $\mathcal{Q}$ is a torsion sheaf supported on a zero-dimensional subset.
\end{lemma}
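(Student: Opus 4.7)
The strategy is the classical one: take $\mathcal{E}$ to be the double dual $\mathcal{F}^{\vee\vee}$ together with the canonical evaluation map $\mathcal{F} \to \mathcal{F}^{\vee\vee}$. I would organize the proof as the existence of this extension followed by a rigidity argument for uniqueness, all built out of the functoriality of $(-)^\vee = \mathcal{H}om(-, \O_S)$.

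For existence, I would first check that the evaluation map $\eta\colon \mathcal{F} \to \mathcal{F}^{\vee\vee}$ is injective. The kernel of $\eta$ is the torsion subsheaf of $\mathcal{F}$ (e.g.\ because on the dense open $U \subset S$ where $\mathcal{F}$ is locally free $\eta$ is an isomorphism, so $\ker \eta$ is supported in codimension $\geq 1$ and hence is torsion), which vanishes by assumption. Let $\mathcal{E} := \mathcal{F}^{\vee\vee}$ and let $\mathcal{Q}$ be the cokernel. Since $S$ is a smooth surface, the sheaf $\mathcal{E}$, being reflexive, has depth $\geq 2$ at every closed point, and therefore by Auslander--Buchsbaum applied to the regular $2$-dimensional local rings $\O_{S, s}$, its projective dimension is zero, so $\mathcal{E}$ is locally free. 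Finally, at every codimension $1$ point $p \in S$ the local ring $\O_{S, p}$ is a DVR, so the torsion-free module $\mathcal{F}_p$ is free and in particular reflexive; hence $\eta$ is an isomorphism at every codimension $1$ point. This forces $\mathcal{Q}$ to be supported on a closed subset of codimension $\geq 2$, i.e.\ a finite set of points.

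For uniqueness, suppose $0 \to \mathcal{F} \xrightarrow{\iota'} \mathcal{E}' \to \mathcal{Q}' \to 0$ is another such sequence. Applying $\mathcal{H}om(-, \O_S)$ gives an exact sequence
\[
  0 \to \mathcal{H}om(\mathcal{Q}', \O_S) \to \mathcal{E}'^{\vee} \to \mathcal{F}^\vee \to \mathcal{E}xt^1(\mathcal{Q}', \O_S).
\]
Since $\mathcal{Q}'$ is torsion, $\mathcal{H}om(\mathcal{Q}', \O_S) = 0$. Since $\mathcal{Q}'$ is supported on a zero-dimensional subset of a smooth surface, a local free resolution $0 \to \O_S^a \to \O_S^b \to \O_S^c \to \mathcal{Q}' \to 0$ (or the local Koszul complex at each point) shows that $\mathcal{E}xt^1(\mathcal{Q}', \O_S) = 0$, because such $\mathcal{E}xt$ sheaves vanish in degrees below the codimension of the support. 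Therefore the dual of $\iota'$ induces an isomorphism $\mathcal{E}'^{\vee} \isoarrow \mathcal{F}^\vee$. Dualizing once more and using that $\mathcal{E}'$ is locally free (hence reflexive) produces a canonical isomorphism $\mathcal{E}' \isoarrow \mathcal{F}^{\vee\vee} = \mathcal{E}$ compatible with the inclusions of $\mathcal{F}$; this gives an isomorphism of short exact sequences. The uniqueness of this isomorphism follows because any two such isomorphisms differ by a map $\mathcal{E}' \to \mathcal{E}'$ vanishing on $\iota'(\mathcal{F})$, hence factoring through $\mathcal{Q}'$; but $\Hom(\mathcal{Q}', \mathcal{E}') = 0$ since $\mathcal{Q}'$ is torsion and $\mathcal{E}'$ is locally free.

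The only nontrivial step is the local-algebra input: namely, that reflexivity on a smooth surface implies local freeness, and that $\mathcal{E}xt^1(\mathcal{Q}', \O_S) = 0$ for $\mathcal{Q}'$ supported in codimension $2$. Both are standard consequences of Auslander--Buchsbaum on the $2$-dimensional regular local rings of $S$, so no serious obstacle arises.
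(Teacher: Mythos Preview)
Your proof is correct and follows essentially the same route as the paper: both take $\mathcal{E} = \mathcal{F}^{\vee\vee}$ with the canonical evaluation map, and both deduce the properties of $\mathcal{E}$ and $\mathcal{Q}$ from the local algebra of smooth surfaces. The paper is terser, citing \cite{okonek-schneider-spindler} for the facts that the double dual is locally free and that the non-locally-free locus has codimension two, and handling uniqueness in one line via the universal property of the double dual (any map from $\mathcal{F}$ to a locally free sheaf factors through $\mathcal{F}^{\vee\vee}$); your version unpacks the same content explicitly via Auslander--Buchsbaum and the $\mathcal{E}xt$ computation, which is a perfectly good self-contained substitute.
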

\begin{proof}
  Any morphism from $\mathcal{F}$ to a locally free sheaf factors through the double dual coherent sheaf $\mathcal{F}^{\dual \dual}$. On a smooth surface the double dual is locally free \cite[Lem.~2.1.1.10]{okonek-schneider-spindler}. The morphism $\mathcal{F} \to \mathcal{F}^{\dual \dual}$ is an isomorphism on an open set where $\mathcal{F}$ is locally free. The complement to that open set has codimension two \cite[Lem.~2.1.1.8]{okonek-schneider-spindler}, so the quotient is a zero-dimensional torsion sheaf. Uniqueness follows from the universal property of the double dual.
\end{proof}

\begin{lemma}
  \label{lem: torsion-free sheaves and divisors}
  Let $S$ be a smooth surface, and let $\mathcal{F}$ be a torsion-free coherent sheaf on $S$. For any divisor $j\colon D \monoarrow S$ the derived restriction $j^*\mathcal{F} \in \Dbcoh(D)$ is concentrated only in degree $0$.
\end{lemma}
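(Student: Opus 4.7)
The statement is essentially a local computation, so the plan is to work in a Zariski-neighborhood of a point of $D$ and exhibit a two-term Koszul resolution of $j_*\O_D$ as an $\O_S$-module.

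More precisely, I would proceed as follows. Since $S$ is smooth, any divisor is Cartier, so locally on $S$ there exists a regular function $f \in \O_S$ which gives a local equation for $D$, and moreover $f$ is a non-zero-divisor in $\O_S$. This yields a short exact sequence
\[
  0 \to \O_S \xrightarrow{\,\cdot f\,} \O_S \to j_*\O_D \to 0,
\]
which is a locally free resolution of $j_*\O_D$ of length one. Tensoring this resolution with $\mathcal{F}$ over $\O_S$ computes $\mathcal{F} \otimes^{\mathrm{L}}_{\O_S} j_*\O_D$, which (up to the pushforward $j_*$) is the object $j^*\mathcal{F} \in \Dbcoh(D)$. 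The resulting two-term complex is $[\mathcal{F} \xrightarrow{\,\cdot f\,} \mathcal{F}]$ placed in cohomological degrees $-1$ and $0$, so $\mathcal{H}^{-1}(j^*\mathcal{F})$ is locally identified with the subsheaf of $f$-torsion sections of $\mathcal{F}$.

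The key observation is that since $\mathcal{F}$ is torsion-free as an $\O_S$-module and $f$ is a non-zero-divisor in $\O_S$, multiplication by $f$ is injective on $\mathcal{F}$. Hence $\mathcal{H}^{-1}(j^*\mathcal{F}) = 0$ locally, and since the local equations glue to give the same functor $j^*$, this vanishing is independent of the chosen trivialization. All higher $L_ij^*\mathcal{F}$ for $i \geq 2$ vanish automatically because $j_*\O_D$ has Tor-dimension at most one over $\O_S$. Therefore $j^*\mathcal{F}$ is concentrated in degree $0$, as claimed. The only place where anything nontrivial happens is the injectivity of $\cdot f$, which is immediate from the torsion-free hypothesis; there is no serious obstacle to this argument.
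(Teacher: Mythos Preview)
Your proof is correct and follows essentially the same idea as the paper's. The paper phrases the argument globally rather than locally: it writes $j_* j^* \mathcal{F}$ as the cone of the map $\mathcal{F} \otimes \O(-D) \to \mathcal{F}$, observes that this map is injective because $\mathcal{F}$ is torsion-free, and then notes that $j_*$ is exact and faithful on coherent sheaves to conclude that $L_1 j^*\mathcal{F} = 0$. Your local two-term Koszul resolution is the same computation after trivializing $\O(-D)$.
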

\begin{proof}
  The object $j_* j^* \mathcal{F} \in \Dbcoh(S)$ can be represented as a cone of a morphism
  \[
    \mathcal{F} \otimes \O(-D) \to \mathcal{F}.
  \]
  Since $\mathcal{F}$ is torsion-free, this map is injective, and hence $j_* j^* \mathcal{F}$ is concentrated in degree zero. Since the pushforward $j_*$ is an exact functor, this implies that $j_*(L_1 j^* \mathcal{F}) = 0$. A pushforward of a nonzero coherent sheaf along the closed embedding is nonzero, so in fact $L_1 j^* \mathcal{F} = 0$, as claimed.
\end{proof}

\subsection{Spectral sequences for $\Ext$-groups}

In this subsection we describe two useful spectral sequences for computing $\Ext$'s between objects in the derived category. In this paper we need them only for Lemma~\ref{lem: objects with a special point have large ext1}, which is used in Section~\ref{sec: projective plane}. The spectral sequences are well-known (e.g., they are used in \cite{kuleshov-orlov}), but we include explicit statements for convenience of reference.

First, we discuss the spectral sequence that computes the self-$\Ext$'s of an object in the derived category in terms of the $\Ext$'s between its cohomology sheaves. It is a special case of the spectral sequence for $\Ext$'s between two objects admitting lifts to a filtered derived category, constructed in \cite[(3.1.3)]{bbd}. We work with the usual derived category, however any object has a canonical filtration whose associated graded factors are quasiisomorphic to the cohomology sheaves. We describe the resulting spectral sequence in this case explicitly for convenience.

\begin{lemma}
  \label{lem: self-ext spectral sequence}
  Let $X$ be a smooth algebraic variety, and let $F \in \Dbcoh(X)$ be an arbitrary object. There exists a $E_1$-spectral sequence with
  \[
    E_1^{p, q} = \bigoplus_{i \in \Z} \Ext^{2p+q}(\mH^i(F), \mH^{i-p}(F)) \qquad d_r^{p, q}\colon E_r^{p, q} \to E_r^{p+r, q -r + 1}
  \]
  which converges to $\Ext^{p+q}(F, F)$. The $d_1$ differential is given by pre- and post-compositions with glueing maps $\xi_{i+1} \in \Ext^2(\mH^{i+1}(F), \mH^i(F))$ and $\xi_{i-p} \in \Ext^2(\mH^{i-p}(F), \mH^{i-p-1}(F))$.
\end{lemma}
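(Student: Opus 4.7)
The plan is to lift $F$ to a filtered object via the canonical truncation tower and apply the general spectral sequence for $\RHom$-groups in the filtered derived category from \cite[(3.1.3)]{bbd}. Concretely, I would consider the canonical increasing filtration whose $n$-th stage is the truncation $\tau_{\leq n}F$, equipped with the distinguished triangles
\[
  \tau_{\leq n-1}F \to \tau_{\leq n}F \to \mH^n(F)[-n] \to (\tau_{\leq n-1}F)[1]
\]
whose associated graded pieces are $\mH^n(F)[-n]$. Since $F$ is bounded, the filtration stabilizes, so convergence is automatic. Filtering both the source and the target copy of $F$ by this tower gives the BBD spectral sequence abutting to $\Ext^{p+q}(F,F)$ with $E_1$-term
\[
  E_1^{p,q} = \bigoplus_i \Ext^{p+q}\!\left(\mH^i(F)[-i],\, \mH^{i-p}(F)[-(i-p)]\right).
\]
The shift identity $\Ext^{p+q}(A[-i], B[-(i-p)]) \caniso \Ext^{2p+q}(A,B)$ puts this in the form stated.

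The identification of $d_1$ is where I would spend the most care. The differential of the spectral sequence arises from the connecting maps of the truncation triangles. For a fixed class $\alpha \in \Ext^{2p+q}(\mH^i(F), \mH^{i-p}(F))$, the source-side contribution comes from the boundary $\mH^{i+1}(F)[-(i+1)] \to (\tau_{\leq i}F)[1] \to \mH^i(F)[-i+1]$, whose cohomology projection in $\Ext^1(\mH^{i+1}(F)[-(i+1)], \mH^i(F)[-i])$ is exactly $\xi_{i+1} \in \Ext^2(\mH^{i+1}(F), \mH^i(F))$; pre-composition of $\alpha$ with this gives a class in $\Ext^{2(p+1)+q}(\mH^{i+1}(F), \mH^{i-p}(F))$, landing in the $(i+1)$-summand of $E_1^{p+1,q}$. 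Symmetrically, post-composition with $\xi_{i-p}$ produces the contribution landing in the $i$-summand of $E_1^{p+1,q}$.

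The main obstacle is simply the bookkeeping of indices and signs in matching the BBD spectral sequence to the indexing scheme of the statement. One can sidestep citing BBD by building the spectral sequence directly from an exact couple: apply $\RHom(F, -)$ to the tower $\{\tau_{\leq n}F\}$ to obtain an exact couple computing $\Ext^*(F,F)$, whose $E_1$-terms are $\Ext^*(F, \mH^n(F)[-n])$; then iterate using $\RHom(-, \mH^n(F)[-n])$ applied to the same tower to break these further into $\Ext^*(\mH^i(F), \mH^n(F))$. Combining the two filtrations yields the claimed double indexing and identifies the two contributions to $d_1$ as pre- and post-composition with the corresponding connecting classes, which is exactly the description in the statement.
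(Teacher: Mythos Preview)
Your approach is correct and essentially the same as the paper's: both lift $F$ to the filtered derived category via its canonical filtration by truncations (with associated graded pieces the shifted cohomology sheaves) and then invoke the spectral sequence from \cite[(3.1.3)]{bbd}. The paper makes the lift concrete by choosing a bounded injective resolution carrying a degreewise-split filtration whose graded pieces resolve the $\mH^i(F)$, while you phrase it directly in terms of the truncation tower and spell out the shift bookkeeping and the $d_1$ identification more explicitly; these are the same argument.
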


\begin{proof}
  Since $F$ is a bounded complex and smooth varieties have finite homological dimension, it is possible to find an injective resolution for $F$ which is a bounded complex equipped with a decreasing filtration whose associated graded factors are injective resolutions for the cohomology sheaves $\mH^i(F)$ such that the filtration in each degree is split. The resolution with this filtration represents an object in the filtered derived category. The spectral sequence in \cite[(3.1.3.4)]{bbd} computing $\Ext(F, F)$ in the usual derived category is the spectral sequence claimed in the statement.
\end{proof}

\begin{corollary}
  \label{cor: self-exts on a surface}
  Let $S$ be a smooth surface, and let $F \in \Dbcoh(S)$ be an object in the derived category. Then
  \[
    \dim \Ext^1(F, F) \geq \sum_{i \in \Z} \dim \Ext^1(\mH^i(F), \mH^i(F)).
  \]
\end{corollary}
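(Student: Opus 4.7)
The plan is to invoke the spectral sequence from Lemma~\ref{lem: self-ext spectral sequence} and observe that, on a smooth surface, nearly all of the $E_1$-page vanishes for degree reasons. Concretely, since $\Ext^k$ between coherent sheaves on $S$ vanishes unless $k \in \{0, 1, 2\}$, the term $E_1^{p,q} = \bigoplus_i \Ext^{2p+q}(\mH^i(F), \mH^{i-p}(F))$ can be nonzero only when $2p + q \in \{0, 1, 2\}$.

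I would then isolate the entry $E_1^{0,1} = \bigoplus_i \Ext^1(\mH^i(F), \mH^i(F))$, which lies on the anti-diagonal $p + q = 1$ and therefore contributes to $\Ext^1(F, F)$, and show that it is already stable at $E_\infty$. The outgoing differentials $d_r\colon E_r^{0,1} \to E_r^{r, 2 - r}$ land in a subquotient of $\bigoplus_i \Ext^{r + 2}(\mH^i(F), \mH^{i - r}(F))$, which vanishes for every $r \geq 1$ because the $\Ext$-degree is at least three and $S$ has homological dimension two. Dually, the incoming differentials $d_r\colon E_r^{-r, r} \to E_r^{0, 1}$ originate from a subquotient of $\bigoplus_i \Ext^{-r}(\mH^i(F), \mH^{i + r}(F))$, which vanishes for $r \geq 1$ because $\Ext$ in negative degrees is zero.

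Since $E_\infty^{0,1} = E_1^{0,1}$, and $\Ext^1(F, F)$ carries a filtration whose associated graded is $\bigoplus_{p + q = 1} E_\infty^{p, q}$, the desired inequality $\dim \Ext^1(F, F) \geq \sum_i \dim \Ext^1(\mH^i(F), \mH^i(F))$ follows. I do not anticipate any serious obstacle: the only bookkeeping step is to parse the indexing convention $d_r\colon E_r^{p, q} \to E_r^{p + r, q - r + 1}$ correctly so as to identify the source and target $\Ext$-degrees, after which the low homological dimension of $S$ does all the work.
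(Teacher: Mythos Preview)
Your argument is correct and follows essentially the same route as the paper: invoke the spectral sequence of Lemma~\ref{lem: self-ext spectral sequence}, identify $E_1^{0,1}$ with $\bigoplus_i \Ext^1(\mH^i(F),\mH^i(F))$, and show it survives to $E_\infty$ because the adjacent terms involve $\Ext$-groups of degree outside $[0,2]$. The only cosmetic difference is that the paper first remarks that on a surface the spectral sequence degenerates at $E_2$, and then checks only the $d_1$-differentials in and out of $E_1^{0,1}$, whereas you check all $d_r$ directly; both yield the same vanishing for the same reason.
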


\begin{proof}
  Consider the spectral sequence from Lemma~\ref{lem: self-ext spectral sequence}. Note that
  \[
    E_1^{0, 1} = \bigoplus_{i \in \Z}  \Ext^1(\mH^i(F), \mH^i(F)).
  \]
  On a smooth variety of dimension $n$ the spectral sequence degenerates at $E_n$ for dimension reasons, thus on a surface the only nonzero differential is $d_1$. Consider the cell $E_1^{0, 1}$ and the $d_1$-differentials starting and ending on that cell:
  \[
    \bigoplus_{i \in \Z} \Ext^{-1}(\mH^i(F), \mH^{i+1}(F))
    \xrightarrow{d_1}
    \fbox{$\displaystyle\bigoplus_{i \in \Z} \Ext^1(\mH^i(F), \mH^i(F))$}
    \xrightarrow{d_1}
    \bigoplus_{i \in \Z} \Ext^3(\mH^i(F), \mH^{i-1}(F)).
  \]
  On a smooth surface both $\Ext^{-1}$ and $\Ext^3$ between coherent sheaves are always zero, thus the vector space in $E_1^{1, 0}$ survives to $E_\infty$ and is a subquotient of $\Ext^1(F, F)$. This implies the inequality for dimensions of those vector spaces.
\end{proof}

Another useful spectral sequence is the following one. It lets us compute $\Ext$'s between cones of maps in $\Dbcoh(X)$. An important class of cones to keep in mind is the ones coming from short exact sequences of coherent sheaves on $X$.

\begin{lemma}
  \label{lem: cone spectral sequence}
  Let $X$ be a smooth algebraic variety. Suppose that there are two distinguished triangles in $\Dbcoh(X)$:
  \[
    A_1 \to B_1 \to C_1 \to A_1[1] \qquad A_2 \to B_2 \to C_2 \to A_2[1].
  \]
  There exists a $E_1$-spectral sequence which degenerates at $E_3$ and converges to $\Ext^*(C_1, C_2)$:
  \[
    E_1^{p, q} =
    \begin{cases}
      \Ext^q(B_1, A_2), & p = -1; \\
      \Ext^q(A_1, A_2) \oplus \Ext^q(B_1, B_2), & p = 0; \\
      \Ext^q(A_1, B_2), & p = 1; \\
      0, & \text{otherwise.}
    \end{cases}
  \]
  with differential $d_r^{p, q}\colon E_r^{p, q} \to E_r^{p+r, q - r + 1}$. The differential $d_1$ is given by compositions with the morphisms $A_1 \to B_1$ and $A_2 \to B_2$.
\end{lemma}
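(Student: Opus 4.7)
The plan is to realize $\RHom(C_1, C_2)$ as the totalization of a $2 \times 2$ bicomplex whose corners are the four $\RHom$-groups in the statement, and then read off the lemma from the column filtration on that totalization. First I would pick representatives making each $A_i \to B_i$ an honest chain map (for instance, resolve each $B_i$ by injectives and extend along the map), so that $C_i$ is realized as the two-term complex $[A_i \to B_i]$ with $A_i$ in degree $-1$ and $B_i$ in degree $0$. Computing $\Hom^\bullet(C_1, C_2)$ termwise and splitting via the graded decomposition $C_i \caniso A_i[1] \oplus B_i$ exhibits this complex as the totalization of the commutative square
\[
\begin{tikzcd}[row sep=small, column sep=small]
\RHom(B_1, A_2) \ar[r] \ar[d] & \RHom(A_1, A_2) \ar[d] \\
\RHom(B_1, B_2) \ar[r] & \RHom(A_1, B_2)
\end{tikzcd}
\]
in which horizontal arrows are pre-composition with $A_1 \to B_1$ and vertical arrows are post-composition with $A_2 \to B_2$ (with the standard Koszul signs).

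Next, place these four corners at bicomplex positions $(0,0), (1,0), (0,1), (1,1)$ and totalize along the anti-diagonals $n = i + j$. Only the three diagonals $n \in \{0, 1, 2\}$ are nonzero, and the relabeling $p = n - 1$ puts them at $p \in \{-1, 0, 1\}$; the two corners at $n = 1$ merge into the claimed direct sum at $p = 0$. The column-filtration spectral sequence of the resulting bicomplex of complexes has $E_1^{p, q}$ equal to the internal cohomology of the $\RHom$-complex at column $p$, which reproduces exactly the $E_1$-page stated in the lemma, with $d_1$ being the bicomplex differential, i.e., pre- and post-composition with the triangle maps.

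Finally, convergence at $E_3$ is a column count: since only three columns are nonzero, any differential $d_r$ with $r \geq 3$ increases $p$ by at least three and so must land in a zero column; the only potentially nonzero higher differential is $d_2\colon E_2^{-1, q} \to E_2^{1, q-1}$, after which the sequence stabilizes. The main subtle point is the sign bookkeeping that identifies the totalized complex with $\RHom(C_1, C_2)$ and ensures that $d_1$ is really given by pre- and post-composition rather than some more complicated combination of them; this is routine Koszul sign-chasing but is the step most prone to error, and it does not affect the $E_\infty$-subquotients that determine $\Ext^*(C_1, C_2)$.
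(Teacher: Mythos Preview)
Your proposal is correct and is essentially the same construction as the paper's, just unpacked by hand: the paper replaces $A_i \to B_i$ by a genuine chain map (via injective resolutions), equips each $C_i$ with the resulting two-step filtration, and then invokes the filtered-derived-category spectral sequence of \cite[(3.1.3.4)]{bbd}, which in this $2\times 2$ case is exactly your bicomplex/column-filtration spectral sequence. Your explicit column count for degeneration at $E_3$ and identification of $d_1$ with pre-/post-composition are precisely what the BBD reference would give after specialization.
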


The key observation is that both $C_1$ and $C_2$ lift to objects in the filtered derived category. There are various notions of a filtration on an object in the triangulated category $\Dbcoh(X)$, and most of them do not allow lifting the object to the filtered category, but the two-step filtrations arising from the distinguished triangles are always sufficient.

\begin{proof}
  Choose injective resolutions for $A_1$ and $B_1$. Then the morphism $A_1 \to B_1$ in the derived category may be represented as an actual map of complexes. The cone of this map of complexes is a complex representing the object $C_1 \in \Dbcoh(X)$. This cone is equipped with a filtration whose associated graded components are quasiisomorphic to $A_1$ and $B_1$ respectively. A similar procedure applied to $C_2$ lets us conclude by invoking \cite[(3.1.3.4)]{bbd} again.
\end{proof}

We may use the spectral sequences from Lemmas~\ref{lem: self-ext spectral sequence} and~\ref{lem: cone spectral sequence} to obtain the following property of objects on smooth surfaces.

\begin{lemma}
  \label{lem: objects with a special point have large ext1}
  Let $S$ be a smooth surface, and let $p \in S$ be a point. Assume that $F \in \Dbcoh(S)$ is an object which is locally free away from $p$, but not locally free at $p$. Then $\dim \Ext^1(F, F) \geq 2$.
\end{lemma}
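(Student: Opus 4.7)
The plan is to apply the self-Ext spectral sequence (Lemma~\ref{lem: self-ext spectral sequence}) to reduce to the case of a single coherent sheaf, and then analyze the local structure at the point $p$. By Corollary~\ref{cor: self-exts on a surface}, one has $\dim \Ext^1(F, F) \geq \sum_i \dim \Ext^1(\mH^i(F), \mH^i(F))$. Since $F$ is not locally free at $p$, by Definition~\ref{def: locally free object} at least one cohomology sheaf $\mathcal{F} := \mH^i(F)$ must fail to be locally free at $p$ while remaining locally free on $S \setminus \{p\}$. Hence it suffices to prove that $\dim \Ext^1(\mathcal{F}, \mathcal{F}) \geq 2$ for any such coherent sheaf~$\mathcal{F}$.

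For the single-sheaf case, the higher Ext sheaves $\mathcal{E}xt^j(\mathcal{F}, \mathcal{F})$ for $j \geq 1$ are torsion sheaves supported only at $p$. Passing to stalks, the module $M := \mathcal{F}_p$ is a finitely generated non-free module over the two-dimensional regular local ring $R := \O_{S, p}$ which is free at every non-maximal prime of $R$. By the Auslander--Buchsbaum formula the projective dimension of $M$ is $1$ or $2$. I would take a minimal free resolution $P_\bullet \to M$; by minimality the differentials of the complex $\Hom_R(P_\bullet, M)$ have entries in the maximal ideal $\mathfrak{m}$. The fact that $M$ is free outside $\mathfrak{m}$ prevents $M$ from being cyclic of the form $R/(f)$ with $f \in \mathfrak{m}$ (as such a module would have codimension-one support), so the number of minimal generators of $M$ must be at least two. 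An analysis of the mod-$\mathfrak{m}$ reduction of $\Hom_R(P_\bullet, M)$ then yields $\dim_k \mathrm{Ext}^1_R(M, M) \geq 2$, which equals the length of the sheaf $\mathcal{E}xt^1(\mathcal{F}, \mathcal{F})$ at $p$.

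To transfer this local bound to a bound on $\Ext^1(\mathcal{F}, \mathcal{F})$, I would apply the cone spectral sequence (Lemma~\ref{lem: cone spectral sequence}) to a well-chosen short exact sequence: the torsion decomposition $0 \to \mathcal{T} \to \mathcal{F} \to \mathcal{F}/\mathcal{T} \to 0$ when the torsion subsheaf $\mathcal{T}$ is nonzero, and otherwise the double-dualization sequence $0 \to \mathcal{F} \to \mathcal{F}^{\dual\dual} \to \mathcal{Q} \to 0$ from Lemma~\ref{lem: torsion-free sheaves on a surface} with $\mathcal{Q} \neq 0$ zero-dimensional at $p$. In either case, the local Ext contribution of the torsion piece ($\mathcal{T}$ or $\mathcal{Q}$) at $p$, bounded below by two from the previous step, propagates to a two-dimensional contribution to $\Ext^1(\mathcal{F}, \mathcal{F})$ via the cone spectral sequence. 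The main obstacle I expect is to show that this two-dimensional local contribution is not cancelled by differentials involving the locally free piece $\mathcal{F}^{\dual\dual}$; I plan to handle this by a careful analysis of the $d_1$ differentials, which are compositions with the connecting morphisms and hence preserve the purely local-at-$p$ contributions.
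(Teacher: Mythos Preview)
Your overall architecture matches the paper's proof exactly: reduce to a single cohomology sheaf via Corollary~\ref{cor: self-exts on a surface}, then split into the cases of a pure zero-dimensional torsion sheaf, a torsion-free sheaf (via the double-dual sequence of Lemma~\ref{lem: torsion-free sheaves on a surface}), and a mixed sheaf (via the torsion filtration), using the cone spectral sequence of Lemma~\ref{lem: cone spectral sequence} to propagate the bound from the torsion piece. However, two steps in your execution do not go through as written.

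First, your argument for the base case is wrong. You claim that a non-free module $M$ over the regular local ring $R = \O_{S,p}$, free on the punctured spectrum and not of the form $R/(f)$, must have at least two minimal generators. This is false: the residue field $R/\mathfrak{m}$ (i.e., the skyscraper $\O_p$) is cyclic, free on the punctured spectrum, and not of that form. So the ``mod-$\mathfrak{m}$ reduction'' step has no foundation. The paper handles this base case by an Euler-characteristic argument instead: for a nonzero sheaf $\mathcal{T}$ supported at the single point $p$, one has $\chi(\mathcal{T}, \mathcal{T}) = 0$ by deformation-invariance (move the point), while Serre duality on $S$, together with the triviality of $K_S$ near $p$, gives $\chi(\mathcal{T}, \mathcal{T}) = 2\dim\Hom(\mathcal{T}, \mathcal{T}) - \dim\Ext^1(\mathcal{T}, \mathcal{T})$, whence $\dim\Ext^1(\mathcal{T}, \mathcal{T}) \geq 2$.

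Second, your treatment of the cone spectral sequence is too vague to succeed. Saying that the $d_1$ differentials ``preserve purely local-at-$p$ contributions'' does not prevent cancellation; what is needed is that the adjacent cells of $E_1^{0,1}$ vanish outright. In the torsion-free case these cells are $\Ext^1(\mathcal{Q}, \mathcal{E})$ and $\Ext^1(\mathcal{E}, \mathcal{Q})$, both zero because $\mathcal{E}$ is locally free and $\mathcal{Q}$ has zero-dimensional support. In the mixed case they are $\Hom(\mathcal{T}, \mathcal{G})$, which vanishes since $\mathcal{G}$ is torsion-free, and $\Ext^2(\mathcal{G}, \mathcal{T})$, which vanishes via the double-dual embedding of $\mathcal{G}$ into a locally free sheaf. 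These specific vanishings are the actual content; once you have them, the $E_1^{0,1}$ cell survives to $E_\infty$ for dimension reasons and the bound follows.
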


\begin{remark}
  For some surfaces such as $\P^2$ there is a geometric argument for this inequality. Consider a two-dimensional family of automorphisms of $\P^2$ which moves the point $p$ around. The pullbacks of $F$ with respect to that family form a deformation of $F$ over a two-dimensional base. It may be checked that, in characteristic zero, the first-order deformation along any direction of the two-dimensional base is nontrivial, and therefore $\dim \Ext^1(F, F) \geq 2$.
\end{remark}

\begin{proof}
  If $F$ is not locally free at $p$, by definition this means that at least one cohomology sheaf of $F$ is not locally free at $p$. By Corollary~\ref{cor: self-exts on a surface} it is enough to prove the inequality for the dimension of self-$\Ext^1$ of that cohomology sheaf. So suppose that $\mathcal{F}$ is a coherent sheaf which is not locally free at $p$, but locally free on the complement $S \setminus \{ p \}$. The inequality for coherent sheaves is related to the inequalities in \cite[Cor.~2.11~and~2.12]{mukai}, but we include a direct proof for completeness. We consider several cases to prove the inequality.

  Suppose first that $\mathcal{F}$ is a torsion sheaf supported at $p$. Then the Euler characteristic $\chi(\mathcal{F}, \mathcal{F})$ is zero since it stays constant in flat families and the sheaf $\mathcal{F}$ may be deformed by moving the point $p$ in a flat family. Since we are on a smooth surface we may use Serre duality to find the following expression for Euler characteristic. Note that the canonical bundle is trivial in a neighborhood of the point $p$, so:
  \[
    \chi(\mathcal{F}, \mathcal{F}) = 2 \cdot \dim \Hom(\mathcal{F}, \mathcal{F}) - \dim \Ext^1(\mathcal{F}, \mathcal{F}).
  \]
  The sheaf $\mathcal{F}$ is nonzero, so $\dim \Hom(\mathcal{F}, \mathcal{F}) \geq 1$. Therefore $\dim \Ext^1(\mathcal{F}, \mathcal{F}) \geq 2$.

  Suppose now that $\mathcal{F}$ is a torsion-free sheaf which is not locally free at $p$. Then  by Lemma~\ref{lem: torsion-free sheaves on a surface} there exists a short exact sequence
  \[
    0 \to \mathcal{F} \to \mathcal{E} \to \mathcal{Q} \to 0,
  \]
  where $\mathcal{E}$ is locally free and $\mathcal{Q}$ is a nonzero torsion sheaf supported at the point $p$. Consider the spectral sequence from Lemma~\ref{lem: cone spectral sequence} which computes $\Ext^*(\mathcal{F}, \mathcal{F})$ in terms of that short exact sequence. The $E_1$ page contains the following fragment:
  \[
    \Ext^1(\mathcal{Q}, \mathcal{E})
    \xrightarrow{d_1}
    \Ext^1(\mathcal{Q}, \mathcal{Q}) \oplus \Ext^1(\mathcal{E}, \mathcal{E})
    \xrightarrow{d_1}
    \Ext^1(\mathcal{E}, \mathcal{Q}).
  \]
  Since $\mathcal{E}$ is locally free and $\mathcal{Q}$ is supported on a zero-dimensional set, it is easy to see that $\Ext^\bullet(\mathcal{E}, \mathcal{Q})$ is concentrated only in degree zero and $\Ext^\bullet(\mathcal{Q}, \mathcal{E})$ is concentrated only in degree two. Thus the vector space in $E_1^{0, 1}$-cell, which contains $\Ext^1(\mathcal{Q}, \mathcal{Q})$ as a subspace, survives to $E_2$. By dimension reasons there are no nonzero differentials on the $E_2$ page which start or end at $E_2^{0, 1}$. Hence
  \[
    \dim \Ext^1(\mathcal{F}, \mathcal{F}) \geq \dim \Ext^1(\mathcal{Q}, \mathcal{Q}).
  \]
  From the previous case we considered we know that the right hand side is at least two, which confirms the claim for torsion-free sheaves.

  It remains to consider the case where $\mathcal{F}$ has a nonzero torsion subsheaf with a nonzero torsion-free quotient:
  \[
    0 \to \mathcal{T} \to \mathcal{F} \to \mathcal{G} \to 0.
  \]
  Consider again the spectral sequence from Lemma~\ref{lem: cone spectral sequence} which computes $\Ext^\bullet(\mathcal{F}, \mathcal{F})$. The $E_1$-page contains the following fragment:
  \[
    \Hom(\mathcal{T}, \mathcal{G})
    \xrightarrow{d_1}
    \Ext^1(\mathcal{T}, \mathcal{T}) \oplus \Ext^1(\mathcal{G}, \mathcal{G})
    \xrightarrow{d_1}
    \Ext^2(\mathcal{G}, \mathcal{T}).
  \]
  Since $\mathcal{G}$ is torsion-free, $\Hom(\mathcal{T}, \mathcal{G}) = 0$. Using the embedding from Lemma~\ref{lem: torsion-free sheaves on a surface} it is easy to see that $\Ext^2(\mathcal{G}, \mathcal{T})$ is also zero. Thus the $E_1^{0, 1}$-cell survives to $E_2$, and similarly to the previous case purely by dimension reasons it survives to $E_\infty$. Therefore the lemma is proved for all coherent sheaves, and hence for all objects in the derived category as well.
\end{proof}

\subsection{Admissible subcategories and their properties}
We begin with several general observations about admissible subcategories and also consider their consequences for admissible subcategories of projective spaces, especially $\P^2$.

\begin{definition}
  Let $X$ be an algebraic variety. A strictly full triangulated subcategory $\mA \subset \Dbcoh(X)$ is an \emph{admissible subcategory} if the inclusion functor admits both left and right adjoint functors. We denote the left adjoint by $\mA_L$ and the right adjoint by $R_\mA$.
\end{definition}

For smooth and proper varieties an admissible subcategory is essentially the same thing as a semiorthogonal decomposition with two components, and the choice of notation for adjoints is compatible with Definition~\ref{lem: projection triangles exist}. More precisely, we have the following statement.

\begin{lemma}[\cite{bond-kapr}]
  \label{lem: admissible subcategories are semiorthogonal decompositions}
  Let $X$ be a smooth and proper algebraic variety. If $\Dbcoh(X) = \langle \mA, \mB \rangle$ is a semiorthogonal decomposition, then both $\mA$ and $\mB$ are admissible subcategories of $\Dbcoh(X)$. Conversely, if $\mA \subset \Dbcoh(X)$ is an admissible subcategory, then both $\langle \mA^\perp, \mA \rangle$ and $\langle \mA, {}^\perp \mA \rangle$ are semiorthogonal decompositions of $\Dbcoh(X)$.
\end{lemma}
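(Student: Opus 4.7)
For the forward direction I start from a semiorthogonal decomposition $\Dbcoh(X) = \langle \mA, \mB \rangle$. Definition~\ref{lem: projection triangles exist} immediately supplies a left adjoint $\mA_L$ to the inclusion $\mA \monoarrow \Dbcoh(X)$ and a right adjoint $\mB_R$ to $\mB \monoarrow \Dbcoh(X)$, so each of $\mA$ and $\mB$ already has one of the two required adjoints. To build the missing adjoints I will use the Serre functor $S(F) := F \otimes \omega_X[\dim X]$, which exists on $\Dbcoh(X)$ because $X$ is smooth and proper. Serre duality gives $\Hom(A, SB) \caniso \Hom(B, A)^\dual$, and the right-hand side vanishes by the semiorthogonality of the given decomposition; the same computation applied to $S^{-1}$ shows that both $(S\mB, \mA)$ and $(\mB, S^{-1}\mA)$ are semiorthogonal pairs in $\Dbcoh(X)$.

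The next step is to promote these pairs to actual semiorthogonal decompositions $\Dbcoh(X) = \langle S\mB, \mA \rangle$ and $\Dbcoh(X) = \langle \mB, S^{-1}\mA \rangle$ by verifying generation. I expect the generation check to be the main technical step: the plan is to apply the autoequivalence $S^{\pm 1}$ to the original projection triangle $\mB_R(F) \to F \to \mA_L(F)$ and rearrange, using semiorthogonality of the rotated pair to reinterpret the output as a projection triangle for the new decomposition. Once this is done, applying Definition~\ref{lem: projection triangles exist} to $\langle S\mB, \mA \rangle$ yields a right adjoint to $\mA \monoarrow \Dbcoh(X)$ (the right projection onto $\mA$), and applying it to $\langle \mB, S^{-1}\mA \rangle$ yields a left adjoint to $\mB \monoarrow \Dbcoh(X)$, completing the admissibility of both components.

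For the backward direction I assume $\mA$ is admissible with adjoints $L_\mA$ and $R_\mA$, and I build the semiorthogonal decomposition $\langle \mA^\perp, \mA \rangle$. The semiorthogonality is tautological from the definition of $\mA^\perp$. For generation I take an arbitrary $F \in \Dbcoh(X)$ and form the counit triangle of the $(i, R_\mA)$ adjunction, $R_\mA(F) \to F \to C \to R_\mA(F)[1]$, with $R_\mA(F) \in \mA$. Applying $\RHom(A, -)$ for arbitrary $A \in \mA$ and using that the counit induces the adjunction isomorphism $\RHom(A, R_\mA F) \isoarrow \RHom(A, F)$, I conclude $\RHom(A, C) = 0$, hence $C \in \mA^\perp$. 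This triangle exhibits $F$ as an extension of an object of $\mA^\perp$ by one of $\mA$, establishing generation. The decomposition $\langle \mA, {}^\perp \mA \rangle$ is produced symmetrically from the unit $F \to L_\mA(F)$ of the $(L_\mA, i)$ adjunction.
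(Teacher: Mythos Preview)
The paper does not prove this lemma; it is stated with a citation to Bondal--Kapranov and no argument is given. So there is no ``paper's own proof'' to compare against, and I will simply assess your sketch.

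Your backward direction is correct and complete: the counit (resp.\ unit) triangle of the adjunction produces the required filtration, and the adjunction isomorphism forces the cone into the orthogonal.

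In the forward direction, the use of the Serre functor to produce the semiorthogonality of $(S\mB,\mA)$ and $(\mB,S^{-1}\mA)$ is fine, but the generation step is a genuine gap. Applying $S^{\pm 1}$ to the projection triangle $\mB_R(F)\to F\to \mA_L(F)$ yields a triangle $S\mB_R(F)\to SF\to S\mA_L(F)$, which decomposes $SF$ with respect to $\langle S\mA, S\mB\rangle$, not $F$ with respect to $\langle S\mB,\mA\rangle$; no amount of rotating or ``reinterpreting via semiorthogonality'' turns one into the other. Indeed, one computes $(\mA^\perp)^\perp=(S\mB)^\perp=S\mA$, which is typically strictly larger than $\mA$, so the pair $(S\mB,\mA)$ is not determined by orthogonality alone and you cannot read off generation from the rotated triangle.

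The standard argument in \cite{bond-kapr} supplies the missing adjoint by a representability statement: since $X$ is smooth and proper, $\Dbcoh(X)$ is saturated, and a left-admissible subcategory of a saturated category is again saturated; hence for each $F$ the cohomological functor $A\mapsto \Hom_X(A,F)$ on $\mA$ is representable, and the representing object is $\mA_R(F)$. Equivalently, $\mA$ inherits a Serre functor $S_\mA$, and one sets $\mA_R = S_\mA\circ \mA_L\circ S_X^{-1}$ (and symmetrically $\mB_L = S_\mB^{-1}\circ \mB_R\circ S_X$). Either way, the point you are missing is that the \emph{subcategory} has its own Serre functor (or representability), not merely the ambient $\Dbcoh(X)$.
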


The main property of admissible subcategories in the geometric situation is the fact that they are closed under small deformations of objects in the following sense:

\begin{proposition}[{\cite[Cor.~3.12]{kawatani-okawa}}]
  \label{prop: admissible subcategories are open}
  Let $X$ be a smooth proper algebraic variety. Let $\mA \subset \Dbcoh(X)$ be an admissible subcategory. For any smooth variety $Y$ with a chosen point $y \in Y$, and any object $R \in \Dbcoh(X \times Y)$ such that the derived restriction $R|_{X \times \{ y \} } \in \Dbcoh(X)$ is in $\mA$, there exists a Zariski open neighborhood $U \subset Y$ of the point $y$ such that $R|_{ X \times \{ u \} } \in \mA$ for any $u \in U$. Moreover, $\mA$ is invariant under the action of the connected automorphism group $\mathrm{Aut}^\circ(X)$.
\end{proposition}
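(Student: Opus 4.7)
The plan is to prove openness by constructing a relative projection functor, then deduce group invariance from it. By admissibility (Lemma~\ref{lem: admissible subcategories are semiorthogonal decompositions}), fix a semiorthogonal decomposition $\Dbcoh(X) = \langle \mA, \mA^\prime \rangle$ with $\mA^\prime$ the semiorthogonal complement, and let $L\colon \Dbcoh(X) \to \mA^\prime$ denote the corresponding projection functor, so that an object $T \in \Dbcoh(X)$ lies in $\mA$ if and only if $L(T) = 0$. Since $X$ is smooth and proper, a representability theorem for exact functors on admissible subcategories produces a Fourier--Mukai kernel $\mathcal{K} \in \Dbcoh(X \times X)$ representing $L$.

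Pulling $\mathcal{K}$ back to $X \times X \times Y$ and convolving with $R$ along one $X$-factor produces an object $P \in \Dbcoh(X \times Y)$ with the property that, via the standard base-change for Fourier--Mukai transforms, the restriction $P|_{X \times \{u\}}$ is isomorphic to $L(R|_{X \times \{u\}})$ for every $u \in Y$. In particular $P|_{X \times \{y\}} = 0$ by hypothesis. The support $\supp(P) \subset X \times Y$ is closed, and the projection $\pi\colon X \times Y \to Y$ is proper since $X$ is proper, so $Z := \pi(\supp(P)) \subset Y$ is closed. A point $u \in Y$ lies outside $Z$ precisely when $\supp(P)$ is disjoint from $X \times \{u\}$, which is equivalent to $P|_{X \times \{u\}} = 0$ (by a fiberwise application of Lemma~\ref{lem: support and derived fibers}), and thus to $R|_{X \times \{u\}} \in \mA$. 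The open subset $U := Y \setminus Z$ is the desired neighborhood of $y$.

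For the $\mathrm{Aut}^\circ(X)$-invariance, pick a classical generator $G_0$ of $\mA$, which exists because $\mA$ is admissible in a smooth proper variety. Apply the openness result with $Y = \mathrm{Aut}^\circ(X)$ and $R$ taken to be the algebraic family of pullbacks $\{ g^* G_0 \}_{g \in Y}$ obtained from the action morphism. Since $R|_e = G_0 \in \mA$, the set $H := \{ g : g^* G_0 \in \mA \}$ is an open neighborhood of the identity. Because $g^*$ is an autoequivalence, $g^* G_0$ classically generates $g^* \mA$, so $g \in H$ forces $g^* \mA \subset \mA$ and hence $g^* \mA = \mA$ by equivalence of categories. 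The stabilizer $\{ g \in \mathrm{Aut}^\circ(X) : g^* \mA = \mA \}$ therefore contains $H$, is a subgroup, and being a subgroup that contains an open neighborhood of the identity it is an open subgroup; as an open subgroup it is automatically closed, and connectedness of $\mathrm{Aut}^\circ(X)$ then forces it to equal the whole group.

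The principal technical obstacle is justifying the Fourier--Mukai representability of $L$, since this is what allows the relative projection $P$ to be defined and to commute with base change along fibers. Representability of exact functors between admissible subcategories of smooth proper varieties is a nontrivial theorem, due in various forms to Orlov, Bondal--Van den Bergh, and Kuznetsov, but granted it the openness reduces cleanly to properness of $\pi$ and the standard identification of support with the locus of fiberwise nonvanishing.
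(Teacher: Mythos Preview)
The paper does not give its own proof of this proposition; it is cited from \cite[Cor.~3.12]{kawatani-okawa} without argument. Your proposal is in fact the standard proof, and it is essentially the one given in Kawatani--Okawa, so there is nothing to compare against in the present paper.

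Two comments on the proposal itself. First, your worry about Fourier--Mukai representability of the projection functor is misplaced in this paper's context: Proposition~\ref{prop: fm kernels for projections} (citing Kuznetsov's base-change paper) states directly that the projection functors for a semiorthogonal decomposition of a smooth proper variety are Fourier--Mukai, so you may quote that rather than the general Orlov-type representability theorems. Second, there is a small gap in the invariance argument: from $g^* G_0 \in \mA$ you correctly get $g^* \mA \subset \mA$, but the phrase ``hence $g^* \mA = \mA$ by equivalence of categories'' does not work --- an abstract equivalence $g^*\mA \simeq \mA$ does not force equality as subcategories of $\Dbcoh(X)$ (this is essentially the open Noetherianity conjecture for admissible subcategories). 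The fix is immediate: replace $H$ by the open neighborhood $H \cap H^{-1}$ of the identity; for $g$ in this set you have both $g^*\mA \subset \mA$ and $(g^{-1})^*\mA \subset \mA$, hence equality, and then your open-subgroup argument goes through unchanged.
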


We often use the following definition.

\begin{definition}
  \label{def: support of a category}
  Let $X$ be an algebraic variety, and let $\mA \subset \Dbcoh(X)$ be an admissible subcategory. We define the support $\supp(\mA)$ to be the union of set-theoretic supports of all objects in~$\mA$.
\end{definition}

\begin{lemma}
  \label{lem: support of a category}
  Let $X$ be a smooth and proper algebraic variety, and let $\mA \subset \Dbcoh(X)$ be an admissible subcategory. Then $\supp(\mA) \subset X$ is a Zariski-closed subset which does not have any isolated points.
\end{lemma}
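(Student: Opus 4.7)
My plan has two parts, corresponding to the two assertions.

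\emph{Closedness.} I first show that $\mA$ admits a classical generator whose support equals $\supp(\mA)$. The category $\Dbcoh(X)$ has a classical generator $G_0$ by \cite[Thm.~3.1.1]{bondal-vandenbergh}. Since $\mA$ is admissible, the right adjoint $R_\mA$ to the inclusion $\mA \monoarrow \Dbcoh(X)$ exists, and I claim $G := R_\mA(G_0) \in \mA$ is a classical generator of $\mA$. Indeed, for any $F \in \mA$ we have $F \iso R_\mA(F)$ and $F \in \langle G_0 \rangle \subset \Dbcoh(X)$; since $R_\mA$ is a triangulated functor and its target is idempotent-complete, the subcategory $\{H \in \Dbcoh(X) : R_\mA(H) \in \langle G \rangle\}$ is strictly full, triangulated, closed under summands, and contains $G_0$, hence contains all of $\langle G_0 \rangle$. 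Thus $F \iso R_\mA(F) \in \langle G \rangle$. Since support is monotone under cones, shifts, direct sums, and direct summands, we conclude $\supp(\mA) = \supp(G) = \bigcup_i \supp(\mH^i(G))$, a finite union of Zariski-closed subsets, hence Zariski-closed.

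\emph{No isolated points.} Suppose for contradiction that $p \in \supp(\mA)$ is an isolated point. Then $\supp(\mA) = \{p\} \sqcup Z$ for some closed $Z$ not containing $p$. Applying Lemma~\ref{lem: disjoint support forces splitting} to the generator $G$ yields a decomposition $G \iso F \oplus G'$ with $\supp(F) = \{p\}$. Admissible subcategories are idempotent-complete, so $F \in \mA$; also $F \neq 0$ since $p \in \supp(G)$.

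I now aim to construct an algebraic family $R \in \Dbcoh(X \times Y)$ over a smooth pointed variety $(Y, y_0)$ such that $R|_{X \times \{y_0\}} \iso F$ while $\supp(R|_{X \times \{y\}})$ contains points close to $p$ but distinct from $p$ for $y$ in a punctured neighborhood of $y_0$. Given such $R$, Proposition~\ref{prop: admissible subcategories are open} forces $R|_{X \times \{y\}} \in \mA$ for $y$ near $y_0$; but then $\supp(R|_{X \times \{y\}}) \subset \{p\} \cup Z = \supp(\mA)$ would fail by the isolation of $p$, a contradiction.

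To build the family I exploit the smoothness of $X$ at $p$: choose an affine étale map $\phi\colon (U, p) \to (\A^n, 0)$ with $n = \dim X$. The translation action of $\A^n$ on itself induces a ``translation family'' of copies of $F$ parametrized by a small neighborhood $V \subset \A^n$ of the origin, which via étale descent and extension by zero outside $U$ produces the desired $R$ on $X \times V$ with $R|_{X \times \{0\}} \iso F$ and with the fiber over $v \neq 0$ in $V$ supported at $\phi^{-1}(v)$. The main technical obstacle is executing this translation as a genuine algebraic deformation of $F$ itself (rather than a merely formal or infinitesimal one), so that Proposition~\ref{prop: admissible subcategories are open} applies; this is handled by combining the étale-local model with Artin approximation to algebraize the formal translation family, after which the contradiction outlined above closes the argument.
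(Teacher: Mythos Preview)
Your proof is correct and follows essentially the same approach as the paper: closedness via a classical generator of $\mA$ (the paper uses the left adjoint $\mA_L$ rather than $R_\mA$, but either works), and the absence of isolated points by splitting off a zero-dimensional summand and deforming it using Proposition~\ref{prop: admissible subcategories are open}.

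One remark on the deformation step: the paper simply asserts that any object supported at a single smooth point can be deformed by moving the point, without further justification. Your sketch via an \'etale chart and translation is the right idea, but the appeal to Artin approximation is heavier than necessary. The translation family $a^*(\phi_*F)$ on $\A^n \times \A^n$ (with $a(x,v)=x-v$) is already algebraic, and pulling back along $\phi \times \mathrm{id}$ gives an algebraic family on $U \times \A^n$; the only genuine care needed is in the ``extension by zero'' to $X \times V$, which requires restricting the base $V$ so that the support of the family is closed in $X \times V$. This follows because over $v=0$ the support is $\{p\}$, and one can shrink $V$ so that the (quasi-finite) support stays inside a closed subset of $X \times V$ proper over $V$. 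No approximation or algebraization theorem is required.
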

\begin{proof}
  Since $X$ is smooth and proper, by \cite[Th.~3.1.4]{bondal-vandenbergh} its derived category $\Dbcoh(X)$ has a classical generator $G$. The (left) projection of a classical generator to $\mA \subset \Dbcoh(X)$ is a classical generator $\mA_L(G)$ of the category $\mA$. Then $\supp(\mA) = \supp(\mA_L(G))$. In particular, this is a closed subset.

  Assume that $\supp(\mA)$ has an isolated point $p \in X$. Then by definition there exists an object $F \in \mA$ whose support contains $p$ as an isolated point. By Lemma~\ref{lem: disjoint support forces splitting} we see that the object $F$ has a direct summand $T$ supported only at the point $\{ p \}$. Admissible subcategories are closed under taking direct summands, so $T \in \mA$. Since $X$ is smooth, any object supported on a single point may be deformed by moving the point in $X$. As admissible subcategories are closed under small deformations by Proposition~\ref{prop: admissible subcategories are open}, we conclude that for each point $q$ in some Zariski-neighborhood of $p \in X$ there exists an object $T_q \in \mA$ supported only at the point $q$. This contradicts the fact $p$ is an isolated point in $\supp(\mA)$.
\end{proof}

In general, it is very difficult to control even the basic behavior of admissible subcategories. For example, the following question is still open:

\begin{conjecture}[{\cite{kuznetsov-oldhochschild}}]
  Let $X$ be a smooth projective variety. If 
  \[
    \mA_1 \subset \mA_2 \subset \ldots \subset \Dbcoh(X)
  \]
  is an infinite increasing chain of admissible subcategories, then it stabilizes at some finite step.
\end{conjecture}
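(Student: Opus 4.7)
The plan is to study the chain via its successive increments. For each $n$, the intersection $\mathcal{C}_n := \mA_{n+1} \cap \mA_n^\perp$ is an admissible subcategory and fits into a semiorthogonal decomposition $\mA_{n+1} = \langle \mA_n, \mathcal{C}_n \rangle$; by passing to a subchain I may assume every inclusion is strict, so each $\mathcal{C}_n$ is non-zero. The strategy is then to find a non-negative additive invariant $\mu$ on admissible subcategories, additive over semiorthogonal decompositions, with $\mu(\Dbcoh(X)) < \infty$ and with $\mu(\mathcal{C}) > 0$ whenever $\mathcal{C}$ is non-zero. Additivity plus the inequalities $\sum_{n < N} \mu(\mathcal{C}_n) \leq \mu(\mA_N) \leq \mu(\Dbcoh(X))$ would then force the chain to terminate.

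The natural candidates for $\mu$ are $\dim_{\Q} K_0(-) \otimes \Q$, $\dim HH_*(-)$, or topological $K$-theory in the sense of Blanc. All of these are additive under semiorthogonal decompositions, and for smooth projective $X$ all of them are finite on $\Dbcoh(X)$. So the argument goes through immediately whenever every $\mathcal{C}_n$ is distinguished from zero by such an invariant. Concretely, this handles every chain whose increments eventually have non-trivial Hochschild homology or non-trivial rational $K_0$.

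The main obstacle is exactly phantoms: by definition these admissible subcategories are invisible to $K_0$, and the known examples are also invisible to $HH_*$ and topological $K$-theory, so no standard additive invariant can rule out an infinite chain $\mathcal{C}_1 \subsetneq \mathcal{C}_2 \subsetneq \ldots$ consisting of phantoms. Thus the conjecture reduces to proving that there is no strictly increasing infinite chain of phantom subcategories, and this is what I expect to be the genuinely hard step. Without a structural theory of phantoms, I do not see a route for arbitrary smooth projective $X$.

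For the surface case treated in this paper one could hope to leverage the tools developed here: Addington's constraint (used in Proposition~\ref{prop: numerical lemma without numbers}) and the classification of admissible subcategories supported on a $(-1)$-curve from Proposition~\ref{prop: new local classification on blow-ups} severely restrict both the support and the shape of any phantom. Combined with the openness statement of Proposition~\ref{prop: admissible subcategories are open}, one might try to argue that an infinite nested family of phantoms would produce an unbounded supply of distinct phantom subcategories in small neighborhoods of each point of $\supp(\cup_n \mA_n)$, and then contradict this using the anticanonical restriction machinery of Section~\ref{sec: numerical lemmas}. This is speculative, and the general smooth projective case appears to require a fundamentally new invariant of dg-categories that can detect phantoms.
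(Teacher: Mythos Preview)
The statement you are attempting to prove is not proved in the paper at all: it is presented as an open conjecture, introduced with the sentence ``the following question is still open''. There is therefore no proof in the paper to compare your proposal against.

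Your proposal is not a proof either, and you essentially acknowledge this. The reduction you outline --- use an additive invariant such as $\dim HH_*$ or $\dim_{\Q} K_0 \otimes \Q$ to bound the length of the chain --- is standard and handles the case where no increment $\mathcal{C}_n$ is a phantom. But, as you correctly identify, the entire difficulty lies in ruling out an infinite strictly increasing chain of phantoms, and for this you offer only speculation. The paper's results (Theorems~\ref{thm: all admissible subcategories are standard} and~\ref{thm: no phantoms in del pezzos}) do establish the conjecture for $\P^2$ and del Pezzo surfaces as a corollary, since there are no phantoms there; but for general smooth projective $X$ the conjecture remains open, and nothing in your proposal closes that gap.
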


Several results of this paper are related to phantom subcategories:

\begin{definition}
  Let $X$ be a smooth and proper variety, and let $\mA \subset \Dbcoh(X)$ be an admissible subcategory. It is called a \emph{phantom subcategory} if $\mA \neq 0$ and the Grothendieck group $K_0(\mA)$ vanishes.
\end{definition}

It is not easy to construct examples of phantom subcategories. It is expected that they do not exist for nice varieties, such as homogeneous spaces \cite[Rem.~1.7]{kuznetsov-polishchuk} or varieties admitting a full exceptional collection \cite[Conj.~1.10]{kuznetsov-icm}. In this paper we confirm this expectation for del Pezzo surfaces.

The invariance of admissible subcategories under the connected automorphism group, shown in Proposition~\ref{prop: admissible subcategories are open}, has several important implications.

\begin{lemma}
  \label{lem: projections are g-invariant}
  Let $X$ be a smooth proper variety. Let $\mA \subset \Dbcoh(X)$ be an admissible subcategory, and let $F \in \Dbcoh(X)$ be an object. Consider the projection triangle as in Definition~\textup{\ref{lem: projection triangles exist}}:
  \[
    B \to F \to A
  \]
  with $A \caniso \mA_L(F) \in \mA$ and $B \in {}^\perp \mA$. Assume that the object $F$ is invariant under the action of some subgroup $G \subset \mathrm{Aut}^\circ(X)$. Then both projections $A$ and $B$ are also invariant under the action of $G$.
\end{lemma}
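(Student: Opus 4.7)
The plan is to exploit the uniqueness statement built into Definition~\ref{lem: projection triangles exist}: for a fixed object $F$, the projection triangle with first vertex in ${}^\perp\mA$ and last vertex in $\mA$ is unique up to unique isomorphism. So for any $g \in G \subset \mathrm{Aut}^\circ(X)$ I will pull back the triangle $B \to F \to A$ along $g$ and show that the result is \emph{another} projection triangle for $F$, which then must be canonically isomorphic to the original one.

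Concretely, I would first observe that the autoequivalence $g^*\colon \Dbcoh(X) \to \Dbcoh(X)$ takes distinguished triangles to distinguished triangles, so we obtain a triangle
\[
  g^*B \to g^*F \to g^*A.
\]
By hypothesis $g^*F \cong F$. By Proposition~\ref{prop: admissible subcategories are open} the subcategory $\mA$ is preserved by every element of $\mathrm{Aut}^\circ(X)$, so $g^*A \in \mA$. I would next note that ${}^\perp \mA$ is also stable under $g^*$: indeed, $g^*$ is an autoequivalence, and for $X \in {}^\perp\mA$ and $Y \in \mA$ we have
\[
  \RHom(g^*X, Y) \caniso \RHom(X, (g^{-1})^* Y) = 0
\]
because $(g^{-1})^*Y$ still lies in $\mA$. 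Hence $g^*B \in {}^\perp\mA$.

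Combining these, the pulled-back triangle $g^*B \to g^*F \to g^*A$ has its outer vertices in the correct subcategories and its middle vertex isomorphic to $F$. By the uniqueness clause in Definition~\ref{lem: projection triangles exist}, this triangle is isomorphic to the original projection triangle $B \to F \to A$; in particular $g^*A \cong A$ and $g^*B \cong B$, so $A$ and $B$ are $G$-invariant. I do not anticipate a real obstacle here: the entire argument is a formal consequence of uniqueness of projection triangles plus the already-established $\mathrm{Aut}^\circ(X)$-invariance of $\mA$, and the only mildly nontrivial point is checking that ${}^\perp\mA$ inherits the invariance, which is immediate from $g^*$ being an autoequivalence.
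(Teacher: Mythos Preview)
Your proof is correct and follows the same line as the paper's: pull back the projection triangle by $g$, observe that the outer terms land in the right subcategories, and invoke uniqueness. The only cosmetic difference is that the paper applies Proposition~\ref{prop: admissible subcategories are open} directly to both $\mA$ and ${}^\perp\mA$ (the latter is itself admissible by Lemma~\ref{lem: admissible subcategories are semiorthogonal decompositions}), whereas you deduce invariance of ${}^\perp\mA$ from that of $\mA$ via the autoequivalence property; both routes are immediate.
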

\begin{proof}
  Pick an automorphism $g \in G$. By Proposition~\ref{prop: admissible subcategories are open} the pullbacks $g^*A$ and $g^*B$ lie in the subcategories $\mA$ and ${}^\perp \mA$ respectively. Thus the pullback of the projection triangle is another decomposition of $F \iso g^*F$ into components from $\mA$ and ${}^\perp \mA$. Such a decomposition is unique, thus $g^*A \iso A$ and $g^*B \iso B$.
\end{proof}

\begin{corollary}
  \label{cor: pgl-invariant generators exist}
  Every admissible subcategory of $\Dbcoh(\P^n)$ has a $\PGL(n+1)$-invariant classical generator.
\end{corollary}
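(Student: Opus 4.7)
The plan is to apply Lemma~\ref{lem: projections are g-invariant} to a carefully chosen classical generator of $\Dbcoh(\P^n)$ itself and then project into $\mA$.

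First I would observe that the classical Beilinson collection yields the object
\[
G := \bigoplus_{k=0}^{n} \O(k) \in \Dbcoh(\P^n),
\]
which is a classical generator of $\Dbcoh(\P^n)$: indeed, $\O(0), \ldots, \O(n)$ is a full exceptional collection, so they together classically generate the derived category, and a finite direct sum of generators is a generator. Next I would note that $G$ is $\PGL(n+1)$-invariant in the sense used in Lemma~\ref{lem: projections are g-invariant}. This is because $\Pic(\P^n) \cong \Z$ is discrete while $\PGL(n+1)$ is connected, so the induced action on the Picard group is trivial; hence each $\O(k)$ is isomorphic to its pullback by any $g \in \PGL(n+1)$, and thus so is $G$. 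Finally, $\PGL(n+1)$ is contained in $\mathrm{Aut}^\circ(\P^n)$ (in fact it is the full connected automorphism group), so we are in the setting of Lemma~\ref{lem: projections are g-invariant}.

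Now I would apply the left projection functor $\mA_L\colon \Dbcoh(\P^n) \to \mA$. Setting $G_\mA := \mA_L(G)$, the projection of a classical generator of the ambient category is a classical generator of the admissible subcategory (if an object $F \in \mA$ is right-orthogonal to $G_\mA$, then by adjunction it is right-orthogonal to $G$, hence zero). By Lemma~\ref{lem: projections are g-invariant}, applied to the subgroup $\PGL(n+1) \subset \mathrm{Aut}^\circ(\P^n)$ and the invariant object $G$, we conclude that $G_\mA$ is $\PGL(n+1)$-invariant. This $G_\mA$ is the desired $\PGL(n+1)$-invariant classical generator of $\mA$.

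There is no real obstacle in this argument, since all pieces have already been established in the preliminaries. The only mild subtlety is confirming that $\PGL(n+1)$-invariance of the generator $G$ really does place us in the hypothesis of Lemma~\ref{lem: projections are g-invariant}, which just requires tracing through the identifications $\PGL(n+1) \subset \mathrm{Aut}^\circ(\P^n)$ and the connectedness argument for triviality of the action on $\Pic(\P^n)$.
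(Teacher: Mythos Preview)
Your proof is correct and follows essentially the same approach as the paper: take the $\PGL(n+1)$-invariant Beilinson generator $G = \bigoplus_{k=0}^{n} \O(k)$, apply the left projection $\mA_L$, and invoke Lemma~\ref{lem: projections are g-invariant}. One small remark: your parenthetical orthogonality argument shows $G_\mA$ is a \emph{weak} generator of $\mA$; to get classical generation it is cleaner to note that $\mA_L$ is exact and essentially surjective onto $\mA$, so $\mA = \mA_L(\langle G \rangle) \subset \langle \mA_L(G) \rangle$.
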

\begin{proof}
  The category $\Dbcoh(\P^n)$ has a $\PGL(n+1)$-invariant classical generator $G = \bigoplus_{0 \leq i \leq n} \O(i)$. Let $\mA_L$ be the projection functor to $\mA$ as in Definition~\ref{lem: projection triangles exist}. Then $\mA_L(G)$ is a classical generator of $\mA$ which is $\PGL(n+1)$-invariant by Lemma~\ref{lem: projections are g-invariant}.
\end{proof}

\begin{lemma}
  \label{lem: maps from torsion cannot be nonzero on cohomology}
  Let $X$ be a smooth variety. Let $\mA \subset \Dbcoh(X)$ be an admissible subcategory, and let $A \in \mA$ be an object. Let $\O_p$ be a skyscraper sheaf at some point $p \in X$. If there exists a morphism $\O_p \to A[a]$ for some shift $a \in \Z$ which induces a nonzero map on the zeroth cohomology sheaves, then any object of the subcategory ${}^\perp \mA$ is set-theoretically supported on the complement to the point $X \setminus \{ p \}$.
\end{lemma}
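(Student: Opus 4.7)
The plan is to argue by contradiction: assume some $B \in {}^\perp\mA$ contains $p$ in its set-theoretic support, and produce a nonzero morphism $B \to A[a-i_*]$ for an appropriate integer $i_*$, contradicting $B \in {}^\perp\mA$. The intuition is that a nonzero map $\mH^{i_*}(B) \to \O_p$, coming from $p \in \supp(\mH^{i_*}(B))$, composes with the given injection $\mH^0(\phi)\colon \O_p \hookrightarrow \mH^a(A)$ to a nonzero map on cohomology sheaves; the task is to lift this composition to a morphism in the derived category.

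First, I would choose the largest integer $i_*$ such that $p \in \supp(\mH^{i_*}(B))$. Such $i_*$ exists because $B$ has bounded cohomology and $\supp(B) = \bigcup_i \supp(\mH^i(B))$. The cohomology sheaves of the truncation $\tau_{>i_*}B$ then do not contain $p$ in their supports, so by Lemma~\ref{lem: support and derived fibers}, $\RHom(\tau_{>i_*}B, \O_p) = 0$. Applying $\RHom(-, \O_p)$ to the truncation triangle $\tau_{\leq i_*}B \to B \to \tau_{>i_*}B$ yields an isomorphism $\RHom(B, \O_p) \iso \RHom(\tau_{\leq i_*}B, \O_p)$ induced by precomposition with the inclusion $\tau_{\leq i_*}B \to B$.

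Second, the object $(\tau_{\leq i_*}B)[i_*]$ is concentrated in nonpositive cohomological degrees with top cohomology sheaf $\mH^{i_*}(B)$, so Lemma~\ref{lem: morphisms from topmost cohomology sheaf extend} identifies $\Hom(\mH^{i_*}(B), \O_p)$ with $\Hom((\tau_{\leq i_*}B)[i_*], \O_p) = \Hom(\tau_{\leq i_*}B, \O_p[-i_*])$. Combined with the previous step, this produces an isomorphism $\Hom(\mH^{i_*}(B), \O_p) \iso \Hom(B, \O_p[-i_*])$. The left-hand side is nonzero because $p \in \supp(\mH^{i_*}(B))$, so I obtain a morphism $\eta\colon B \to \O_p[-i_*]$ whose cohomology sheaf $\mH^{i_*}(\eta)\colon \mH^{i_*}(B) \to \O_p$ is nonzero.

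Finally, I would compose $\eta$ with $\phi[-i_*]\colon \O_p[-i_*] \to A[a-i_*]$ and inspect the $i_*$-th cohomology sheaf of the composition: it equals $\mH^{i_*}(B) \xrightarrow{\mH^{i_*}(\eta)} \O_p \xrightarrow{\mH^0(\phi)} \mH^a(A)$. Since $\O_p$ is simple as a coherent sheaf, the nonzero map $\mH^0(\phi)$ is injective, so the whole composition is nonzero. Hence $\phi[-i_*] \circ \eta$ is a nonzero element of $\Hom(B, A[a-i_*])$, which must vanish because $B \in {}^\perp\mA$ and $A[a-i_*] \in \mA$. This contradiction forces $p \notin \supp(B)$, as desired. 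The most delicate point of the argument is the maximal choice of $i_*$: only this maximality guarantees that $\tau_{>i_*}B$ avoids $p$ in support, which is what allows the clean application of Lemma~\ref{lem: morphisms from topmost cohomology sheaf extend} to $(\tau_{\leq i_*}B)[i_*]$ and the lifting of the prescribed map on cohomology sheaves.
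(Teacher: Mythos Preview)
Your proof is correct and follows essentially the same approach as the paper: pick the top cohomological degree of $B$ whose cohomology sheaf is supported at $p$, produce a nonzero map $B \to \O_p[-i_*]$, and compose with the given map to $A$ to contradict semiorthogonality. The paper compresses your steps by shifting $B$ so that $i_* = 0$ and declaring the identification $\mathrm{R}^0\Hom(B,\O_p)\cong \mathrm{R}^0\Hom(\mH^0(B),\O_p)$ as ``easy to check'', whereas you spell this out via the truncation triangle and Lemma~\ref{lem: morphisms from topmost cohomology sheaf extend}.
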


\begin{proof}
  Let $B \in {}^\perp\mA$ be any object. Suppose that at least one of its cohomology sheaves is not zero at $p$. Without loss of generality we may assume that the support of $\mH^0(B)$ contains $p$, while the supports of $\mH^i(B)$ for $i > 0$ do not. It is easy to check that
  \[
    \mathrm{R}^0\Hom(B, \O_p) \caniso \mathrm{R}^0\Hom(\mH^0(B), \O_p) \neq 0.
  \]
  Pick any nonzero map $f\colon B \to \O_p$. Then the composition $B \to \O_p \to A[a]$ induces a nonzero map on the zeroth cohomology sheaves, but this contradicts semiorthogonality. Therefore any object in ${}^\perp \mA$ is supported away from $p$.
\end{proof}

\begin{corollary}
  \label{cor: maps from torsion cannot be nonzero on cohomology}
  Let $X$ be a smooth algebraic variety such that the connected automorphism group $\mathrm{Aut}^\circ(X)$ acts transitively on $X$. Let $\mA \subset \Dbcoh(X)$ be an admissible subcategory, and let $A \in \mA$ be an object. If there exists a morphism $\O_p \to A[a]$ from a skyscraper sheaf at some point $p \in X$ to a shift of $A$ which induces a nonzero map on the zeroth cohomology sheaves, then $\mA = \Dbcoh(X)$.
\end{corollary}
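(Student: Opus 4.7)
The plan is to reduce the statement directly to Lemma~\ref{lem: maps from torsion cannot be nonzero on cohomology} by moving the distinguished point $p$ around via the automorphism action. Explicitly, I want to show that the left orthogonal ${}^\perp\mA$ is zero, which for a smooth proper $X$ implies $\mA = \Dbcoh(X)$.

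First, I would invoke Lemma~\ref{lem: maps from torsion cannot be nonzero on cohomology} as given: the hypothesis provides a morphism $f\colon \O_p \to A[a]$ which is nonzero on $\mH^0$, so any object $B \in {}^\perp \mA$ has $p \notin \supp(B)$. Now I want to upgrade this from a single point $p$ to every point of $X$.

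To do this, pick an arbitrary point $q \in X$. By transitivity of $\mathrm{Aut}^\circ(X)$ there is some $g \in \mathrm{Aut}^\circ(X)$ with $g(q) = p$. Pulling back along the automorphism $g$, which is an exact equivalence, produces a morphism $g^* f\colon g^*\O_p \to (g^*A)[a]$. Since $g$ is an automorphism with $g(q) = p$, we have $g^*\O_p \caniso \O_q$, and the induced map on $\mH^0$ remains nonzero because $g^*$ is an equivalence. By Proposition~\ref{prop: admissible subcategories are open}, the subcategory $\mA$ is invariant under $\mathrm{Aut}^\circ(X)$, so $g^*A$ is again an object of $\mA$. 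Applying Lemma~\ref{lem: maps from torsion cannot be nonzero on cohomology} with $\O_q \to (g^*A)[a]$ in place of the original morphism, I conclude that every $B \in {}^\perp\mA$ is supported away from $q$ as well.

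Since $q \in X$ was arbitrary, every object $B \in {}^\perp \mA$ has empty support and is therefore zero. Hence ${}^\perp \mA = 0$, which for a smooth proper variety forces $\mA = \Dbcoh(X)$ by Lemma~\ref{lem: admissible subcategories are semiorthogonal decompositions}. There is essentially no hard step here; the only thing to check carefully is that pulling back by an automorphism of $X$ preserves the property of being nonzero on $\mH^0$, which is immediate from exactness of $g^*$.
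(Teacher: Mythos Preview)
Your argument is essentially identical to the paper's: apply Lemma~\ref{lem: maps from torsion cannot be nonzero on cohomology}, transport the morphism by $g^*$ for $g \in \mathrm{Aut}^\circ(X)$ (using Proposition~\ref{prop: admissible subcategories are open} to keep $g^*A$ in $\mA$), and conclude that ${}^\perp\mA = 0$. One small remark: you twice insert ``for a smooth proper $X$'', but properness is not among the hypotheses of the corollary; the implication ${}^\perp\mA = 0 \Rightarrow \mA = \Dbcoh(X)$ follows directly from admissibility (the existence of a right adjoint already gives the semiorthogonal decomposition $\langle \mA, {}^\perp\mA\rangle$), so you do not need to invoke Lemma~\ref{lem: admissible subcategories are semiorthogonal decompositions}.
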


\begin{proof}
  By Lemma~\ref{lem: maps from torsion cannot be nonzero on cohomology} any object of the orthogonal subcategory ${}^\perp \mA$ is not supported at $p$. For any element $g \in \mathrm{Aut}^\circ(X)$, the pullback $g^*(\O_p \to A[a])$ lets us conclude similarly that any object of ${}^\perp \mA$ is not supported anywhere along the orbit of $p$ under $\mathrm{Aut}^\circ(X)$. Therefore the orthogonal subcategory~${}^\perp \mA$ is zero, and $\mA = \Dbcoh(X)$.
\end{proof}

\subsection{Projections of skyscraper sheaves}
To study admissible subcategories, in this paper we often consider the projections of skyscraper sheaves into them. The following several lemmas prove some important properties of the projections of skyscrapers.

\begin{lemma}
  \label{lem: projection triangle induces nonzero map on cohomology}
  Let $X$ be a smooth algebraic variety such that the connected automorphism group~$\mathrm{Aut}^\circ(X)$ acts transitively on $X$. Let $\Dbcoh(X) = \langle \mA, \mB \rangle$ be a semiorthogonal decomposition. Consider a projection triangle for a skyscraper sheaf $\O_p$ at some point $p \in X$:
  \[
    B \to \O_p \to A \to B[1].
  \]
  If $\mB \neq 0$, then the morphism $\mH^0(B) \to \O_p$ is surjective.
\end{lemma}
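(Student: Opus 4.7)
The plan is to argue by contradiction. Suppose the map $\mH^0(B) \to \O_p$ is not surjective. Since $\O_p$ is a simple coherent sheaf (a one-dimensional skyscraper has no proper nontrivial quotient sheaves), any morphism into $\O_p$ is either zero or surjective, so under our assumption the map $\mH^0(B) \to \O_p$ must vanish identically.

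Next, I would examine the long exact sequence of cohomology sheaves associated to the projection triangle $B \to \O_p \to A \to B[1]$. Using that $\mH^i(\O_p) = 0$ for $i \neq 0$, the relevant fragment reads
\[
  \ldots \to \mH^{-1}(A) \to \mH^0(B) \to \O_p \to \mH^0(A) \to \mH^1(B) \to 0.
\]
The kernel of the connecting map $\O_p \to \mH^0(A)$ is the image of $\mH^0(B) \to \O_p$, which we have assumed to be zero. Hence $\O_p \hookrightarrow \mH^0(A)$ is injective, and in particular nonzero.

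Now the crucial observation is that the map $\O_p \to \mH^0(A)$ was induced by the morphism $\O_p \to A$ that appears in the projection triangle itself; this is a genuine morphism in $\Dbcoh(X)$ from a skyscraper sheaf to an object of $\mA$, whose effect on $\mH^0$ is nonzero. Invoking Corollary~\ref{cor: maps from torsion cannot be nonzero on cohomology} with $a = 0$, and using the hypothesis that $\mathrm{Aut}^\circ(X)$ acts transitively on $X$, we conclude that $\mA = \Dbcoh(X)$. But this forces $\mB = \mA^\perp = 0$, contradicting the assumption $\mB \neq 0$. Therefore $\mH^0(B) \to \O_p$ must be surjective.

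The argument is quite short; the only subtle point is to recognize that the relevant map $\O_p \to \mH^0(A)$ is literally the one induced on cohomology sheaves by the canonical morphism $\O_p \to A$ coming from the projection triangle, so that Corollary~\ref{cor: maps from torsion cannot be nonzero on cohomology} applies without any further lifting of sheaf morphisms into derived category morphisms. The rest is a straightforward chase through the cohomology long exact sequence.
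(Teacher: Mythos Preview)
Your proof is correct and follows exactly the same approach as the paper's: observe that a non-surjective map to the simple sheaf $\O_p$ must be zero, read off from the long exact sequence that $\O_p \to A$ is nonzero on $\mH^0$, and invoke Corollary~\ref{cor: maps from torsion cannot be nonzero on cohomology}. One small slip: in the decomposition $\langle \mA, \mB \rangle$ we have $\mB = {}^\perp\mA$, not $\mA^\perp$, but since $\mA = \Dbcoh(X)$ forces both orthogonals to vanish, the conclusion is unaffected.
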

\begin{proof}
  If the map $\mH^0(B) \to \O_p$ is not surjective, then it is zero, and by the long exact sequence of cohomology this would imply that $\O_p \to A$ induces a nonzero map on $\mH^0$. The result follows from Corollary~\ref{cor: maps from torsion cannot be nonzero on cohomology}.
\end{proof}

\begin{lemma}
  \label{lem: projections of a skyscraper}
  Let $X$ be a smooth algebraic variety such that the connected automorphism group~$\mathrm{Aut}^\circ(X)$ acts transitively on $X$. Let $\Dbcoh(X) = \langle \mA, \mB \rangle$ be a semiorthogonal decomposition. Consider a projection triangle for a skyscraper sheaf $\O_p$ at some point $p \in X$:
  \[
    B \to \O_p \to A \to B[1].
  \]
  \begin{enumerate}
    \item If $\dim X > 1$, at least one of $A$ and $B$ is not a locally free object at the point $p$.
    \item Both $A$ and $B$ are invariant under the action of $\Stab(p) \subset \mathrm{Aut}^\circ(X)$.
    \item If $B$ is set-theoretically supported at the point $p$, then $\mA = 0$ and $\mB = \Dbcoh(X)$.
  \end{enumerate}
\end{lemma}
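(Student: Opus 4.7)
The plan is to treat the three parts separately: part~(2) is an immediate consequence of an earlier lemma, while parts~(1) and~(3) follow from analyzing the long exact sequence of cohomology attached to the projection triangle, combined with the tools from Lemma~\ref{lem: projection triangle induces nonzero map on cohomology} and Corollary~\ref{cor: maps from torsion cannot be nonzero on cohomology}.

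For part~(2), I would simply invoke Lemma~\ref{lem: projections are g-invariant}: any $g \in \Stab(p) \subset \mathrm{Aut}^\circ(X)$ satisfies $g^*\O_p \caniso \O_p$, so by that lemma both projections $A$ and $B$ inherit $\Stab(p)$-invariance.

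For part~(1), I would proceed by contradiction: assume both $A$ and $B$ are locally free at $p$. Neither can vanish as an object, since $B = 0$ would force $A \caniso \O_p$ and vice versa, and $\O_p$ is not locally free when $\dim X \geq 1$. Hence $\mB$ is nonzero, and Lemma~\ref{lem: projection triangle induces nonzero map on cohomology} yields a surjection $\mH^0(B) \twoheadrightarrow \O_p$ with kernel $K$. The long exact sequence of cohomology for $B \to \O_p \to A$, together with $\mH^{-1}(\O_p) = 0$, identifies $\mH^{-1}(A) \caniso K$. Since $\mH^0(B)$ is locally free near $p$, the short exact sequence $0 \to K \to \mH^0(B) \to \O_p \to 0$ realizes $K$ locally at $p$ as the first syzygy of $\O_p$; by Auslander--Buchsbaum, $\O_p$ has projective dimension $\dim X$, so $K_p$ has projective dimension $\dim X - 1 \geq 1$ when $\dim X > 1$, and hence $K$ cannot be locally free at $p$. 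This contradicts the local freeness of $A$.

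For part~(3), I would produce a nonzero morphism $\O_p \to B[d]$ which is nonzero on the zeroth cohomology sheaf, so that Corollary~\ref{cor: maps from torsion cannot be nonzero on cohomology}, applied to the subcategory $\mB$ and the object $B[d] \in \mB$, forces $\mB = \Dbcoh(X)$ and therefore $\mA = 0$. Since $B$ is nonzero with $\supp(B) \subset \{p\}$, I let $d$ be the minimal integer with $\mH^d(B) \neq 0$; then $B[d]$ is concentrated in non-negative cohomological degrees and $\mH^0(B[d]) \caniso \mH^d(B)$ is a nonzero coherent sheaf supported at $\{p\}$, which therefore has nonzero socle. A choice of nonzero socle element gives a nonzero morphism of sheaves $\O_p \to \mH^0(B[d])$, and composing with the canonical truncation map $\mH^0(B[d])[0] \caniso \tau_{\leq 0}(B[d]) \to B[d]$ yields the desired morphism. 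The main obstacle is producing an honest lift from $\mH^0(B[d])$ to $B[d]$ in the derived category; shifting by the \emph{minimal} nonzero cohomology degree is what makes this possible, because then $\tau_{\leq 0}(B[d])$ consists of the single cohomology sheaf $\mH^0(B[d])[0]$ and includes into $B[d]$ via the canonical truncation with no extension obstruction.
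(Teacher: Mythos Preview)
Your proposal is correct and follows essentially the same approach as the paper. Part~(2) is identical; for part~(1) the paper also extracts the short exact sequence $0 \to \mH^{-1}(A) \to \mH^0(B) \to \O_p \to 0$ from Lemma~\ref{lem: projection triangle induces nonzero map on cohomology} and rules out both sheaves being locally free by homological dimension; for part~(3) the paper likewise maps $\O_p$ into the leftmost cohomology sheaf of $B$ and applies Corollary~\ref{cor: maps from torsion cannot be nonzero on cohomology}, just phrased more tersely than your explicit socle-and-truncation construction.
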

\begin{proof}

  If $\mB = 0$, all properties are clear. So we assume that $\mB$ is a nonzero admissible subcategory.
  
  $(1)$: consider the fragment of the long exact sequence of cohomology sheaves associated to the projection triangle:
  \[
    0 \to \mH^{-1}(A) \to \mH^0(B) \to \O_p \to \mH^0(A).
  \]
  Since $\mB \neq 0$, by Lemma~\ref{lem: projection triangle induces nonzero map on cohomology} we see that the morphism $\mH^0(B) \to \O_p$ is surjective. Then the fragment above produces a short exact sequence
  \[
    0 \to \mH^{-1}(A) \to \mH^0(B) \to \O_p \to 0.
  \]
  If both $\mH^{-1}(A)$ and $\mH^0(B)$ are locally free at $p$, this produces a locally free resolution of $\O_p$ of length one, which is impossible by homological dimension reasons if $\dim X$ is greater than one. Thus at least one of those two cohomology sheaves is not locally free.

  $(2)$ is an immediate consequence of Lemma~\ref{lem: projections are g-invariant}.

  $(3)$ If $B$ is supported at $p$, then there is a nonzero morphism from a skyscraper sheaf $\O_p$ to the leftmost cohomology sheaf of $B$.  By Corollary~\ref{cor: maps from torsion cannot be nonzero on cohomology} this is equivalent to $\mB = \Dbcoh(X)$ and hence $\mA = 0$.
\end{proof}

\subsection{Fourier--Mukai transforms}

We recall some material about Fourier--Mukai transforms. For a more detailed exposition, see, for example, the book \cite[Ch.~5]{HuybFM}.

\begin{definition}
  \label{def: fourier-mukai transform}
  Let $X$ and $Y$ be two smooth and proper varieties. Let $\pi_X, \pi_Y$ be the projection maps from $X \times Y$ to $X$ and $Y$ respectively. Let $K \in \Dbcoh(X \times Y)$ be any object. Then the \emph{Fourier--Mukai transform with kernel $K$} is the functor $\Phi_K\colon \Dbcoh(X) \to \Dbcoh(Y)$ given by the formula $\Phi_K(-) := \pi_{Y *}(\pi_X^*(-) \otimes K)$.
\end{definition}

Most natural functors between derived categories of sheaves are Fourier--Mukai transforms. The identity functor on $\Dbcoh(X)$ is given by a Fourier--Mukai transform with respect to the structure sheaf of the diagonal $\O_{\Delta_X} \in \Dbcoh(X \times X)$. See \cite[Ex.~5.4]{HuybFM} for many other examples.

If $X$ and $Y$ are proper varieties, not necessarily smooth, then an object $K \in \Dbcoh(X \times Y)$ naturally defines a functor $\Phi_K\colon \Dperf(X) \to \Dbcoh(Y)$ using the same formula. This functor is also called a Fourier--Mukai transform with kernel $K$.

\begin{proposition}[{\cite[Prop.~5.9]{HuybFM}}]
  \label{prop: adjoints for fm transforms}
  Let $X$ and $Y$ be smooth and proper varieties. For any object $K \in \Dbcoh(X \times Y)$ the functor $\Phi_K$ has both left and right adjoint functors, and they are also Fourier--Mukai transforms.
\end{proposition}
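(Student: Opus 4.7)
The plan is to decompose $\Phi_K$ into a composition of three elementary functors and to construct both adjoints by composing the adjoints of each piece in reverse order. Writing
\[
  \Phi_K = \pi_{Y*} \circ (- \otimes K) \circ \pi_X^*,
\]
I would verify that each of these three constituent functors admits both a left and a right adjoint on $\Dbcoh$, and then simply read off the result.

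For the pullback and pushforward pieces, the input is Grothendieck--Serre duality applied to the smooth proper projections $\pi_X\colon X \times Y \to X$ and $\pi_Y\colon X \times Y \to Y$. Each is proper, giving an adjunction $\pi_* \dashv \pi^!$, and smoothness gives the explicit identification $\pi^!(-) \cong \pi^*(-) \otimes \omega_\pi[\dim \pi]$ where $\omega_{\pi_X} = \pi_Y^*\omega_Y$ and $\omega_{\pi_Y} = \pi_X^*\omega_X$. This supplies a right adjoint $\pi_{Y*}^!$ for $\pi_{Y*}$, and after a small manipulation using that $\omega_\pi$ is an invertible object also a left adjoint for $\pi_X^*$ of the form $F \mapsto \pi_{X*}\bigl(F \otimes \omega_{\pi_X}[\dim \pi_X]\bigr)$. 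For the middle piece $-\otimes K$, smoothness of $X \times Y$ implies $\Dbcoh(X \times Y) = \Dperf(X\times Y)$, so $K$ is a perfect (hence dualizable) object and both its left and right adjoint are given by $-\otimes K^\vee$.

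Composing these adjoints in reverse order yields candidate adjoints of $\Phi_K$. A direct computation using the projection formula collects all the tensor factors into a single kernel: the right adjoint of $\Phi_K$ is the Fourier--Mukai transform $\Dbcoh(Y) \to \Dbcoh(X)$ with kernel
\[
  K_R := K^\vee \otimes \pi_X^*\omega_X[\dim X] \in \Dbcoh(X \times Y),
\]
and the left adjoint is the Fourier--Mukai transform with kernel $K_L := K^\vee \otimes \pi_Y^*\omega_Y[\dim Y]$, swapping the roles of $X$ and $Y$. The adjunction property for each is inherited from the adjunctions of the three constituent pieces.

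The proof is then essentially structural, and the main difficulty is not conceptual but bookkeeping: one must check that every intermediate functor preserves the bounded coherent subcategory and that the three adjunctions compose compatibly in the derived setting. Smoothness of $X$ and $Y$ ensures that $\omega_X, \omega_Y$ are line bundles and all objects in sight are perfect complexes, so dualization and tensoring preserve boundedness and coherence; properness of $X$ and $Y$ makes the projections proper, so $\pi_{X*}$ and $\pi_{Y*}$ preserve coherence. Beyond these standard verifications, no serious obstacle arises.
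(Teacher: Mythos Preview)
The paper does not give its own proof of this proposition; it simply cites \cite[Prop.~5.9]{HuybFM} and moves on. Your argument is correct and is essentially the standard proof from that reference: decompose $\Phi_K$ into pullback, tensor, and pushforward, use Grothendieck duality for the smooth proper projections to obtain adjoints on both sides, and collect the resulting twists into explicit kernels $K_R$ and $K_L$.
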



\begin{proposition}[{\cite[Th.~7.1]{kuznetsov-basechange}}]
  \label{prop: fm kernels for projections}
  Let $X$ be a smooth and proper variety, and  let $\langle \mA, \mB \rangle$ be a semiorthogonal decomposition of $\Dbcoh(X)$. Then the projection functors $\mB_R$ and $\mA_L$ from Definition~\textup{\ref{lem: projection triangles exist}} are Fourier--Mukai transforms. The kernels of those functors, which we also denote by $\mB_R$ and $\mA_L$, fit into a distinguished triangle
  \[
    \mB_R \to \O_{\Delta_X} \to \mA_L
  \]
  of objects in $\Dbcoh(X \times X)$.
\end{proposition}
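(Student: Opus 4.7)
The plan is to invoke the cited theorem of Kuznetsov directly, so that the argument is largely formal once the relative version of the decomposition is in hand. The core technical input is that the decomposition $\Dbcoh(X) = \langle \mA, \mB \rangle$ admits a ``relative'' analogue over any base: base-changing along one of the projections $p\colon X \times X \to X$ produces a semiorthogonal decomposition of $\Dbcoh(X \times X)$ whose projection functors are compatible, fiber-by-fiber, with the original projections $\mB_R$ and $\mA_L$. This compatibility with base change is exactly what Kuznetsov's theorem provides, and it is the only step that requires substantive geometric input.

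Granting this, one constructs the Fourier--Mukai kernels tautologically. The structure sheaf of the diagonal $\O_{\Delta_X} \in \Dbcoh(X \times X)$ is the kernel of the identity endofunctor of $\Dbcoh(X)$. Define $\mB_R \in \Dbcoh(X \times X)$ to be the image of $\O_{\Delta_X}$ under the relative right projection onto the base-changed copy of $\mB$, and define $\mA_L$ analogously via the relative left projection onto the base-changed $\mA$. The base-change compatibility guarantees that the fiber of $\mB_R$ over each closed point $x \in X$ recovers the ordinary projection $\mB_R(\O_x)$ of the skyscraper at $x$. Since a Fourier--Mukai transform is determined by its values on skyscrapers (which form a spanning class for $\Dbcoh(X)$), the FM functor with kernel $\mB_R$ must coincide with the composition of the projection $\mB_R$ with the inclusion $\mB \monoarrow \Dbcoh(X)$, and symmetrically for $\mA_L$.

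Finally, the distinguished triangle in the statement is obtained simply by applying the functorial projection triangle of Definition~\ref{lem: projection triangles exist}, now in the relative decomposition of $\Dbcoh(X \times X)$, to the specific object $\O_{\Delta_X}$. By construction the outer terms of the resulting triangle are the kernels identified above, yielding
\[
  \mB_R \to \O_{\Delta_X} \to \mA_L
\]
as required. The main obstacle, to reiterate, is the base-change compatibility that Kuznetsov's theorem encodes: generic admissible subcategories of $\Dbcoh(X)$ do not behave well under pullback, and producing a sensible relative version of $\langle \mA, \mB \rangle$ over $X \times X$ is genuine work. Once that is granted, the kernel identification and the triangle are both formal consequences of the functoriality of the projection triangle together with the identification of $\O_{\Delta_X}$ as the kernel of the identity functor.
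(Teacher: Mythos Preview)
The paper does not prove this proposition at all; it is stated with a citation to Kuznetsov's base-change paper and no argument is given. Your sketch is a reasonable outline of the idea behind that cited result: base-change the semiorthogonal decomposition along one projection $X \times X \to X$, apply the projection triangle to $\O_{\Delta_X}$, and identify the resulting kernels with the projection functors.

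One small caution: the step ``a Fourier--Mukai transform is determined by its values on skyscrapers'' is a bit loose as stated. Agreement on a spanning class gives agreement on objects, but promoting this to an isomorphism of functors requires more (compatibility of the pointwise isomorphisms, or an argument via adjoints/uniqueness of projection functors). In Kuznetsov's actual proof the identification is more structural: the base-changed projection functors are $X$-linear, and one checks directly that the Fourier--Mukai transform with the constructed kernel satisfies the universal property of the projection. This is a minor point for a sketch, but worth tightening if you ever need to write out the argument in full.
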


For future reference we record the following two results.

\begin{lemma}
  \label{lem: support of the kernel}
  Let $X$ be a smooth and proper variety, and let $\mA \subset \Dbcoh(X)$ be an admissible subcategory such that $\supp(\mA) = Z \subset X$ is a proper closed subset of $X$. Then the kernel $\mA_L$ for the (left) projection functor to $\mA$ is set-theoretically supported on $Z \times Z \subset X \times X$.
\end{lemma}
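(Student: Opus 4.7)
The plan is to show the containment via the two projections $\pi_1, \pi_2 \colon X \times X \to X$ separately, proving $\supp(\mA_L) \subset X \times Z$ and $\supp(\mA_L) \subset Z \times X$. Throughout I use that $\mA_L$, viewed as a Fourier--Mukai kernel (Proposition~\ref{prop: fm kernels for projections}), defines the left projection functor, so that for a skyscraper sheaf $\O_p$,
\[
  \mA_L(\O_p) = \pi_{2*}(\pi_1^* \O_p \otimes \mA_L) \iso (\mA_L)|_{\{p\} \times X},
\]
since $\pi_2$ restricts to an isomorphism on $\{p\} \times X$. I will combine this with the standard fact that for a closed immersion $j\colon W \monoarrow V$ one has $\supp(j^*F) = \supp(F) \cap W$ set-theoretically.

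First, for the inclusion $\supp(\mA_L) \subset X \times Z$. Fix any closed point $p \in X$. By definition of the left projection, the object $\mA_L(\O_p) \iso (\mA_L)|_{\{p\} \times X}$ lies in $\mA$, so its support is contained in $Z$. Using the identification $\{p\} \times X \iso X$ via $\pi_2$, this says $\supp(\mA_L) \cap (\{p\} \times X) \subset \{p\} \times Z$. Since $p$ was arbitrary, taking the union gives $\supp(\mA_L) \subset X \times Z$.

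Next, for the inclusion $\supp(\mA_L) \subset Z \times X$. The key observation is that for any point $p \notin Z$, the skyscraper $\O_p$ lies in the left orthogonal ${}^\perp \mA$. Indeed, for any object $A \in \mA$ we have $\supp(A) \subset Z$, so $A$ vanishes on a Zariski open neighborhood $U \subset X$ of $p$; since $\O_p$ is set-theoretically supported at $p \in U$, the graded vector space $\RHom(\O_p, A)$ may be computed on $U$ and thus vanishes. Hence $\O_p \in {}^\perp \mA$, and consequently $\mA_L(\O_p) = 0$. Reading this as $(\mA_L)|_{\{p\} \times X} = 0$ in $\Dbcoh(\{p\} \times X)$, we conclude that $\supp(\mA_L) \cap (\{p\} \times X) = \emptyset$ for every $p \notin Z$. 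Taking the union over all such $p$ gives $\supp(\mA_L) \subset Z \times X$.

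Combining the two inclusions yields $\supp(\mA_L) \subset (X \times Z) \cap (Z \times X) = Z \times Z$, as required. I do not expect any serious obstacle: the main subtlety is the elementary but essential observation that skyscrapers supported outside $Z$ are left-orthogonal to $\mA$, which is where the asymmetry between the two arguments sits. The first inclusion uses only that $\mA_L$ lands in $\mA$, whereas the second uses that $\mA_L$ annihilates ${}^\perp \mA$.
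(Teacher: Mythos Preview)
Your proof is correct and follows essentially the same strategy as the paper: establish $\supp(\mA_L) \subset X \times Z$ by noting each $\mA_L(\O_p)$ lies in $\mA$, and establish $\supp(\mA_L) \subset Z \times X$ by showing $\mA_L(\O_p) = 0$ for $p \notin Z$. The only difference is that for the second inclusion the paper argues via the projection triangle (the map $A \to \O_p$ vanishes by disjoint supports, forcing a splitting that contradicts semiorthogonality unless $A=0$), whereas you observe more directly that $\O_p \in {}^\perp\mA$ and invoke the adjunction; these are two phrasings of the same idea.
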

\begin{proof}
  Let $p \in X$ be any point. The projection $\mA_L(\O_p)$ of the skyscraper sheaf to $\mA$ is set-theoretically supported on the subset $Z \subset X$. By the definition of the Fourier--Mukai transforms this implies that the object $\mA_L$ is supported on the subset $X \times Z \subset X \times X$.

  Now let $p$ be a point in the open set $X \setminus Z$. Consider the projection triangle as in Lemma~\ref{lem: projection triangles exist}:
  \[
    A \to \O_p \to B
  \]
  where $A = \mA_L(\O_p) \in \mA$ and $B \in {}^\perp\mA$. Since $\supp(A)$ is a subset of $Z$, and $p$ does not lie in $Z$, there are no nonzero morphisms $A \to \O_p$. Thus the first arrow in the distinguished triangle above is zero, which implies that $B \iso \O_p \oplus A[1]$. Since $B$ and $A$ are semiorthogonal, this is only possible if the object $A = \mA_L(\O_p)$ is zero. By the definition of Fourier--Mukai transforms this implies that $\mA_L$ is supported on the closed subset $Z \times X \subset X \times X$.

  Thus $\mA_L$ is supported on the intersection of the two subsets mentioned above, i.e., on the subset $Z \times Z \subset X \times X$, as expected.
\end{proof}

\begin{remark}
  Lemma~\ref{lem: support of the kernel} may alternatively be deduced from a deeper statement about the structure of the Fourier--Mukai kernels for projection functors in \cite[Prop.~3.8]{kuznetsov-basechange}.
\end{remark}

\begin{lemma}
  \label{lem: fourier-mukai restriction}
  Let $X$ be a smooth variety, and let $f\colon Y \to X$ be a proper morphism. Let $\Dbcoh(X) = \langle \mA, \mB \rangle$ be a semiorthogonal decomposition, with the right projection functor defined by the Fourier--Mukai kernel $\mB_R \in \Dbcoh(X \times X)$. Then the Fourier--Mukai transform along the object $(f, f)^*\mB_R \in \Dbcoh(Y \times Y)$ is the functor $f^* \circ \mB_R \circ f_*\colon \Dbcoh(Y) \to \Dbcoh(Y)$. 
\end{lemma}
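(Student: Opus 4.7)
My plan is to unwind both sides of the claimed identity and reduce them to a common expression by two successive applications of derived base change, together with the projection formula. Write $\pi_1, \pi_2 \colon X \times X \to X$ and $\pi_1^Y, \pi_2^Y \colon Y \times Y \to Y$ for the projections, and $\varpi_1 \colon X \times Y \to X$, $\varpi_2 \colon X \times Y \to Y$ for the two projections from the mixed product. For any object $F \in \Dbcoh(Y)$ I will begin from the right-hand side of the claim, namely
\[
    f^* \mB_R(f_* F) = f^* \pi_{2*}\bigl(\pi_1^*(f_* F) \otimes \mB_R\bigr).
\]

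First I will apply base change to the cartesian square whose top row is $1 \times f \colon X \times Y \to X \times X$ and whose bottom row is $f \colon Y \to X$, with vertical arrows $\varpi_2$ and $\pi_2$; this gives $f^* \pi_{2*} \cong \varpi_{2*}(1 \times f)^*$. Combined with the identity $\pi_1 \circ (1 \times f) = \varpi_1$ and the compatibility of pullback with the derived tensor product, the expression above becomes
\[
    \varpi_{2*}\bigl(\varpi_1^*(f_* F) \otimes (1 \times f)^* \mB_R\bigr).
\]
Next I will apply base change again, to the cartesian square with top row $f \times 1 \colon Y \times Y \to X \times Y$ and bottom row $f \colon Y \to X$, with vertical arrows $\pi_1^Y$ and $\varpi_1$; this yields $\varpi_1^* f_* \cong (f \times 1)_*(\pi_1^Y)^*$. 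After substituting, applying the projection formula for the proper morphism $f \times 1$, and using $(f \times 1)^*(1 \times f)^* = (f \times f)^*$, I arrive at
\[
    \varpi_{2*}(f \times 1)_*\bigl((\pi_1^Y)^* F \otimes (f \times f)^* \mB_R\bigr).
\]
Since $\varpi_2 \circ (f \times 1) = \pi_2^Y$, this is equal to $(\pi_2^Y)_*\bigl((\pi_1^Y)^* F \otimes (f \times f)^* \mB_R\bigr) = \Phi_{(f \times f)^* \mB_R}(F)$, which is the claimed identity.

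The main subtlety is verifying that the two base change isomorphisms hold in the derived sense; in both squares, one leg (namely $\pi_2$ in the first and $\varpi_1$ in the second) is a projection from a product over the base field and is therefore flat, so each square is Tor-independent and derived base change applies. The projection formula for the proper morphism $f \times 1$ is standard. I do not foresee further obstacles: the argument is a short formal manipulation, and the only place where something could have gone wrong is precisely the base change steps, which are covered by the flatness of the two projection maps.
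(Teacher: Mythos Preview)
Your argument is correct and follows essentially the same route as the paper: the paper sets up the diagram of Cartesian squares involving $Y\times Y$, $X\times Y$ (and $Y\times X$), and $X\times X$, notes they are Tor-independent, and says the identity follows by diagram chasing with base change and the projection formula. Your write-up is simply a more explicit version of that chase, factoring $(f,f)=(1\times f)\circ(f\times 1)$ and applying base change twice plus the projection formula for $f\times 1$; the justification of Tor-independence via flatness of the projection legs is exactly what is needed.
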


\begin{proof}
  Consider the commutative diagram:
  \[
    \begin{tikzcd}
      & & Y \times Y \arrow[ld] \arrow[rd]  & & \\
      &[4mm] Y \times X \arrow[ld, swap, "\pi_1"] \arrow[r, ->] &[0mm] X \times X \arrow[ld, swap, "\pi_1"] \arrow[rd, "\pi_2"] &[0mm] X \times Y \arrow[l, ->] \arrow[rd, "\pi_2"] &[4mm] \\
      Y \arrow[r, ->] & X & & X & Y \arrow[l, ->]
    \end{tikzcd}
  \]
  All three commutative squares in this diagram are Cartesian and are easily seen to be Tor-independent (\cite[Tag~08IA]{stacks-project}). The claimed formula follows by diagram chasing using the projection formula and the base change theorem for Tor-independent squares (see, for example, \cite[Tag~08IB]{stacks-project}).
\end{proof}


\section{Semiorthogonal decompositions and anticanonical divisors}
\label{sec: numerical lemmas}

Many standard examples of semiorthogonal decompositions arise from exceptional objects. An important tool helpful for studying exceptional objects is a restriction to an anticanonical divisor. It has been used, for example, in \cite{zube}, to prove the stability of exceptional vector bundles on $\P^3$. Given an arbitrary semiorthogonal decomposition which does not arise from an exceptional collection, it is more difficult to apply this approach. It is not even clear what exactly should we restrict to the divisor.

A natural generalization of this method to arbitrary admissible subcategories has been discovered by Nicolas Addington \cite[Prop.~2.1]{addington}. We recall his result in Theorem~\ref{thm: admissible subcategories produce autoequivalences}. Roughly speaking, he shows that any admissible subcategory induces an autoequivalence on each anticanonical divisor. This is especially strong in the case of surfaces, since autoequivalences of curves are well-understood. We give explicit corollaries for surfaces in Subsection~\ref{ssec: consequences on surfaces}.

\subsection{Admissible subcategories and autoequivalences}

The following theorem is essentially due to Addington. Note that for us an anticanonical divisor $D$ in a smooth and proper variety $X$ is a subscheme cut out by any nonzero section of $K_X^\dual$, not the reduced scheme structure on that subvariety. Recall that if $D$ is not smooth, then the Fourier--Mukai transform with respect to any object $K \in \Dbcoh(D \times D)$ still makes sense as a functor $\Dperf(D) \to \Dbcoh(D)$.

\begin{theorem}[{\cite[Prop.~2.1]{addington}}]
  \label{thm: admissible subcategories produce autoequivalences}
  Let $X$ be a smooth proper variety. Let $\mB \subset \Dbcoh(X)$ be an admissible subcategory, and let $\mB_R \in \Dperf(X \times X)$ be a Fourier--Mukai kernel for the right projection functor to $\mB$, equipped with the morphism $\varphi_\mB\colon \mB_R \to \O_{\Delta_X}$ as in Proposition~\textup{\ref{prop: fm kernels for projections}}.

  Let $j\colon D \monoarrow X$ be an inclusion morphism of an anticanonical divisor. Consider the composition of the restricted morphism $\varphi_\mB|_{D \times D}\colon \mB_R|_{D \times D} \to \O_{\Delta_X}|_{D \times D}$ with the tautological map $\O_{\Delta_X}|_{D \times D} \to \O_{\Delta_D}$. Take the cone of this composition to obtain a distinguished triangle in $\Dbcoh(D \times D)$:
  \begin{equation}
    \label{eqn: twist triangle}
    \mB_R|_{D \times D} \to \O_{\Delta_D} \to T.
  \end{equation}
  Then the Fourier--Mukai transform with respect to the object $T \in \Dbcoh(D \times D)$ is a functor $\Dperf(D) \to \Dbcoh(D)$ whose image lies inside the subcategory $\Dperf(D) \subset \Dbcoh(D)$ and it induces an autoequivalence of the category $\Dperf(D)$.
\end{theorem}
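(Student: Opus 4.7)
The plan is to recognize $\Phi_T$ as the twist functor associated to the restriction $\iota := j^* \circ i\colon \mB \hookrightarrow \Dbcoh(X) \to \Dperf(D)$ of the inclusion of $\mB$, and to prove it is an autoequivalence by verifying the sphericity of $\iota$, with the anticanonical hypothesis entering in a crucial way.

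The first step is to translate triangle (\ref{eqn: twist triangle}) into a triangle of endofunctors of $\Dperf(D)$. By Lemma~\ref{lem: fourier-mukai restriction} applied to the inclusion $(j,j)\colon D \times D \to X \times X$, there is a natural isomorphism $\Phi_{\mB_R|_{D \times D}} \cong j^* \circ \mB_R \circ j_*$, so taking Fourier--Mukai transforms of (\ref{eqn: twist triangle}) yields
\[
  j^* \mB_R j_* \to \mathrm{id}_{\Dperf(D)} \to \Phi_T.
\]
Since $X$ is smooth, $j_*$ sends $\Dperf(D)$ into $\Dperf(X) = \Dbcoh(X)$, the projection $\mB_R$ preserves $\Dperf(X)$, and $j^*$ always preserves perfect complexes; hence $j^* \mB_R j_*$, and therefore $\Phi_T$, preserve $\Dperf(D)$. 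Writing $\mB_R = i \circ i^R$ for the adjunction of $i\colon \mB \hookrightarrow \Dbcoh(X)$ and combining it with $j^* \dashv j_*$, the functor $j^* \mB_R j_*$ is recognized as $\iota \circ \iota^R$, where $\iota^R = i^R \circ j_*$ is the right adjoint of $\iota$; thus $\Phi_T$ is the cone of the counit $\iota \iota^R \to \mathrm{id}_{\Dperf(D)}$.

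The essential content is to show that $\iota$ is a spherical functor, in which case its twist $\Phi_T$ is automatically an autoequivalence by the general theory. The sphericity condition amounts to a comparison between the left and right adjoints of $\iota$ up to a shift and an autoequivalence, and this comparison is controlled by Grothendieck duality for the divisor embedding $j\colon D \to X$, which gives $j^!(-) \cong j^*(-) \otimes \O_D(D)[-1]$. The anticanonical hypothesis enters via the adjunction formula, yielding $\omega_D \cong \O_D$ and $\O_D(D) \cong \omega_X^\dual|_D$; combined with the inherited Serre functor $S_\mB = i^R \circ ({-} \otimes \omega_X)[\dim X]$ on $\mB$, these identifications produce exactly the shift-and-twist matching required. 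A quasi-inverse to $\Phi_T$ can then either be extracted from the spherical-functor formalism or built concretely from a symmetric triangle involving $\mA_L|_{D \times D}$ and $\O_{\Delta_X}|_{D \times D}$, whose compositions with $\Phi_T$ are checked by kernel convolution on $D \times D \times D$ using the identities $\mB_R \star \mB_R \cong \mB_R$, $\mA_L \star \mA_L \cong \mA_L$, and $\mB_R \star \mA_L \cong 0$ on $X \times X$.

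The main obstacle is the careful accounting of shifts and twists in the identification of adjoints. The derived restriction $\O_{\Delta_X}|_{D \times D}$ is not isomorphic to $\O_{\Delta_D}$, and controlling the difference — which involves shifts and twists by line bundles coming from the conormal of $D \times D \subset X \times X$ — is the most delicate step. It is precisely the anticanonical identification $\omega_D \cong \O_D$ that ensures these line bundles reduce to ones that cancel against the shifts built into the twist triangle, producing the desired autoequivalence.
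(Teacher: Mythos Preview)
Your proposal is correct and lands on the same overall strategy as the paper: identify $\Phi_T$ with the spherical twist associated to the composite functor $\iota = j^* \circ i\colon \mB \to \Dperf(D)$, and conclude by showing that $\iota$ is spherical. The difference lies in how sphericity of $\iota$ is established. You propose to verify it directly, computing the left and right adjoints of $\iota$ via Grothendieck duality for $j$ and the inherited Serre functor on $\mB$, and checking that they agree up to the requisite shift and autoequivalence. The paper instead separates out a general principle (stated here as Theorem~\ref{thm: addington main theorem}, also due to Addington): if a spherical functor $F\colon T_1 \to T_2$ has cotwist equal to a shift of the Serre functor of $T_1$, then its restriction to any admissible subcategory of $T_1$ is again spherical. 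The paper then observes that $j^*\colon \Dperf(X) \to \Dperf(D)$ is spherical with cotwist $(-)\otimes\O_X(-D)$, and that the anticanonical hypothesis is precisely what makes this a shift of the Serre functor; the general principle then applies immediately.

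Your direct verification is essentially a specialization of the proof of that general principle to the case at hand, so the two approaches have the same mathematical content. The paper's packaging has the advantage that the role of the anticanonical hypothesis is isolated cleanly: it is exactly the condition matching the cotwist of $j^*$ to the Serre functor. Your approach has the advantage of being more self-contained, and your sketch of an alternative via an explicit quasi-inverse built from $\mA_L|_{D\times D}$ and kernel convolutions is also viable; the paper's Remark after the proof notes that such an argument appears in the thesis version \cite[Sec.~3]{mythesis}. One caution: in your direct route, the step you flag as the ``main obstacle'' --- controlling the difference between $\O_{\Delta_X}|_{D\times D}$ and $\O_{\Delta_D}$ --- does require care, and is exactly where the general principle earns its keep by handling these bookkeeping issues once and for all.
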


To deduce this from Addington's paper, we need to recall a notion of a spherical functor.

\begin{definition}
A functor $F\colon T_1 \to T_2$ between two triangulated categories which admits a left adjoint $L$ and a right adjoint $R$ is called \emph{spherical} if the endofunctors obtained as cones of the unit natural transformations $\mathrm{Id}_{T_1} \Rightarrow R \circ F$ and $\mathrm{Id}_{T_2} \Rightarrow F \circ L$ are both autoequivalences, of $T_1$ and $T_2$, respectively.
\end{definition}

Of course, this definition only makes sense in settings where taking a cone of a natural transformation is a meaningful operation. This is not really possible in the realm of triangulated subcategories, and we need either dg-enhancements or stable $(\infty, 1)$-categories to make this into a rigorous definition (\cite{anno-logvinenko}; however, see \cite[Def.~2.8]{kuznetsov-calabi} for an alternative approach). There exist several other equivalent definitions.

\begin{numberedexample}
  \label{ex: spherical restriction}
  Let $X$ be a smooth and proper variety, and let $j\colon D \monoarrow X$ be an embedding of any divisor. Then the restriction functor $j^*\colon \Dperf(X) \to \Dperf(D)$ is a spherical functor. This is well-known, see, e.g., \cite[2.2 (4)]{addington} for details, but we sketch the argument here for completeness.

  To make sense of this statement we need to know that the restriction functor $j^*$ has both a left and a right adjoint, find a way to construct cones of the unit natural transformations, and check that those cones are autoequivalences. The right adjoint functor is the pushforward $j_*$, the left adjoint functor to $j^*$ is the functor
  \[
    j_!(-) := j_*( (-) \otimes \omega_D \otimes j^*\omega_X^\dual[-1])
  \]
  where $\omega_X$ and $\omega_D$ are dualizing line bundles, and they exist since $X$ is smooth and $D$ is a locally complete intersection in $X$, in particular Gorenstein. Compare the construction of the right adjoint functor to the pushforward in \cite[Th.~3.34]{HuybFM}. We can take cones of unit natural transformations by representing the functors via their Fourier--Mukai kernels in $\Dbcoh(X \times X)$ and $\Dbcoh(D \times D)$. Note that apriori an object in $\Dbcoh(D \times D)$ only defines a functor $\Dperf(D) \to \Dbcoh(D)$, but it may be checked that in our case the "twist" object defines a functor which sends perfect objects to perfect objects.

  Finally, the fact that the cones are autoequivalences follows essentially from the fact that for any $F \in \Dperf(X)$ there exists a functorial distinguished triangle
  \[
    F(-D) \to F \to j_* j^*(F) \to F(-D)[1]
  \]
  and here the functor $F \mapsto F(-D)$ ("spherical cotwist") is an autoequivalence. Thus the restriction $j^*$ is indeed a spherical functor.
\end{numberedexample}

Addington proved the following result. It involves the notion of the Serre functor of a triangulated category, introduced in \cite{bond-kapr}. We will confine ourselves to mentioning that for a smooth and proper variety $X$, the Serre functor of the category $\Dbcoh(X)$ is $(-) \otimes \omega_X[\dim X]$, where $\omega_X$ is the canonical line bundle.

\begin{theorem}[{\cite[Prop.~2.1]{addington}}]
  \label{thm: addington main theorem}
  Let $F\colon T_1 \to T_2$ be a spherical functor such that the spherical cotwist is, up to a shift, isomorphic to the Serre functor of $T_1$. If $\mA \subset T_1$ is an admissible subcategory, then the composition $\mA \monoarrow T_1 \xrightarrow{F} T_2$ is also a spherical functor.
\end{theorem}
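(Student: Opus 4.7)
The plan is to verify the three ingredients required for $Fi\colon \mA \to T_2$ to be a spherical functor: existence of two-sided adjoints, the cotwist being an autoequivalence of $\mA$, and the twist being an autoequivalence of $T_2$.

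First I would construct the adjoints of the composition $Fi$. Since $\mA$ is admissible, the inclusion $i$ has a left adjoint $\mA_L$ and a right adjoint which I denote $\mA_R$, both functors $T_1 \to \mA$. Composing with the adjunctions $L \dashv F \dashv R$ gives $\mA_L L \dashv Fi \dashv \mA_R R$.

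Next I would identify the cotwist of $Fi$ with $S_\mA[n]$. Apply the functor $\mA_R \circ (-) \circ i$ to the triangle $\mathrm{Id}_{T_1} \to RF \to S_{T_1}[n]$ defining the cotwist of $F$ (using the hypothesis that $C_F \iso S_{T_1}[n]$). Using $\mA_R \circ i \iso \mathrm{Id}_\mA$ (since $i$ is fully faithful) and the standard formula $S_\mA \iso \mA_R \circ S_{T_1} \circ i$ for the Serre functor of an admissible subcategory, we obtain a triangle $\mathrm{Id}_\mA \to \mA_R R F i \to S_\mA[n]$ in $\mA$. Naturality of the units identifies the first arrow with the unit of $Fi \dashv \mA_R R$, so the cotwist of $Fi$ is $S_\mA[n]$, an autoequivalence of $\mA$.

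The main obstacle is proving that the twist of $Fi$, namely the cone of $\mathrm{Id}_{T_2} \to Fi \circ \mA_L L$, is an autoequivalence of $T_2$. The unit factors as $\mathrm{Id}_{T_2} \to FL \to F i \mA_L L$, where the first map is the unit of $L \dashv F$ and the second is $F$ applied to the unit of $\mA_L \dashv i$. The octahedral axiom then yields a triangle $T_F \to T_{Fi} \to FPL[1]$ in $T_2$, where $T_F$ is the autoequivalence twist of $F$ and $P\colon T_1 \to T_1$ denotes the projection onto ${}^\perp \mA$. A direct analysis of this triangle to conclude that $T_{Fi}$ is invertible seems delicate. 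Instead, I would appeal to the Anno--Logvinenko characterization of spherical functors: a functor with two-sided adjoints whose cotwist is an autoequivalence is spherical provided a compatibility between its two adjoints holds (an equivalence of the form $R \iso C_F \circ L$, up to shift). I would verify this compatibility for $Fi$ by combining the analogous compatibility for $F$ (which holds because $F$ is spherical) with the identity $\mA_R \circ S_{T_1} \iso S_\mA \circ \mA_L$ of functors $T_1 \to \mA$. This last identity follows from the standard interaction $S_{T_1}({}^\perp \mA) = \mA^\perp$ between the Serre functor and semiorthogonal complements, combined with the fact that $\mA_R$ annihilates $\mA^\perp$.
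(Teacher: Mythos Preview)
The paper does not give its own proof of this theorem; it is quoted as \cite[Prop.~2.1]{addington} and used as a black box to deduce Theorem~\ref{thm: admissible subcategories produce autoequivalences}. So there is nothing in the paper to compare against directly.

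Your approach is essentially Addington's original argument: verify that the cotwist of $Fi$ is the Serre functor of $\mA$ (hence an autoequivalence), and then invoke the Anno--Logvinenko ``two out of four'' criterion by checking the compatibility $\mA_R R \simeq C_{Fi}\,\mA_L L$ between the two adjoints. Your identification of the cotwist is correct, as is the key identity $\mA_R \circ S_{T_1} \simeq S_\mA \circ \mA_L$. One point deserves more care than you give it: for the Anno--Logvinenko criterion it is not enough to produce \emph{some} isomorphism $\mA_R R \simeq C_{Fi}\,\mA_L L$; the isomorphism must be the canonical one induced by the unit and counit maps. Concretely, you should construct the natural transformation $\mA_R S_{T_1} \to S_\mA \mA_L$ by applying $\mA_R S_{T_1}$ to the unit $\mathrm{Id}_{T_1} \to i\mA_L$ and then check that its cone lies in $\mA_R S_{T_1}({}^\perp\mA) = \mA_R(\mA^\perp) = 0$, and finally verify that this is compatible with the canonical map coming from the sphericity of $F$. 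This is routine but not automatic, and in a rigorous treatment (as in \cite{anno-logvinenko}) it requires working at the level of dg-enhancements or Fourier--Mukai kernels rather than bare triangulated functors.
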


We can now deduce Theorem~\ref{thm: admissible subcategories produce autoequivalences} from the result above.

\begin{proof}[Proof of Theorem~\textup{\ref{thm: admissible subcategories produce autoequivalences}}]
  Recall from Example~\ref{ex: spherical restriction} that the restriction functor
  \[
    j^*\colon \Dperf(X) \to \Dperf(D)
  \]
  to any divisor is a spherical functor, and the induced autoequivalence on $\Dperf(X)$ is the functor $F \mapsto F(-D)$. Since $D$ is an anticanonical divisor, this is a shift of the Serre functor.

  Then Theorem~\ref{thm: addington main theorem} shows that the composition
  \[
    \mB \monoarrow \Dbcoh(X) = \Dperf(X) \xrightarrow{j^*} \Dperf(D)
  \]
  is also a spherical functor. Compare to \cite[2.2 ($4^\prime$)]{addington}, where the same argument is used for an admissible subcategory spanned by a single exceptional object. Spherical functors are associated with many endofunctors, and one may check that a so-called \emph{spherical twist} in this context is exactly a Fourier--Mukai transform along the object $T \in \Dbcoh(D \times D)$. Spherical twists are always autoequivalences \cite[Th.~2.3]{addington}.
\end{proof}

\begin{remark}
  As is clear from the argument, Addington's result is not specifically about anticanonical divisors. Some other spherical functors may be used instead, like a pullback to a two-sheeted covering of $X$ branched in $|-2K_X|$, or an embedding of $X$ into the total space of the canonical bundle $X \monoarrow \Tot(K_X)$. The advantage of the anticanonical divisors is that they have smaller dimension than $X$, and this is very important for the study of surfaces in this paper.
\end{remark}

\begin{corollary}
  \label{cor: autoequivalence triangle}
  In the notation of Theorem~\textup{\ref{thm: admissible subcategories produce autoequivalences}}, for any object $F \in \Dperf(D)$ there exists a distinguished triangle
  \(
    j^* \mB_R(j_*F) \to F \to T(F)
  \) in $\Dperf(D)$.
\end{corollary}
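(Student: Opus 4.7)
The plan is to apply the Fourier--Mukai formalism to the kernel triangle~(\ref{eqn: twist triangle}) and identify each of the three resulting functors on $\Dperf(D)$. For a fixed $F \in \Dperf(D)$, the assignment $K \mapsto \Phi_K(F) = \pi_{2*}(\pi_1^* F \otimes K)$ is an exact functor of the kernel $K \in \Dbcoh(D \times D)$: since $F$ is perfect, $\pi_1^* F$ is perfect and the tensor product preserves distinguished triangles in $K$, and the proper pushforward $\pi_{2*}$ is exact. Applying this to the triangle
\[
  \mB_R|_{D \times D} \to \O_{\Delta_D} \to T
\]
therefore produces a distinguished triangle in $\Dbcoh(D)$.

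Next I would identify the three resulting terms. The middle term is $\Phi_{\O_{\Delta_D}}(F) = F$, because the structure sheaf of the diagonal is the kernel of the identity functor. The right-hand term is $T(F)$ by the definition of the autoequivalence $T(-)$ given in Theorem~\ref{thm: admissible subcategories produce autoequivalences}. For the left-hand term, note that $\mB_R|_{D \times D} = (j, j)^* \mB_R$, so by Lemma~\ref{lem: fourier-mukai restriction} applied to the closed embedding $j \colon D \hookrightarrow X$ (which is proper, with $X$ smooth), the associated Fourier--Mukai transform is the composition $j^* \circ \mB_R \circ j_*$, where on the right $\mB_R$ now denotes the projection endofunctor of $\Dbcoh(X)$. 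Evaluating at $F$ yields $j^* \mB_R(j_* F)$.

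Combining these identifications produces the claimed triangle $j^* \mB_R(j_* F) \to F \to T(F)$, and the morphisms are the expected ones by the functoriality of $\Phi_{(-)}(F)$ in the kernel. It remains to check that this triangle lives in $\Dperf(D)$, which is immediate: $F \in \Dperf(D)$ by hypothesis; $T(F) \in \Dperf(D)$ by the conclusion of Theorem~\ref{thm: admissible subcategories produce autoequivalences}; and for the left term, $j_* F \in \Dbcoh(X) = \Dperf(X)$ since $X$ is smooth, hence $\mB_R(j_* F) \in \Dperf(X)$, and $j^*$ preserves perfect complexes.

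I do not expect a serious obstacle here. The only minor subtlety is bookkeeping: one must verify that the map $j^* \mB_R(j_* F) \to F$ arising from the Fourier--Mukai formalism agrees, up to the identifications above, with the map induced by the composition $\mB_R|_{D \times D} \to \O_{\Delta_X}|_{D \times D} \to \O_{\Delta_D}$ used to define $T$ in Theorem~\ref{thm: admissible subcategories produce autoequivalences}. This is automatic from functoriality of the Fourier--Mukai transform in the kernel, so no further argument beyond citing Lemma~\ref{lem: fourier-mukai restriction} is needed.
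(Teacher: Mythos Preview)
Your proposal is correct and follows essentially the same approach as the paper: apply the Fourier--Mukai transform to the kernel triangle~(\ref{eqn: twist triangle}) and invoke Lemma~\ref{lem: fourier-mukai restriction} to identify the left-hand term as $j^* \mB_R(j_* F)$. The paper's proof is a one-line reference to that lemma; your version simply spells out the surrounding bookkeeping (exactness in the kernel, identification of the middle and right terms, and the check that everything lands in $\Dperf(D)$), none of which adds a genuinely different idea.
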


\begin{proof}
  The only thing to check is that the Fourier--Mukai transform of an object $F$ with respect to the kernel $\mB_R|_{D \times D}$ is isomorphic to $j^* \mB_R(j_*F)$, but this is true by Lemma~\ref{lem: fourier-mukai restriction}.
\end{proof}

\subsection{Consequences on surfaces}
\label{ssec: consequences on surfaces}

When the ambient variety is a surface, we can deduce from Theorem~\ref{thm: admissible subcategories produce autoequivalences} a strong structural result that lets us control the behavior of arbitrary admissible subcategories. The following proposition is used multiple times in the proofs of the major results of this paper: the classification of admissible subcategories of $\P^2$ in Section~\ref{sec: projective plane}, and the study of admissible subcategories in rational elliptic surfaces and del Pezzo surfaces in Section~\ref{sec: del pezzos}.

\begin{proposition}
  \label{prop: numerical lemma without numbers}
  Let $S$ be a smooth proper surface, let $j\colon E \monoarrow S$ be a reduced and irreducible anticanonical divisor, and let $p \in E$ be a smooth point of $E$. Let $\mB \subset \Dbcoh(S)$ be an admissible subcategory. Denote by $B := \mB_R(\O_p)$ the (right) projection of a skyscraper sheaf $\O_p$ to the subcategory $\mB$. Then the object $j^*B$ is isomorphic to one of the following options:
  
  \begin{enumerate}
  \item $j^*B = 0$;
  \item $j^*B \iso \O_p[0] \oplus \O_q[a]$ for a smooth point $q \in E$ which may coincide with $p$, and some shift $a \in \Z$;
  \item $j^*B \iso \O_{2p}[0]$, where $\O_{2p} \in \mathrm{Coh}(E)$ is a quotient of $\O_E$ by the square of the maximal ideal of the point $p \in E$;
  \item $j^*B \iso \O_p[0] \oplus M[a]$ for some simple vector bundle $M$ on $E$ and some shift $a \in \Z$;
  \item $j^*B \iso \widetilde{M}[0]$, where $\widetilde{M}$ is a vector bundle on $E$ which fits into a short exact sequence
    \[
      0 \to M \to \widetilde{M} \to \O_p \to 0
    \]
    where $M$ is a simple vector bundle on $E$.
  \end{enumerate}
\end{proposition}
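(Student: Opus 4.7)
The plan is to invoke Theorem~\ref{thm: admissible subcategories produce autoequivalences}, which converts the admissible subcategory $\mB$ into an autoequivalence $T$ of $\Dperf(E)$, and then reduce the whole proposition to analyzing the distinguished triangle
\[
  j^*B \longrightarrow \O_p \longrightarrow T(\O_p)
\]
in $\Dperf(E)$ given by Corollary~\ref{cor: autoequivalence triangle} with $F = \O_p$. The sheaf $\O_p$ is perfect because $p$ is a smooth point of $E$, so $T(\O_p)$ is well-defined as an object of $\Dperf(E)$.

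Next, by adjunction $\omega_E \cong (\omega_S \otimes \O_S(E))|_E \cong \O_E$, so $E$ is a Gorenstein curve of arithmetic genus one. Since $T$ is an autoequivalence and $\O_p$ has graded endomorphism algebra $k \oplus k[-1]$, the image $T(\O_p)$ is a $1$-spherical perfect complex on $E$. I would then invoke the classification of such objects on an integral Gorenstein genus-one curve: up to a shift, $T(\O_p)$ is either a skyscraper $\O_q$ at a smooth point $q \in E$ or a simple (Schurian) vector bundle $M$ on $E$. For smooth elliptic $E$ this follows from Atiyah's classification of indecomposable sheaves; for a cuspidal or nodal cubic the analogous statement is available in the literature (Burban--Kreu{\ss}ler et al.), combined with the observation that the numerical class of $T(\O_p)$ must be primitive (being the image of the primitive class of $\O_p$ under the action of $T$ on numerical $K$-theory), which rules out length-$>1$ torsion sheaves and indecomposable bundles with non-coprime rank and degree.

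Writing $T(\O_p) \cong C[a]$, the last step is a case analysis of the connecting morphism $\O_p \to C[a]$. This map lives in $\Ext^a(\O_p, C)$, which vanishes outside a very short list: for $C = \O_q$ only when $p = q$ and $a \in \{0, 1\}$, and for $C = M$ a simple vector bundle only when $a = 1$, where $\Ext^1(\O_p, M) \cong M|_p^\vee$. When the connecting map is zero the triangle splits and we obtain options~(1), (2), or~(4); a nonzero element of $\Ext^1(\O_p, \O_p)$ produces the unique nontrivial self-extension $\O_{2p}$ of option~(3); a nonzero element of $\Ext^1(\O_p, M)$ produces the extension $\widetilde{M}$ of option~(5), whose local freeness at $p$ follows from a direct local computation (the extension class identifies $\widetilde{M}$ locally with an ``enlargement'' of $M$ along a $1$-dimensional direction in $M|_p$).

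The main obstacle I anticipate is the classification of $1$-spherical perfect complexes on $E$ in the singular case: Atiyah's classification handles smooth elliptic curves directly, and analogous classifications are known for nodal and cuspidal cubics, but assembling them uniformly for an arbitrary reduced irreducible anticanonical divisor requires some care, in particular to exclude exotic simple complexes concentrated near the singular locus of $E$. Once this classification is in hand, the remainder of the argument is formal manipulation of the triangle together with small $\Ext$ computations.
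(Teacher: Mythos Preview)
Your approach is essentially identical to the paper's: both invoke Corollary~\ref{cor: autoequivalence triangle} to obtain the triangle $j^*B \to \O_p \to T(\O_p)$, use that $T$ is an autoequivalence to deduce $\REnd_E(T(\O_p)) \cong \k[0] \oplus \k[-1]$, classify such objects, and run the same case analysis on the connecting morphism.

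The one place you express uncertainty is exactly where the paper is more efficient. Your anticipated obstacle---assembling a uniform classification of $1$-spherical perfect complexes across the smooth, nodal, and cuspidal cases---is dispatched in the paper by a single citation: \cite[Prop.~4.13]{burban-kreussler} classifies such objects uniformly on any reduced irreducible projective curve of arithmetic genus one, yielding precisely ``shift of a skyscraper at a smooth point'' or ``shift of a simple vector bundle''. This makes your auxiliary primitivity argument in numerical $K$-theory unnecessary, and it also removes the worry about exotic complexes near the singular locus. Once you know that reference, the proof is exactly as you outline.
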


\begin{proof}
  The object $B$ is by definition isomorphic to $\mB_R(j_*\O_p)$. Since $p \in E$ is a smooth point, the skyscraper sheaf $\O_p \in \Dbcoh(E)$ is a perfect object. By Corollary~\ref{cor: autoequivalence triangle} the derived pullback $j^*B \in \Dperf(E)$ fits into a triangle
  \begin{equation}
    \label{eqn: skyscraper-like objects}
    j^*B \to \O_p \to C \to j^*B [1]
  \end{equation}
  where $C := T(\O_p) \in \Dperf(E)$ is some object. By Theorem~\ref{thm: admissible subcategories produce autoequivalences} the functor $T$ is an autoequivalence, thus
  \[
    \REnd_E(C) \iso \REnd_E(\O_p) \iso \k[0] \oplus \k[-1].
  \]

  For the duration of this proof, we use the term \emph{skyscraper-like object} to mean an object in the category $\Dperf(E)$ with this $\Ext$-algebra. Since $E$ is reduced and irreducible, we can use the classification of skyscraper-like perfect objects given in \cite[Prop.~4.13]{burban-kreussler}. It shows that the object $C$ is, up to a shift, either a skyscraper sheaf on some smooth point $q \in E$, or a simple vector bundle $M$ on $E$.
  

  Assume first that $C$ is a shift of a skyscraper sheaf on a smooth point $q \in E$. If $q$ is not the same point as $p$, then the map $\O_p \to \O_q[a]$ from (\ref{eqn: skyscraper-like objects}) is necessarily zero, and hence we have an isomorphism $j^*B \iso \O_p[0] \oplus \O_q[a-1]$. If $q = p$ and the map $\O_p \to \O_p[a]$ from (\ref{eqn: skyscraper-like objects}) is nonzero, there are two cases. Either $a = 0$ and the map is an isomorphism, and then the cone $j^*B$ is zero. Or $a = 1$ and the map is a nonzero element of $\Ext_E^1(\O_p, \O_p) \iso \k$, in which case the object $j^*B$ is isomorphic to the unique nontrivial extension of a skyscraper sheaf on a smooth point by itself, i.e., $j^*B \iso \O_{2p}[0]$.

  The other option given in \cite[Prop.~4.13]{burban-kreussler} for a skyscraper-like object $C$ is a shift of a simple vector bundle. Assume that $C \iso M[a]$ for some $a \in \Z$ and some simple vector bundle~$M$. If the morphism $\O_p \to M[a]$ in (\ref{eqn: skyscraper-like objects}) is zero, then $j^*B \iso \O_p[0] \oplus M[a-1]$. Since the point $p$ is smooth in $E$, $\Ext_E^a(\O_p, M)$ is nontrivial only when $a = 1$, so any nonzero map in the triangle (\ref{eqn: skyscraper-like objects}) arises from some short exact sequence
  \[
    0 \to M \to \widetilde{M} \to \O_p \to 0,
  \]
  and for those maps in $\Ext^1_E(\O_p, M)$ we have an isomorphism $j^*B \iso \widetilde{M}[0]$ in (\ref{eqn: skyscraper-like objects}). Thus the list of possible isomorphism classes of $j^*B$ in the statement is exhaustive.
\end{proof}

\begin{remark}
  If $E$ is a smooth anticanonical divisor, the classification of skyscraper-like perfect objects can be easily deduced from Lemmas~\ref{lem: objects on smooth curves split} and~\ref{lem: smooth torsion sheaves on curves}. Moreover, if we assume that the support of the object $j^*B$ is in the smooth locus of $E$, then the conclusion of Proposition~\ref{prop: numerical lemma without numbers} holds when $j^*B$ is a torsion object, even in the case where if $E$ is a reducible curve.
\end{remark}

The description of $j^*B$ in the proposition above implies an interesting property for restrictions of the object $B$ to general anticanonical divisors. Suppose that we are in a situation where $j^*B$ is isomorphic to a direct sum of two skyscraper sheaves. Consider a different anticanonical divisor, $j^\prime\colon E^\prime \monoarrow S$, which does not necessarily pass through the point $p \in S$. If $E^\prime$ is in some sense "close" to the divisor $E$, it is reasonable to expect that $j^{\prime *}B$ is also a torsion object, and by semicontinuity it should not be significantly more complicated than two skyscrapers. This imprecise intuitive picture may be improved to a rigorous statement. In fact, this holds for an arbitrary anticanonical divisor as soon as the restriction is a torsion object.

\begin{lemma}
  \label{lem: arbitrary anticanonical divisors}
  Let $S$, $j\colon E \monoarrow S$, $p \in E$ and $B \in \mB \subset \Dbcoh(S)$ be as in Proposition~\textup{\ref{prop: numerical lemma without numbers}}. Assume that the support of $j^*B$ consists of two distinct smooth points of $E$. Let~$j^\prime\colon E^\prime \monoarrow S$ be another anticanonical divisor, not necessarily passing through the point $p$.  Suppose that $j^{\prime *}B$ is a torsion object supported on a smooth part of the curve $E^\prime$. Then $j^{\prime *}B$ is isomorphic to one of the following options:
  \begin{enumerate}
    \item $j^{\prime *}B = 0$;
    \item $j^{\prime *}B \iso \O_q[a]$ for some point $q \in E^\prime$ and a shift $a \in \Z$; 
    \item $j^{\prime *}B \iso \O_q[a] \oplus \O_r[b]$ for some points $q, r \in E^\prime$ and shifts $a, b \in \Z$;
    \item $j^{\prime *}B \iso \O_{2q}[a]$ for some point $q \in E^\prime$ and a shift $a \in \Z$, where $\O_{2q}$ is the quotient of the structure sheaf $\O_E$ by the square of the maximal ideal of the point $q \in E$.
  \end{enumerate}
\end{lemma}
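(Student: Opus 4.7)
The plan is to mimic the strategy of Proposition~\ref{prop: numerical lemma without numbers}---using the spherical twist autoequivalence $T_{E'}$ on $\Dperf(E')$ from Theorem~\ref{thm: admissible subcategories produce autoequivalences} together with the classification of torsion perfect objects on smooth curve points provided by Lemmas~\ref{lem: objects on smooth curves split} and~\ref{lem: smooth torsion sheaves on curves}. The principal new difficulty relative to the proposition is that the point $p$ need not lie on $E'$, so Corollary~\ref{cor: autoequivalence triangle} does not directly produce a triangle involving $j'^*B$; structural constraints must instead be extracted from numerical invariants and Ext comparisons between $E$ and $E'$.

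First I would establish a numerical constraint. Since $E$ and $E'$ are linearly equivalent anticanonical divisors, $\O_S(-E) \iso \O_S(-E') \iso K_S$, and the adjunction triangles $B \otimes K_S \to B \to j_* j^* B$ and $B \otimes K_S \to B \to j'_* j'^* B$ share the same first two terms. Applying Euler characteristics and using $\chi_S(j_* F) = \chi(F)$, this gives $\chi_{E'}(j'^*B) = \chi_E(j^*B) = 1 + (-1)^a$, where $a$ is the shift in $j^*B \iso \O_p \oplus \O_q[a]$. By Lemmas~\ref{lem: objects on smooth curves split} and~\ref{lem: smooth torsion sheaves on curves}, the torsion object $j'^*B$ splits as a direct sum $\bigoplus_i \O_{E'}/\mathfrak{m}_{x_i}^{n_i}[a_i]$ supported at smooth points, and the Euler characteristic equation $\sum_i (-1)^{a_i} n_i = 1 + (-1)^a$ bounds the signed total length.

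Second, I would apply $\RHom_S(B, -)$ to the adjunction triangle for $E'$ and invoke Serre duality on $S$ in the form $\RHom_S(B, B \otimes K_S) \iso \RHom_S(B, B)^\dual[-2]$, producing a distinguished triangle
\[
  \RHom_S(B, B)^\dual[-2] \to \RHom_S(B, B) \to \RHom_{E'}(j'^*B, j'^*B),
\]
with the analogous triangle for $E$ having a right-hand side that we can compute explicitly from $j^*B \iso \O_p \oplus \O_q[a]$ using Lemma~\ref{lem: smooth torsion sheaves on curves}. Comparing the two long exact sequences bounds $\dim \Hom_{E'}(j'^*B, j'^*B)$ and $\dim \Ext^1_{E'}(j'^*B, j'^*B)$ in terms of the corresponding dimensions on $E$. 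Together with the explicit $\min(n,m)$-formulas from Lemma~\ref{lem: smooth torsion sheaves on curves} for Hom and Ext between infinitesimal torsion sheaves, these bounds force $j'^*B$ to decompose into at most two indecomposable summands. Enumerating the possibilities compatible with both the Euler characteristic constraint and the Hom-dimension bound produces exactly the four forms listed in the lemma.

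The main obstacle will be the comparison step: the connecting morphisms in the two triangles arise from multiplication by two different sections $s, s' \in H^0(S, -K_S)$ cutting out $E$ and $E'$, so their ranks on the graded pieces of $\RHom_S(B, B)$ may differ a priori. The heart of the argument is to show that, under the torsion assumption on $j'^*B$, the rank of the connecting map for $E'$ cannot exceed that for $E$ by enough to admit decompositions outside the listed four. I expect this rank control to be the most technical part of the proof.
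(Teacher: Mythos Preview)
Your proposal has a genuine gap, and it is exactly the one you flag yourself: the ``rank control'' step comparing the connecting morphisms for $E$ and $E'$ is not carried out, and the approach you sketch does not obviously close. The paper avoids this obstacle entirely by a trick you are missing.

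The key observation is that the map $\RHom(B, B \otimes K_S) \to \RHom(B, B)$ induced by the section $s$ cutting out $E$ is in fact \emph{zero}. This uses that $B$ is not an arbitrary object but specifically $\mB_R(\O_p)$ with $p \in E$. The projection morphism $B \to \O_p$ gives a commutative square
\[
\begin{tikzcd}
B \otimes K_S \arrow[r, "\cdot s"] \arrow[d] & B \arrow[d] \\
\O_p \otimes K_S \arrow[r, "\cdot s"] & \O_p
\end{tikzcd}
\]
whose bottom arrow vanishes because $p$ lies on $E = \{s = 0\}$. Applying $\RHom(B, -)$ and using that the right vertical arrow becomes an isomorphism (Corollary~\ref{cor: universal property of projection triangles}), the top arrow must induce zero on $\RHom(B,-)$. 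Hence the triangle for $E$ splits and yields the exact equality
\[
\mylength\bigl(\RHom(B, B \otimes K_S)\bigr) + \mylength\bigl(\RHom(B, B)\bigr) = \mylength\bigl(\REnd(j^*B)\bigr) = 4.
\]
For the second divisor $E'$ no vanishing is needed at all: the triangle $\RHom(B, B \otimes K_S) \to \RHom(B, B) \to \REnd(j'^*B)$ immediately gives $\mylength(\REnd(j'^*B)) \leq 4$ by subadditivity of lengths in a distinguished triangle. The classification of torsion objects on the smooth locus with $\mylength(\REnd) \leq 4$ via Lemmas~\ref{lem: objects on smooth curves split} and~\ref{lem: smooth torsion sheaves on curves} then yields exactly the four listed forms.

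Your Serre duality identification is correct but does not by itself control the rank of the connecting map for $E'$, and your Euler characteristic constraint is unnecessary once the length bound is in hand. The essential missing ingredient is exploiting the asymmetry between $E$ and $E'$: only $E$ contains $p$, and that is precisely what makes the vanishing trick work.
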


\begin{proof}
  Consider a restriction triangle for the object $B \in \Dbcoh(S)$ to the divisor $E \subset S$:
  \begin{equation}
    \label{eqn: triangle of restrictions}
    B \otimes K_S \to B \to j_*j^*B
  \end{equation}
  An application of the functor $\RHom(B, -)$ produces a triangle of graded vector spaces
  \begin{equation}
    \label{eqn: triangle of endomorphisms}
    \RHom(B, B \otimes K_S) \to \RHom(B, B) \to \REnd(j^*B).
  \end{equation}
  From Proposition~\ref{prop: numerical lemma without numbers} we know that $j^*B$ is isomorphic to a direct sum of two distinct skyscrapers. Then the length of the graded vector space $\REnd(j^*B)$ is equal to four.

  Since the object $B$ comes equipped with the natural morphism $B \to \O_p$, the first morphism in the triangle (\ref{eqn: triangle of restrictions}) may be extended into the commutative square
  \begin{equation}
    \label{eqn: vanishing of an arrow}
    \begin{tikzcd}
      B \otimes K_S \arrow[r] \arrow[d] & B \arrow[d] \\
      \O_p \otimes K_S \arrow[r] & \O_p
    \end{tikzcd}
  \end{equation}
  Note that the bottom arrow is zero since $p$ lies on the divisor $E$. An application of the functor $\RHom(B, -)$ produces a commutative square
  \[
    \begin{tikzcd}
      \RHom(B, B \otimes K_S) \arrow[r] \arrow[d] & \RHom(B, B) \arrow[d] \\
      \RHom(B, \O_p \otimes K_S) \arrow[r] & \RHom(B, \O_p)
    \end{tikzcd}
  \]
  Here the right vertical map is an isomorphism by the definition of the projection functor, and the bottom horizontal map is zero. Therefore the upper horizontal map is also zero.

  Thus the first arrow in the triangle (\ref{eqn: triangle of endomorphisms}) is zero, so $\REnd(j^*B)$ is isomorphic to a direct sum of the other two terms. Therefore we get
  \[
    \mylength(\RHom(B, B \otimes K_S)) + \mylength(\RHom(B, B)) = 4.
  \]

  By a similar procedure we obtain a triangle of graded vector spaces corresponding to the restriction to the divisor $j^\prime\colon E^\prime \monoarrow S$:
  \[
    \RHom(B, B \otimes K_S) \to \RHom(B, B) \to \REnd(j^{\prime *}B).
  \]
  The length of the cone is bounded from above by the sum of lengths of the first two terms. This produces an inequality:
  \[
    \mylength(\REnd(j^{\prime *}B)) \leq \mylength(\RHom(B, B \otimes K_S)) + \mylength(\RHom(B, B)) = 4.
  \]

  Using Lemmas~\ref{lem: objects on smooth curves split} and~\ref{lem: smooth torsion sheaves on curves} it is easy to see that a torsion object $j^{\prime *}B$ on a smooth locus of the curve $E^\prime$ with $\mylength(\REnd(j^{\prime *}B)) \leq 4$ is isomorphic to one of the four options listed above.
\end{proof}

\begin{remark}
  Using the commutative square (\ref{eqn: vanishing of an arrow}) to show the vanishing of the first arrow in the triangle (\ref{eqn: triangle of endomorphisms}) is an interesting trick that may be built upon to produce an alternative proof of Theorem~\ref{thm: admissible subcategories produce autoequivalences} which does not refer to spherical functors and their properties. However, that proof is longer, more complicated, and less versatile. Interested readers may read it in the thesis version of this paper \cite[Sec.~3]{mythesis}.
\end{remark}


\section{Classification of admissible subcategories of $\P^2$}
\label{sec: projective plane}

\subsection{Overview}

The goal of this chapter is to prove the following result about admissible subcategories in the derived category $\Dbcoh(\P^2)$ of coherent sheaves on $\P^2$. Since exceptional objects and exceptional collections in $\Dbcoh(\P^2)$ have been classified in \cite{gorodentsev-rudakov}, this theorem may be described as a classification of admissible subcategories.

\statemaintheoremPlane

This classification immediately implies the following.


\begin{restatable}{corollary}{statemaintheoremPlanePhantoms}
  \label{cor: no phantoms in the plane}
  There are no phantom subcategories in $\Dbcoh(\P^2)$.
\end{restatable}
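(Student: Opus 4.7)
The plan is to derive the corollary as an immediate consequence of Theorem~\ref{thm: all admissible subcategories are standard}. Recall that a phantom subcategory is by definition a nonzero admissible subcategory $\mA$ with $K_0(\mA) = 0$, so it suffices to show that every nonzero admissible subcategory of $\Dbcoh(\P^2)$ has nontrivial Grothendieck group.

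First I would invoke Theorem~\ref{thm: all admissible subcategories are standard} to write any nonzero admissible subcategory $\mA \subset \Dbcoh(\P^2)$ as $\mA = \langle E_{i_1}, \ldots, E_{i_k} \rangle$ for some nonempty subcollection of a mutation $\langle E_1, E_2, E_3 \rangle$ of the standard exceptional collection on $\P^2$. Such a subcollection is itself an exceptional collection, and hence exhibits $\mA$ as admitting a full exceptional collection of length $k \geq 1$.

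Next I would use the standard fact that if a triangulated category $\mA$ admits a full exceptional collection of length $k$, then the exceptional objects give a $\Z$-basis of $K_0(\mA)$, so $K_0(\mA) \caniso \Z^k$. This follows from applying the projection triangles for the semiorthogonal decomposition $\mA = \langle \langle E_{i_1} \rangle, \ldots, \langle E_{i_k} \rangle \rangle$ inductively: each component $\langle E_{i_j} \rangle \iso \Dbcoh(\k)$ contributes a single copy of $\Z$, and the Grothendieck group is additive under semiorthogonal decompositions. In particular $K_0(\mA) \neq 0$ whenever $\mA \neq 0$.

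The only potential obstacle is justifying the additivity of $K_0$ under the semiorthogonal decomposition generated by the exceptional subcollection, but this is standard and follows directly from the projection triangles of Definition~\ref{lem: projection triangles exist}. With these facts in hand, no nonzero admissible subcategory of $\Dbcoh(\P^2)$ can have trivial $K_0$, which is exactly the statement that there are no phantom subcategories in $\Dbcoh(\P^2)$.
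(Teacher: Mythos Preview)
Your proposal is correct and matches the paper's own proof essentially verbatim: invoke Theorem~\ref{thm: all admissible subcategories are standard} to get a full exceptional collection of some length $k \geq 1$ on any nonzero admissible subcategory, and then use that $K_0$ of such a category is free of rank $k$. The paper states this in two sentences; your added justification via additivity of $K_0$ under the semiorthogonal decomposition is fine but not strictly necessary.
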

 
\begin{proof}
  For any category $\mA$ with a full exceptional collection of length $n$ the Grothendieck group $K_0(\mA)$ is a free abelian group on $n$ generators. Thus by Theorem~\ref{thm: all admissible subcategories are standard} an admissible subcategory of $\Dbcoh(\P^2)$ is either a zero category, or has non-vanishing $K_0$.
\end{proof}

As mentioned in Lemma~\ref{lem: admissible subcategories are semiorthogonal decompositions}, any admissible subcategory $\mA \subset \Dbcoh(\P^2)$ leads to a semiorthogonal decomposition of that category, $\Dbcoh(\P^2) = \langle \mA, {}^\perp\mA \rangle$. Since the length of any full exceptional collection in $\Dbcoh(\P^2)$ is three, the result above implies that in any nontrivial decomposition at least one of the subcategories $\mA$ and ${}^\perp \mA$ is generated by a single exceptional object. In fact, in the proof of Theorem~\ref{thm: all admissible subcategories are standard} we do not construct nontrivial exceptional collections directly, but rather recognize which of the subcategories $\mA$ and ${}^\perp \mA$ is a simpler one. More precisely, Theorem~\ref{thm: all admissible subcategories are standard} is implied by the following statement:

\begin{theorem}
  \label{thm: reduction to B not locally free}
  Let $\Dbcoh(\P^2) = \langle \mA, \mB \rangle$ be a semiorthogonal decomposition such that $\mA \neq 0$ and~$\mB \neq 0$. Pick a point $p \in \P^2$. Consider a projection triangle for the skyscraper sheaf $\O_p$:
  \[
    B \to \O_p \to A \to B[1]
  \]
  with $B \caniso \mB_R(\O_p) \in \mB$ and $A \caniso \mA_L(\O_p) \in \mA$. Assume that $B$ is not locally free at $p$. Then the subcategory $\mA \subset \Dbcoh(\P^2)$ is generated by a single exceptional vector bundle.
\end{theorem}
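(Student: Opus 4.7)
The plan is to show that the left projection $A = \mA_L(\O_p)$ is a shift of a direct sum of copies of a single exceptional vector bundle on $\P^2$, which then classically generates $\mA$. The key tools are the $\PGL(3)$-symmetry of admissible subcategories from Proposition~\ref{prop: admissible subcategories are open} and the anticanonical restriction machine from Section~\ref{sec: numerical lemmas}. First, I would establish that $\supp(B) = \P^2$ and that $B$ is locally free on $\P^2 \setminus \{p\}$. By Lemma~\ref{lem: projections of a skyscraper}(2), the object $B$ is $\Stab(p)$-invariant; since $\Stab(p)$ acts transitively on $\P^2 \setminus \{p\}$, the only $\Stab(p)$-invariant closed subsets of $\P^2$ containing $p$ are $\{p\}$ and $\P^2$. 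Lemma~\ref{lem: projections of a skyscraper}(3) rules out $\supp(B) = \{p\}$, so $\supp(B) = \P^2$, and local freeness outside $p$ follows from Lemma~\ref{lem: locally free has many definitions}(3) applied to the transitive orbit.

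Next, I would pick a smooth elliptic anticanonical cubic $E \ni p$ and apply Proposition~\ref{prop: numerical lemma without numbers} to $j^*B$. The vanishing option is excluded since $\supp(B) \cap E \neq \emptyset$. Using Lemma~\ref{lem: derived restriction to a divisor} together with the local freeness of $B$ outside $p$, I would argue that the cohomology sheaves of $B$ lie in at most two adjacent degrees, with the higher-degree piece a torsion sheaf at $p$; Lemma~\ref{lem: projection triangle induces nonzero map on cohomology} ensures it maps surjectively onto $\O_p$. A careful case analysis comparing $j^*B$ to the restrictions of $B$ to several other anticanonical cubics through $p$ via Lemma~\ref{lem: arbitrary anticanonical divisors}, possibly combined with Lemma~\ref{lem: objects with a special point have large ext1} to bound $\dim \Ext^1(B, B)$, rules out the longer jet $\O_{2p}$ (case (3)) and the nontrivial extension with a simple vector bundle (case (5)) of Proposition~\ref{prop: numerical lemma without numbers}, identifying the torsion piece as exactly $\O_p$. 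Hence $B$ is an extension of $\O_p$ by a shift of a vector bundle $V$ on $\P^2$.

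Finally, the long exact cohomology sequence of $B \to \O_p \to A$, with $\mH^0(B) = \O_p$ mapping isomorphically onto $\O_p$, yields $A \iso V[2]$. The adjunction $\REnd_\mA(A) \iso \RHom(\O_p, A)$ combined with the standard Ext computations for skyscrapers against vector bundles on $\P^2$ shows that $\End(V)$ is a matrix algebra $M_n(\k)$ with $\Ext^{>0}(V, V) = 0$, forcing $V \iso E^{\oplus n}$ for some exceptional vector bundle $E$. By varying $p$ across $\P^2$, the bundle $E$ must be $\PGL(3)$-invariant up to isomorphism, so it is one of the standard exceptional homogeneous bundles on $\P^2$ compatible with the Gorodentsev-Rudakov classification. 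The equality $\mA = \langle E \rangle$ follows because the $\PGL(3)$-invariant classical generator of $\mA$ from Corollary~\ref{cor: pgl-invariant generators exist} lies in $\langle E \rangle$, being a direct sum of projections of line bundles, each itself a sum of shifts of $E$ by the same adjunction-based argument. The main obstacle is the structural analysis of $B$ in the second step: carefully ruling out the longer jet and the simple-vector-bundle extension requires a delicate comparison across several anticanonical cubics through $p$ and a careful tracking of how the $\Stab(p)$-action constrains the extension classes appearing in Proposition~\ref{prop: numerical lemma without numbers}.
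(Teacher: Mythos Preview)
Your overall architecture matches the paper's, but two key steps are not correctly handled.

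First, the case analysis for $j^*B$ in Proposition~\ref{prop: numerical lemma without numbers} is simpler than you suggest: since $\supp(B) = \P^2$, the restriction $j^*B$ is not a torsion object, which already eliminates options (1)--(3); and since $B$ is not locally free at $p$, neither is $j^*B$, eliminating option (5). So $j^*B \iso \O_p[0] \oplus M[a]$ directly, with no need for Lemma~\ref{lem: arbitrary anticanonical divisors} (which in any case only applies when $j^*B$ is two distinct skyscrapers). The genuine difficulty, which you gloss over, is the passage from this isomorphism to ``$\mH^0(B)$ is a torsion sheaf at $p$'': when $a = 0$ the object $B$ could a priori be a single torsion-free coherent sheaf $\mathcal{F}$ with $j^*\mathcal{F} \iso \O_p \oplus M$. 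The paper rules this out in Lemma~\ref{lem: always torsion in cohomology sheaves of B} by showing that such an $\mathcal{F}$ would be the kernel of a surjection from a vector bundle to a single skyscraper, computing $\Ext^1(B, \O_p) \iso \k$, and then invoking Lemma~\ref{lem: objects with a special point have large ext1} to get $\dim \Ext^1(B,B) \geq 2$, a contradiction via Corollary~\ref{cor: universal property of projection triangles}. This is where Lemma~\ref{lem: objects with a special point have large ext1} is actually needed, not for eliminating options in Proposition~\ref{prop: numerical lemma without numbers}.

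Second, your endgame claim that $\End(V)$ is a matrix algebra is not justified by the adjunction: what one obtains is only that $\REnd(V) \iso \RHom(\O_p, V[2])$ is concentrated in degree zero, i.e.\ $V$ is rigid. The paper then cites Drezet's theorem that rigid bundles on $\P^2$ are direct sums of exceptional bundles, and uses a slope/stability argument to exclude two non-isomorphic exceptional summands (otherwise $\mB$ would sit inside the orthogonal to an exceptional pair on $\P^2$, forcing every object of $\mB$ to be locally free and contradicting the hypothesis on $B$). Finally, $\mA = \langle E \rangle$ is established by noting that the projection of \emph{every} skyscraper lies in $\langle E \rangle$ (since exceptional bundles on $\P^2$ are $\PGL(3)$-invariant) and applying Lemma~\ref{lem: exceptional projections are generators}; your route via projections of line bundles would require a separate argument that you have not supplied.
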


The strategy of the proof of Theorem~\ref{thm: reduction to B not locally free} is discussed in Subsection~\ref{ssec: overview of the proof}. First we show how to deduce Theorem~\ref{thm: all admissible subcategories are standard} from this statement.

\begin{proof}[{Proof of the implication \textup{(\ref{thm: reduction to B not locally free})} $\implies$ \textup{(\ref{thm: all admissible subcategories are standard})}}]
  Let $\mA \subset \Dbcoh(\P^2)$ be an arbitrary admissible subcategory. Denote the orthogonal subcategory ${}^\perp\mA \subset \Dbcoh(\P^2)$ by $\mB$. Then $\Dbcoh(\P^2) = \langle \mA, \mB \rangle$ is a semiorthogonal decomposition. If either $\mA$ or $\mB$ is a zero subcategory, there is nothing to prove, so we assume that the decomposition is nontrivial. Let $p \in \P^2$ be a point. Consider a projection triangle
  \[
    B \to \O_p \to A \to B[1]
  \]
  of the skyscraper sheaf. By parts $(1)$ and $(2)$ of Lemma~\ref{lem: projections of a skyscraper} we know that at least one of projections $A$ and $B$ is not locally free at $p$.

  Suppose $B$ is not locally free. Then Theorem~\ref{thm: reduction to B not locally free} implies that there is an exceptional vector bundle $E \in \Dbcoh(\P^2)$ such that $\mA = \langle E \rangle$. By \cite[Th.~5.10]{gorodentsev-rudakov} there exists a mutation of the standard exceptional collection $\Dbcoh(\P^2) = \langle \O, \O(1), \O(2) \rangle$ which contains $E$, confirming Theorem~\ref{thm: all admissible subcategories are standard} in this case.

  Suppose now that $A$ is not locally free. Observe that the dualized and shifted triangle
  \[
    A^\dual[2] \to \O_p \to B^\dual[2] \to A^\dual[3]
  \]
  is the projection triangle of the skyscraper $\O_p$ corresponding to the dual semiorthogonal decomposition $\Dbcoh(\P^2) = \langle \mB^\dual, \mA^\dual \rangle$. Note that $A$ is locally free if and only if $A^\dual[2]$ is. By the same argument as above we see that $\mB^\dual = \langle E \rangle$ for some exceptional bundle $E \in \Dbcoh(\P^2)$. This implies that $\mB$ is generated by a single exceptional bundle $E^\dual$.

  By \cite[Th.~5.10]{gorodentsev-rudakov} any exceptional vector bundle $E^\dual$ on $\P^2$ is a member of some full exceptional collection $\langle E^\prime, E^{\prime \prime}, E^\dual \rangle$, which is moreover a mutation of the standard exceptional collection on $\P^2$. Therefore the category $\mA = \mB^\perp$ is equal to the subcategory $\langle E^\prime, E^{\prime \prime} \rangle$. Thus Theorem~\ref{thm: all admissible subcategories are standard} holds in this case as well.
\end{proof}

\subsection{Strategy of the proof}
\label{ssec: overview of the proof}

The proof of Theorem~\ref{thm: reduction to B not locally free} relies on the properties of the restriction of $B$ to a cubic curve passing through the point $p \in \P^2$. The proof is split into several parts. First in Subsection~\ref{ssec: restrictions to cubic curve satisfy the conditions} we use the results from Section~\ref{sec: numerical lemmas} to study the restriction of the object $B$ to a cubic curve, i.e., to an anticanonical divisor. We use the classification from Proposition~\ref{prop: numerical lemma without numbers} to deduce strong constraints on the object $B$ itself. For instance, in that subsection we show that $B$ is concentrated in at most two cohomology degrees. Then in Subsection~\ref{ssec: new reduction steps} we prove that the zeroth cohomology sheaf of $B$ is a skyscraper sheaf $\O_p$ and the minus first cohomology sheaf is locally free. Finally, in Subsection~\ref{ssec: exceptional bundles from local freeness} we conclude that $A$ is a direct sum of several copies of a single exceptional vector bundle, which lets us finish the proof by Lemma~\ref{lem: projection is rigid implies exceptional vector bundle}.

\subsection{Restricting projections of a skyscraper to a cubic curve}
\label{ssec: restrictions to cubic curve satisfy the conditions}

\begin{setting}
  \label{set: b-projector}
  From here on we fix the data involved in Theorem~\ref{thm: reduction to B not locally free}, namely a semiorthogonal decomposition $\Dbcoh(\P^2) = \langle \mA, \mB \rangle$ such that $\mA \neq 0$ and~$\mB \neq 0$, a point $p \in \P^2$, and the projection triangle for the skyscraper sheaf
  \[
    B \to \O_p \to A \to B[1]
  \]
  with $B \in \mB$ and $A \in \mA$, such that $B$ is not locally free at $p$. We also fix a smooth cubic curve $j\colon E \to \P^2$ cut out by an equation $s \in \Gamma(\P^2, \O(3))$ which passes through $p$.
\end{setting}

\begin{remark}
  In our approach to the proof of Theorem~\ref{thm: reduction to B not locally free} we often use the fact that $\PGL(3)$, the automorphism group of $\P^2$, acts doubly transitively on $\P^2$. For example, this implies that the stabilizer subgroup $\Stab(p) \subset \PGL(3)$ of the point $p \in \P^2$, which acts on the projections of the skyscraper sheaf by Lemma~\ref{lem: projections of a skyscraper} (2), has only two orbits in $\P^2$. It is possible to avoid most instances of relying on symmetry by using general cubic curves instead of fixing the curve $E$ in Setting~\ref{set: b-projector}. We use a strategy like that in some parts of Section~\ref{sec: del pezzos}, where we deal with del Pezzo surfaces. However, for Theorem~\ref{thm: reduction to B not locally free} we need some global geometric properties of $\P^2$ in any case, so there is no immediate benefit from circumventing the arguments based on symmetry.
\end{remark}

\begin{lemma}
  \label{lem: nontrivial decomposition is not pointwise}
  Let $B$ be as in Setting~\textup{\ref{set: b-projector}}. Then the support $\supp(B)$ is $\P^2$.
\end{lemma}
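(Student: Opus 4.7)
The plan is to use the $\PGL(3)$-symmetry of $\P^2$ to cut down the possibilities for $\supp(B)$ to a very short list, and then eliminate all options except $\P^2$ using the results already established about projections of skyscraper sheaves.

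More precisely, by Lemma~\ref{lem: projections of a skyscraper}~(2) the object $B$ is invariant under the action of the stabilizer subgroup $\Stab(p) \subset \PGL(3)$, so $\supp(B) \subset \P^2$ is a $\Stab(p)$-invariant Zariski-closed subset. Since $\Stab(p)$ acts on $\P^2$ with exactly two orbits, namely $\{p\}$ and $\P^2 \setminus \{p\}$, the only closed invariant subsets are $\emptyset$, $\{p\}$, and $\P^2$. So it suffices to rule out the first two options.

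First I would show that $B \neq 0$. If $B$ were zero, then the projection triangle gives $\O_p \iso A \in \mA$, and the identity morphism $\O_p \to A$ is nonzero on the zeroth cohomology sheaf. Since $\PGL(3)$ acts transitively on $\P^2$, Corollary~\ref{cor: maps from torsion cannot be nonzero on cohomology} then forces $\mA = \Dbcoh(\P^2)$, contradicting the hypothesis $\mB \neq 0$. Hence $\supp(B) \neq \emptyset$. Next, the case $\supp(B) = \{p\}$ is immediately excluded by Lemma~\ref{lem: projections of a skyscraper}~(3): if $B$ were set-theoretically supported only at $p$, then $\mA = 0$, again contradicting our assumptions. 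The only remaining possibility is $\supp(B) = \P^2$.

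I do not expect any real obstacle here; the statement is essentially a warm-up observation that exploits the very rich automorphism group of $\P^2$ together with the transitivity-based lemmas already proved in Section~\ref{sec: preliminaries}. The assumption that $B$ is not locally free at $p$, which is part of Setting~\ref{set: b-projector}, is not needed for this particular lemma, though it will play a role in the subsequent arguments of the section.
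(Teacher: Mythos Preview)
Your proof is correct and follows the same approach as the paper: use $\Stab(p)$-invariance to reduce $\supp(B)$ to $\emptyset$, $\{p\}$, or $\P^2$, then rule out the small cases via the transitivity-based results. The paper handles $\supp(B)=\emptyset$ implicitly via the Setting's hypothesis that $B$ is not locally free at $p$ (hence $B\neq 0$) and, for $\supp(B)=\{p\}$, spells out in detail the argument you simply cite as Lemma~\ref{lem: projections of a skyscraper}(3); your explicit treatment of the empty case is a valid minor refinement.
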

\begin{proof}
  The object $B$ is invariant under the action of the group $\Stab(p) \subset \PGL(3)$ by Lemma~\ref{lem: projections of a skyscraper}, so $\supp(B)$ is a closed $\Stab(p)$-invariant subset of $\P^2$. Thus it is either $\P^2$, or a point $p$.

  Assume that $B$ is an object set-theoretically supported only at the point $p \in \P^2$. Pick the smallest integer $i \in \Z$ such that $\mH^i(B) \neq 0$. Then there exists a morphism $\mH^i(B)[-i] \to B$ in the derived category inducing the identity map on the $i$'th cohomology sheaves. Since $\mH^i(B)$ is a nonzero torsion sheaf supported at a point $p$, there exists a inclusion $\O_p \monoarrow \mH^i(B)$ of sheaves. The composition $\O_p[-i] \to \mH^i(B)[-i] \to B$ is a map inducing a nonzero morphism on the $i$'th cohomology sheaves, so by Corollary~\ref{cor: maps from torsion cannot be nonzero on cohomology} this implies that $\mB = \Dbcoh(\P^2)$ and~$\mA = 0$. This is a contradiction with the assumption that $\mA \neq 0$.
\end{proof}

\begin{lemma}
  \label{lem: p2 numerical lemma}
  Let $B$ be as in Setting~\textup{\ref{set: b-projector}}. For any smooth cubic curve $j\colon E \to \P^2$ which passes through $p$, the derived restriction $j^*B$ is isomorphic to $\O_p[0] \oplus M[a]$ for some simple vector bundle $M$ on the curve $E$ and some shift $a \in \Z$.
\end{lemma}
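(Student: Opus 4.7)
The plan is to apply Proposition~\ref{prop: numerical lemma without numbers} to the smooth anticanonical cubic $j\colon E \hookrightarrow \P^2$ passing through the smooth point $p$, obtaining five possible descriptions of $j^*B$. I then rule out cases (1), (2), (3), and (5), so that only case (4) survives; this is the desired conclusion.

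Cases (1), (2), and (3) are eliminated by support arguments using Lemma~\ref{lem: nontrivial decomposition is not pointwise}, which asserts $\supp B = \P^2$. Case (1), $j^*B = 0$, forces $\supp B \cap E = \emptyset$ by Lemma~\ref{lem: derived restriction to a divisor}(3), a contradiction. In cases (2) and (3) the object $j^*B$ is a torsion complex on $E$, so the injection $L_0 j^*\mH^i(B) \hookrightarrow \mH^i(j^*B)$ from Lemma~\ref{lem: derived restriction to a divisor}(1) forces each $\mH^i(B) \otimes \O_E$ to have zero-dimensional support on $E$. However, since $\P^2$ is irreducible and $\supp B = \P^2$ is a finite union of $\supp \mH^i(B)$, at least one $\mH^i(B)$ must have two-dimensional support equal to $\P^2$, and then its restriction to $E$ is one-dimensional, contradicting the above.

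The main obstacle is case (5), where $j^*B \iso \widetilde M$ is a vector bundle; here I must contradict the hypothesis of Setting~\ref{set: b-projector} by showing $B$ is in fact locally free at $p$. Combining $\mH^i(j^*B) = 0$ for $i \neq 0$ with the short exact sequences of Lemma~\ref{lem: derived restriction to a divisor}(1) yields $\mH^i(B) \otimes \O_E = 0$ for $i \neq 0$, and Nakayama's lemma then forces $\supp \mH^i(B) \cap E = \emptyset$, so $B$ is concentrated in degree $0$ near $p$ with $j^*\mH^0(B) \iso \widetilde M$. Decomposing $\mH^0(B)$ into its torsion subsheaf $T$ and torsion-free quotient $\mathcal{F}$, Lemma~\ref{lem: torsion-free sheaves and divisors} gives an embedding $j^*T \hookrightarrow j^*\mH^0(B) \iso \widetilde M$, and since $\widetilde M$ is torsion-free we deduce $j^*T = 0$, so $T$ is supported away from $E$ and hence from $p$. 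Finally, applying Lemma~\ref{lem: torsion-free sheaves on a surface} to $\mathcal{F}$, I would embed it in a locally free sheaf $\mathcal{E}$ with zero-dimensional cokernel $\mathcal{Q}$; if $p$ were in $\supp \mathcal{Q}$, then since the defining equation of $E$ is a non-unit acting nilpotently on the finite-length module $\mathcal{Q}_p$, the sheaf $L_1 j^*\mathcal{Q}$ would be nonzero at $p$, and the long exact sequence (using $L_1 j^*\mathcal{E} = 0$) embeds this nontrivial torsion into $j^*\mathcal{F} \iso \widetilde M$, another contradiction with torsion-freeness. Hence $\mH^0(B)$, and therefore $B$, is locally free at $p$, ruling out case (5) and forcing $j^*B$ into case (4).
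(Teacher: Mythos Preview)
Your elimination of cases (1)--(3) is fine, and your overall strategy for case~(5) is sound, but it differs from the paper's and contains a gap.

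The paper invokes the $\Stab(p)$-invariance of $B$ (Lemma~\ref{lem: projections of a skyscraper}(2)): since $\Stab(p)$ has only two orbits on $\P^2$, the derived fiber length of $B$ is constant on $\P^2\setminus\{p\}$, and because $B$ is not locally free at $p$ the length jumps there. Hence $j^*B$ is not locally free at $p$ either, and combined with $\supp(j^*B)=E$ (from Lemma~\ref{lem: nontrivial decomposition is not pointwise}) this forces option~(4) in one stroke. You instead analyze the sheaf $\mH^0(B)$ directly without using the automorphism group---a route that could in principle generalize to surfaces with smaller automorphism groups, at the cost of more bookkeeping.

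The gap is in your treatment of the torsion subsheaf. You write ``since $\widetilde M$ is torsion-free we deduce $j^*T=0$'', but a subsheaf of a torsion-free sheaf is merely torsion-free, not zero. You need $L_0 j^*T$ to be \emph{torsion} on $E$, and this can fail if $\supp T$ contains $E$ as a component (e.g.\ $T=j_*\O_E$ has $L_0 j^*T=\O_E$). The fix is already implicit in your first step: from $\mH^{-1}(j^*B)=0$ and Lemma~\ref{lem: derived restriction to a divisor}(1) you obtain not only $L_0 j^*\mH^{-1}(B)=0$ but also $L_1 j^*\mH^0(B)=0$, so the equation $s$ of $E$ is a non-zerodivisor on $\mH^0(B)$, hence on its subsheaf $T$. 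Localizing at the generic point $\eta_E$ of $E$, the stalk $T_{\eta_E}$ is a finite-length module over the DVR $\O_{\P^2,\eta_E}$ on which the uniformizer $s$ acts injectively; this forces $T_{\eta_E}=0$, so $E\not\subset\supp T$. Then $\supp T\cap E$ is zero-dimensional, $L_0 j^*T$ is torsion, and your conclusion $L_0 j^*T=0$ follows. With this patch the rest of your argument for case~(5) goes through.
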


\begin{proof}
  Note that we are exactly in the situation of Proposition~\ref{prop: numerical lemma without numbers}: we restrict a projection of a skyscraper to a smooth anticanonical divisor on a surface. It only remains to rule out all options except $\O_p[0] \oplus M[a]$.

  The object $B$ is $\Stab(p)$-invariant by Lemma~\ref{lem: projections of a skyscraper}. There are only two orbits of $\Stab(p)$ on $\P^2$, the point $p$ and the complement $\P^2 \setminus \{ p \}$. Thus if $B$ is not locally free at $p$, by Lemma~\ref{lem: locally free has many definitions} the length of the derived fiber at~$p$ is strictly larger than at any other point of $\P^2$. This implies that the restriction $j^*B$ to $E$ is also not locally free at $p \in E$ since the (derived) restriction does not change the lengths of derived fibers. By Lemma~\ref{lem: nontrivial decomposition is not pointwise} the support of $j^*B$ is the curve $E$, so the pullback $j^*B$ is not a torsion object. Among the options listed in Proposition~\ref{prop: numerical lemma without numbers}, only one is an object which is not torsion and not locally free at~$p$, and therefore $j^*B \iso \O_p[0] \oplus M[a]$ for a simple vector bundle $M$ on $E$, as claimed in the statement.
\end{proof}

\begin{lemma}
  \label{lem: cohomology sheaves do not disappear after a restriction}
  Let $B$ and $j\colon E \to \P^2$ be as in Setting~\textup{\ref{set: b-projector}}. If $\mathcal{H}^i(j^*B) = 0$, then $\mathcal{H}^i(B) = 0$.
\end{lemma}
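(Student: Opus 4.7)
The plan is to combine two pieces of information about $B$: a support constraint coming from the vanishing of $\mH^i(j^*B)$, and a symmetry constraint coming from the fact that $B$ is $\Stab(p)$-invariant. The geometric punchline is that since $p$ lies on $E$, the restriction cannot ``miss'' any $\Stab(p)$-invariant cohomology sheaf.

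First, I would apply Lemma~\ref{lem: derived restriction to a divisor}(3) to the object $B \in \Dbcoh(\P^2)$ and the Cartier divisor $j\colon E \monoarrow \P^2$: the hypothesis $\mH^i(j^*B) = 0$ implies that $\supp \mH^i(B)$ does not meet $E$. (Equivalently, one can read this directly from the short exact sequence in part (1) of that lemma, which forces $L_0 j^* \mH^i(B) = 0$.)

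Next, I would invoke Lemma~\ref{lem: projections of a skyscraper}(2), which says that $B$ is invariant under the action of $\Stab(p) \subset \PGL(3) = \mathrm{Aut}^\circ(\P^2)$. For any $g \in \Stab(p)$ the pullback $g^*$ is an exact autoequivalence of $\Dbcoh(\P^2)$, so an isomorphism $g^*B \iso B$ induces an isomorphism $g^*\mH^i(B) \iso \mH^i(B)$ of coherent sheaves. In particular, the closed subset $\supp \mH^i(B) \subset \P^2$ is $\Stab(p)$-invariant.

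Finally, because $\Stab(p)$ acts transitively on $\P^2 \setminus \{ p \}$, the only closed $\Stab(p)$-invariant subsets of $\P^2$ are $\emptyset$, $\{ p \}$, and all of $\P^2$. The point $p$ lies on $E$ by the choice of the cubic curve in Setting~\ref{set: b-projector}, so both $\{ p \}$ and $\P^2$ intersect $E$; the first step therefore forces $\supp \mH^i(B) = \emptyset$, i.e., $\mH^i(B) = 0$. I do not expect any real obstacle in this proof: it is a clean combination of the restriction-support relation from Lemma~\ref{lem: derived restriction to a divisor} with the $\Stab(p)$-equivariance from Lemma~\ref{lem: projections of a skyscraper} and the double transitivity of $\PGL(3)$ on $\P^2$ (the same symmetry argument already used in Lemma~\ref{lem: nontrivial decomposition is not pointwise}).
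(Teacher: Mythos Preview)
Your proof is correct and is essentially identical to the paper's own argument: both use Lemma~\ref{lem: derived restriction to a divisor} to conclude $\supp(\mH^i(B)) \cap E = \emptyset$, then the $\Stab(p)$-invariance of $B$ from Lemma~\ref{lem: projections of a skyscraper}(2), and finally the fact that $p \in E$ together with the transitivity of $\Stab(p)$ on $\P^2 \setminus \{p\}$ to force $\supp(\mH^i(B)) = \emptyset$.
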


\begin{proof}
  Since $j\colon E \to \P^2$ is an inclusion of a (Cartier) divisor, by Lemma~\ref{lem: derived restriction to a divisor} the vanishing of~$\mH^i(j^*B)$ implies that $\supp(\mH^{i}(B)) \cap E = \emptyset$. By Lemma~\ref{lem: projections of a skyscraper} the object $B$ is $\Stab(p)$-invariant, hence $\mH^i(B)$ is also $\Stab(p)$-invariant. Since $E$ passes through $p$ and $\Stab(p)$ acts transitively on $\P^2 \setminus \{ p \}$, we obtain that the nonderived restriction of $\mH^i(B)$ to any point of~$\P^2$ is zero, but this implies $\mH^i(B) = 0$.
\end{proof}

\begin{corollary}
  \label{cor: reduction to at most two cohomology sheaves}
  Let $B$ be as in Setting~\textup{\ref{set: b-projector}}. Then $B$ has at most two nonzero cohomology sheaves, and at most one of them is not a torsion sheaf supported at $p$.
\end{corollary}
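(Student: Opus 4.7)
The plan is to combine the restriction description in Lemma~\ref{lem: p2 numerical lemma} with the reflection Lemma~\ref{lem: cohomology sheaves do not disappear after a restriction} to bound the cohomology of $B$, and then exploit $\Stab(p)$-invariance to analyze the degree-zero piece.

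First, Lemma~\ref{lem: p2 numerical lemma} gives $j^*B \iso \O_p[0] \oplus M[a]$, whose cohomology is nonzero only in degrees $0$ and $-a$. Applying Lemma~\ref{lem: cohomology sheaves do not disappear after a restriction} termwise yields $\mH^i(B) = 0$ for every $i \notin \{0, -a\}$, which already proves the first assertion. It also reduces the second assertion to the case $a \neq 0$, since otherwise $B$ has only one nonzero cohomology sheaf and the ``at most one'' condition is automatic.

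For the case $a \neq 0$, the plan is to show that $\mH^0(B)$, when nonzero, is a torsion sheaf supported at $p$; the other candidate $\mH^{-a}(B)$ is then the single permitted exception. I would apply Lemma~\ref{lem: derived restriction to a divisor} in degree zero to obtain the short exact sequence
\[
  0 \to \mH^0(B)|_E \to \mH^0(j^*B) = \O_p \to L_1 j^*\mH^1(B) \to 0,
\]
so that $\mH^0(B)|_E$ is a subsheaf of $\O_p$, with set-theoretic support contained in $\{p\}$, irrespective of the cokernel $L_1 j^*\mH^1(B)$.

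Next, Lemma~\ref{lem: projections of a skyscraper}(2) guarantees that $B$, and hence each $\mH^i(B)$, is $\Stab(p)$-invariant, so $\supp \mH^0(B)$ is one of $\emptyset$, $\{p\}$, or $\P^2$; the condition $\supp \mH^0(B) \cap E \subset \{p\}$ rules out $\P^2$, leaving only $\emptyset$ or $\{p\}$. This yields the claim. The main subtlety, rather than a genuine obstacle, is the case $a = -1$ where $\mH^1(B)$ may be nonzero and $L_1 j^*\mH^1(B)$ need not vanish; however, the weak containment $\mH^0(B)|_E \hookrightarrow \O_p$ is all that is required, so the argument is unaffected.
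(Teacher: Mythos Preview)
Your argument is correct. The first assertion is handled identically to the paper. For the second assertion the two proofs diverge slightly in how they extract information from $j^*B$.

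The paper looks at the derived fiber of $B$ at a single point $q \in E$ with $q \neq p$: since $j^*B \iso \O_p \oplus M[a]$, this fiber is $M_q[a]$, concentrated in one degree. By $\Stab(p)$-invariance the fiber length is constant on $\P^2 \setminus \{p\}$, so $B$ is locally free there (Lemma~\ref{lem: locally free has many definitions}), and then only the degree-$(-a)$ cohomology sheaf can be nonzero away from $p$. Your route instead applies Lemma~\ref{lem: derived restriction to a divisor} in degree $0$ to get $L_0 j^*\mH^0(B) \hookrightarrow \O_p$, and then uses $\Stab(p)$-invariance of $\mH^0(B)$ to force $\supp \mH^0(B) \subset \{p\}$. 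Both arguments hinge on the same two ingredients, the known shape of $j^*B$ and $\Stab(p)$-invariance; yours is a touch more direct for the corollary as stated, while the paper's yields the slightly stronger intermediate fact that $B$ is locally free on $\P^2 \setminus \{p\}$.
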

\begin{proof}
  Pick an elliptic curve $j\colon E \to \P^2$ which passes through $p$. Then Lemmas~\ref{lem: p2 numerical lemma} and~\ref{lem: cohomology sheaves do not disappear after a restriction} imply that $B$ has at most two nonzero cohomology sheaves. Moreover, we see that the (derived) restriction of $B$ to some point $q \in E$ which is distinct from $p$ is concentrated in a single degree. Since $B$ is $\Stab(p)$-invariant, it is locally free away from $p$ and thus only one of cohomology sheaves is nonzero around the point $q$.
\end{proof}

\subsection{The structure of $B$}
\label{ssec: new reduction steps}

\begin{lemma}
  \label{lem: lines detect point-torsion}
  Let $\mathcal{F}$ be a nonzero coherent sheaf on a smooth surface $S$ supported at a single point $p \in S$. Then for any curve $j\colon C \monoarrow S$ passing through~$p$ we have~$L_1j^*\mathcal{F} \neq 0$ and~$L_0j^*\mathcal{F} \neq 0$. Moreover, those two zero-dimensional sheaves have the same length.
\end{lemma}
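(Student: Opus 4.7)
The plan is to reduce to a purely local computation at $p$ and then use basic linear algebra for finite-length modules.

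First, I would pass to the stalk at $p$. Let $A = \O_{S,p}$, a two-dimensional regular local ring with maximal ideal $\mathfrak{m}$ and residue field $\k$. Since $j\colon C \monoarrow S$ is an inclusion of a curve (in particular, a Cartier divisor) passing through $p$, the curve is locally cut out by a single nonzero element $f \in \mathfrak{m}$, which is a non-zero-divisor in $A$. The short exact sequence of sheaves
\[
  0 \to \O_S(-C) \xrightarrow{\cdot f} \O_S \to j_*\O_C \to 0
\]
is a length-one locally free resolution of $j_*\O_C$. Applying $(-) \otimes \mathcal{F}$ (and using that both sides of the derived adjunction compute the same thing) gives canonical identifications
\[
  L_0 j^* \mathcal{F} \iso \coker\bigl(f\cdot\colon \mathcal{F} \to \mathcal{F}(C)\bigr), \qquad L_1 j^* \mathcal{F} \iso \ker\bigl(f\cdot\colon \mathcal{F} \to \mathcal{F}(C)\bigr),
\]
both of which are supported at $p$ and whose lengths (as sheaves on $C$, equivalently as $A$-modules) equal the lengths of the corresponding kernel and cokernel of $f\cdot\colon \mathcal{F}_p \to \mathcal{F}_p$ (after trivializing $\O_S(-C)$ locally).

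Next, I would exploit the fact that $\mathcal{F}_p$ is a finite-length $A$-module (since $\mathcal{F}$ is supported at the single point $p$), so there exists $N$ with $\mathfrak{m}^N \mathcal{F}_p = 0$. Because $f \in \mathfrak{m}$, we obtain $f^N \mathcal{F}_p \subset \mathfrak{m}^N \mathcal{F}_p = 0$; that is, the endomorphism $f\cdot$ of $\mathcal{F}_p$ is nilpotent. For the short exact sequences
\[
  0 \to \ker(f\cdot) \to \mathcal{F}_p \to \im(f\cdot) \to 0, \qquad 0 \to \im(f\cdot) \to \mathcal{F}_p \to \coker(f\cdot) \to 0,
\]
additivity of length over $A$ immediately yields $\ell(\ker f\cdot) = \ell(\coker f\cdot)$, which gives the equality of lengths claimed in the last sentence of the lemma.

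Finally, to see that both derived pullbacks are nonzero, note that $\mathcal{F}_p \neq 0$ and $f\cdot$ is nilpotent on it: if $f\cdot$ were injective, then by length additivity it would be an isomorphism, contradicting $f^N = 0$ on $\mathcal{F}_p$. Hence $\ker(f\cdot) \neq 0$, and by the length equality above $\coker(f\cdot) \neq 0$ as well. There is no real obstacle here; the only mild subtlety is keeping the identification of $L_1 j^*$ with $\ker(f\cdot)$ clean despite the twist by $\O_S(-C)$, which disappears once one works with stalks at $p$ and chooses a local trivialization.
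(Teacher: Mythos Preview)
Your proof is correct and matches the paper's approach essentially line by line: both reduce to the local ring at $p$, compute $j^*\mathcal{F}$ as the two-term complex given by multiplication by $f$, observe that $f$ acts nilpotently on the finite-length module $\mathcal{F}_p$, and conclude that kernel and cokernel are nonzero of equal length. The only cosmetic difference is that the paper phrases the last step as ``$\mathcal{F}$ is a finite-dimensional vector space'' rather than invoking length additivity, but the content is identical.
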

\begin{proof}
  We may work locally and assume that $S$ is a spectrum of a local ring. Let $\mathfrak{m} \subset \O_S$ be the ideal sheaf of the point $p$. The curve $C$ is given by $f = 0$ for some $f \in \mathfrak{m}$. The derived pullback $j^*\mathcal{F}$ is computed by the complex $\mathcal{F} \xrightarrow{f \cdot -} \mathcal{F}$.
  Since $\mathcal{F}$ is set-theoretically supported at the point $p$, for some $n \gg 0$ we have $f^n \in \mathrm{Ann}(\mathcal{F})$. The multiplication by $f$ thus cannot be an automorphism of $\mathcal{F}$. Since $\mathcal{F}$ is a vector space of finite dimension, this means the kernel and cokernel of the multiplication map are both nonzero and have the same dimension.
\end{proof}

\begin{lemma}
  \label{lem: lines detect skyscrapers}
  Let $\mathcal{F}$ be a nonzero coherent sheaf on a smooth surface $S$ supported at a single point $p \in S$. Assume that for any tangent direction at $p$ there exists a smooth curve $j\colon C \monoarrow S$ passing through $p$ with that tangent direction such that the torsion sheaf $L_0j^*\mathcal{F}$ has length one. Then $\mathcal{F}$ is isomorphic to a skyscraper sheaf $\O_p$ on $S$.
\end{lemma}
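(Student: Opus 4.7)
The plan is to reduce to commutative algebra on the local ring $R = \O_{S,p}$ with maximal ideal $\mathfrak{m}$. Since $\mathcal{F}$ is supported only at $p$, viewing it over $R$ it becomes an artinian module. A smooth curve $C$ through $p$ is cut out locally by an element $f \in \mathfrak{m} \setminus \mathfrak{m}^2$, and the derived pullback has $L_0 j^* \mathcal{F} = \mathcal{F}/f\mathcal{F}$. Because $f \in \mathfrak{m}$, there is a surjection $\mathcal{F}/f\mathcal{F} \twoheadrightarrow \mathcal{F}/\mathfrak{m}\mathcal{F}$. Applying the hypothesis even for a single tangent direction forces $\dim_k \mathcal{F}/\mathfrak{m}\mathcal{F} \leq 1$, and Nakayama gives $\dim_k \mathcal{F}/\mathfrak{m}\mathcal{F} = 1$ since $\mathcal{F} \neq 0$. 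Thus $\mathcal{F}$ is cyclic, so $\mathcal{F} \iso R/I$ for some ideal $I \subset \mathfrak{m}$, and the length one hypothesis becomes $(I, f) = \mathfrak{m}$.

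Next I exploit the freedom in the tangent direction. Writing $\bar{I} \subset \mathfrak{m}/\mathfrak{m}^2$ for the image of $I$ and $\bar{f} \in \mathfrak{m}/\mathfrak{m}^2$ for the image of $f$, the equation $(I, f) = \mathfrak{m}$ reduces modulo $\mathfrak{m}^2$ to
\[
  \bar{I} + k \cdot \bar{f} = \mathfrak{m}/\mathfrak{m}^2.
\]
Tangent directions at $p$ are in bijection with lines in $\mathfrak{m}/\mathfrak{m}^2$ (the line $[\bar{f}]$ corresponds to the hyperplane $\ker(\bar{f}) \subset T_pS$), and every nonzero class in $\mathfrak{m}/\mathfrak{m}^2$ is realized as the leading term of some smooth curve's local equation. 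Hence, by hypothesis, for every nonzero $\bar{f} \in \mathfrak{m}/\mathfrak{m}^2$ some nonzero scalar multiple of $\bar{f}$ (equivalently $\bar{f}$ itself) satisfies $\bar{I} + k \bar{f} = \mathfrak{m}/\mathfrak{m}^2$.

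Finally, I claim $\bar{I} = \mathfrak{m}/\mathfrak{m}^2$. Otherwise, since $\dim_k \mathfrak{m}/\mathfrak{m}^2 = 2$, the subspace $\bar{I}$ is either zero or a line. If $\bar{I} = 0$, then $\bar{I} + k\bar{f} = k\bar{f}$ is at most one-dimensional for every $\bar{f}$, contradicting the above. If $\bar{I}$ is a line, choosing $\bar{f} \in \bar{I}$ nonzero gives $\bar{I} + k\bar{f} = \bar{I} \subsetneq \mathfrak{m}/\mathfrak{m}^2$, again a contradiction. Hence $I + \mathfrak{m}^2 = \mathfrak{m}$, and Nakayama's lemma yields $I = \mathfrak{m}$, so $\mathcal{F} \iso R/\mathfrak{m} \iso \O_p$.

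The argument contains no genuine obstacle once Step~1 reduces us to a cyclic module: the only slightly delicate point is recognizing that the smooth-curve condition exactly lets us prescribe the class $\bar{f} \in \mathfrak{m}/\mathfrak{m}^2$ (up to scalar) freely, so that the hypothesis becomes the purely linear-algebraic statement that $\bar{I} + k\bar{f} = \mathfrak{m}/\mathfrak{m}^2$ for every nonzero $\bar{f}$. This is a strong enough statement in a $2$-dimensional vector space to force $\bar{I}$ to be everything.
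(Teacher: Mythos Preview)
Your proof is correct and follows essentially the same approach as the paper: reduce to the local ring, use one slice plus Nakayama to get $\mathcal{F}\iso R/I$, then compare the image $\bar I\subset\mathfrak{m}/\mathfrak{m}^2$ against the classes $\bar f$ coming from curves in all tangent directions to force $\bar I=\mathfrak{m}/\mathfrak{m}^2$ and conclude $I=\mathfrak{m}$ by Nakayama. The paper organizes the final step as a direct contradiction (pick $[f]\in I_p$ when $I_p\neq 0$, arbitrary otherwise), while you phrase it as a case split on $\dim\bar I\in\{0,1,2\}$; these are the same argument.
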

\begin{proof}
  Let $A := \O_{S, p}$ be the local ring of the point $p \in S$, and denote by $\mathfrak{m}$ the maximal ideal of $A$. Let $C \subset S$ be one of the curves from the statement, and let $f \in \mathfrak{m}$ be an equation of the curve $C$. Then the nonderived restriction $L_0j^*\mathcal{F}$ is isomorphic to $\mathcal{F}/f \mathcal{F}$. Note that the quotient $\mathcal{F}/\mathfrak{m}\mathcal{F}$ is nonzero since $\mathcal{F}$ is a nonzero sheaf. Since the length of $\mathcal{F}/f \mathcal{F}$ is one, this implies that $\mathcal{F}/\mathfrak{m}\mathcal{F}$ is an one-dimensional vector space. By Nakayama's lemma $\mathcal{F}$ is a cyclic module, i.e., $\mathcal{F} \iso A/I$ for some ideal $I \subset A$ contained in $\mathfrak{m}$.

  Let $I_p$ be the image of $I \subset \mathfrak{m}$ in the cotangent space $T_p^\dual := \mathfrak{m}/\mathfrak{m}^2$. If $I_p = T_p^\dual$, then by Nakayama's lemma $I = \mathfrak{m}$ and then $\mathcal{F} \iso A/\mathfrak{m} \iso \O_p$, so the lemma is proved. Assume now that $I_p$ is a proper subset of $T_p^\dual$. For an equation $f \in \mathfrak{m}$ of a curve $C$ as in the statement let $[ f ] \in T_p^\dual$ denote its class in $T_p^\dual$. If $I_p$ is a nonzero subspace, choose a curve $C = \{ f = 0 \}$ such that $[ f ] \in I_p$, and if $I_p$ is zero, choose an arbitrary $C$. The assumption on the length of $L_0j^*\mathcal{F}$ implies that $(I, f) = \mathfrak{m}$. But by the choice of $f$ the image of the ideal $(I, f)$ in the cotangent space $T_p^\dual$ is a proper subset of $T_p^\dual$, a contradiction. Thus $I = \mathfrak{m}$ is the only option.
\end{proof}

\begin{lemma}
  \label{lem: always torsion in cohomology sheaves of B}
  Let $B$ be as in Setting~\textup{\ref{set: b-projector}}. At least one cohomology sheaf $\mH^i(B)$ has torsion.
\end{lemma}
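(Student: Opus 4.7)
The plan is to assume for contradiction that every cohomology sheaf of $B$ is torsion-free. Then Corollary~\ref{cor: reduction to at most two cohomology sheaves} forces $B$ to have a single nonzero cohomology sheaf, so $B \iso \mathcal{F}[k]$ for some integer $k$ and a torsion-free coherent sheaf $\mathcal{F}$. By Lemma~\ref{lem: projections of a skyscraper} and the length-of-derived-fiber argument used in Lemma~\ref{lem: p2 numerical lemma}, $\mathcal{F}$ is $\Stab(p)$-invariant, locally free on $\P^2 \setminus \{ p \}$, and not locally free at $p$.

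The goal is a numerical contradiction between two estimates for $\dim \Ext^1(\mathcal{F}, \mathcal{F})$. Lemma~\ref{lem: objects with a special point have large ext1} supplies the lower bound $\dim \Ext^1(\mathcal{F}, \mathcal{F}) \geq 2$. For a matching upper bound, I would first use Lemma~\ref{lem: torsion-free sheaves on a surface} to embed $\mathcal{F}$ into its reflexive hull, obtaining a short exact sequence $0 \to \mathcal{F} \to \mathcal{E} \to Q \to 0$ with $\mathcal{E} := \mathcal{F}^{\dual\dual}$ locally free and $Q$ a zero-dimensional torsion sheaf. Both $\mathcal{E}$ and $Q$ inherit $\Stab(p)$-invariance from $\mathcal{F}$, and since the only $\Stab(p)$-invariant finite subset of $\P^2$ is $\{ p \}$, the sheaf $Q$ is supported at $p$. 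The technical heart is to identify $Q$ with $\O_p$ itself. Restricting the sequence along a smooth cubic $j\colon E \monoarrow \P^2$ through $p$: by Lemma~\ref{lem: torsion-free sheaves and divisors} the pullback $j^*\mathcal{F}$ is concentrated in degree $0$, and matching with Lemma~\ref{lem: p2 numerical lemma} forces $j^*\mathcal{F} \iso \O_p \oplus M$ for a simple vector bundle $M$ on $E$. The derived pullback of the sequence to $E$ then embeds $L_1 j^*Q$, which is nonzero by Lemma~\ref{lem: lines detect point-torsion}, into the torsion subsheaf $\O_p$ of $j^*\mathcal{F}$; this forces $L_1 j^*Q \iso \O_p$ and hence $L_0 j^*Q$ of length one. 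Since smooth cubics through $p$ realize every tangent direction at $p$, Lemma~\ref{lem: lines detect skyscrapers} yields $Q \iso \O_p$.

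With $Q \iso \O_p$ established, I would close using the universal property $\REnd(B) \iso \RHom(B, \O_p)$ from Corollary~\ref{cor: universal property of projection triangles}. A Koszul computation shows that $\RHom(\mathcal{F}, \O_p)$ is concentrated in degrees $0$ and $1$, and the $\Tor$ long exact sequence for the reflexive hull sequence gives $\dim \Ext^1(\mathcal{F}, \O_p) = \dim \Tor_2(\O_p, \O_p) = 1$. Since $\End(B)$ is nonzero, the shift $k$ must lie in $\{ -1, 0 \}$, and in either case the universal property provides an isomorphism $\Ext^1(\mathcal{F}, \mathcal{F}) \iso \Ext^{1-k}(\mathcal{F}, \O_p)$, whose dimension is at most $1$, contradicting the lower bound of $2$. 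The main obstacle is step two, the identification $Q \iso \O_p$: it is precisely this control on the colength of $\mathcal{F}$ inside its reflexive hull that makes the universal-property bound sharp enough to clash with Lemma~\ref{lem: objects with a special point have large ext1}.
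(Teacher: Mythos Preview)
Your proposal is correct and follows essentially the same route as the paper's proof: reduce to $B$ being a shift of a single torsion-free sheaf $\mathcal{F}$, use the reflexive hull sequence and the restriction to cubics through $p$ to identify the cokernel $Q$ with $\O_p$ via Lemmas~\ref{lem: lines detect point-torsion} and~\ref{lem: lines detect skyscrapers}, then contrast the universal-property bound on $\Ext^1(B,B)\iso\Ext^1(B,\O_p)$ with the lower bound from Lemma~\ref{lem: objects with a special point have large ext1}.

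One small redundancy: once you match $j^*\mathcal{F}$ (a sheaf in degree~$0$ by Lemma~\ref{lem: torsion-free sheaves and divisors}) against $j^*B\iso\O_p[0]\oplus M[a]$ from Lemma~\ref{lem: p2 numerical lemma}, the shift $k$ is already forced to be $0$, so the later case split $k\in\{-1,0\}$ is unnecessary. The paper handles this by invoking Lemma~\ref{lem: projection triangle induces nonzero map on cohomology} at the outset to pin down $\mH^0(B)\neq 0$, hence $k=0$. Either way the contradiction goes through.
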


\begin{proof}
  Assume that all cohomology sheaves are torsion-free. By Corollary~\ref{cor: reduction to at most two cohomology sheaves} the object $B$ has only one nonzero cohomology sheaf. Moreover, by Lemma~\ref{lem: projection triangle induces nonzero map on cohomology} the sheaf $\mH^0(B)$ is not zero. Hence $B \iso \mathcal{F}[0]$ for some $\Stab(p)$-invariant torsion-free coherent sheaf $\mathcal{F}$ on $\P^2$. By Lemma~\ref{lem: torsion-free sheaves and divisors} the derived restriction $j^*\mathcal{F}$ is concentrated in degree zero. From Lemma~\ref{lem: p2 numerical lemma} we conclude that $L_0j^*\mathcal{F} \iso \O_p \oplus M$ for a vector bundle $M$ on the curve $E$, and $L_1j^*\mathcal{F} = 0$.
  
  Since $\mathcal{F}$ is a torsion-free sheaf on a surface, we may consider the short exact sequence from Lemma~\ref{lem: torsion-free sheaves on a surface}:
  \begin{equation}
    \label{eqn: torsion-free sheaf on a surface}
    0 \to \mathcal{F} \to \mathcal{E} \to \mathcal{Q} \to 0
  \end{equation}
  where $\mathcal{E}$ is locally free and $\mathcal{Q}$ is a torsion sheaf. By $\Stab(p)$-invariance of $\mathcal{F}$ and the uniqueness of the short exact sequence the torsion sheaf $\mathcal{Q}$ is supported only at the point $p$.

  Consider the long exact sequence of derived pullbacks $L_\bullet j^*$ induced by the short exact sequence (\ref{eqn: torsion-free sheaf on a surface}):
  \[
    0 \to L_1 j^* \mathcal{Q} \to L_0 j^*\mathcal{F} \to L_0 j^*\mathcal{E} \to L_0 j^* \mathcal{Q} \to 0.
  \]
  The sheaf $L_1 j^* \mathcal{Q}$ is a nonzero torsion sheaf by Lemma~\ref{lem: lines detect point-torsion}. Since the torsion part of $L_0 j^*\mathcal{F}$ is isomorphic to a skyscraper $\O_p$, this implies that $L_1 j^* \mathcal{Q} \iso \O_p$. By Lemma~\ref{lem: lines detect point-torsion} the nonderived pullback $L_0 j^*\mathcal{Q}$ is also isomorphic to a skyscraper at $p$. Since $\mathcal{Q}$ is $\Stab(p)$-invariant, the same holds for cubic curves passing through $p$ in any direction. By Lemma~\ref{lem: lines detect skyscrapers} this implies that $\mathcal{Q} \iso \O_p$. Then one easily computes that
  \[
    \Ext^1(B, \O_p) = \Ext^1(\mathcal{F}, \O_p) \iso \Ext^2(\mathcal{Q}, \O_p) \iso \k.
  \]
  
  Since the object $B$ is the projection of a skyscraper sheaf, by Corollary~\ref{cor: universal property of projection triangles} the vector space $\Ext^1(B, \O_p)$ is isomorphic to $\Ext^1(B, B)$. On the other hand, $\mathcal{F}$ is not locally free at a single point $p \in \P^2$, so $\Ext^1(\mathcal{F}, \mathcal{F})$ is at least two-dimensional by Lemma~\ref{lem: objects with a special point have large ext1}. This is a contradiction, so at least one cohomology sheaf of $B$ is not torsion-free.
\end{proof}

\begin{remark}
  The first part of the argument in Lemma~\ref{lem: always torsion in cohomology sheaves of B} shows that if $B$ is a single coherent sheaf, then it is necessarily a torsion-free sheaf which is a kernel of a map between a vector bundle and a skyscraper. This does not happen in Setting~\ref{set: b-projector}, but sheaves like that appear as the \emph{left} projections $\mA_L(\O_p)$ of skyscraper sheaves for those semiorthogonal decompositions~$\Dbcoh(\P^2) = \langle \mA, \mB \rangle$ where the subcategory $\mB$ is generated by a single exceptional vector bundle. For example, if $\mB = \langle \O \rangle$, the projection triangle is
  \[
    \O \to \O_p \to \mathcal{I}_p[1].
  \]
  Here the ideal sheaf $\mathcal{I}_p$ is exactly the sheaf described by the first part of the argument in Lemma~\ref{lem: always torsion in cohomology sheaves of B}. Thus the second part of the argument may be considered as a way to distinguish the left projection and the right projection of a skyscraper sheaf.
\end{remark}

\begin{lemma}
  \label{lem: torsion case of cohomology sheaf}
  Let $B$ be as in Setting~\textup{\ref{set: b-projector}}. Then $B$ is concentrated in degrees $[-1; 0]$, $\mH^0(B)$ is isomorphic to $\O_p$, the sheaf $\mH^{-1}(B)$ is locally free, and the projection triangle
  \[
    B \to \O_p \to A
  \]
  from Setting~\textup{\ref{set: b-projector}} is isomorphic to a truncation triangle of $B$, with $A \iso \mH^{-1}(B)[2]$.
\end{lemma}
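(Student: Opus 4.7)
The plan is to use the derived-restriction spectral sequence of Lemma~\ref{lem: derived restriction to a divisor} to pin down the shift $a$ appearing in the decomposition $j^*B \iso \O_p \oplus M[a]$ from Lemma~\ref{lem: p2 numerical lemma}, then extract the structure of the cohomology sheaves of $B$ using the point-torsion detection results of Subsection~\ref{ssec: new reduction steps}, and finally appeal to Lemma~\ref{lem: morphisms from topmost cohomology sheaf extend} to identify the projection triangle with the canonical truncation triangle of $B$.

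First, Corollary~\ref{cor: reduction to at most two cohomology sheaves}, Lemma~\ref{lem: p2 numerical lemma}, and Lemma~\ref{lem: cohomology sheaves do not disappear after a restriction} together force every nonzero cohomology sheaf of $B$ to lie in degrees $\{0, -a\}$, while Lemma~\ref{lem: projection triangle induces nonzero map on cohomology} guarantees $\mH^0(B) \neq 0$. To rule out each $a \neq 1$, I will exploit the two short exact sequences
\begin{gather*}
  0 \to L_0 j^* \mH^{-1}(B) \to \mH^{-1}(j^*B) \to L_1 j^* \mH^0(B) \to 0, \\
  0 \to L_0 j^* \mH^0(B) \to \mH^0(j^*B) \to L_1 j^* \mH^1(B) \to 0.
\end{gather*}
For $a \neq 1$ the vanishing of $\mH^{-1}(j^*B)$ together with $\mH^{-1}(B) = 0$ makes the first sequence force $L_1 j^* \mH^0(B) = 0$, whence Lemma~\ref{lem: lines detect point-torsion} and $\Stab(p)$-invariance make $\mH^0(B)$ torsion-free. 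When $a = 0$ this contradicts Lemma~\ref{lem: always torsion in cohomology sheaves of B}, since $B$ is then a single sheaf. For every other value of $a$, the second sequence exhibits $L_0 j^* \mH^0(B)$ as a coherent subsheaf of $\O_p$, which is impossible: a nonzero torsion-free coherent sheaf on $\P^2$ has positive rank at the generic point of $E$, so its restriction to $E$ has full support and therefore cannot be contained in $\O_p$.

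With $a = 1$ in hand, the second SES gives $L_0 j^* \mH^0(B) \iso \O_p$, and the same rank argument prevents $\mH^0(B)$ from being torsion-free, so $\mH^0(B)$ is a torsion sheaf supported at the single point $p$. Translating $E$ by elements of $\Stab(p) \subset \PGL(3)$ produces smooth cubics through $p$ realizing every tangent direction at $p$, each restricting $\mH^0(B)$ to a length-one sheaf; thus $\mH^0(B) \iso \O_p$ by Lemma~\ref{lem: lines detect skyscrapers}. Writing $\mathcal{F} := \mH^{-1}(B)$, the first SES (with $L_1 j^* \O_p \iso \O_p$) exhibits $L_0 j^* \mathcal{F}$ as the kernel of a nonzero surjection $M \twoheadrightarrow \O_p$, hence a locally free sheaf on $E$. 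The SES for $i = -2$ forces $L_1 j^* \mathcal{F} = 0$, so $\mathcal{F}$ has no torsion at $p$ by Lemma~\ref{lem: lines detect point-torsion} and therefore is torsion-free. Applying Lemma~\ref{lem: torsion-free sheaves on a surface} writes $0 \to \mathcal{F} \to \mathcal{E} \to \mathcal{Q} \to 0$ with $\mathcal{Q}$ either zero or torsion at $p$; if $\mathcal{Q} \neq 0$ then Lemma~\ref{lem: lines detect point-torsion} gives $L_1 j^* \mathcal{Q} \neq 0$, producing a nonzero torsion subsheaf of the locally free sheaf $L_0 j^* \mathcal{F}$, contradiction. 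Hence $\mathcal{F}$ is locally free.

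For the final identification, Lemma~\ref{lem: morphisms from topmost cohomology sheaf extend} applied to the bounded-above complex $B$ gives $\Hom(B, \O_p) \iso \Hom(\mH^0(B), \O_p) \iso \k$, so the projection map $B \to \O_p$ (nonzero by Lemma~\ref{lem: projection triangle induces nonzero map on cohomology}) and the canonical truncation map $B \to \mH^0(B) \iso \O_p$ differ by a scalar; their cones are therefore isomorphic, yielding $A \iso \mH^{-1}(B)[2]$. The main obstacle I anticipate is the case analysis ruling out $a \neq 1$, and in particular the delicate subcase $a = -1$, where the second SES only provides an injection $L_0 j^* \mH^0(B) \hookrightarrow \O_p$ (rather than equality) and the argument must rely on the positive-rank-at-the-generic-point-of-$E$ calculation for torsion-free coherent sheaves on $\P^2$.
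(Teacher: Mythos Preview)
Your argument is correct and complete. The main organizational difference from the paper is how the shift $a$ gets pinned down. The paper uses Lemma~\ref{lem: always torsion in cohomology sheaves of B} as the \emph{starting} point: knowing that some $\mH^i(B)$ has torsion supported at $p$, it observes that $L_1 j^*\mH^i(B)\neq 0$ and $L_0 j^*\mH^i(B)$ has torsion, so the spectral-sequence short exact sequences force both $\mH^{i-1}(j^*B)$ and $\mH^i(j^*B)$ to be nonzero (with the latter containing torsion). Matching this against the shape $\O_p[0]\oplus M[a]$ immediately gives $i=0$ and $a=1$, with no case analysis. You instead run the case analysis on $a$ directly and only invoke Lemma~\ref{lem: always torsion in cohomology sheaves of B} to dispatch the subcase $a=0$; this is slightly longer but perfectly sound, and your treatment of $a=-1$ via the generic-rank observation is clean.

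For the local freeness of $\mH^{-1}(B)$ the two arguments also diverge. The paper notes that $L_0 j^*\mH^{-1}(B)$, being a subsheaf of the locally free $\mH^{-1}(j^*B)\iso M$ on the smooth curve $E$, is locally free; hence the nonderived fiber dimension of $\mH^{-1}(B)$ is constant along $E$, and by $\Stab(p)$-invariance constant on all of $\P^2$, so $\mH^{-1}(B)$ is locally free. Your route through torsion-freeness plus the double dual sequence and the contradiction $L_1 j^*\mathcal{Q}\hookrightarrow L_0 j^*\mathcal{F}$ is a bit more hands-on but equally valid, and it makes the role of $L_1 j^*\mathcal{F}=0$ more visible. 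Either approach lands in the same place for the final step, where your use of Lemma~\ref{lem: morphisms from topmost cohomology sheaf extend} matches the paper verbatim.
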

\begin{proof}
  By Lemma~\ref{lem: always torsion in cohomology sheaves of B} we know that there exists some $i \in \Z$ such that the sheaf $\mH^i(B)$ is not torsion-free. Let $T \subset \mH^i(B)$ be the torsion subsheaf. It is $\Stab(p)$-invariant, so it is supported only at the point $p$. Consider the short exact sequence
  \[
    0 \to T \to \mH^i(B) \to \mH^i(B)/T \to 0.
  \]
  Consider the long exact sequence of derived pullbacks $L_\bullet j^*$ induced by that short exact sequence. The quotient $\mH^i(B)/T$ is a torsion-free sheaf on a smooth surface, so using Lemma~\ref{lem: torsion-free sheaves and divisors} we see $L_1j^*(\mH^i(B)/T) = 0$, and hence $L_1j^*\mH^i(B) \iso L_1j^*T$. This space is nonzero by Lemma~\ref{lem: lines detect point-torsion}. We also see that $L_0j^*\mH^i(B)$ contains the nonzero torsion subsheaf isomorphic to $L_0j^*T$.

  The relation between cohomology sheaves of $j^*B$ and derived pullbacks $L_\bullet j^* \mH^i(B)$ is described in Lemma~\ref{lem: derived restriction to a divisor}. In particular, this lemma implies that $\mH^{i-1}(j^*B)$ has a quotient isomorphic to $L_1 j^* \mH^i(B)$, and $\mH^i(j^*B)$ has a subsheaf isomorphic to $L_0j^*\mH^i(B)$. Thus both $i$'th and $(i-1)$'th cohomology sheaves of $j^*B$ are nonzero, and moreover $\mH^i(j^*B)$ has a nonzero torsion subsheaf.
  
  By Lemma~\ref{lem: p2 numerical lemma} this implies that $i = 0$, the object $j^*B$ is concentrated in degrees $[-1; 0]$, the sheaf $\mH^0(j^*B)$ is isomorphic to a skyscraper sheaf $\O_p$, and the sheaf $\mH^{-1}(j^*B)$ is locally free. By Lemma~\ref{lem: cohomology sheaves do not disappear after a restriction} the cohomology sheaves of the complex $B$ are also zero outside of the range $[-1; 0]$. Since in this case $L_0j^*\mH^0(B) \iso \mH^0(j^*B) \iso \O_p$, and the sheaf $\mH^0(B)$ is $\Stab(p)$-invariant, by Lemma~\ref{lem: lines detect skyscrapers} this implies that $\mH^0(B) \iso \O_p$.

  Since the sheaf $\mH^{-1}(j^*B)$ is locally free on a curve, its subsheaf $L_0j^*\mH^{-1}(B)$ is also locally free. The sheaf $\mH^{-1}(B)$ is $\Stab(p)$-invariant, and the curve $j\colon E \to \P^2$ passes through $p$, so the nonderived rank of the sheaf $\mH^{-1}(B)$ is constant over $\P^2$. Therefore $\mH^{-1}(B)$ is locally free.
  
  Thus $B$ is concentrated in degrees $-1$ and $0$, with $\mH^0(B) \iso \O_p$ and $\mH^{-1}(B)$ locally free. Using Lemma~\ref{lem: morphisms from topmost cohomology sheaf extend} it is easy to compute that $\Hom(B, \O_p)$ is one-dimensional. Any nonzero map is proportional to the truncation morphism $B \to \tau_{\geq 0}(B) \iso \O_p[0]$, and the cone of this map is isomorphic to $\mH^{-1}(B)[2]$. This confirms the last claim of the statement.
\end{proof}

\subsection{Full description of $A$ and $B$}
\label{ssec: exceptional bundles from local freeness}

\begin{lemma}
  \label{lem: exceptional projections are generators}
  Let $X$ be a smooth and proper variety, and let $\mA \subset \Dbcoh(X)$ be an admissible subcategory. Let $E \in \mA$ be an exceptional object and suppose that for any point $p \in X$ the projection $\mA_L(\O_p) \in \mA$ lies in the subcategory $\langle E \rangle \subset \mA$. Then $\mA = \langle E \rangle$.
\end{lemma}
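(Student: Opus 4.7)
The plan is to use the admissibility of $\langle E \rangle$ inside $\mA$ together with the adjunction defining $\mA_L$ to reduce the problem to showing that an object killed by all skyscraper sheaves must be zero.

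Since $E$ is exceptional and $X$ is smooth and proper, the $\RHom$-spaces in $\mA \subset \Dbcoh(X)$ are finite-dimensional, so by Lemma~\ref{lem: exceptional collections are admissible} the subcategory $\langle E \rangle \subset \mA$ is admissible in $\mA$, yielding a semiorthogonal decomposition
\[
  \mA = \langle \mC, \langle E \rangle \rangle,
\]
where $\mC := \langle E \rangle^{\perp_\mA}$. It suffices to show that $\mC = 0$.

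Fix an arbitrary object $c \in \mC$. For any point $p \in X$, the hypothesis gives $\mA_L(\O_p) \in \langle E \rangle$, and since $c$ lies in the right orthogonal $\langle E \rangle^{\perp_\mA}$, we obtain $\RHom(\mA_L(\O_p), c) = 0$. Because $\mA_L$ is left adjoint to the inclusion $\mA \hookrightarrow \Dbcoh(X)$ and $c \in \mA$, adjunction yields
\[
  \RHom(\O_p, c) \;\iso\; \RHom(\mA_L(\O_p), c) \;=\; 0
\]
for every $p \in X$.

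It remains to argue that this forces $c = 0$. By Serre duality on the smooth proper variety $X$ of dimension $n$ we have $\RHom(\O_p, c) \iso \RHom(c, \O_p \otimes \omega_X[n])^\dual$, and since $\omega_X$ is an invertible sheaf, $\RHom(c, \O_p \otimes \omega_X[n]) = 0$ is equivalent to $\RHom(c, \O_p) = 0$. By Lemma~\ref{lem: support and derived fibers}, the latter vanishing for all $p \in X$ means $\supp(c) = \emptyset$, hence $c = 0$. Therefore $\mC = 0$ and $\mA = \langle E \rangle$, as claimed. The only delicate point is ensuring the adjunction is applied on the correct side, but this is immediate once one sets up the orthogonal inside $\mA$ rather than inside $\Dbcoh(X)$.
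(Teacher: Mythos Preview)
Your proof is correct and follows essentially the same approach as the paper's: decompose $\mA = \langle \mC, E \rangle$, use the adjunction for $\mA_L$ together with the hypothesis to see that every $c \in \mC$ is orthogonal to all skyscraper sheaves, and conclude $\mC = 0$. Your write-up is in fact slightly more careful at the final step, explicitly invoking Serre duality to pass from $\RHom(\O_p, c) = 0$ to $\RHom(c, \O_p) = 0$ before applying Lemma~\ref{lem: support and derived fibers}, a step the paper leaves implicit.
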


\begin{proof}
  By Lemma~\ref{lem: exceptional collections are admissible} the subcategory $\langle E \rangle \subset \mA$ is admissible in $\mA$. Consider the induced semiorthogonal decomposition $\mA = \langle \mA^\prime, E \rangle$. Let $L_{\mA^\prime}\colon \Dbcoh(X) \to \mA^\prime$ be the left projection functor. It is equal to the composition of the left projection functor $\mA_L$ and the left projection to $\mA^\prime$ inside $\mA$. Thus the condition $\mA_L(\O_p) \in \langle E \rangle$ implies that $L_{\mA^\prime}(\O_p) = 0$ for all skyscrapers. Since $L_{\mA^\prime}$ is the left adjoint for the inclusion functor $\mA^\prime \monoarrow \Dbcoh(X)$, for any object $A \in \mA^\prime$ we have
  \[
    \RHom_{X}(A, \O_p) \caniso \RHom(A, L_{\mA^\prime}(\O_p)) = 0.
  \]
  This is true for all points $p \in X$, so the support of any object $A \in \mA^\prime$ is empty. Therefore the subcategory $\mA^\prime$ is a zero subcategory, which means that $\mA = \langle E \rangle$, as claimed.
\end{proof}

\begin{lemma}
  \label{lem: projection is rigid implies exceptional vector bundle}
  Let $A, B$ be as in Setting~\textup{\ref{set: b-projector}}. Then $\mA$ is generated by a single exceptional vector bundle.
\end{lemma}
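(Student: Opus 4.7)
The plan is to apply Lemma~\ref{lem: exceptional projections are generators}: I will produce an exceptional vector bundle $E \in \mA$ such that $\mA_L(\O_q) \in \langle E \rangle$ for every $q \in \P^2$. By Lemma~\ref{lem: torsion case of cohomology sheaf}, $A \iso V[2]$ where $V := \mH^{-1}(B)$ is a locally free sheaf on $\P^2$, so it suffices to show $V \iso E^{\oplus n}$ for a single exceptional vector bundle $E$ and then to propagate this to every skyscraper by $\PGL(3)$-symmetry.

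First I would compute $\RHom(A, A)$ via the left-adjointness of $\mA_L$ to the inclusion $\mA \hookrightarrow \Dbcoh(\P^2)$:
\[
  \RHom(A, A) \iso \RHom(\mA_L(\O_p), A) \iso \RHom(\O_p, A) \iso \RHom(\O_p, V)[2] \iso V|_p[0],
\]
using $\RHom(\O_p, V) \iso V|_p[-2]$ from the Koszul resolution of $\O_p$ on a smooth surface. Consequently $\Ext^{>0}(V, V) = 0$, so $V$ is rigid, and $\dim \End(V) = \rk V$. Decomposing $V = \bigoplus_i E_i^{\oplus n_i}$ into indecomposables via Krull--Schmidt, each summand inherits rigidity, and is therefore an exceptional vector bundle on $\P^2$ by Drézet's classification of rigid indecomposable bundles; moreover $\Ext^{>0}(E_i, E_j) = 0$ for all pairs $i, j$.

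The hard part will be to show that there is only one isomorphism class among the $E_i$, i.e.\ that $V$ is isotypic. To this end I would combine the numerical identity
\[
  \sum_{i,j} n_i n_j \dim \Hom(E_i, E_j) \;=\; \dim \End(V) \;=\; \rk V \;=\; \sum_i n_i \rk E_i
\]
with the fact that on $\P^2$ there are no pairs of distinct exceptional bundles with vanishing $\Hom$-groups in both directions (a consequence of the Euler-form computation and the helix structure of exceptional collections, yielding $\chi(E_i, E_j) + \chi(E_j, E_i) > 0$ for any distinct rigid exceptional pair). The contributions of off-diagonal terms then push the quadratic left-hand side strictly above the linear right-hand side as soon as two distinct summands appear, so only one $E_i$ survives; the remaining isotypic case $V \iso E^{\oplus n}$ forces $n = \rk E$.

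Finally, I would upgrade the conclusion from $p$ to an arbitrary $q \in \P^2$. By the $\PGL(3)$-invariance of $\mA$ from Proposition~\ref{prop: admissible subcategories are open}, picking $g \in \PGL(3)$ with $g(p) = q$ and using uniqueness of projection triangles gives $\mA_L(\O_q) \iso g^* A \iso (g^* E)^{\oplus n}[2]$, and $g^* E$ is again an exceptional object of $\mA$. The collection $\{g^* E\}_{g \in \PGL(3)}$ is a flat family of rigid objects over the connected variety $\PGL(3)$, so the isomorphism class is constant: $g^* E \iso E$ for every $g$. Therefore $\mA_L(\O_q) \iso E^{\oplus n}[2] \in \langle E \rangle$ for every $q \in \P^2$, and Lemma~\ref{lem: exceptional projections are generators} concludes $\mA = \langle E \rangle$.
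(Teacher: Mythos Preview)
Your overall architecture matches the paper's: reduce to $A\iso V[2]$ with $V$ locally free via Lemma~\ref{lem: torsion case of cohomology sheaf}, show $V$ is rigid, invoke Dr\'ezet to split $V$ into exceptional bundles, prove $V$ is isotypic, then globalize by $\PGL(3)$-invariance and apply Lemma~\ref{lem: exceptional projections are generators}. Your computation $\RHom(A,A)\iso V|_p[0]$ is correct, and the final globalization step is essentially the paper's.

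The gap is in the isotypicity step. Your claimed inequality ``off-diagonal terms push the quadratic left-hand side strictly above the linear right-hand side'' is false as stated. Take an exceptional pair $(E_1,E_2)$ on $\P^2$ with ranks $(5,13)$ and $\dim\Hom(E_1,E_2)=3$ (such pairs exist, arising from the Markov triple $(1,5,13)$); with $n_1=n_2=1$ the left side is $1+1+3=5$ while the right side is $5+13=18$. So the quadratic side can be \emph{smaller}. What you actually need is that equality never occurs for positive $n_i$ with at least two summands, and your sketch does not establish this. Your auxiliary claim that $\chi(E_i,E_j)+\chi(E_j,E_i)>0$ for distinct exceptional bundles is in fact true, but it does not by itself yield the needed non-equality.

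The paper's route is shorter and, crucially, uses the hypothesis from Setting~\ref{set: b-projector} that $B$ is \emph{not locally free}, which you never invoke. Once $V=\bigoplus E_i^{n_i}$ with the $E_i$ exceptional and $\Ext^{>0}(E_i,E_j)=0$, ordering by slope and using stability of exceptional bundles gives $\Hom(E_j,E_i)=0$ for $j>i$; hence any two distinct summands already form an exceptional \emph{pair} $\langle R_0,R_1\rangle\subset\mA$. Then $\mB\subset{}^\perp\langle R_0,R_1\rangle$, and by \cite[Th.~5.10]{gorodentsev-rudakov} this orthogonal is generated by a single exceptional bundle, forcing every object of $\mB$ to be locally free --- contradicting the standing assumption on $B$. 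This replaces your numerical inequality entirely.
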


\begin{proof}
  By Lemma~\ref{lem: torsion case of cohomology sheaf} we know that the object $A$ in the projection triangle $B \to \O_p \to A$ is isomorphic to $N[2]$ for some vector bundle $N$ on $\P^2$. Semiorthogonality of $A$ and $B$ implies that
  \[
    \RHom(N, N) \caniso \RHom(N[2], N[2]) \caniso \RHom(\O_p, N[2]).
  \]
  Since $N$ is locally free, the space $\RHom(\O_p, N[2])$ is concentrated in degree $0$. Therefore the graded vector space~$\Ext^*(N, N)$ is also concentrated in degree zero. Thus the bundle~$N$ is rigid. By \cite[Cor.~7]{drezet} all rigid vector bundles on $\P^2$ are direct sums of exceptional bundles. Suppose that $N$ is not a direct sum of several copies of the same exceptional bundle. Then it has two non-isomorphic direct summands $R_0$ and $R_1$, which are both exceptional bundles. It is known \cite[Lem.~4.3]{drezet-lepotier} that an exceptional vector bundle on $\P^2$ is uniquely determined by its slope, so without loss of generality we may assume that the slope of $R_0$ is strictly smaller than the slope of $R_1$.

  Since every exceptional bundle on $\P^2$ is stable \cite[Th.~4.1]{gorodentsev-rudakov}, the inequality of slopes implies that
  \[
    \mathrm{R}^0\Hom(R_1, R_0) = 0.
  \]
  Then the pair $R_0, R_1$ is semiorthogonal: indeed, $\Ext^*(N, N) = \Ext^0(N, N)$, so there are no higher $\Ext$s between the direct summands of $N$, and there are no $\mathrm{R}^0\Hom$s from $R_1$ to $R_0$ by semistability.

  The category $\mA$ is closed under direct summands, so both $R_0$ and $R_1$ lie in $\mA$. The orthogonal subcategory $\mB = {}^\perp \mA$ is contained inside ${}^\perp \langle R_0, R_1 \rangle$. By \cite[Th.~5.10]{gorodentsev-rudakov} the orthogonal to an exceptional pair on $\P^2$ is generated by a single exceptional vector bundle. In particular, this would imply that any object in $\mB$ is locally free, but we assumed from the very beginning in Setting~\ref{set: b-projector} that $B \in \mB$ is not a locally free object. This contradiction shows that $A \iso N[2] \iso \left(N^\prime\right)^{\oplus n}[2]$ is a direct sum of several copies of an exceptional vector bundle $N^\prime$.

  All exceptional bundles on $\P^2$ are rigid and therefore $\PGL(3)$-invariant. Thus by Proposition~\ref{prop: admissible subcategories are open} we know that the pullback of the projection triangle
  \[
    B \to \O_p \to A
  \]
  along some element $g \in \PGL(3)$ is a projection triangle for a skyscraper at the point $g^{-1}(p)$. Thus the projection of any skyscraper to $\mA$ is isomorphic to $\left(N^\prime\right)^{\oplus n}[2]$. By Lemma~\ref{lem: exceptional projections are generators} we see that the subcategory $\mA$ is generated by an exceptional vector bundle $N^\prime$. This establishes the second part of the statement.
\end{proof}

This lemma is the final step in the proof of Theorem~\ref{thm: reduction to B not locally free}, and hence it also establishes the main theorem of this chapter, Theorem~\ref{thm: all admissible subcategories are standard}.

\section{Admissible subcategories supported on a \texorpdfstring{$(-1)$}{-1}-curve}
\label{sec: minus one curves}

In the previous section we showed that any admissible subcategory of $\Dbcoh(\P^2)$ is one of the examples we know. We would like to generalize this statement to some other varieties, such as del Pezzo surfaces. A necessary step is to study admissible subcategories whose support is some curve in a del Pezzo surface. We have seen that for $\P^2$ there are no such subcategories, but in general they exist. For example, the structure sheaf of any smooth $(-1)$-curve is an exceptional object, and hence generates an admissible subcategory by Lemma~\ref{lem: exceptional collections are admissible}.

The main result of this section is Proposition~\ref{prop: new local classification on blow-ups}, where we prove that any admissible subcategory supported on a smooth $(-1)$-curve in a surface is one of the standard examples, i.e., it is generated by a twist of the structure sheaf of that $(-1)$-curve. It is possible and not too difficult to give a proof along the lines of Section~\ref{sec: projective plane}. The reason why this is possible is that the powerful Proposition~\ref{prop: numerical lemma without numbers} may be applied in this situation not only to the anticanonical divisor of the surface, but in fact to any divisor which in a neighborhood of the $(-1)$-curve is equivalent to an anticanonical divisor. For example, any curve which intersects the $(-1)$-curve transversely at a single point is an option. For a more detailed sketch of an argument along these lines we refer the interested reader to the thesis version of this paper \cite[Sec.~5.1]{mythesis}.

In this section we use a different, more conceptual approach, following a suggestion by Kuznetsov. It uses the additivity of Hochschild homology in the form proved in \cite{kuznetsov-oldhochschild}.

One application of this local classification result is given in Corollary~\ref{cor: no phantoms in blow-ups of antifano}, where we prove the non-existence of phantom subcategories in some blow-ups of surfaces. Note that any nontrivial blow-up has a nontrivial semiorthogonal decomposition \cite{orlov93}, so the non-existence of phantoms is interesting.

We start with a reminder on Hochschild homology and its interaction with semiorthogonal decompositions, following \cite{kuznetsov-oldhochschild}, in Subsection~\ref{ssec: reminder on hochschild}. We complete the classification of possible admissible subcategories supported on a $(-1)$-curve in Subsection~\ref{ssec: new argument for minus one curves}.

\subsection{Reminder on Hochschild homology}
\label{ssec: reminder on hochschild}

The material below is taken from \cite{kuznetsov-oldhochschild}. See the reference for additional details and the proofs.

Let $X$ be a smooth and proper variety, and let $\langle \mA, \mB \rangle = \Dbcoh(X)$ be a semiorthogonal decomposition. Let $\mB_R$ and $\mA_L$ denote the projection functors from $\Dbcoh(X)$ to $\mB$ and $\mA$, right and left respectively. By Proposition~\ref{prop: fm kernels for projections} there exist Fourier--Mukai kernels in $\Dbcoh(X \times X)$ representing those functors, and we denote the kernels with the same symbols. The kernels for the projection functors fit into a triangle in $\Dbcoh(X \times X)$:
\[
  \mB_R \to \Delta_*\O_X \to \mA_L.
\]

We denote the canonical line bundle $\Omega^{\dim X}_X$ of $X$ by $K_X$. The graded vector space
\[
  \mathrm{HH}_\bullet(X) := \RHom_{X \times X}(\Delta_*\O_X, \Delta_*K_X[\dim X])
\]
is called the \emph{Hochschild homology} of $X$. It is a straightforward consequence of this definition that there exists an isomorphism $\mathrm{HH}_{-\dim X}(X) \caniso H^0(X, K_X)$. 

For objects in $\Dbcoh(X \times X)$ there is a binary operation $\operatorname{-} \circ \operatorname{-}$, called convolution, which corresponds to the composition of Fourier--Mukai transforms. The structure sheaf $\Delta_*\O_X$ of the diagonal is the identity element for this operation, and thus there is an isomorphism between $\Delta_*K_X[\dim X]$ and $\Delta_*\O_X \circ \Delta_*K_X[\dim X]$. It is proved in \cite[Prop.~5.5]{kuznetsov-oldhochschild} that any morphism $\varphi \in \mathrm{HH}_{m}(X)$ can be uniquely extended to a morphism of triangles:

\begin{equation}
  \label{eqn: hochschild additivity}
  \begin{tikzcd}
    \mB_R \arrow[r] \arrow[d, "\gamma_\mA(\varphi)"] & \Delta_*\O_X \arrow[r] \arrow[d, "\varphi"] & \mA_L \arrow[d, "\gamma_\mB(\varphi)"] \\
    \mB_R \circ \Delta_*K_X[m + \dim X] \arrow[r] & \Delta_*K_X[m + \dim X] \arrow[r] & \mA_L  \circ \Delta_*K_X[m + \dim X]
  \end{tikzcd}
\end{equation}

The spaces $\RHom(\mB_R, \mB_R \circ \Delta_*K_X[\dim X])$ and $\RHom(\mA_L, \mA_L \circ \Delta_*K_X[\dim X])$ are called Hochschild homology spaces $\mathrm{HH}_\bullet(\mB)$ and $\mathrm{HH}_\bullet(\mA)$ respectively. They depend only on the categories $\mB$ and $\mA$, as proved in \cite{kuznetsov-oldhochschild}. The uniqueness and existence of the extension of the map $\varphi \in \mathrm{HH}_m(X)$ to a morphism of triangles (\ref{eqn: hochschild additivity}) using certain maps $\gamma_\mA(\varphi)$ and $\gamma_\mB(\varphi)$ produces a map
\[
  \mathrm{HH}_\bullet(X) \xrightarrow{(\gamma_\mA, \gamma_\mB)} \mathrm{HH}_\bullet(\mA) \oplus \mathrm{HH}_\bullet(\mB).
\]
Theorem 7.3 in \cite{kuznetsov-oldhochschild} shows that this map is an isomorphism, i.e., Hochschild homology is additive for semiorthogonal decompositions.

\subsection{Local classification on a $(-1)$-curve}
\label{ssec: new argument for minus one curves}

Let $S$ be a smooth proper surface. Suppose that $\mA \subset \Dbcoh(S)$ is an admissible subcategory supported (in the sense of Definition~\ref{def: support of a category}) on some smooth $(-1)$-curve $E \subset S$. In this subsection we first show that $\mathrm{HH}_{-2}(\mA) = 0$ in Lemma~\ref{lem: top hochschild homology of torsion subcategories}, and then in Lemmas~\ref{lem: hochschild homology near minus one curves} and~\ref{lem: section-theoretic support on a smooth curve} we deduce from the vanishing of this homology group the fact that any object of $\mA$ is a pushforward of some object from the derived category of coherent sheaves $\Dbcoh(E)$ on the $(-1)$-curve. Finally, in Proposition~\ref{prop: new local classification on blow-ups} we complete the classification.

\begin{lemma}
  \label{lem: top hochschild homology of torsion subcategories}
  Let $X$ be a smooth and proper variety, and let $\mA \subset \Dbcoh(X)$ be an admissible subcategory. Assume that $\mA$ is supported on a proper closed subset $Z \subset X$. Then the bottom Hochschild homology $\mathrm{HH}_{-\dim X}(\mA)$ vanishes.
\end{lemma}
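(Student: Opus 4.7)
The plan is to combine the additivity of Hochschild homology recalled in Subsection~\ref{ssec: reminder on hochschild} with the identification $\mathrm{HH}_{-\dim X}(X) \caniso H^0(X, K_X)$. Since $(\gamma_\mA, \gamma_\mB)$ is an isomorphism by Kuznetsov's theorem, any element of $\mathrm{HH}_{-\dim X}(\mA)$ lifts to some $\varphi \in H^0(X, K_X)$ with $\gamma_\mB(\varphi) = 0$; to conclude $\mathrm{HH}_{-\dim X}(\mA) = 0$ it therefore suffices to prove that the other component $\gamma_\mB \colon H^0(X, K_X) \to \mathrm{HH}_{-\dim X}(\mB)$ is injective.

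For injectivity I would test the natural endotransformation $\gamma_\mB(\varphi)$ of the projection functor $\mB_R$ on a well-chosen skyscraper sheaf. Pick a nonzero $\varphi \in H^0(X, K_X)$; its vanishing locus $V(\varphi)$ is a proper closed subset of $X$, so the complement $X \setminus (Z \cup V(\varphi))$ is nonempty, and I choose a closed point $p$ in it. Since $p \notin Z = \supp(\mA)$, the graded space $\RHom(A, \O_p)$ vanishes for every $A \in \mA$ by Lemma~\ref{lem: support and derived fibers}, so $\O_p$ lies in $\mA^\perp = \mB$.

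Next I evaluate the commutative square involving $\mB_R$ in the morphism of triangles~(\ref{eqn: hochschild additivity}) at the object $\O_p$. The horizontal arrow $\mB_R(\O_p) \to \O_p$ is an isomorphism (because $\O_p \in \mB$), and the vertical arrow $\O_p \to \O_p \otimes K_X$ is multiplication by $\varphi$, which under the canonical identification of $\O_p \otimes K_X$ with the one-dimensional fiber $K_X|_p$ is multiplication by the value $\varphi(p) \neq 0$. Commutativity then forces the value of the natural transformation $\gamma_\mB(\varphi)$ at $\O_p$ to be nonzero, so $\gamma_\mB(\varphi) \neq 0$ in $\mathrm{HH}_{-\dim X}(\mB)$, as required.

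The step requiring the most care is the bookkeeping identifying $\mathrm{HH}_{-\dim X}(\mB)$ with natural transformations of the projection functor $\mB_R$ and confirming that the Fourier--Mukai kernel morphism $\gamma_\mB(\varphi)$, evaluated on an object $G \in \Dbcoh(X)$, really produces the map $\mB_R(G) \to \mB_R(G \otimes K_X)$ coming from multiplication by $\varphi$. This compatibility is built into the commutative-square construction in~(\ref{eqn: hochschild additivity}), but it must be spelled out carefully before the evaluation at $\O_p$ becomes meaningful; once it is in place, the rest of the argument is a short check.
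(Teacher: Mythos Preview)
Your approach is essentially identical to the paper's: reduce to injectivity of $\gamma_\mB$ via additivity, then test $\gamma_\mB(\varphi)$ on the skyscraper $\O_p$ at a point $p \in X \setminus (Z \cup V(\varphi))$, where $\mB_R(\O_p) \caniso \O_p$ and the vertical map is multiplication by $\varphi(p) \neq 0$.

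There is one small slip. You write ``$\O_p$ lies in $\mA^\perp = \mB$'', but in the paper's conventions for the decomposition $\langle \mA, \mB \rangle$ one has $\mB = {}^\perp\mA$, not $\mA^\perp$. Your invocation of Lemma~\ref{lem: support and derived fibers} gives $\RHom(A, \O_p) = 0$, i.e., $\O_p \in \mA^\perp$, which is the wrong orthogonal. The fix is immediate: since $p \notin \supp(A)$ for every $A \in \mA$, the object $A$ vanishes on a neighborhood of $p$, so $\RHom(\O_p, A) = 0$ as well (by computing locally, or by Serre duality combined with the same lemma), and hence $\O_p \in {}^\perp\mA = \mB$. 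With this correction the argument goes through exactly as in the paper.
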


\begin{proof}
  Let $\mB := {}^\perp \mA$ be the orthogonal subcategory in $\Dbcoh(X)$. Let $\gamma_\mB\colon \mathrm{HH}_\bullet(X) \to \mathrm{HH}_\bullet(\mB)$ be the restriction morphism defined in Subsection~\ref{ssec: reminder on hochschild}. By the additivity of Hochschild homology \cite[Th.~7.3]{kuznetsov-oldhochschild} the kernel of the map $\mathrm{HH}_{-\dim X}(X) \to \mathrm{HH}_{-\dim X}(\mB)$ is isomorphic to $\mathrm{HH}_{-\dim X}(\mA)$. Thus it is enough to prove that this map is injective.

  Using the definition given in Subsection~\ref{ssec: reminder on hochschild} it is easy to compute that the vector space $\mathrm{HH}_{-\dim X}(X)$ is isomorphic to $H^0(X, K_X)$. Suppose $s \in H^0(X, K_X)$ is a nonzero section such that $\gamma_\mB(s)$ is a zero class in $\mathrm{HH}_{-\dim X}(\mB)$. Pick a point $p$ in the open subset $X \setminus Z$ such that $s$ does not vanish at $p$. By assumption the skyscraper sheaf $\O_p$ is orthogonal to every object in $\mA$, and hence $\O_p \in \mB$. The morphism of triangles (\ref{eqn: hochschild additivity}) of objects in $\Dbcoh(X \times X)$ for the class $s \in \mathrm{HH}_{-\dim X}(X)$ produces the following morphism of triangles in $\Dbcoh(X)$ via a Fourier--Mukai transform of the skyscraper sheaf $\O_p$:
  \begin{equation}
    \begin{tikzcd}
      \mB_R(\O_p) \caniso \O_p \arrow[r, "\mathrm{id}"] \arrow[d, "\gamma_\mB(s)(\O_p)"] & \O_p \arrow[r] \arrow[d, "s(p)"] & 0 \arrow[d] \\
      \mB_R(\O_p \otimes K_X) \caniso \O_p \otimes K_X \arrow[r, "\mathrm{id}"] & \O_p \otimes K_X \arrow[r] & 0
    \end{tikzcd}
  \end{equation}

  By assumption $s(p) \neq 0$, so by the commutativity of the diagram the leftmost vertical morphism $\gamma_\mB(s)(\O_p)$ is not zero, hence the natural transformation $\gamma_\mB(s) \in \mathrm{HH}_{-\dim X}(\mB)$ is nonzero. This is a contradiction, so the morphism $\gamma_\mB$ is injective on $\mathrm{HH}_{-\dim X}$ and the lemma is proved.
\end{proof}

\begin{remark}
  The argument used in the proof implies that $\supp(\mA)$ is contained in the base locus of the canonical bundle. This gives an alternative proof of \cite[Th.~1.1]{kawatani-okawa} for smooth proper varieties.
\end{remark}

To any global section $s \in H^0(X, K_X)$ and any object $F \in \Dbcoh(X)$ we can associate the multiplication morphism $F \xrightarrow{\cdot s} F \otimes K_X$. These morphisms (for $F \iso \O_p$) were used in the proof of the preceding lemma. If $F$ is a torsion object, then the multiplication morphism is well-defined even if $s$ is only a local section as long as it is defined on a Zariski-open neighborhood of the set-theoretic support of $F$. We use this observation in the following statement.

\begin{lemma}
  \label{lem: hochschild and section-theoretic support}
  Let $X$ be a smooth and proper variety, and let $\mA \subset \Dbcoh(X)$ be an admissible subcategory. Assume that $\mA$ is supported on a proper closed subset $Z \subset X$. Let $U \subset X$ be some Zariski-open neighborhood of $Z$, and let $s \in H^0(U, K_X|_U)$ be a local section of the canonical bundle. Then for any object $A \in \mA$ the morphism
  \[
    A \xrightarrow{\cdot s} A \otimes K_X
  \]
  in the derived category $\Dbcoh(X)$ given by the multiplication with the section $s$ is well-defined, and it is a zero morphism.
\end{lemma}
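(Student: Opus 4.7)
For well-definedness, the plan is to use that $A$ is set-theoretically supported on $Z\subset U$. The full subcategory $\Dbcoh_Z(X)\subset\Dbcoh(X)$ of objects supported on $Z$ is equivalent, via restriction, to the analogous subcategory $\Dbcoh_Z(U)\subset\Dbcoh(U)$: for $F$ and $G$ supported on $Z\subset U$ the complex $\intRHom(F,G)$ is again supported on $Z$, so $\RHom_X(F,G)\caniso\RHom_U(F|_U,G|_U)$. Since both $A$ and $A\otimes K_X$ lie in $\Dbcoh_Z(X)$, the multiplication morphism defined on $U$ using the local section $s$ lifts uniquely to the desired morphism $A\xrightarrow{\cdot s}A\otimes K_X$ in $\Dbcoh(X)$.

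For the vanishing, the plan is to lift the multiplication by $s$ to a morphism of the Fourier--Mukai kernel and reduce to the Hochschild vanishing of Lemma~\ref{lem: top hochschild homology of torsion subcategories}. By Lemma~\ref{lem: support of the kernel}, the kernel $\mA_L\in\Dbcoh(X\times X)$ is supported on $Z\times Z\subset X\times U$. Multiplying by $p_2^*s$ on the open $X\times U$ defines a morphism $\mA_L|_{X\times U}\to\mA_L|_{X\times U}\otimes p_2^*K_X|_{X\times U}$, which by the same support-equivalence argument extends uniquely to a morphism $\mu_s\colon\mA_L\to\mA_L\otimes p_2^*K_X$ in $\Dbcoh(X\times X)$. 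Under the Fourier--Mukai correspondence, the kernel $\mA_L\otimes p_2^*K_X$ represents the endofunctor $F\mapsto\mA_L(F)\otimes K_X$, and $\mu_s$ represents post-composition with multiplication by $s$; evaluating at $F=A\in\mA$ (with $\mA_L(A)\caniso A$) recovers precisely the morphism $A\xrightarrow{\cdot s}A\otimes K_X$ of the statement. Hence it suffices to prove $\Hom_{X\times X}(\mA_L,\mA_L\otimes p_2^*K_X)=0$.

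The main obstacle is that the previous lemma gives $\mathrm{HH}_{-\dim X}(\mA)=\Hom(\mA_L,\mA_L\otimes p_1^*K_X)=0$, using $p_1^*K_X$ rather than $p_2^*K_X$. I see two routes across this gap. First, the argument of Lemma~\ref{lem: top hochschild homology of torsion subcategories} applies verbatim with $p_2^*$ in place of $p_1^*$: at a skyscraper $\O_p$ with $p\notin\supp(\mA)$ the induced morphism $\mB_R(\O_p)=\O_p\to\O_p\otimes K_X\caniso\O_p$ in the parallel additivity diagram is again multiplication by $s(p)$, so $H^0(X,K_X)\to\Hom(\mB_R,\mB_R\otimes p_2^*K_X)$ is injective and the additivity decomposition then forces $\Hom(\mA_L,\mA_L\otimes p_2^*K_X)=0$. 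Second, Serre duality on $X\times X$, whose Serre functor is $(-)\otimes p_1^*K_X\otimes p_2^*K_X[2\dim X]$, identifies $\Hom(\mA_L,\mA_L\otimes p_2^*K_X)$ with $\Ext^{2\dim X}(\mA_L,\mA_L\otimes p_1^*K_X)^\dual\iso\mathrm{HH}_{\dim X}(\mA)^\dual$, which vanishes by the self-duality $\mathrm{HH}_{\dim X}(\mA)\iso\mathrm{HH}_{-\dim X}(\mA)^\dual$ for smooth proper categories and the previous lemma. Either route closes the argument.
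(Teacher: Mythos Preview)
Your argument is correct via Route~2, but it takes a detour the paper avoids. The paper lifts $s$ not to a morphism of $\mA_L$ directly, but to a morphism of diagonal objects $\varphi_s\colon \Delta_*\O_U \to \Delta_*(K_X\otimes\O_U)$ in $\Dbcoh(X\times X)$, and then applies the convolution $\mA_L\circ(-)$. Since $\mA_L$ is supported on $Z\times Z\subset U\times U$, convolving with $\Delta_*\O_U$ gives $\mA_L$ itself and convolving with $\Delta_*(K_X\otimes\O_U)$ gives $\mA_L\circ\Delta_*K_X$; but $\mA_L\circ\Delta_*K_X \caniso \mA_L\otimes p_1^*K_X$, not $p_2^*K_X$, so the resulting morphism lands \emph{directly} in $\Hom(\mA_L,\mA_L\circ\Delta_*K_X)=\mathrm{HH}_{-\dim X}(\mA)$, which is zero by the previous lemma. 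No Mukai pairing, no variant additivity needed. The price of using $p_1^*$ is that the natural transformation now represents $F\mapsto\mA_L(F\otimes K_X)$ rather than $\mA_L(F)\otimes K_X$; the paper therefore evaluates at $A\otimes K_X^\dual$ and uses the naturality square for the projection $F\to\mA_L(F)$ (the right vertical arrow being the identity on $A$) to conclude that $A\otimes K_X^\dual\xrightarrow{\cdot s}A$ vanishes. Your Route~2 is valid but imports the nondegeneracy of the Mukai pairing on $\mathrm{HH}_\bullet(\mA)$, a nontrivial fact for smooth proper categories. Your Route~1 is plausible but not ``verbatim'': Kuznetsov's additivity \cite[Prop.~5.5]{kuznetsov-oldhochschild} is stated for the right-convolution invariant $K\mapsto\RHom(K,K\circ\Delta_*L)$, and the $p_2^*$ version you want corresponds to left convolution $\Delta_*K_X\circ K$, so you would need to check that the cross-term vanishing still goes through.
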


\begin{proof}
  The multiplication morphism is well-defined since any object $A \in \mA$ is set-theoretically supported on the subset $Z \subset U$ by assumption.

  To show that the morphism vanishes, we interpret it as a result of a certain natural transformation. Let $\mA_L \in \Dbcoh(X \times X)$ denote the Fourier--Mukai kernel for the (left) projection functor from the category $\Dbcoh(X)$ to the subcategory $\mA$, as in Proposition~\ref{prop: fm kernels for projections}. By Lemma~\ref{lem: support of the kernel} the object $\mA_L$ is supported on $Z \times Z \subset X \times X$.

  Since $U \subset X$ is an open subscheme, there exists a natural restriction morphism $\O_X \to \O_U$ of quasicoherent sheaves on $X$. Denote by $\Delta$ the diagonal embedding $X \monoarrow X \times X$. Note that the local section $s \in H^0(U, K_X|_U)$ produces a morphism $\O_U \to K_X \otimes \O_U$ of quasicoherent sheaves on $X$. Using it, we can define the following morphism of quasicoherent sheaves on the product $X \times X$:
  \[
    \varphi_s\colon \Delta_*\O_U \xrightarrow{\cdot s}  \Delta_*(K_X \otimes \O_U).
  \]
  The application of the functor $\mA_L$ defines a morphism of Fourier--Mukai kernels
  \[
    \mA_L \circ \varphi_s\colon \mA_L \circ \Delta_*\O_U \to \mA_L \circ \Delta_*(K_X \otimes \O_U).
  \]

  Since the object $\mA_L$ is supported on the subset $Z \times Z \subset U \times U$, we see that $\mA_L \circ \Delta_*\O_U$ is naturally isomorphic to $\mA_L \circ \Delta_*\O_X \caniso \mA_L$, and similarly
  \[
    \mA_L \circ \Delta_*(K_X \otimes \O_U) \iso \mA_L \circ \Delta_*K_X
  \]
  as well. Thus there exists an isomorphism
  \[
    \Hom(\mA_L \circ \Delta_*\O_U, \mA_L \circ \Delta_*(K_X \otimes \O_U)) \caniso \Hom(\mA_L, \mA_L \circ \Delta_*K_X) \caniso \mathrm{HH}_{-\dim X}(\mA).
  \]
  Since the Hochschild homology space $\mathrm{HH}_{-\dim X}(\mA)$ vanishes by Lemma~\ref{lem: top hochschild homology of torsion subcategories}, the morphism of Fourier--Mukai kernels $\mA_L \circ \varphi_s$ is necessarily zero.

  Now let $A \in \mA$ be an arbitrary object. The result of the natural transformation $\mA_L \circ \varphi_s$ applied to the object $A \otimes K_X^\dual$ is the (left) projection of the morphism
  \[
    A \otimes K_X^\dual \xrightarrow{\cdot s} A \otimes K_X^\dual \otimes K_X \caniso A
  \]
  in the category $\Dbcoh(X)$ to the subcategory $\mA$. Note that since $A$ is an object of $\mA$, the projection $\mA_L(A)$ is canonically isomorphic to $A$.

  By Lemma~\ref{lem: projection triangles exist} the projection of the morphism $A \otimes K_X^\dual \xrightarrow{s} A$ to the subcategory $\mA \subset \Dbcoh(X)$ fits into the following commutative square:
  \begin{center}
    \begin{tikzcd}
      A \otimes K_X^\dual \arrow[r, "s"] \arrow[d] & A \arrow[d, "\mathrm{id}"] \\
      \mA_L(A \otimes K_X^\dual) \arrow[r] & \mA_L(A) \caniso A
    \end{tikzcd}
  \end{center}
  The bottom horizontal morphism is zero since it is given by the natural transformation arising from the zero morphism $\mA_L \circ \varphi_s$. Since the right vertical arrow is an isomorphism, the top horizontal map $A \otimes K_X^\dual \to A$ is also zero, which is equivalent to the vanishing of the morphism $A \to A \otimes K_X$, and this is exactly what we wanted to show.
\end{proof}

\begin{remark}
  A different way to prove Lemma~\ref{lem: hochschild and section-theoretic support} is to say that any local section of $K_X$ extends to a global section of some line bundle $L$ which is isomorphic to $K_X$ in a neighborhood of the subset $Z \subset X$, and then use the notion of Hochschild cohomology with support from the paper \cite{kuznetsov-oldhochschild}, i.e., use $\Delta_*L$ instead of $\Delta_*K_X$ in the definition of Hochschild homology. Then an analogue of Lemma~\ref{lem: top hochschild homology of torsion subcategories} would also be true by the additivity criterion for this generalized invariant \cite[Prop.~5.5]{kuznetsov-oldhochschild}.
\end{remark}

\begin{lemma}
  \label{lem: hochschild homology near minus one curves}
  Let $S$ be a smooth and proper surface, and let $C \subset S$ be a smooth $(-1)$-curve. Let $c \in \Gamma(S, \O_S(C))$ be the section cutting out the curve $C$. Let $\mA \subset \Dbcoh(S)$ be an admissible subcategory supported on $C$. Then for every object $A \in \mA$ the morphism $A \to A(C)$ in the derived category given by the multiplication with a section $c$ is the zero morphism.
\end{lemma}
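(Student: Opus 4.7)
The plan is to reduce this to an application of Lemma~\ref{lem: hochschild and section-theoretic support}. The key observation is that while $c$ is a section of $\O_S(C)$ rather than of $K_S$, the line bundles $\O_S(C)$ and $K_S$ are isomorphic on a Zariski-open neighborhood of $C$, so multiplication by $c$ can be translated into multiplication by a local section of $K_S$, to which the previous lemma directly applies.

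To produce the required neighborhood, I would use Castelnuovo's contractibility: there exists a proper birational morphism $f\colon S \to S'$ onto a smooth proper surface that contracts $C$ to a smooth point $p \in S'$, exhibiting $S$ as the blow-up of $S'$ at $p$. The blow-up formula for canonical bundles reads $K_S \cong f^*K_{S'} \otimes \O_S(C)$, i.e.\ $K_S \otimes \O_S(-C) \cong f^*K_{S'}$. Pick an affine open $V \subset S'$ containing $p$ on which the line bundle $K_{S'}$ is trivial, and set $U := f^{-1}(V)$, which is a Zariski-open neighborhood of $C$ in $S$. On $U$ the line bundle $f^*K_{S'}$ is trivial, so we obtain an isomorphism $\phi\colon \O_S(C)|_U \isoarrow K_S|_U$, and may define a local section $s := \phi(c|_U) \in H^0(U, K_S|_U)$.

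Now for any object $A \in \mA$, Lemma~\ref{lem: hochschild and section-theoretic support} applies to the Zariski neighborhood $U \supset C \supset \supp(\mA)$ and the section $s$, giving that multiplication by $s$ is a zero morphism $A \to A \otimes K_S$. Let $j\colon U \monoarrow S$ denote the open immersion. Since $A \otimes \O_S(C)$ and $A \otimes K_S$ are both set-theoretically supported on $C \subset U$, the projection formula yields canonical identifications $A \otimes \O_S(C) \iso j_*(j^*A \otimes \O_S(C)|_U)$ and $A \otimes K_S \iso j_*(j^*A \otimes K_S|_U)$, so the isomorphism $\phi$ induces an isomorphism $\widetilde{\phi}\colon A \otimes \O_S(C) \isoarrow A \otimes K_S$ in $\Dbcoh(S)$. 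By construction of $s$ the diagram $\O_U \xrightarrow{c|_U} \O_S(C)|_U \xrightarrow{\phi} K_S|_U$ commutes with the direct map given by $s$, and tensoring with $j^*A$ and pushing forward along $j_*$ shows that multiplication by $s$ on $A$ equals the composition of multiplication by $c$ with $\widetilde{\phi}$. Since $\widetilde{\phi}$ is an isomorphism and the composite is zero, multiplication by $c$ is zero.

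The only nontrivial input beyond Lemma~\ref{lem: hochschild and section-theoretic support} is Castelnuovo's theorem (which applies since smooth proper surfaces are projective), and the main point to verify carefully is the translation between multiplication-by-$s$ and multiplication-by-$c$, which is a formal projection-formula computation. I expect no further obstacles.
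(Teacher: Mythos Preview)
Your proposal is correct and follows essentially the same strategy as the paper: show that $K_S$ and $\O_S(C)$ are isomorphic on a Zariski neighborhood of $C$, transport the section $c$ to a local section $s$ of $K_S$, and invoke Lemma~\ref{lem: hochschild and section-theoretic support}. The only difference is in how the local isomorphism $K_S|_U \iso \O_S(C)|_U$ is obtained: the paper argues via the adjunction formula that $K_S|_C$ and $\O_S(C)|_C$ have the same degree on $C \iso \P^1$ and concludes directly, whereas you make this explicit through Castelnuovo's theorem and the blow-up formula $K_S \iso f^*K_{S'} \otimes \O_S(C)$, which in fact is a clean way to justify the paper's ``thus locally\ldots'' step.
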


\begin{proof}
  By the adjunction formula the intersection $K_S \cdot C$ equals $-1$, and since $C$ is isomorphic to the projective line $\P^1$, this determines the restriction $K_S|_C$ uniquely. Thus locally, in a Zariski neighborhood of the curve $C \subset S$, the line bundle $K_S$ is isomorphic to $\O_S(C)$. Since any object of the category $\mA$ is supported on the subset $C$, the global section $c \in H^0(S, \O_S(C))$ defines a local section of $K_S$ via that isomorphism. Then the statement follows from Lemma~\ref{lem: hochschild and section-theoretic support}.
\end{proof}

\begin{lemma}
  \label{lem: section-theoretic support on a smooth curve}
  Let $S$ be a smooth and proper surface, and let $C \subset S$ be a smooth curve. Pick a section $s \in \Gamma(S, \O_S(C))$ cutting out the curve $C$. Assume that $A \in \Dbcoh(S)$ is an object such that the morphism $A \xrightarrow{\cdot s} A(C)$ is zero in the derived category. Then $A$ is isomorphic to a pushforward of an object from $\Dbcoh(C)$.
\end{lemma}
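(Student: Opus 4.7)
The plan is to reduce everything to a single splitting argument applied to the Koszul-type triangle
\[
\O_S(-C) \xrightarrow{\cdot s} \O_S \to j_*\O_C \to \O_S(-C)[1]
\]
in $\Dbcoh(S)$. Tensoring this triangle (derived) with $A$ and applying the projection formula $A \otimes^L j_*\O_C \iso j_*(j^*A \otimes^L \O_C) \iso j_*j^*A$ produces the distinguished triangle
\[
A(-C) \xrightarrow{\cdot s} A \to j_*j^*A \to A(-C)[1].
\]

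Next, I would observe that the morphism $\cdot s\colon A \to A(C)$ of the hypothesis is zero if and only if $\cdot s\colon A(-C) \to A$ is zero, since these two morphisms differ by tensoring with the invertible sheaf $\O_S(-C)$, which is an autoequivalence of $\Dbcoh(S)$. Under our hypothesis, the first arrow in the triangle above therefore vanishes, and the triangle splits, yielding
\[
j_*j^*A \iso A \oplus A(-C)[1].
\]
In particular, $A$ is a direct summand of the object $j_*j^*A$, which lies in the essential image of the pushforward functor $j_*\colon \Dbcoh(C) \to \Dbcoh(S)$.

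To conclude, I would invoke the fact that this essential image is closed under direct summands. Since $j$ is a closed immersion of smooth varieties, $j_*$ is fully faithful; hence the projection idempotent on $j_*j^*A$ corresponding to the summand $A$ pulls back under $j_*$ to an idempotent endomorphism of $j^*A$ in $\Dbcoh(C)$. Because $\Dbcoh(C)$ is idempotent-complete, this idempotent splits, producing a decomposition $j^*A \iso B \oplus B'$ with $j_*B \iso A$, which exhibits $A$ as a pushforward from $C$.

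I do not expect any serious obstacle here; the only delicate point is ensuring that $A$ itself, and not merely a summand, lies in the image of $j_*$, and this is resolved by the standard idempotent-completeness argument. The entire argument hinges on the single observation that the hypothesis forces the Koszul-type triangle to split.
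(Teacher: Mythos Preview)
Your overall strategy matches the paper's: tensor the Koszul triangle by $A$, observe that the first arrow vanishes so that $A$ is a direct summand of $j_*j^*A$, and then argue that direct summands of objects in the essential image of $j_*$ are again in that image. The gap is in this last step. Your justification is that $j_*\colon \Dbcoh(C) \to \Dbcoh(S)$ is fully faithful, so the projection idempotent on $j_*j^*A$ lifts to one on $j^*A$. But $j_*$ is \emph{not} fully faithful on derived categories: for a divisor $j\colon C \hookrightarrow S$ one has a triangle $F \otimes N_{C/S}^\vee[1] \to j^*j_*F \to F$, so $\RHom_S(j_*F, j_*G) \cong \RHom_C(j^*j_*F, G)$ picks up an extra piece coming from the normal bundle. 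Concretely, $\Ext^1_S(j_*\O_C, j_*\O_C)$ is typically larger than $\Ext^1_C(\O_C, \O_C)$. So you cannot lift idempotents this way.

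The paper handles the summand issue differently, using the specific hypothesis that $C$ is a smooth \emph{curve}. Since $C$ has homological dimension one, every object of $\Dbcoh(C)$ is formal, hence $j_*j^*A$ is a direct sum of shifts of sheaves of the form $j_*\mH^i(j^*A)$. A direct summand of a formal complex is again formal, so $A \iso \bigoplus \mH^i(A)[-i]$, and each $\mH^i(A)$ is a direct summand of the coherent sheaf $j_*\mH^i(j^*A)$. Now on the \emph{abelian} category $j_*\colon \Coh(C) \to \Coh(S)$ really is fully faithful with essential image closed under summands (sheaves scheme-theoretically supported on $C$), so each $\mH^i(A)$ is a pushforward. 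The paper even remarks that the general statement (valid in higher codimension, without the curve hypothesis) is a recent and substantially harder result of Lieblich--Olsson; you should not expect to get it from a one-line idempotent-completeness argument.
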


\begin{remark}
  A stronger result valid in arbitrary dimension was recently proved in \cite[Th.~3.2]{lieblich-olsson}. The two-dimensional case is significantly easier than the general statement, so we include the direct proof.
\end{remark}

\begin{proof}
  Denote by $j\colon C \monoarrow S$ the inclusion morphism. Consider the restriction triangle for $A$:
  \[
    A(-C) \xrightarrow{s} A \to j_*j^*A.
  \]
  The first morphism in this triangle vanishes by assumption, thus the morphism $A \to j_*j^*A$ is a split monomorphism, i.e., an inclusion of a direct summand. By Lemma~\ref{lem: objects on smooth curves split} the derived pullback $j^*A \in \Dbcoh(C)$ in the derived category of a smooth curve is formal, i.e., it is a direct sum of shifts of cohomology sheaves. Then the pushforward $j_*j^*A$ is also formal, and any direct summand of a formal complex is formal, given by a choice of a direct summand in each cohomology sheaf. Thus $A \iso \oplus \mH^i(A)[-i]$, and each cohomology sheaf $\mH^i(A)$ is a direct summand of a sheaf $j_* \mH^i(j^*A)$. Any direct summand of the pushforward sheaf $j_* \mH^i(j^*A)$ is a pushforward of some direct summand of $\mH^i(j^*A)$. Thus $A$ is isomorphic to a pushforward of an object in $\Dbcoh(C)$.
\end{proof}

Now we can prove the main result of this section.

\statemaintheoremMinusOneCurves

\begin{proof}
  By Lemmas~\ref{lem: hochschild homology near minus one curves} and~\ref{lem: section-theoretic support on a smooth curve} any object of the subcategory $\mA$ is a pushforward of some object from $\Dbcoh(E)$. Let $G \in \Dbcoh(E)$ be an object such that $j_*G$ is a generator of $\mA$. Note that $\mA$ contains pushforwards of all objects in $\langle G \rangle \subset \Dbcoh(E)$. Suppose that $G$ generates the entire derived category of $E$. Then $\mA$ contains a skyscraper sheaf at some point of $E$. On the surface $S$ this skyscraper sheaf may be deformed into a skyscraper sheaf at some point away from $E \subset S$. Since admissible subcategories are closed under small deformations (Proposition~\ref{prop: admissible subcategories are open}), this is a contradiction with the assumption that $\mA$ is supported only on the curve $E$. Therefore $G \in \Dbcoh(E)$ cannot be a generator.

  Since $E$ is isomorphic to $\P^1$, any object in $\Dbcoh(E)$ splits into a direct sum of shifts of torsion sheaves and line bundles. It is easy to check that an object $G \in \Dbcoh(\P^1)$ is not a generator only in two cases: either $G$ is a torsion object, or $G$ is a direct sum of several copies of the same line bundle. The object $G$ cannot be torsion by the same argument as above. Thus $G$ is a direct sum of shifts of copies of $\O_{\P^1}(k)$ for some fixed $k$, and the subcategory $\mA$ generated by its pushforward $j_* G$ can also be generated by $j_*\O_E(k)$, as claimed.
\end{proof}

This local classification implies that there are no phantom subcategories supported on a smooth $(-1)$-curve. We may deduce from this the non-existence of phantom subcategories in some surfaces.

\begin{corollary}
  \label{cor: no phantoms in blow-ups of antifano}
  Let $S$ be a surface with a globally generated canonical bundle. Let $\pi\colon S^\prime \to S$ be a blow-up of several distinct points on $S$. Let $\mA \subset \Dbcoh(S^\prime)$ be an admissible subcategory. Then there exists a subset $\{ E_i \}_{i \in I}$ of exceptional divisors of the morphism $\pi$ and the corresponding integers $\{ k_i \in \Z \}_{i \in I}$ such that either $\mA$ is equal to the subcategory $\oplus_{i \in I} \langle \O_{E_i}(k_i E_i) \rangle$, or is the orthogonal to that subcategory. In particular, $\Dbcoh(S^\prime)$ does not contain any phantom subcategories.
\end{corollary}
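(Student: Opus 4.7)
The plan is to combine the local classification of Proposition~\ref{prop: new local classification on blow-ups} with the additivity of bottom Hochschild homology. Writing $E := \sum_i E_i$ for the exceptional divisor, the identification $K_{S'} \cong \pi^*K_S \otimes \O_{S'}(E)$ together with $\pi_*\O_{S'}(E) = \O_S$ yields $H^0(S', K_{S'}) \cong H^0(S, K_S)$, with every global section of $K_{S'}$ vanishing along $E$. Since $K_S$ is globally generated, for any $p \in S' \setminus E$ I can choose $t \in H^0(S, K_S)$ with $t(\pi(p)) \neq 0$ to obtain a section of $K_{S'}$ nonzero at $p$. Hence the base locus of $K_{S'}$ is exactly $E$, and in particular $H^0(S', K_{S'}) \neq 0$.

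The key claim to establish is that at least one of $\supp(\mA)$, $\supp(\mA^\perp)$ is contained in $E$. First, any admissible $\mathcal{C} \subset \Dbcoh(S')$ with $\supp(\mathcal{C}) \subsetneq S'$ automatically satisfies $\supp(\mathcal{C}) \subset E$: by Lemma~\ref{lem: hochschild and section-theoretic support}, multiplication by every global section of $K_{S'}$ vanishes on objects of $\mathcal{C}$, so no such section can be nonzero at a point of $\supp(\mathcal{C})$. Second, the case $\supp(\mA) = S' = \supp(\mA^\perp)$ will be ruled out by a Hochschild-homology dimension count. If $\supp(\mA) = S'$, then for any nonzero $s \in H^0(K_{S'})$ I pick $q$ with $s(q) \neq 0$; by Serre duality on the smooth proper $S'$, one has $\mA_L(\O_q) \neq 0$ iff $q \in \supp(\mA)$, so $\mA_L(\O_q) \neq 0$, and the map $\mA_L(\O_q) \to \mA_L(\O_q \otimes K_{S'}) \cong \mA_L(\O_q)$ induced by $\gamma_\mA(s)$ from Subsection~\ref{ssec: reminder on hochschild} is nonzero, since it amounts to multiplication by the nonzero scalar $s(q)$. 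Thus $\gamma_\mA$ is injective on $H^0(K_{S'})$, giving $\dim \mathrm{HH}_{-2}(\mA) \geq \dim H^0(K_{S'})$, and symmetrically $\dim \mathrm{HH}_{-2}(\mA^\perp) \geq \dim H^0(K_{S'})$. But additivity of Hochschild homology forces $\dim \mathrm{HH}_{-2}(\mA) + \dim \mathrm{HH}_{-2}(\mA^\perp) = \dim H^0(K_{S'})$, so having both injections would yield $2 \dim H^0(K_{S'}) \leq \dim H^0(K_{S'})$, contradicting $H^0(K_{S'}) \neq 0$.

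Having reduced (possibly after swapping $\mA$ with $\mA^\perp$) to the case $\supp(\mA) \subset E$, the conclusion follows quickly. The curves $E_i$ are pairwise disjoint, so Lemma~\ref{lem: disjoint support forces splitting} canonically decomposes every $A \in \mA$ as $\oplus_i A_i$ with $\supp(A_i) \subset E_i$, inducing a decomposition $\mA = \oplus_i \mA_i$ into admissible subcategories supported on individual $(-1)$-curves. Proposition~\ref{prop: new local classification on blow-ups} then identifies each nonzero $\mA_i$ as $\langle j_*\O_{E_i}(k_i)\rangle$ for some $k_i \in \Z$, and the relation $\O_{E_i}(k) \iso \O_{E_i}(-kE_i)$ (coming from $E_i^2 = -1$) matches the form stated in the corollary after a sign adjustment of the integers $k_i$. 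The phantom assertion is then immediate: one of $\mA, \mA^\perp$ is generated by an exceptional collection of length $|I|$ with $K_0 \cong \Z^{|I|}$, and additivity of $K_0$ for semiorthogonal decompositions combined with $K_0(\Dbcoh(S')) \neq 0$ forces the other component also to have nonvanishing $K_0$. The main obstacle in this plan is the Hochschild-homology dimension count, which depends crucially on the global generation of $K_S$ to ensure $H^0(K_{S'}) \neq 0$.
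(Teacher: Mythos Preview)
Your argument is correct and follows the paper's overall architecture: identify the base locus of $K_{S'}$ as $E$, reduce to the case where $\mA$ (or its orthogonal) is supported on $E$, split over the disjoint components, and apply Proposition~\ref{prop: new local classification on blow-ups}. The difference lies in the reduction step. The paper cites \cite[Th.~1.1]{kawatani-okawa} as a black box, whereas you reprove that statement using the Hochschild machinery of Subsection~\ref{ssec: reminder on hochschild}: Lemma~\ref{lem: hochschild and section-theoretic support} forces any admissible subcategory of proper support into the base locus, and your additivity dimension count rules out both $\mA$ and its orthogonal having full support. This is precisely the alternative route the paper alludes to in the remark following Lemma~\ref{lem: top hochschild homology of torsion subcategories}, so the approaches are close in spirit; yours has the virtue of being self-contained within the paper's framework, at the cost of a slightly longer argument.

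There is one small gap in your last sentence. You deduce nonvanishing of $K_0$ for the orthogonal component from ``$K_0(\Dbcoh(S')) \neq 0$'' and additivity alone, but this does not suffice: additivity would still permit one summand to vanish if the exceptional classes happened to span all of $K_0(S')$. The easy fix is to note that every class $[\O_{E_i}(k_i E_i)]$ has rank zero while $[\O_{S'}]$ has rank one, so the subgroup generated by the exceptional objects is never all of $K_0(S')$; alternatively, invoke Orlov's blow-up formula to see $K_0(S') \caniso K_0(S) \oplus \Z^N$ directly.
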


\begin{proof}
  Since the canonical bundle of $S$ is globally generated, the base locus $Z \subset S^\prime$ of the canonical bundle $K_{S^\prime}$ is equal to the union of exceptional divisors. By \cite[Th.~1.1]{kawatani-okawa} any admissible subcategory of $S^\prime$ either is supported on $Z$, or its orthogonal ${}^\perp \mA$ is supported on $Z$. By switching to the orthogonal we may suppose without loss of generality that $\mA$ is supported on $Z$.

  By definition $Z$ is a disjoint union of several smooth $(-1)$-curves. Objects supported on different $(-1)$-curves are completely orthogonal to each other. Thus $\mA$ splits into a completely orthogonal sum of subcategories supported on each $(-1)$-curve separately. The options for each summand are classified in Proposition~\ref{prop: new local classification on blow-ups}, and there are no phantom subcategories among them.
\end{proof}


\section{Rational elliptic surfaces, del Pezzo surfaces, and phantoms}
\label{sec: del pezzos}

From the classification of admissible subcategories of $\Dbcoh(\P^2)$ given by Theorem~\ref{thm: all admissible subcategories are standard} we easily see that there are no phantom subcategories in $\P^2$. In fact, the full classification is not necessary, and it is not hard to come to the same conclusion right after Lemma~\ref{lem: p2 numerical lemma}. In this section we explore other situations where we can apply the strong structural result given by Proposition~\ref{prop: numerical lemma without numbers} to study phantoms.

To apply the methods of Section~\ref{sec: numerical lemmas} we need a surface with many anticanonical divisors. We concentrate on the case of rational elliptic surfaces. Those are the surfaces whose anticanonical linear system defines a fibration over $\P^1$. First we show that a phantom subcategory in a rational elliptic surface cannot have full support; in fact, in Theorem~\ref{thm: rational elliptic surfaces} we prove that the support of any phantom subcategory is contained in the union of sections and reducible fibers of the anticanonical fibration.

This is a relatively explicit description: all sections are smooth $(-1)$-curves, and Kodaira classified the reducible fibers of (minimal) elliptic surfaces. Since we have managed to prove a classification of admissible subcategories supported on a single smooth $(-1)$-curve in Section~\ref{sec: minus one curves}, it seems reasonable to expect that it may be possible to classify the admissible subcategories supported on this more complicated, but still quite specific, configuration of negative curves. We have done this partially.

Using Proposition~\ref{prop: numerical lemma without numbers} we have proved the classification of admissible subcategories in rational elliptic surfaces which are supported only along the sections of the anticanonical fibration. We show in Theorem~\ref{thm: admissible subcategories on sections} that the only option is to choose several pairwise disjoint sections, pick a line bundle on each one, and span the subcategory by their pushforwards.

Finally, we utilize the classification result above and a relation between del Pezzo surfaces and rational elliptic surfaces to show in Theorem~\ref{thm: no phantoms in del pezzos} that there are no phantoms in del~Pezzo surfaces.

The proofs in this section use the notion of a \emph{point-support} of an admissible subcategory at some point. This notion is introduced in Subsection~\ref{ssec: point-supports}. We also need some additional lemmas about objects set-theoretically supported on curves in surfaces. We study them by pulling them back along curves transverse to the support in Subsection~\ref{ssec: cutting lemmas}. We proceed by studying possible phantom subcategories in rational elliptic surfaces in Subsection~\ref{ssec: rational elliptic surfaces}. The classification of admissible subcategories supported on the union of sections of the elliptic fibration is established in Subsection~\ref{ssec: sections of anticanonical system}. We conclude by showing the non-existence of phantoms in del Pezzo surfaces in Subsection~\ref{ssec: no phantoms in del pezzos}.

\subsection{Point-supports of admissible subcategories}
\label{ssec: point-supports}

\begin{definition}
  \label{def: point-support}
  Let $X$ be an algebraic variety. Let $\mB \subset \Dbcoh(X)$ be an admissible subcategory. Pick a point $p \in X$. The \emph{point-support} of $\mB$ at $p$ is a set-theoretic support $\supp(\mB_R(\O_p))$ of the right projection of the skyscraper sheaf at $p$ to the subcategory $\mB$.
\end{definition}

\begin{remark}
  In this definition we chose to use the right projection functor. I do not know whether the supports of the right projection $\mB_R(\O_p)$ and the left projection $\mB_L(\O_p)$ always coincide.
\end{remark}

\begin{lemma}
  \label{lem: properties of point-supports}
  Let $X$ be an algebraic variety, let $p \in X$ be a point, and let $\mB \subset \Dbcoh(X)$ be an admissible subcategory. Let $S_p$ be the point-support of $\mB$ at $p$.
  \begin{enumerate}
    \item If $S_p$ is not empty, then it is a connected closed subset which contains $p$.
    \item If $S_p$ is empty, then $\O_p \in \mB^\perp$. If $S_p = \{ p \}$, then $\O_p \in \mB$.
    \item Let $\mA = \mB^\perp$ be the orthogonal subcategory. Consider the projection triangle
      \[
        \mB_R(\O_p) \to \O_p \to \mA_L(\O_p)
      \]
      If $S_p$ is neither empty nor $\{ p \}$, then $\supp(\mA_L(\O_p)) = S_p = \supp(\mB_R(\O_p))$.
  \end{enumerate}
\end{lemma}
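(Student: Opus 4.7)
The plan is to analyze each part via the projection triangle $B \to \O_p \to A \to B[1]$, where $B = \mB_R(\O_p)$ and $A = \mA_L(\O_p)$ lies in $\mA := \mB^\perp$. The key technical input is the universal property from Corollary~\ref{cor: universal property of projection triangles}, which gives the isomorphism $\REnd(B) \iso \RHom(B, \O_p)$ induced by composition with the canonical map $B \to \O_p$.

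For part~(1), closedness of $S_p = \supp(B)$ is immediate. If $S_p$ is nonempty then $B \neq 0$, so $\mathrm{id}_B \in \REnd(B)$ produces a nonzero element of $\RHom(B, \O_p)$, and by Lemma~\ref{lem: support and derived fibers} this forces $p \in S_p$. For connectedness, I will assume $S_p = Z_1 \sqcup Z_2$ with $p \in Z_2$ and derive a contradiction. Lemma~\ref{lem: disjoint support forces splitting} decomposes $B \iso B_1 \oplus B_2$ with $\supp(B_i) = Z_i$, and closure under direct summands places both $B_i$ in $\mB$. Since $p \notin \supp(B_1)$ we have $\RHom(B_1, \O_p) = 0$, but $\REnd(B_1)$ contains $\mathrm{id}_{B_1} \neq 0$; this contradicts the universal property applied to the summand $B_1$.

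For part~(2), the empty case is standard: $\mB_R(F) = 0$ is equivalent to $F \in \mB^\perp$ by adjunction (applied together with the universal property to $B' = \mB_R(F)$). For the case $S_p = \{p\}$, the triangle immediately gives $\supp(A) \subset \{p\}$, since off the point $p$ both $B$ and $\O_p$ vanish. The goal is to show the connecting morphism $\O_p \to A$ in the triangle is zero; this will split the triangle as $B \iso \O_p \oplus A[-1]$, simultaneously giving $\O_p \in \mB$ (as a direct summand of $B$) and $A \in \mB \cap \mB^\perp = 0$. To force the boundary map to vanish, I plan to combine the universal property with Serre duality on the admissible subcategory $\mB$: since $B$ is supported at a point, $S_\mB(B) \iso B[\dim X]$, which yields the identity $\dim \Hom(\O_p, B) = \dim \End(B) \geq 1$. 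Applying $\Hom(\O_p, -)$ to the triangle and combining with a cohomological-dimension bound on $\Ext^\bullet(A, \O_p)$ coming from $A$ being supported at a single point should then force the boundary map to vanish.

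For part~(3), the inclusion $\supp(A) \subset S_p$ is immediate from the triangle together with part~(1). For the reverse inclusion, take $q \in S_p$. If $q \neq p$, restrict the triangle to a Zariski neighborhood $U$ of $q$ disjoint from $p$; it reduces to $B|_U \to 0 \to A|_U$, so $A|_U \iso B|_U[1]$ and $q \in \supp(A)$. If $q = p$ and I assume $p \notin \supp(A)$, then $\Hom(\O_p, A) = 0$, so the connecting map $\O_p \to A$ in the triangle is zero. The triangle splits as $B \iso \O_p \oplus A[-1]$, making $\O_p$ a direct summand of $B \in \mB$, hence $\O_p \in \mB$. But then $\mB_R(\O_p) = \O_p$ and $S_p = \{p\}$, contradicting the hypothesis of~(3). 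The main obstacle will be the second half of~(2), where ruling out a nontrivial extension between $\O_p$ and $A$ requires a careful analysis of the cohomological degrees of $A$ together with the Serre-duality structure on $\mB$; parts~(1) and~(3) follow fairly directly from triangle locality and the universal property.
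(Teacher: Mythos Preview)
Your arguments for parts~(1) and~(3) are correct and close to the paper's. For~(1) the paper phrases the contradiction slightly differently---it notes that the summand $B'$ of $B$ supported away from $p$ forces the cone $A$ to contain $B'[1]$ as a summand, violating $\RHom(B,A)=0$---but this is equivalent to your adjunction argument $\RHom(B_1,\O_p)\iso\RHom(B_1,B)\supset\REnd(B_1)$. For~(3) the paper uses the same long exact sequence of cohomology sheaves.

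The genuine gap is the second half of~(2). Your plan establishes $\dim\Hom(\O_p,B)=\dim\End(B)\geq 1$ (this is correct, though it really uses Serre duality on $X$ together with the universal property, not the Serre functor of $\mB$), but from there you still need to show that the canonical map $B\to\O_p$ admits a section, which is exactly what ``the boundary $\O_p\to A$ vanishes'' means. Knowing $\Hom(\O_p,B)\neq 0$ does not give a section: a nonzero $s\colon\O_p\to B$ could compose to zero with $B\to\O_p$. The unspecified ``cohomological-dimension bound on $\Ext^\bullet(A,\O_p)$'' does not close this; I do not see how to complete the argument along these lines without essentially redoing the paper's step. Note also that Serre duality requires $X$ smooth and proper, while the lemma is stated for an arbitrary variety.

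The paper's argument for this case is direct and elementary and avoids all of the above. Assume both $A$ and $B$ are nonzero and supported at $p$. Let $b$ be the largest degree with $\mH^b(B)\neq 0$ and $a$ the smallest with $\mH^a(A)\neq 0$. Since these are nonzero coherent sheaves supported at a single point, there exist nonzero maps $\mH^b(B)\twoheadrightarrow\O_p$ and $\O_p\hookrightarrow\mH^a(A)$. The composite
\[
B \longrightarrow \mH^b(B)[-b] \twoheadrightarrow \O_p[-b] \hookrightarrow \mH^a(A)[-b] \longrightarrow A[a-b]
\]
with the truncation maps is nonzero on cohomology sheaves, contradicting $\RHom(B,A)=0$. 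Hence $A=0$ and $\O_p\iso B\in\mB$. Replace your Serre-duality plan with this construction.
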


\begin{proof}
  Let $\mA = \mB^\perp$ be the orthogonal subcategory, and let $A_p = \mA_L(\O_p)$ and $B_p = \mB_R(\O_p)$ denote the projections of the skyscraper sheaf $\O_p$, fitting into the projection triangle
  \[
    B_p \to \O_p \to A_p.
  \]
  Then $S_p = \supp(B_p)$ by definition. Suppose that this set has a nonempty connected component which does not contain $p$. Then by Lemma~\ref{lem: disjoint support forces splitting} the object $B_p$ has a nonzero direct summand~$B^\prime$ which is supported away from $p$. Then the map $B_p \to \O_p$ factors through the projection to $B_p/B^\prime$, and therefore its cone, which is $A_p$, has a direct summand isomorphic to~$B^\prime[1]$. But then $\RHom(B_p, A_p) \neq 0$, which is a contradiction with semiorthogonality. Thus the support of $B_p$ (and by a similar argument the support of $A_p$ as well) is either empty, or a connected subset containing $p$. This proves part (1).
  
  Suppose now that $S_p = \{ p \}$. The projection triangle implies in this case that $\supp(A_p)$ is contained in the one-point set $\{ p \}$. Assume that both $A_p$ and $B_p$ are nonzero objects supported at a single point $p$. Let $a \in \Z$ be the smallest number such that $\mH^a(A_p) \neq 0$ and let~$b \in \Z$ be the largest number such that $\mH^b(B_p) \neq 0$. Pick nonzero morphisms $\mH^b(B_p) \epiarrow \O_p$ and $\O_p \monoarrow \mH^a(A_p)$, which always exist for coherent sheaves supported at one point. Then the composition
  \[
    B_p \to \mH^b(B_p)[-b] \epiarrow \O_p[-b] \monoarrow \mH^a(A_p)[-b] \to A_p[a-b]
  \]
  with truncation morphisms is a morphism $B_p \to A_p[a-b]$ which by construction is nonzero on cohomology sheaves. This contradicts semiorthogonality. Thus~$A_p$ must be a zero object when~$S_p = \{ p \}$, so part (2) is proved.
  
  To deal with the last part, note that the long exact sequence of cohomology sheaves proves that $\supp(B_p) \subset \supp(A_p) \cup \{ p \}$ and similarly for $\supp(A_p)$. Since both of those supports are either empty or contain the point $p$, part (3) follows.
\end{proof}

\begin{lemma}
  \label{lem: no semiorthogonality for zero-dimensional intersections}
  Let $Y$ be a smooth variety, $S_1, S_2 \subset Y$ two closed subsets whose set-theoretic intersection $S_1 \cap S_2$ contains an isolated point. Let $F_1, F_2 \in \Dperf(Y)$ be objects whose set-theoretical supports are $S_1, S_2$ respectively. Then $\RHom(F_1, F_2) \neq 0$.
\end{lemma}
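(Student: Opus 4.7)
The plan is to reduce $\RHom(F_1, F_2)$ to a local computation at $p$ via duality. Since $Y$ is smooth and $F_1, F_2$ are perfect, there is a natural isomorphism $\RHom(F_1, F_2) \iso \RGamma(Y, H)$ where $H := F_1^\dual \otimes F_2 \iso \intRHom(F_1, F_2)$ is again a perfect object on $Y$. My strategy is to split off a nonzero direct summand $G_1$ of $H$ that is supported only at $p$, and then verify $\RGamma(Y, G_1) \neq 0$ directly.

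First I would check that $p$ lies in $\supp(H)$. Using the compatibility of derived pullback with duality and tensor product for perfect complexes, the derived fiber $\iota_p^*H$ is isomorphic to $(\iota_p^*F_1)^\dual \otimes \iota_p^*F_2$, where $\iota_p\colon \{p\} \monoarrow Y$. Since $p$ lies in both $S_1$ and $S_2$, each $\iota_p^*F_i$ is a nonzero bounded complex of finite-dimensional vector spaces (a perfect complex whose derived fiber at $p$ vanishes must vanish in a Zariski neighborhood of $p$ by Nakayama). The tensor product of two nonzero graded vector spaces is nonzero, so $\iota_p^*H \neq 0$, and therefore $p \in \supp(H)$.

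Next, $\supp(H)$ is contained in $S_1 \cap S_2$ by the standard support bound for a tensor product of perfect complexes. Since $p$ is by assumption isolated in $S_1 \cap S_2$, it is also isolated in $\supp(H)$, so I can write $\supp(H) = \{p\} \sqcup T'$ as a disjoint union of closed subsets. Lemma~\ref{lem: disjoint support forces splitting} then gives a direct sum decomposition $H \iso G_1 \oplus G_2$ with $\supp(G_1) = \{p\}$ and $\supp(G_2) = T'$, and $G_1 \neq 0$ by the previous paragraph.

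Finally, each cohomology sheaf of $G_1$ is a coherent sheaf set-theoretically supported at the closed point $p$, hence has vanishing higher sheaf cohomology; the hypercohomology spectral sequence for $\RGamma(Y, G_1)$ therefore collapses and yields $H^i(\RGamma(Y, G_1)) \iso \mathcal{H}^i(G_1)_p$. Since $G_1 \neq 0$, at least one of these stalks is nonzero, so $\RGamma(Y, G_1) \neq 0$, and consequently $\RHom(F_1, F_2) \iso \RGamma(Y, G_1) \oplus \RGamma(Y, G_2)$ is nonzero. I do not anticipate any serious obstacle: every step is standard formalism for perfect complexes on a smooth variety, and the only nonformal input — that an isolated component of the support of a derived object splits off as a direct summand — is precisely the content of Lemma~\ref{lem: disjoint support forces splitting}.
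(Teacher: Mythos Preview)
Your proof is correct and follows essentially the same approach as the paper: rewrite $\RHom(F_1,F_2)$ as $\RGamma(Y, F_1^\dual \otimes F_2)$, use Lemma~\ref{lem: disjoint support forces splitting} to split off a nonzero summand supported at the isolated point $p$, and observe that such an object has nonvanishing hypercohomology. The paper is slightly terser (it asserts directly that $\supp(F_1^\dual\otimes F_2)=S_1\cap S_2$ and that an object with zero-dimensional support has a nonzero global section in its lowest cohomology sheaf), but the argument is the same.
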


\begin{proof}
  We can compute the $\RHom$-space by the dualization:
  \[
    \RHom(F_1, F_2) \caniso \RGamma(Y, F_1^\dual \otimes F_2).
  \]
  The support of the tensor product $F_1^\dual \otimes F_2$ is the intersection $S_1 \cap S_2$. It contains an isolated point, so by Lemma~\ref{lem: disjoint support forces splitting} the object $F_1^\dual \otimes F_2$ has a nonzero direct summand supported only at a single point. Any object with zero-dimensional support has a nonvanishing (hyper)cohomology class given by a nonzero global section of the lowest degree cohomology sheaf, so the lemma is proved.
\end{proof}

\begin{lemma}
  \label{lem: skyscrapers and intersections}
  Let $Y$ be a smooth variety, $p, q \in Y$ distinct points. Let $\mB \subset \Dbcoh(Y)$ be an admissible subcategory. Denote by $S_p$ and $S_q$ the point-supports of $\mB$ at $p$ and $q$ respectively. Then the set-theoretic intersection $S_p \cap S_q$ does not contain any isolated points.
\end{lemma}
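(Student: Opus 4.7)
The plan is to argue by contradiction: assume $r \in S_p \cap S_q$ is isolated, and produce objects on opposite sides of the semiorthogonal decomposition $\Dbcoh(Y) = \langle \mB^\perp, \mB \rangle$ whose $\RHom$ is simultaneously forced to vanish by semiorthogonality and to be nonzero by Lemma~\ref{lem: no semiorthogonality for zero-dimensional intersections}. Because $p \neq q$, the isolated point satisfies $r \neq p$ or $r \neq q$, and the whole setup is symmetric in $p$ and $q$, so without loss of generality I take $r \neq q$.

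Set $\mA := \mB^\perp$, write $B_q := \mB_R(\O_q)$ and $A_q := \mA_L(\O_q)$, and consider the projection triangle $B_q \to \O_q \to A_q$. Applying the contravariant functor $\RHom(-,\O_r)$ and using that $\RHom(\O_q,\O_r)=0$ because the skyscrapers at the distinct points $q$ and $r$ have disjoint support, I obtain an isomorphism $\RHom(A_q,\O_r)\iso \RHom(B_q,\O_r)[-1]$ of graded vector spaces. Lemma~\ref{lem: support and derived fibers} then converts the hypothesis $r \in S_q = \supp(B_q)$ into $r \in \supp(A_q)$. The same triangle also gives $\supp(A_q) \subseteq \supp(B_q) \cup \{q\} = S_q$, where the second equality uses part~(1) of Lemma~\ref{lem: properties of point-supports} applied to the nonempty set $S_q$.

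Consequently $\supp(B_p) \cap \supp(A_q) \subseteq S_p \cap S_q$, and it still contains the isolated point $r$. Since $Y$ is smooth, the objects $B_p = \mB_R(\O_p)$ and $A_q$ both lie in $\Dperf(Y)$, so Lemma~\ref{lem: no semiorthogonality for zero-dimensional intersections} forces $\RHom(B_p,A_q)\neq 0$. But $B_p \in \mB$ and $A_q \in \mA = \mB^\perp$, so semiorthogonality demands $\RHom(B_p,A_q)=0$, which is the desired contradiction. The main technical move, and essentially the only one that is not bookkeeping, is the promotion of $r \in \supp(B_q)$ to $r \in \supp(A_q)$ via the triangle trick; once this is in hand, Lemma~\ref{lem: no semiorthogonality for zero-dimensional intersections} applies to a semiorthogonal pair and finishes the argument.
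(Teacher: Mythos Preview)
Your proof is correct and follows essentially the same approach as the paper: both arguments pass from $B_q$ to $A_q$ and then apply Lemma~\ref{lem: no semiorthogonality for zero-dimensional intersections} to the semiorthogonal pair $(B_p, A_q)$. The only organizational difference is that the paper invokes part~(3) of Lemma~\ref{lem: properties of point-supports} to get $\supp(A_q) = S_q$ (which forces a small case split on whether $S_q = \{q\}$), whereas you show $r \in \supp(A_q) \subseteq S_q$ directly from the projection triangle and the assumption $r \neq q$, thereby handling all cases uniformly.
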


\begin{remark}
  This lemma is most useful on surfaces, where all nontrivial point-supports are curves, and curves usually intersect along finitely many points.
\end{remark}

\begin{figure*}[ht]
  \centering
  \def\svgwidth{0.3\columnwidth}
  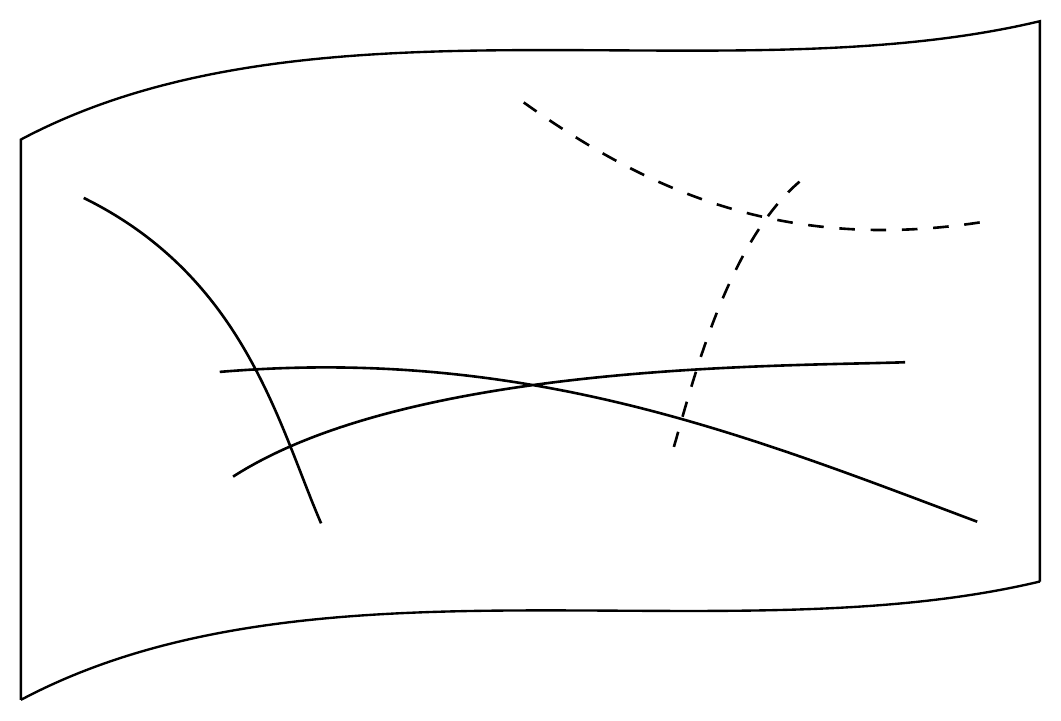
  \caption{An impossible situation}
  \label{fig: skyscrapers and intersections}
\end{figure*}

\begin{proof}
  Without loss of generality assume that $S_p$ is larger than just $\{ p \}$. Let $\mA = \mB^\perp$ be the orthogonal subcategory. Let~$A_p = \mA_L(\O_p)$ and~$B_p = \mB_R(\O_p)$ be the projections of the skyscraper sheaf at $p$ to the categories $\mA$ and $\mB$ respectively. By the last part of Lemma~\ref{lem: properties of point-supports} we see that $\supp(A_p) = \supp(B_p) =  S_p$ in this case.

  First, suppose that $S_q = \{ q \}$ and $S_p$ contains $q$. Then by Lemma~\ref{lem: properties of point-supports} this implies that~$\O_q \in \mB$. But then the graded space $\RHom(\O_q, A_p)$ is zero by semiorthogonality, which implies that $\supp(A_p) = S_p$ does not contain $q$, a contradiction.
  
  Thus we may assume that $S_q \neq \{ q \}$. Then by the same lemma $\supp(\mA_L(\O_q)) = S_q$. Consider the space $\RHom(B_p, \mA_L(\O_q))$. If the intersection $S_p \cap S_q$ contains an isolated point, then by Lemma~\ref{lem: no semiorthogonality for zero-dimensional intersections} this space is not zero, but this again is impossible by semiorthogonality of $\mA$ and $\mB$.
\end{proof}

\subsection{Cutting lemmas}
\label{ssec: cutting lemmas}

This subsection contains a few observations about objects in the derived categories of surfaces which are set-theoretically supported on curves.

\begin{definition}
  Let $S$ be a smooth surface, $F \in \Dbcoh(S)$ an object. Assume that the set-theoretic support of $F$ is a reduced curve $C \subset S$. A \emph{slice} of $F$ at a point $p \in C$ is the derived pullback $j^*F$ to a curve $j\colon D \to S$ which is smooth at the point $p$ and does not intersect $C$ anywhere else.
\end{definition}

Note that an alternative way to state the definition would be to let $D$ intersect $C$ at some other points, but replace the derived pullback by the largest direct summand supported at the point $p$. This is equivalent to replacing $D$ with an open neighborhood of $p$ in $D$.

\begin{lemma}
  \label{lem: equicutting lemma}
  Let $U$ be a smooth surface, let $i\colon C \monoarrow U$ be a curve, and let $p \in C$ be a point. Let $F \in \Dperf(U)$ be an object whose set-theoretic support is $C$. Suppose that there exists a slice of $F$ at $p$ which is a torsion object of length one. Then $C$ equipped with a reduced scheme structure is smooth at the point $p$, and, after possibly replacing $U$ by a Zariski-neighborhood of $p$, the object $F$ is isomorphic to $i_*\O_C[a]$ for some shift $a \in \Z$.
\end{lemma}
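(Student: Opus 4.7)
\emph{Proof plan.} I would work locally around $p$, replacing $U$ by a Zariski neighborhood of $p$ on which $D$ meets $C$ only at $p$; let $g \in \O_{U,p}$ be a local equation for $D$. Since $D$ is smooth at $p$, $g \notin \mathfrak{m}_p^2$. The hypothesis on the slice says that $\mH^a(j^*F) \iso \O_p$ for a single integer $a$ and $\mH^i(j^*F) = 0$ otherwise. Feeding this into the short exact sequences of Lemma~\ref{lem: derived restriction to a divisor}, for every $i \neq a$ one obtains $L_0 j^*\mH^i(F) = 0$ and $L_1 j^*\mH^{i+1}(F) = 0$. The vanishing $\mH^i(F)/g\mH^i(F) = L_0 j^*\mH^i(F) = 0$, together with Nakayama's lemma applied to the stalk at $p$, forces $\mH^i(F) = 0$ in a neighborhood of $p$ for every $i \neq a$. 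After further shrinking $U$ we therefore have $F \iso \mathcal{F}[-a]$ for a single coherent sheaf $\mathcal{F}$ set-theoretically supported on $C$.

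The remaining short exact sequence then yields $\mathcal{F}/g\mathcal{F} \iso \O_p$, and the vanishing $L_1 j^*\mathcal{F} = 0$ says that $g$ is a non-zero-divisor on $\mathcal{F}$. Since $\mathcal{F}/g\mathcal{F}$ is cyclic, Nakayama's lemma shows that $\mathcal{F}$ itself is cyclic: $\mathcal{F} \iso \O_{U,p}/I$ for some ideal $I \subset \O_{U,p}$. The identity $\O_{U,p}/(I, g) \iso \O_p$ forces $(I, g) = \mathfrak{m}_p$, so because $\bar g$ is a nonzero element of the two-dimensional $\k$-vector space $\mathfrak{m}_p/\mathfrak{m}_p^2$ we can pick $f \in I$ with $\bar f, \bar g$ forming a basis of $\mathfrak{m}_p/\mathfrak{m}_p^2$. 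The key step is now to show that $I = (f)$: passing to the DVR $R := \O_{U,p}/(f)$ with uniformizer $\bar g$, the image $J$ of $I$ satisfies $\O_{U,p}/I \iso R/J$. Any nonzero ideal of $R$ has the form $(\bar g^n)$ with $n \geq 1$, and multiplication by $\bar g$ on $R/(\bar g^n)$ has nonzero kernel $(\bar g^{n-1})/(\bar g^n)$. This would contradict the fact that $g$ is a non-zero-divisor on $\mathcal{F}$, so $J = 0$ and $I = (f)$.

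Consequently $\O_{U,p}/I \iso \O_{U,p}/(f)$ is a regular local ring of dimension one; in particular it is reduced, so the radical ideal $\sqrt{I}$ cutting out the reduced scheme structure on $C$ coincides with $(f)$. Thus $C$ is smooth at $p$ and $\mathcal{F} \iso i_*\O_C$ on a suitable Zariski neighborhood, giving $F \iso i_*\O_C[-a]$; relabelling $-a$ as the shift in the statement finishes the proof. The main obstacle is the analysis of the ideal $I$ in the second paragraph: one has to extract, from the length-one slice hypothesis (which is really a Serre-type intersection multiplicity equal to one) combined with the non-zero-divisor property of $g$, the conclusion that $I$ is principal and generated by part of a regular system of parameters. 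The rest of the argument is a routine manipulation of the derived restriction sequence via Nakayama's lemma.
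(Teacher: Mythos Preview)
Your proof is correct and follows essentially the same path as the paper's: reduce to a single cohomology sheaf via Lemma~\ref{lem: derived restriction to a divisor} and Nakayama, show that sheaf is cyclic, and then prove the defining ideal is principal generated by part of a regular system of parameters. The only difference is in the last step, where the paper compares lengths in the short exact sequence $0 \to I/(f) \to A/(f) \to A/I \to 0$ after restricting to $D$, whereas you pass to the DVR $\O_{U,p}/(f)$ and use the non-zero-divisor property of $g$ directly; both arguments extract the same information from $L_1 j^*\mathcal{F} = 0$.
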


\begin{proof}
  Let $j\colon D \to U$ be a smooth curve such that the derived pullback $j^*F \in \Dbcoh(D)$ is a torsion object of length one, i.e., it is isomorphic to a shift of a skyscraper sheaf $\O_p[a]$ for some $a \in \Z$. By Lemma~\ref{lem: derived restriction to a divisor} the isomorphism $j^*F \iso \O_p[a]$ implies that in a neighborhood of the point $p$ the object $F$ has only one nonzero cohomology sheaf, $F \iso \mathcal{F}[a]$ for some coherent sheaf $\mathcal{F} \in \mathrm{Coh}(U)$. By shrinking $U$ we may assume that $U \iso \Spec(A)$ for some ring $A$, the coherent sheaf $\mathcal{F}$ corresponds to a module $M$ over the ring $A$, and the smooth curve $D \subset U$ is defined by an equation $\{ d = 0 \}$ for an element $d \in A$. The assumption that $M/dM$ is isomorphic to a skyscraper sheaf at the point $p$ implies that $M/\mathfrak{m}_pM$ is also a skyscraper sheaf, so by Nakayama's lemma $M$ is locally isomorphic to a cyclic module, i.e., there exists an isomorphism $M \iso A/I$ for some ideal $I \subset A$.

  Since $(A/I)/d = (A/d)/I$ has length one, and $A/(d)$ is a discrete valuation ring, this means that the image of $I$ in the quotient ring $A/(d)$ is generated by one regular element $\tilde{f} \in A/(d)$ such that $\tilde{f}$ generates the maximal ideal of $A/(d)$. Pick a preimage $f \in A$ of $\tilde{f}$ in the ideal $I$. We will show that $f$ generates $I$. Consider the short exact sequence
  \[
    0 \to I/fA \to A/fA \to A/I \to 0.
  \]
  The derived pullback to the smooth curve $j\colon D \to U$ produces a long exact sequence of modules over the quotient ring $A/d$. Consider the following fragment:
  \[
    L_1j^*(A/I) \to L_0j^*(I/fA) \to L_0j^*(A/fA) \to L_0j^*(A/I) \to 0
  \]
  Since $L_1j^*(A/I) \iso L_1j^*M = 0$ by the assumption of the theorem, this is in fact a short exact sequence. Note that $L_0j^*(A/fA)$ by the definition is isomorphic to the quotient $A/(d, f)$. Since $f$ is equal to $\tilde{f}$ modulo $(d)$, this quotient is isomorphic to $(A/d)/\tilde{f}$, which by the choice of $\tilde{f}$ is isomorphic to $(A/d)/I$. Thus the last two terms of the short exact sequence are both torsion sheaves of length one. Therefore $L_0j^*(I/fA)$ can only be zero. In particular, $I/fA$ is not supported at the point $p \in U$.

  Thus the inclusion $(f) \subset I$ is an isomorphism at the point $p$, so after shrinking $U$ we can assume that $I = (f)$, so the module $M \iso A/fA$ is the structure sheaf of the curve $\{ f = 0 \}$. Note additionally that since $\tilde{f} \in A/d$ has valuation 1, the curve $C = \{ f = 0 \}$ is smooth at the point $p$. This finishes the proof of the lemma.
\end{proof}

This local description may be improved to a global one if we consider the slices at all points of the curve instead of a single point.

\begin{lemma}
  \label{lem: global equicutting lemma}
  Let $S$ be a smooth surface, and let $i\colon C \monoarrow S$ be a connected curve. Let $F \in \Dperf(S)$ be an object whose set-theoretic support is $C$. Suppose that at each point $p \in C$ there exists a slice of $F$ which is a torsion object of length one. Then the curve $C$ is smooth, and the object $F$ is isomorphic to a pushforward $i_*(L)[a]$ for some line bundle $L \in \Pic(C)$ and a shift $a \in \Z$.
\end{lemma}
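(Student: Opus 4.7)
The plan is to globalize Lemma~\ref{lem: equicutting lemma} in the obvious way. First I would apply the local equicutting lemma at every point $p \in C$. This immediately shows that $C$ is smooth at each of its points, hence smooth globally, and produces an open cover $\{ U_\alpha \}_\alpha$ of some open neighborhood of $C$ in $S$ together with integers $a_\alpha \in \Z$ and isomorphisms $F|_{U_\alpha} \iso i_*\O_{C \cap U_\alpha}[a_\alpha]$.

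The next step is to check that the shift $a_\alpha$ is independent of $\alpha$. The key point is that $a_\alpha$ is intrinsic to the object $F$ at any $p \in C \cap U_\alpha$: by the local description, it is the unique degree in which the cohomology sheaves of $F$ are nonzero near $p$. Therefore, for each $k \in \Z$, the set $C_k \subset C$ of points where the nonzero cohomology of $F$ sits in degree $k$ is well-defined. It is open in $C$ by the local trivialization, and closed as the complement of the union of the remaining sets $C_{k'}$. Since $C$ is connected, exactly one $C_k$ is nonempty, so $a := a_\alpha$ does not depend on $\alpha$.

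Shifting by $[-a]$, I may replace $F$ by a coherent sheaf $\mathcal{F}$ on $S$ that is locally isomorphic to $i_*\O_{C \cap U_\alpha}$. In particular, $\mathcal{F}$ is locally annihilated by the ideal sheaf $\mathcal{I}_C$, and hence annihilated by $\mathcal{I}_C$ globally, so there is a unique coherent sheaf $\mathcal{G}$ on $C$ with $\mathcal{F} \caniso i_*\mathcal{G}$. Fully faithfulness of $i_*$ converts the local isomorphisms $\mathcal{F}|_{U_\alpha} \iso i_*\O_{C \cap U_\alpha}$ into local isomorphisms $\mathcal{G}|_{C \cap U_\alpha} \iso \O_{C \cap U_\alpha}$, which exhibits $\mathcal{G}$ as a line bundle $L \in \Pic(C)$. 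This yields the desired description $F \iso i_*L[a]$.

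I expect the argument to be essentially routine; the only mildly delicate point is the constancy of the cohomological shift, which crucially uses both the connectedness of $C$ and the intrinsic characterization of $a_\alpha$ as the unique degree supporting nonzero cohomology at points of $C \cap U_\alpha$. Everything else is standard gluing, together with the observation that a coherent sheaf on $S$ which is locally pushed forward from $C$ is globally pushed forward from $C$.
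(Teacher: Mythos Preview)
Your proposal is correct and follows essentially the same route as the paper: apply the local Lemma~\ref{lem: equicutting lemma} at every point to get smoothness and a single cohomology sheaf, then observe that this sheaf is scheme-theoretically supported on $C$ and locally isomorphic to $i_*\O_C$, hence the pushforward of a line bundle. You are in fact more careful than the paper about the constancy of the shift $a$, which the paper leaves implicit; your open--closed argument using connectedness of $C$ is exactly the point behind the Remark immediately following the lemma.
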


\begin{remark}
  If $C$ is not connected, the pushforwards of line bundles from different connected components may have different shifts, but otherwise the conclusion is the same.
\end{remark}

\begin{proof}
  By Lemma~\ref{lem: equicutting lemma}, applied at all points of $C = \supp(B)$, the object $B$ is a shift of some coherent sheaf $\mathcal{F} \in \mathrm{Coh}(Y)$. Moreover, the scheme-theoretic support of $\mathcal{F}$ is equal to the reduced scheme structure on the curve $C$. Therefore $\mathcal{F}$ is a pushforward of a coherent sheaf $\mathcal{F}^\prime \in \mathrm{Coh}(C)$. Locally the sheaf $\mathcal{F}^\prime$ is isomorphic to the structure sheaf of $C$, thus $\mathcal{F}^\prime$ is in fact a line bundle on $C$.
\end{proof}

The minimal possible length of any nonzero slice is equal to one, and lemmas above show what happens in this minimal case. In particular, we see that the support curve is necessarily smooth at the point of the slice. When the curve is singular, we will see below that the length of the slice is always greater or equal to the multiplicity of the singular point. Moreover, the minimal possible length only happens in some special situations. The rest of this subsection is dedicated to the study of that minimal case.

\begin{lemma}
  \label{lem: thin slices are minimal}
  Let $U$ be a smooth surface, and let $F \in \Dbcoh(U)$ be an object whose set-theoretic support is a reduced curve $C \subset U$. Let $j\colon D \monoarrow U$ be a smooth curve which intersects $C$ at a single point $p \in C$. Denote by $d$ the multiplicity of the curve $C$ at $p$.
  \begin{enumerate}
    \item For the length of the torsion object $j^*F \in \Dbcoh(D)$ we have an inequality $\mylength(j^*F) \geq d$.
    \item If the tangent vector to the curve  $D$ at $p$ lies in the tangent cone of the curve $C$ at $p$, then $\mylength(j^*F) > d$.
    \item If $F \iso \mathcal{F}[0]$ is a single coherent sheaf, then the bounds above hold for $\mylength(L_0 j^*\mathcal{F})$.
  \end{enumerate}
\end{lemma}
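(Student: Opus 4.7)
The plan is to reduce all three statements to a single Euler-characteristic estimate coming from Hilbert--Samuel multiplicity. First I would pass to the formal completion at $p$, setting $A := \widehat{\O}_{U,p} \cong \k[[x,y]]$, writing $C = V(f)$ with $f = f_1 \cdots f_k$ the factorization into local branches $C_j := V(f_j)$ at $p$ of multiplicities $d_j := \mathrm{mult}_p(C_j)$, and $D = V(g)$ for some $g \in \mathfrak{m} \setminus \mathfrak{m}^2$. A direct calculation in $\k[[x,y]]$ (change coordinates so that $g = y$ and read off the order of $f_j(x, 0)$) yields the elementary intersection estimate
\[
  (C_j \cdot D)_p \;:=\; \dim_\k A/(f_j, g) \;\geq\; d_j,
\]
with strict inequality precisely when the tangent direction of $D$ lies in the tangent cone $V((f_j)_{d_j})$ of $C_j$. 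Summing over $j$ and noting that the tangent cone of $C$ is the union of tangent cones of its branches gives $\sum_j (C_j \cdot D)_p \geq d$, strict exactly when the hypothesis of part (2) holds.

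The key commutative-algebra input is the identity
\[
  \mylength(L_0 j^*\mathcal{G}) - \mylength(L_1 j^*\mathcal{G}) \;=\; e(g;\,\mathcal{G}_p) \;=\; \sum_j r_j \cdot (C_j \cdot D)_p,
\]
valid for any coherent sheaf $\mathcal{G}$ on $U$ with $\mathcal{G}_p$ of Krull dimension at most one, where $r_j$ denotes the generic rank of $\mathcal{G}$ along $C_j$. I would prove it by taking a composition series of $\mathcal{G}_p$ with cyclic subquotients $A/\mathfrak{p}$ and invoking additivity of Euler characteristics in short exact sequences: each factor $A/\mathfrak{m}$ contributes $0$ (both $L_0$ and $L_1$ have dimension one), while each factor $A/(f_j)$ contributes $(C_j \cdot D)_p$, since $g$ is a non-zero-divisor on $A/(f_j)$ and hence $\Tor_1^A(A/(f_j), A/g) = 0$.

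Part (3) follows immediately from $\mylength(L_0 j^*\mathcal{F}) \geq e(g;\,\mathcal{F}_p)$ together with the observation that $\supp F = C$ forces $r_j \geq 1$ for every branch $C_j$. For part (1), iterating Lemma~\ref{lem: derived restriction to a divisor} gives the identity $\mylength(j^*F) = \sum_i \mylength(j^*\mathcal{H}^i(F))$, after which
\[
  \mylength(j^*F) \;\geq\; \sum_i e(g;\,\mathcal{H}^i(F)_p) \;=\; \sum_j \Big(\sum_i r_{i,j}\Big)(C_j \cdot D)_p \;\geq\; \sum_j (C_j \cdot D)_p \;\geq\; d,
\]
where $r_{i,j}$ is the generic rank of $\mathcal{H}^i(F)$ along $C_j$; the penultimate inequality uses that each branch $C_j$ lies in the support of at least one cohomology sheaf of $F$ since $\supp F = C$. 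Part (2) is the same chain of inequalities, with strict inequality at the final step coming from the tangent-cone hypothesis. The main technical obstacle is the careful bookkeeping between the different cohomology sheaves of $F$ and the different branches of $C$ at $p$; the tangent-cone intersection bound and the multiplicity identity are otherwise standard.
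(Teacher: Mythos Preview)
Your argument is correct and takes a genuinely different route from the paper. The paper's proof uses the zeroth Fitting ideal: since $\Fit_0(\mathcal{F}) \subset \Ann(\mathcal{F}) \subset (f)$ and Fitting ideals commute with base change, one has $\Fit_0(L_0 j^*\mathcal{F}) \subset (f|_D) \subset \mathfrak{m}^d$ (respectively $\mathfrak{m}^{d+1}$ under the tangent-cone hypothesis), and on a discrete valuation ring the Fitting ideal of a torsion module of length $\ell$ is exactly $\mathfrak{m}^\ell$. This gives the bound for $L_0 j^*\mathcal{F}$ in one stroke, without factoring $f$ into analytic branches or appealing to composition series. Your approach instead decomposes $C$ into its formal branches at $p$ and uses additivity of the Euler characteristic $\ell(L_0)-\ell(L_1)$ over a composition series to express the bound as a sum of local intersection numbers weighted by generic ranks. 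This is longer but yields finer information: you isolate the contribution of each branch and each cohomology sheaf separately, which is exactly the kind of bookkeeping the paper carries out later in Lemma~\ref{lem: thin objects}. The reduction from an arbitrary object $F$ to its cohomology sheaves via Lemma~\ref{lem: derived restriction to a divisor} is common to both arguments.
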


\begin{proof}
  Since $D \subset U$ is a Cartier divisor, by Lemma~\ref{lem: derived restriction to a divisor} we know that
  \[
    \mylength(j^*F) = \sum_{n \in \Z} \mylength(j^*\mH^n(F)).
  \]
  Thus we may replace the object $F$ with the coherent sheaf $\bigoplus_{n \in \Z} \mH^n(F)$ without changing the lengths of the slices. Thus it is enough to prove the bounds for the length of the nonderived pullback $L_0 j^* \mathcal{F}$ of a coherent sheaf $\mathcal{F}$ on $U$.

  In the proof we use the notion of a (zeroth) Fitting ideal of a coherent sheaf. Recall the definition: given a finitely generated module $M$ over a Noetherian ring $A$, pick an arbitrary free presentation:
  \[
    A^{k} \xrightarrow{Q} A^n \to M \to 0.
  \]
  The Fitting ideal $\Fit(M)$ is defined to be the ideal of $A$ generated by the $(n \times n)$-minors of the matrix $Q$. This construction globalizes to coherent sheaves. The Fitting ideal is contained in the annihilator ideal, and the formation of Fitting ideals is compatible with arbitrary base change (see, e.g., \cite[Tag~07Z6]{stacks-project}).

  Consider the coherent sheaf $L_0 j^*\mathcal{F}$ on a curve $D$. It is supported at a single point $p \in D$. Let $\mathfrak{m} \subset \O_D$ be the maximal ideal sheaf of the point $p$. Using Lemma~\ref{lem: smooth torsion sheaves on curves} it is easy to compute that for any coherent sheaf on a smooth curve $D$ supported at the point $p$ the Fitting ideal is equal to $\mathfrak{m}^{\mylength}$, where $\mylength$ is the length of the torsion sheaf. Thus in order to bound the length of $L_0 j^*\mathcal{F}$ it is enough to understand the Fitting ideal of this sheaf.

  By passing to an \'etale neighborhood of the point $p \in U$ we may assume that $U$ is the affine plane $\A^2 = \Spec \k[x, y]$ and $p$ is the origin. Let $f \in \k[x, y]$ be the reduced equation of the curve $C$. Since the set-theoretic support of $\mathcal{F}$ is $C$, we know that the annihilator ideal of the sheaf $\mathcal{F}$ is contained in the ideal $(f)$. The Fitting ideal $\Fit(\mathcal{F})$ is contained in the annihilator ideal, so $\Fit(\mathcal{F}) \subset (f)$.

  Since Fitting ideals are compatible with base change, we know that $\Fit(L_0 j^* \mathcal{F})$ is contained in the restriction of the ideal $(f)$ to the curve $D$. The pullback of $(f)$ is contained in $\mathfrak{m}^d$, where $d$ is the multiplicity of $C$ at $p$, which is the lowest degree of a monomial occuring in $f$ with nonzero coefficient. Thus $\mylength(L_0 j^*\mathcal{F}) \geq d$. Moreover, if the tangent vector to $D$ lies in the tangent cone of $C$ at the point $p$, by definition this means that the degree-$d$ part of the polynomial $f$ restricts to zero in the quotient $\mathfrak{m}^{d}/\mathfrak{m}^{d+1}$. Thus in this case the pullback of $(f)$ to the curve $D$ is contained in $\mathfrak{m}^{d+1}$, and then $\mylength(L_0 j^*\mathcal{F}) > d$, as claimed.
\end{proof}

Motivated by the bound above, we introduce the following definition.

\begin{definition}
  \label{def: thin objects}
  Let $U$ be a smooth surface, and let $F \in \Dbcoh(U)$ be an object whose set-theoretic support is a reduced curve $C \subset U$. We say that the object $F$ is \emph{thin at the point $p \in C$} if there exists a slice of $F$ at the point $p$ which is a torsion object of the length equal to the multiplicity of the curve $C$ at $p$.
\end{definition}

\begin{lemma}
  \label{lem: thin coherent sheaves}
  Let $U$ be a smooth surface, let $C \subset S$ be a reduced curve, and let $p \in C$ be a point. Let $\mathcal{F}$ be a coherent sheaf on $U$ whose set-theoretic support is $C \subset S$. Suppose that $\mathcal{F}$ is thin at the point $p \in C$. Then, after possibly replacing $U$ by a Zariski neighborhood of $p \in U$, the sheaf $\mathcal{F}$ is a pushforward of a torsion-free rank one sheaf $\mathcal{F}^\prime$ on $C$.
\end{lemma}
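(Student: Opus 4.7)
The plan is to proceed in two stages. First, I will upgrade the set-theoretic identity $\sqrt{\Ann(\mathcal{F})} = (f)$ to the scheme-theoretic identity $\Ann(\mathcal{F}) = (f)$ locally at $p$, so that $\mathcal{F}$ is scheme-theoretically supported on $C$ and hence equal to $i_*\mathcal{F}'$ for some coherent sheaf $\mathcal{F}'$ on $C$. Second, I will use a Hilbert--Samuel multiplicity count on the local module $\mathcal{F}'_p$ to show that $\mathcal{F}'$ is torsion-free of generic rank one on each branch of $C$ through $p$. After shrinking $U$ to avoid the other singularities of $C$ and the finitely many potential embedded associated points of $\mathcal{F}'$, the sheaf $\mathcal{F}'$ becomes torsion-free of rank one on $C$.

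For the first stage, I continue the Fitting-ideal analysis from the proof of Lemma~\textup{\ref{lem: thin slices are minimal}}. We have $\Fit(\mathcal{F}) \subset \Ann(\mathcal{F}) \subset (f)$, and the base-change compatibility of Fitting ideals gives $\Fit(\mathcal{F}) \cdot \O_D = \Fit(L_0 j^*\mathcal{F})$. Since $L_0 j^*\mathcal{F}$ is a torsion sheaf of length $m$ on the smooth curve $D$, its Fitting ideal equals $\mathfrak{m}_p^m \O_D$. The transversality-to-tangent-cone computation from the same lemma also shows that $(f)\cdot \O_D = \mathfrak{m}_p^m \O_D$. Writing $\Fit(\mathcal{F}) = f \cdot J$ for an ideal $J \subset \O_U$ and cancelling the non-zero-divisor $f|_D$ from both sides, we find $J \cdot \O_D = \O_D$, so Nakayama's lemma applied to the finitely generated module $\O_U/J$ forces $J_p = \O_{U, p}$, i.e., $\Fit(\mathcal{F}) = (f)$ in a Zariski neighborhood of $p$. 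The sandwich $\Fit(\mathcal{F}) \subset \Ann(\mathcal{F}) \subset (f)$ then collapses to $\Ann(\mathcal{F}) = (f)$, proving $\mathcal{F} = i_*\mathcal{F}'$ after shrinking $U$.

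For the second stage, let $\bar{x} \in \O_{C, p}$ denote the image of the local equation $x$ of $D$. Since $D$ is transverse to each branch of $C$ at $p$, $\bar{x}$ is a parameter in the one-dimensional local ring $\O_{C, p}$, and the associativity formula for Hilbert--Samuel multiplicity gives
\[
  m = \mylength(\mathcal{F}'_p/\bar{x}\mathcal{F}'_p) = \mylength(\mathcal{F}'_p[\bar{x}]) + \sum_\mathfrak{p} r_\mathfrak{p} \cdot e(\bar{x}, \O_{C, p}/\mathfrak{p}),
\]
where the sum runs over the minimal primes $\mathfrak{p}$ of $\O_{C, p}$ (the branches of $C$ at $p$), and $r_\mathfrak{p} := \mylength_{\O_{C, \mathfrak{p}}}(\mathcal{F}'_\mathfrak{p}) \geq 1$ is the generic rank of $\mathcal{F}'$ along the corresponding branch (positive because $\supp(\mathcal{F}) = C$ meets every branch). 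Since $\sum_\mathfrak{p} e(\bar{x}, \O_{C, p}/\mathfrak{p}) = e(\bar{x}, \O_{C, p}) = m$, the identity forces $r_\mathfrak{p} = 1$ for every $\mathfrak{p}$ and $\mathcal{F}'_p[\bar{x}] = 0$. The latter makes $\bar{x}$ a non-zero-divisor on $\mathcal{F}'_p$, so $\mathcal{F}'_p$ has depth equal to $\dim \O_{C, p} = 1$, i.e., it is Cohen--Macaulay and hence torsion-free, with generic rank one on each branch. Removing the finitely many other singularities of $C$ and embedded associated points of $\mathcal{F}'$ from $U$ then finishes the proof.

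The main technical obstacle is the Fitting-ideal computation: one has to keep track of the identification $(f) \cong \O_U$ and of the images of $\Fit(\mathcal{F})$ under both the inclusion into $\O_U$ and into $(f)$, in order to justify cancelling the common factor $f|_D$ before applying Nakayama. Once $\mathcal{F} = i_*\mathcal{F}'$ is in hand, the multiplicity-length argument is essentially routine.
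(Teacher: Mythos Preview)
Your argument is correct and takes a genuinely different route from the paper's proof.

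The paper argues geometrically: it first shows $L_1 j^*\mathcal{F}=0$ (hence no point-torsion at $p$), then moves the slicing curve $D$ in a pencil $D_t=\{g=t\}$ so that a generic $D_t$ meets $C$ transversally in $d$ distinct points, invokes semicontinuity to get length-one slices at every smooth point of $C$ near $p$, applies the $d=1$ base case (Lemma~\ref{lem: equicutting lemma}) away from $p$, and finally uses the absence of point-torsion to conclude that the equation $f$ annihilates $\mathcal{F}$. Your proof replaces this entire deformation-and-semicontinuity step with two direct commutative-algebra computations: the Fitting-ideal sandwich forces $\Ann(\mathcal{F})=(f)$ at once, and the Hilbert--Samuel associativity formula reads off both torsion-freeness and rank one from the single equality $\ell(\mathcal{F}'_p/\bar x\mathcal{F}'_p)=m$.

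What each buys: the paper's approach is more elementary in that it only needs the $d=1$ case and basic semicontinuity, but it uses characteristic zero (for the generic slice $D_t$ to be reduced along $C$) and requires the auxiliary Lemma~\ref{lem: equicutting lemma}. Your approach is characteristic-free and self-contained, at the cost of importing the associativity formula for multiplicities. One small point you use without stating it explicitly: the equality $\ell(L_0 j^*\mathcal{F})=m$ (rather than $\geq m$) follows because the \emph{derived} slice has total length $m$ while part~(3) of Lemma~\ref{lem: thin slices are minimal} gives $\ell(L_0 j^*\mathcal{F})\geq m$, forcing $L_1 j^*\mathcal{F}=0$; this is also what guarantees $\ell(\mathcal{F}'_p/\bar x\mathcal{F}'_p)=m$ in Stage~2. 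Similarly, the identity $e(\bar x,\O_{C,p})=m$ uses that $D$ is transverse to the tangent cone (so the intersection number $D\cdot C$ at $p$ equals the multiplicity), which you correctly extract from the same lemma.
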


\begin{proof}
  The foundational case is when the multiplicity of the curve $C$ at the point $p$ is equal to one. Then by definition $\mathcal{F}$ is thin at $p$ if and only if there exists a slice of $\mathcal{F}$ of length one. This case is proved in Lemma~\ref{lem: equicutting lemma}. Otherwise, let $d > 1$ be the multiplicity of the curve $C$ at the point $p$.

  By shrinking $U$ we may assume that $U$ is affine. Let $f \in \mathrm{H}^0(\O_U)$ be the equation for the reduced scheme structure on the curve $C$. Since the characteristic of the base field is zero, by further shrinking $U$ we may assume that all points in $C \setminus \{ p \}$ are smooth in the curve $C$. Let $j\colon D \monoarrow S$ be a smooth curve passing through $p$ such that the derived pullback $j^*\mathcal{F}$ is a torsion object of the length $d$.

  We first show that $\mathcal{F}$ has no point-torsion at $p$. We know that the length
  \[
    \mylength(j^*\mathcal{F}) := \mylength(L_0 j^*\mathcal{F}) + \mylength(L_1 j^* \mathcal{F})
  \]
  is equal to $d$. By Lemma~\ref{lem: thin slices are minimal} the summand $\mylength(L_0 j^*\mathcal{F})$ is greater or equal to $d$. Thus $L_1 j^* \mathcal{F}$ has length zero, so it is a zero object. Consider the subsheaf $\mathcal{T} \subset \mathcal{F}$ spanned by sections supported only at the point $p$. Consider the short exact sequence
  \[
    0 \to \mathcal{T} \to \mathcal{F} \to \mathcal{F}/\mathcal{T} \to 0.
  \]
  Since $j$ is an inclusion of a Cartier divisor, $L_2 j^*(-)$ vanishes at every argument. Thus the long exact sequence of derived pullbacks along $j\colon D \monoarrow S$ shows that $L_1 j^* \mathcal{F}$ has a subsheaf isomorphic to $L_1 j^* \mathcal{T}$. If $\mathcal{T}$ is a nonzero sheaf, then by Lemma~\ref{lem: lines detect point-torsion} the sheaf $L_1 j^*\mathcal{T}$ is also nonzero, but this leads to a contradiction with the fact that $\mathcal{F}$ is thin at $p$. Thus $\mathcal{T} = 0$, i.e., the sheaf $\mathcal{F}$ has no point-torsion.

  Let $g \in \mathrm{H}^0(\O_U)$ be the equation of the smooth curve $D \subset U$. Consider a family of inclusions $j_t\colon D_t \monoarrow U$, where the curve $D_t$ is given by the equation $\{ g = t \}$. By Lemma~\ref{lem: thin slices are minimal} the tangent vector of $D$ at the point $p$ does not lie in the tangent cone of $C$, and thus for a general value of $t$ the curve $D_t$ intersects $C$ transversely in exactly $d$ distinct points (see, e.g., \cite[\S 5A]{mumford-algeom}; we use the assumption of characteristic zero here). Thus, after possibly shrinking $U$, by semicontinuity we may assume that at each point of $C \setminus \{ p \}$ the sheaf $\mathcal{F}$ has a slice which is a torsion object of length one (see Figure~\ref{fig: semicontinuity and slices}).

  
  \begin{figure*}[ht]
    \centering
    \def\svgwidth{0.3\columnwidth}
    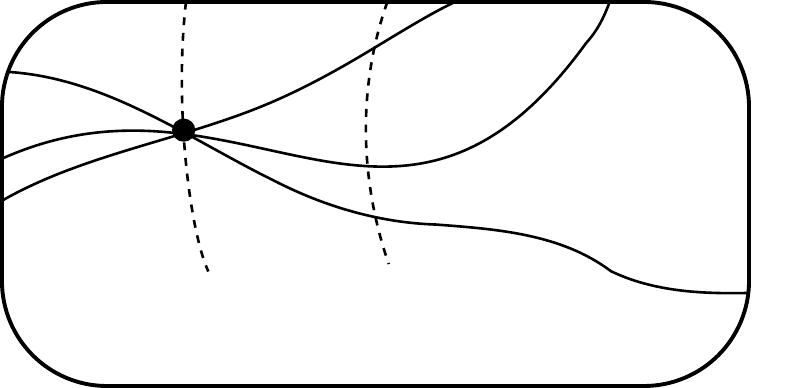
    \caption{Semicontinuity and slices}
    \label{fig: semicontinuity and slices}
  \end{figure*}

  In particular, by Lemma~\ref{lem: global equicutting lemma} this implies that on $U \setminus \{ p \}$ the sheaf $\mathcal{F}$ is a direct sum of pushforwards of line bundles from the irreducible components of $C \setminus \{ p \}$. Assume that on the unpunctured surface $U$ the sheaf $\mathcal{F}$ is not a pushforward from the curve $C$. Since $U$ is affine, this is equivalent to the fact that the equation $f \in \mathrm{H}^0(\O_U)$ does not annihilate $\mathcal{F}$. Then there exists a section $s \in \mathrm{H}^0(\mathcal{F})$ such that $f \cdot s$ is not zero. The equation $f$ annihilates any section on the open set $U \setminus \{ p \}$. Therefore $f \cdot s$ is a section of $\mathcal{F}$ which is supported only at a single point $p$. But we proved that $\mathcal{F}$ has no zero-dimensional torsion, a contradiction. Thus the scheme-theoretic support of the sheaf $\mathcal{F}$ is equal to $C$.
\end{proof}

\begin{lemma}
  \label{lem: thin objects}
  Let $S$ be a smooth surface, and let $F \in \Dperf(S)$ be an object whose set-theoretic support is a reduced curve $C \subset S$. Suppose that $F$ is thin at every point of $C$. Then $F$ is a formal complex, and each cohomology sheaf $\mH^n(F)$ is isomorphic to a pushforward of a torsion-free rank one sheaf from some subcurve $C_n \subset C$.
\end{lemma}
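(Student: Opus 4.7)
The plan is to reduce to Lemma~\ref{lem: thin coherent sheaves} by establishing that each cohomology sheaf $\mH^n(F)$ is itself thin at every point of $C_n := \supp(\mH^n(F))$, and then to deduce formality from the vanishing of the appropriate $\Ext^2$ groups on the smooth surface $S$.

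First I would show that the subcurves $\{C_n\}$ share no irreducible components, so that they partition the components of $C$. For an irreducible component $C^{(i)} \subset C$, pick a smooth point $q \in C^{(i)}$ lying on no other component; then $C$ has multiplicity one at $q$, so thinness provides a slice $j\colon D \monoarrow S$ with $\mylength(j^*F) = 1$. By Lemma~\ref{lem: derived restriction to a divisor} the length adds over cohomology sheaves, and Lemma~\ref{lem: thin slices are minimal}(1) gives $\mylength(j^*\mH^n(F)) \geq 1$ whenever $C^{(i)} \subset C_n$. Hence exactly one index $n$ satisfies $C^{(i)} \subset C_n$.

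Next, fix any $p \in C$ of multiplicity $d$. Thinness of $F$ at $p$ yields a slice $j\colon D \monoarrow S$ with $\mylength(j^*F) = d$; by Lemma~\ref{lem: thin slices are minimal}(2) the tangent of $D$ at $p$ misses the tangent cone of $C$, hence the tangent cone of every component of $C$ through $p$. Applying Lemma~\ref{lem: thin slices are minimal}(1) to each cohomology sheaf with this same slice gives $\mylength(j^*\mH^n(F)) \geq \text{mult}_p(C_n)$, and summing together with the previous step yields
\[
  d \;=\; \mylength(j^*F) \;=\; \sum_n \mylength(j^*\mH^n(F)) \;\geq\; \sum_n \text{mult}_p(C_n) \;=\; \text{mult}_p(C) \;=\; d,
\]
where the penultimate equality uses the partition established above. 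Equality must hold throughout, so each $\mH^n(F)$ is thin at every $p \in C_n$. Lemma~\ref{lem: thin coherent sheaves} then applies locally and globalizes: the local isomorphisms pin down the scheme-theoretic support of $\mH^n(F)$ as the reduced curve $C_n$, giving $\mH^n(F) \iso j_{n,*}\mathcal{F}_n$ with $\mathcal{F}_n$ torsion-free of rank one on $C_n$, where $j_n\colon C_n \monoarrow S$ is the inclusion.

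It remains to establish formality. By Lemma~\ref{lem: complexes on a surface are given by adjacent glueings} it suffices to show that $\Ext^2_S(A, B) = 0$ for $A := j_{n,*}\mathcal{F}_n$ and $B := j_{n-1,*}\mathcal{F}_{n-1}$ for every $n$. Both $A$ and $B$ are pure of dimension one, so by the Auslander--Buchsbaum formula on the two-dimensional regular local rings of $S$ each has local projective dimension at most one, and therefore the local ext sheaves $\mathcal{E}xt^{i}_S(A, B)$ vanish for $i \geq 2$. The remaining terms $\mathcal{H}om_S(A, B)$ and $\mathcal{E}xt^1_S(A, B)$ are both supported on the set-theoretic intersection $C_n \cap C_{n-1}$, which is zero-dimensional by the first step; a coherent sheaf with zero-dimensional support has no higher cohomology, so the local-to-global spectral sequence yields $\Ext^2_S(A, B) = 0$. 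Thus all glueing classes vanish and $F$ is formal. The main subtlety is the transverse-slice argument of the second step: it is crucial that any slice witnessing thinness of $F$ at $p$ is automatically transverse to every component of $C$ through $p$, which is exactly the reverse implication in Lemma~\ref{lem: thin slices are minimal}(2) used in the contrapositive.
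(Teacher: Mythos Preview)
Your proof is correct and follows essentially the same strategy as the paper: bound the slice length from below by the multiplicities of the supports of the cohomology sheaves, force equality, deduce thinness of each $\mH^n(F)$, invoke Lemma~\ref{lem: thin coherent sheaves}, and kill the $\Ext^2$-glueing classes via Auslander--Buchsbaum and local-to-global. Two small remarks on presentation.

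First, your preliminary step establishing that the irreducible components of $C$ are partitioned among the $C_n$ is not needed. The paper only uses the inequality $\sum_{n} \mathrm{mult}_p(C_n) \geq \mathrm{mult}_p(C)$, which holds because every component of $C$ through $p$ lies in some $C_n$; the partition then falls out for free from equality in the chain. Your route is also valid, but you are doing more work than necessary.

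Second, when you invoke Lemma~\ref{lem: thin slices are minimal}(1) for the individual cohomology sheaves $\mH^n(F)$, the hypothesis of that lemma requires the support to be a reduced curve, and you have not yet excluded the possibility that some $C_n$ contains $p$ as an isolated point. The paper handles this by restricting the sum to the index set $I$ of those $n$ for which $p$ is not isolated in $C_n$, and then reads off from the resulting equalities that there are in fact no isolated points. Your chain of inequalities implicitly forces the same conclusion (any isolated point would contribute positively to the left side but zero to the right), but it would be cleaner to say so explicitly rather than to apply the lemma where its hypotheses are not yet verified.
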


\begin{proof}
  Let $p \in C$ be a point, and let $j\colon D \monoarrow S$ be a smooth curve passing through $p$ such that $j^*F$ is a torsion object of length equal to the multiplicity of $C$ at $p$. By Lemma~\ref{lem: derived restriction to a divisor} we know that $\mylength(j^*F) = \sum_n \mylength(j^*\mH^n(F))$. Let $C_n := \supp(\mH^n(F)) \subset C$ be the set-theoretic support of the $n$'th cohomology sheaf. Denote by $I \subset \Z$ the subset of those indices $n \in \Z$ such that $C_n$ contains $p$ and $p$ is not an isolated point in $C_n$. For $n \in I$, let $m_n$ be the multiplicity of $C_n$ at the point $p$. By Lemma~\ref{lem: thin slices are minimal} we have $\mylength(j^*\mH^n(F)) \geq m_n$ for each $n \in I$. Let $m$ be the multiplicity of the curve $C$ at $p$. Since $C = \cup_{n \in I} C_n$ near the point $p$ set-theoretically, we have $\sum_{n \in I} m_n \geq m$. Taking all this information into account, we get a chain of inequalities:
  \[
    \mylength(j^*F) = \sum_{n \in \Z} \mylength(j^* \mH^n(F)) \geq
    \sum_{n \in I} \mylength(j^* \mH^n(F)) \geq \sum_{n \in I} m_n \geq m.
  \]
  The assumption that $F$ is thin at $p$ implies that each inequality is in fact an equality. Note that this holds for any point $p \in C$. Thus we conclude that:
  \begin{enumerate}
    \item For any $n \in \Z$ such that $\mH^n(F) \neq 0$, the subset $C_n = \supp(\mH^n(F))$ is a curve, and the sheaf $\mH^n(F)$ is thin at any point of its support $C_n$.
    \item For any two distinct $n, n^\prime \in \Z$ the intersection $C_n \cap C_{n^\prime}$ is a zero-dimensional set.
  \end{enumerate}

  Consider a nonzero cohomology sheaf $\mH^n(F)$. It is thin at every point of the curve $C_n$, so by Lemma~\ref{lem: thin coherent sheaves} the sheaf $\mH^n(F)$ is isomorphic to a pushforward of a torsion-free rank one sheaf from $C_n$. It only remains to show that $F$ is a formal complex.

  By Lemma~\ref{lem: complexes on a surface are given by adjacent glueings} the glueing data for $F$ consists of classes in $\Ext^2(\mH^n(F), \mH^{n-1}(F))$ for each~$n \in \Z$. Since the supports $C_n \cap C_{n - 1}$ intersect along a zero-dimensional set, the $\Ext$-group may be computed locally at each intersection point, i.e.
  \[
    \Ext^2(\mH^n(F), \mH^{n-1}(F)) = \mathrm{H}^0(\mathcal{E}xt^2(\mH^n(F), \mH^{n-1}(F)).
  \]
  Let $p \in S$ be any point. Since $\mH^n(F)$ is a pushforward of a torsion-free sheaf from a curve via an inclusion $C_n \monoarrow S$, it has no point-torsion at $p$. By definition this means that the depth of the coherent sheaf $\mH^n(F)$ at $p \in S$ is not zero. Since $S$ is a smooth surface, Auslander--Buchsbaum formula implies that the projective dimension of $\mH^n(F)$ over the local ring $\O_{S, p}$ is at most one. Since this true for every point $p \in S$, the local $\Ext$-sheaf~$\mathcal{E}xt^2(\mH^n(F), -)$ vanishes for any second argument. Therefore $\Ext^2(\mH^n(F), \mH^{n-1}(F)) = 0$. This shows that complex $F$ splits into a direct sum of its cohomology sheaves, and the lemma is proved.
\end{proof}

\subsection{Rational elliptic surfaces}
\label{ssec: rational elliptic surfaces}

We are interested in the following class of surfaces.

\begin{definition}
  \label{def: rational elliptic surface}
  A smooth proper surface $S$ is a \emph{rational elliptic surface} if the anticanonical linear system $| {-K_S} |$ defines a regular morphism $\pi\colon S \to \P^1$.
\end{definition}

There are other, sometimes incompatible, definitions of rational elliptic surfaces in the literature. We follow the terminology of \cite{heckman-looijenga}. Note that in this convention the elliptic fibration $\pi\colon S \to \P^1$ is relatively minimal, i.e., there are no smooth $(-1)$-curves in the fibers. See the reference for more information about this class of surfaces.

Since the anticanonical divisors of a rational elliptic surface $S$ are nothing but the fibers of the morphism $\pi\colon S \to \P^1$, the methods of Section~\ref{sec: numerical lemmas} may be productively used to study admissible subcategories in rational elliptic surfaces. In this subsection we study phantom subcategories using Proposition~\ref{prop: numerical lemma without numbers}, the notion of a point-support introduced in Subsection~\ref{ssec: point-supports}, and the observations about objects set-theoretically supported on curves from Subsection~\ref{ssec: cutting lemmas}. The main result is Theorem~\ref{thm: rational elliptic surfaces}.

We start by a couple of general observations about phantom subcategories and objects with zero-dimensional support.

\begin{lemma}
  \label{lem: zero-dimensional is not a phantom}
  Let $X$ be a smooth projective variety, and let $\mB \subset \Dbcoh(X)$ be an admissible subcategory. If $\mB$ contains an object with zero-dimensional support, then $\mB$ is not a phantom subcategory.
\end{lemma}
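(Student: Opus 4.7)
The plan is to exhibit a nonzero class in $K_0(\mB)$ by passing through the Grothendieck group of the ambient category. First, using Lemma~\ref{lem: disjoint support forces splitting} I would split the given object $F \in \mB$ as a direct sum of components supported at each of its finitely many supporting points, and then use that admissible subcategories are closed under direct summands to reduce to the case where $F$ is supported at a single point $p \in X$.

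Next, I would invoke the additivity of the Grothendieck group for semiorthogonal decompositions, which follows from the functoriality of the projection triangles of Definition~\ref{lem: projection triangles exist} (see also \cite{kuznetsov-oldhochschild}): the decomposition $\Dbcoh(X) = \langle \mB^\perp, \mB \rangle$ gives $K_0(X) \iso K_0(\mB^\perp) \oplus K_0(\mB)$, so that the natural map $K_0(\mB) \to K_0(X)$ is (split) injective. For any object $G \in \Dbcoh(X)$ supported at $p$, the decomposition via cohomology sheaves combined with the composition series at $p$ gives $[G] = \chi(G) \cdot [\O_p]$ in $K_0(X)$, where $\chi(G) = \sum_i (-1)^i \ell(\mH^i(G))$. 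Since $\chi(\O_p, \O_X) = (-1)^{\dim X}$ is nonzero, the class $[\O_p]$ has infinite order in $K_0(X)$, and thus $[G] \neq 0$ in $K_0(\mB)$ if and only if $\chi(G) \neq 0$.

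The hard part is that the given $F$ itself may happen to satisfy $\chi(F) = 0$ (for instance, $F \iso \O_p \oplus \O_p[1]$ has trivial Euler characteristic). To bypass this, I would pass to a more restricted object in $\mB$: using a Krull--Schmidt decomposition of $F$, which is valid because $\End(F)$ is finite-dimensional for objects with zero-dimensional support, one replaces $F$ by an indecomposable direct summand whose invariants are easier to control. By examining the extremal cohomology sheaves of such an indecomposable summand and combining the truncation triangles with the structural lemmas of Subsection~\ref{ssec: cutting lemmas}, one produces an object of $\mB$ with nonzero Euler characteristic, yielding the desired nonzero class in $K_0(\mB)$ and contradicting the phantom hypothesis.
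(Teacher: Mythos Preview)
Your reduction to an object supported at a single point and the observation that $[G] = \chi(G)\cdot[\O_p]$ in $K_0(X)$ are both fine, but the final step is a genuine gap. Indecomposable objects supported at a single point can very well have vanishing Euler characteristic: for example, on a smooth surface one may glue two copies of $\O_p$ placed in adjacent cohomological degrees by a nonzero class in $\Ext^2(\O_p,\O_p)\cong\k$, obtaining an indecomposable object with $\chi=0$. Your suggested remedy --- passing to truncations and invoking the ``structural lemmas of Subsection~\ref{ssec: cutting lemmas}'' --- does not help: truncations of an object of $\mB$ need not lie in $\mB$, and the lemmas in that subsection concern objects supported on curves, not on points. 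So there is no mechanism in your outline that actually produces an object of $\mB$ with nonzero class in $K_0(X)$.

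The paper sidesteps this difficulty entirely by arguing on the orthogonal side. From the lowest nonzero cohomology sheaf of the given object one extracts a morphism $\O_q \to G[i]$ that is nonzero on cohomology, and then Lemma~\ref{lem: maps from torsion cannot be nonzero on cohomology} forces every object of $\mB^\perp$ to be supported away from $q$, hence to have zero rank at the generic point. Therefore $K_0(\mB^\perp)$ lands in the kernel of the rank map $K_0(X)\to\Z$, which is a proper subgroup; the direct sum decomposition $K_0(X)\cong K_0(\mB^\perp)\oplus K_0(\mB)$ then gives $K_0(\mB)\neq 0$. The point is that instead of locating a nonzero class in $K_0(\mB)$, one shows that $K_0(\mB^\perp)$ is too small to fill up $K_0(X)$.
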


\begin{proof}
  Let $G \in \mB$ be an object with zero-dimensional support. Let $i \in \Z$ be the smallest integer such that $\mH^i(G) \neq 0$. Then $\mH^i(G)$ is a coherent sheaf with zero-dimensional support, and in particular there exists an embedding $\O_q \monoarrow \mH^i(G)$ of a skyscraper sheaf $\O_q$ at some point $q \in X$ into that coherent sheaf. Then the composition $\O_q \monoarrow \mH^i(G) \to G$ induces a nonzero map on cohomology sheaves. Thus by Lemma~\ref{lem: maps from torsion cannot be nonzero on cohomology} we see that any object of $\mB^\perp$ is supported on the complement $X \setminus \{ q \}$. In particular, the rank of any object of $\mB^\perp$ at the generic point of $X$ is zero. This shows that $K_0(\mB^\perp)$ cannot be equal to $K_0(X)$. Since there exists a direct sum decomposition $K_0(X) = K_0(\mB^\perp) \oplus K_0(\mB)$, we see that $K_0(\mB) \neq 0$.
\end{proof}

\begin{lemma}
  \label{lem: splitting from single sheaf}
  Let $X$ be a smooth projective variety, and let $\mB \subset \Dbcoh(X)$ be an admissible subcategory. Let $B \in \mB$ be an object. Suppose that $\mH^n(B)$ for some $n \in \Z$ is the only cohomology sheaf of $B$ which is supported at a point $p \in X$. If $p \in \supp(\mH^n(B))$ is an isolated point, then $\mB$ is not a phantom subcategory.
\end{lemma}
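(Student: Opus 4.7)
The plan is to peel off from $B$ a nonzero direct summand with zero-dimensional support; since admissible subcategories of idempotent-complete triangulated categories are closed under direct summands, this summand will itself lie in $\mB$, and then the preceding Lemma~\ref{lem: zero-dimensional is not a phantom} immediately delivers the conclusion that $\mB$ is not a phantom subcategory. So everything reduces to producing that point-supported summand.

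The first step is to check that $p$ is actually an isolated point of the full set-theoretic support $\supp(B) = \bigcup_{i \in \Z} \supp(\mH^i(B))$. This union is finite and each term is closed in $X$. By hypothesis $p \notin \supp(\mH^i(B))$ for every $i \neq n$, so those closed sets can be avoided by a small Zariski neighborhood of $p$; combining this with the assumption that $p$ is isolated in $\supp(\mH^n(B))$ shows that $\{p\}$ is open inside $\supp(B)$, i.e.\ $\supp(B) = \{p\} \sqcup Z$ for some closed subset $Z \subset X$ with $p \notin Z$.

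Now I would apply Lemma~\ref{lem: disjoint support forces splitting} directly to $B$ with this disjoint decomposition of its support. It produces a unique splitting $B \iso B_1 \oplus B_2$ with $\supp(B_1) = \{p\}$ and $\supp(B_2) = Z$. The summand $B_1$ is nonzero because $p$ genuinely belongs to $\supp(B)$ (it lies in $\supp(\mH^n(B))$ by assumption). Since admissible subcategories are closed under direct summands, $B_1 \in \mB$.

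Finally, $B_1$ is a nonzero object of $\mB$ whose support is the single point $\{p\}$, i.e.\ zero-dimensional, so Lemma~\ref{lem: zero-dimensional is not a phantom} applied to $\mB$ immediately shows that $\mB$ is not a phantom subcategory. I do not anticipate any real obstacle here: the whole argument is a two-line bookkeeping of supports followed by invocations of the splitting lemma and the previous non-phantom criterion.
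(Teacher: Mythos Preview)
Your argument is correct and follows essentially the same route as the paper's proof: both show that $p$ is isolated in the full support $\supp(B)$, invoke Lemma~\ref{lem: disjoint support forces splitting} to split off a nonzero summand supported at $\{p\}$, note that admissible subcategories are closed under summands, and then apply Lemma~\ref{lem: zero-dimensional is not a phantom}.
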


\begin{proof}
  Assume that $p \in \supp(\mH^n(B))$ is an isolated point. Since all other cohomology sheaves of $B$ are supported away from $p$, this point is still isolated in the union
  \[
    \supp(B) := \cup_{i \in \Z} \supp(\mH^i(B)).
  \]
  Then by Lemma~\ref{lem: disjoint support forces splitting} the object $B \in \mB$ has a direct summand $T$ with $\supp(T) = \{ p \}$. Since admissible subcategories are closed under direct summands, we have $T \in \mB$. Then by Lemma~\ref{lem: zero-dimensional is not a phantom} the category $\mB$ is not a phantom.
\end{proof}

We continue with more specific properties of phantom subcategories in rational elliptic surfaces. As in Section~\ref{sec: projective plane}, we use the projections of skyscraper sheaves to study admissible subcategories. We fix the notation used in the rest of this subsection below.

\begin{setting}
  \label{set: rational elliptic surfaces}
  Let $\pi\colon S \to \P^1$ be a rational elliptic surface. Let $\mB \subset \Dbcoh(S)$ be an admissible subcategory. Pick a point $p \in S$ lying on the fiber $F \subset S$ of the projection map $\pi\colon S \to \P^1$ such that
  \begin{enumerate}
  \item $F$ is irreducible;
  \item $p \in F$ is a smooth point of that curve.
  \end{enumerate}
  Let $B := \mB_R(\O_p)$ be the (right) projection of the skyscraper sheaf at the point $p$ to the subcategory $\mB$. Denote by $S_p := \supp(B) \subset S$ the set-theoretic support of $B$.
\end{setting}

\begin{lemma}
  \label{lem: fiber and point-support}
  Let $p, \mB, B, F$ be as in Setting~\textup{\ref{set: rational elliptic surfaces}}. If $\mB$ is a phantom subcategory, then the restriction $B|_F$ is isomorphic to either
  \begin{itemize}
  \item a zero object, in which case $B = 0$ and $\O_p \in \mB^\perp$; or
  \item to a direct sum $\O_p[0] \oplus \O_q[a]$ for some smooth point $q \in F$, possibly coinciding with the point $p$, and some odd shift $a \in \Z$.
  \end{itemize}
\end{lemma}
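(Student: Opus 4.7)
The plan is to apply Proposition~\ref{prop: numerical lemma without numbers} to the anticanonical divisor $F$ and use the phantom condition to exclude most of the five resulting possibilities. Since $p$ is a smooth point of the reduced irreducible anticanonical divisor $F \subset S$, that proposition restricts $j^*B = B|_F$ to one of five explicit forms. The phantom hypothesis $K_0(\mB) = 0$ forces $[B] = 0$ in $K_0(S)$; since $S$ is smooth, $B$ is perfect, so pulling back yields $[j^*B] = 0$ in $K_0(\Dperf(F))$. Because $F$ is reduced and irreducible, both the generic rank and the Euler characteristic define homomorphisms $K_0(\Dperf(F)) \to \Z$, and therefore both invariants must vanish on $j^*B$.

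Next, I would dispose of the unwanted cases by these two numerical constraints. Options $(4)$ and $(5)$ of Proposition~\ref{prop: numerical lemma without numbers} both involve a simple vector bundle summand $M$ of positive rank, so the generic rank of $j^*B$ would be $\pm\,\rk(M) \neq 0$, a contradiction. Option $(3)$ gives $j^*B \iso \O_{2p}$, whose Euler characteristic equals its length $2$, ruled out. Option $(2)$ gives $j^*B \iso \O_p \oplus \O_q[a]$, whose Euler characteristic is $1 + (-1)^a$, vanishing exactly when $a$ is odd. This leaves only option $(1)$ ($j^*B = 0$) and option $(2)$ with $a$ odd, matching the statement.

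Finally, in the remaining case $j^*B = 0$, Lemma~\ref{lem: derived restriction to a divisor} gives $\supp \mH^i(B) \cap F = \emptyset$ for every $i$, so $p \notin \supp(B)$. Then Lemma~\ref{lem: support and derived fibers} yields $\RHom(B, \O_p) = 0$, and Corollary~\ref{cor: universal property of projection triangles} identifies this space with $\REnd(B)$, forcing $B = 0$. The projection triangle $B \to \O_p \to A$ then degenerates to $\O_p \iso A \in \mB^\perp$. I do not anticipate any serious obstacle here: the technical work is concentrated in Proposition~\ref{prop: numerical lemma without numbers}, and the only subtlety is verifying that the rank and Euler characteristic are well-defined additive invariants on $K_0(\Dperf(F))$ when $F$ is possibly singular, which follows from $F$ being a reduced irreducible projective curve.
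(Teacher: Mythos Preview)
Your proposal is correct and follows essentially the same approach as the paper: apply Proposition~\ref{prop: numerical lemma without numbers}, then use the phantom condition via rank and Euler characteristic on $K_0$ to eliminate all options except the two claimed. The only cosmetic differences are that the paper phrases the numerical constraints as ``nonzero class in $K_0(F)$'' rather than explicitly invoking rank and $\chi$, and handles the $j^*B = 0$ case by citing Lemma~\ref{lem: properties of point-supports} directly instead of your chain through Lemma~\ref{lem: derived restriction to a divisor}, Lemma~\ref{lem: support and derived fibers}, and Corollary~\ref{cor: universal property of projection triangles}; both routes are valid.
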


\begin{proof}
  Consider the restriction $B|_F \in \Dperf(F)$. The fiber $F$ is an anticanonical divisor of the surface $S$. Note that $F$ is a reduced irreducible projective curve of arithmetic genus one. Since $p \in F$ is a smooth point, the skyscraper sheaf $\O_p$ is a perfect object on $F$. Then we can apply Proposition~\ref{prop: numerical lemma without numbers} to see that the object $B|_F$ is either
  \begin{enumerate}
  \item a cone between two skyscrapers on smooth points; or
  \item a cone of a map from the skyscraper $\O_p$ to a simple vector bundle on $F$.
  \end{enumerate}

  In the case (2) the object $B|_F$ has a nonzero rank at the generic point of $F$. Then the class of $B|_F$ in $K_0(F)$ is nonzero, and hence the class of $B \in \mB$ is $K_0(S)$ is also nonzero. This shows that $\mB$ is not a phantom subcategory, a contradiction.

  Suppose we are in the case (1). Then by Corollary~\ref{cor: autoequivalence triangle} the object $B|_F$ fits into a distinguished triangle
  \[
    B|_F \to \O_p[0] \to \O_q[n]
  \]
  where $q \in F$ is some smooth point and $n \in \Z$ is some shift. If $n$ is an odd integer, then the class of $B|_F$ in $K_0(F)$ is equal to the class of $\O_p \oplus \O_q$, a nonzero element, and this gives a contradiction with the assumption that $\mB$ is a phantom subcategory. Thus we see that $n$ is necessarily an even integer.

  If the morphism $\O_p \to \O_q[n]$ is zero, then $B|_F \iso \O_p \oplus \O_q[n-1]$, as claimed. For even~$n$, the morphism of skyscrapers can be nonzero only when $q = p$, $n = 0$, and the map is an isomorphism $\O_p[0] \isoarrow \O_p[0]$. In this case the cone $B|_F$ is a zero object. Then the intersection of the support $\supp(B)$ with $F$ is empty. In particular, $\supp(B)$ does not contain the point~$p$. Since $B$ is the projection of the skyscraper sheaf, by Lemma~\ref{lem: properties of point-supports} this implies that $B = 0$ and hence the skyscraper sheaf $\O_p$ lies in the orthogonal subcategory $\mB^\perp$, as claimed.
\end{proof}

\begin{lemma}
  \label{lem: phantoms like minus one curves}
  Let $p, \mB, B, F$ be as in Setting~\textup{\ref{set: rational elliptic surfaces}}. If $\mB$ is a phantom subcategory and $B \neq 0$, then the point $p$ lies on some section of the map $\pi\colon S \to \P^1$.
\end{lemma}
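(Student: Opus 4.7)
The plan is to combine the structural constraint on $B|_F$ from Lemma~\ref{lem: fiber and point-support} with the K-theoretic vanishing $[B] = 0 \in K_0(S)$ furnished by the phantom hypothesis, and then analyze the support $S_p := \supp(B) \subset S$.

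Since $B \neq 0$ and $\mB$ is a phantom, Lemma~\ref{lem: fiber and point-support} gives $B|_F \iso \O_p[0] \oplus \O_q[a]$ for some smooth point $q \in F$ (possibly $q = p$) and some odd integer $a$. Because $K_0(\mB) = 0$ and $K_0(S) = K_0(\mB^\perp) \oplus K_0(\mB)$, we have $[B] = 0$ in $K_0(S)$. Pulling back along the inclusion $j\colon F \hookrightarrow S$ yields $[B|_F] = 0$ in $K_0(F)$, hence $[\O_p] + (-1)^a[\O_q] = [\O_p] - [\O_q] = 0$, so $\O_F(p) \iso \O_F(q)$ in $\mathrm{Pic}(F)$. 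Since $F$ is a reduced irreducible projective curve of arithmetic genus one (an anticanonical divisor of a rational elliptic surface) and $p, q$ are smooth points, the generalized Abel--Jacobi map embeds the smooth locus into $\mathrm{Pic}^0(F)$, and we conclude $p = q$. Thus $B|_F \iso \O_p \oplus \O_p[a]$ is supported set-theoretically only at $\{p\}$.

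Next, I would analyze $S_p$. By Lemma~\ref{lem: properties of point-supports} it is closed, connected, and contains $p$, and it cannot equal $\{p\}$: otherwise $\O_p \in \mB$, which is impossible since $[\O_p]$ is nonzero in $K_0(S)$ while $K_0(\mB) = 0$. To rule out vertical components, note that if an irreducible component of $S_p$ were an entire fiber $F''$ of $\pi$, then by Lemma~\ref{lem: derived restriction to a divisor} we would have $F'' \subset \supp(B|_{F''})$, so $B|_{F''}$ would have positive generic rank on the smooth locus of $F''$, contradicting the fact that the class $[B|_{F''}] = j_{F''}^*[B]$ vanishes in the Grothendieck group of that smooth locus. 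Hence every irreducible component of $S_p$ maps dominantly to $\P^1$ via $\pi$.

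The crux is to show that the component $C \subset S_p$ through $p$ is a section of $\pi$. Since $S_p \cap F = \{p\}$ set-theoretically, the scheme-theoretic intersection $C \cap F$ is concentrated at $p$ with multiplicity $(C \cdot F)_p = \deg(\pi|_C)$, so it suffices to show $(C \cdot F)_p = 1$. The key local observation is that the direct-sum shape $B|_F \iso \O_p \oplus \O_p[a]$ with $a \neq 0$ odd cannot arise as the derived pullback to $F$ of any complex whose set-theoretic support meets $F$ with multiplicity $\geq 2$ at $p$: such a pullback would produce an indecomposable length-$d$ torsion object like $\O_{dp}[b]$ or a related non-split extension incompatible with the nonzero shift $a$, as one sees by direct local calculation of derived restrictions in the spirit of the examples in the proof of Lemma~\ref{lem: thin slices are minimal}, organized via Lemmas~\ref{lem: derived restriction to a divisor} and~\ref{lem: complexes on a surface are given by adjacent glueings}. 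Consequently $(C \cdot F)_p = 1$, the curve $C$ is smooth and transverse to $F$ at $p$, $\deg(\pi|_C) = 1$, and so $C$ is a section of $\pi$ containing $p$.

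The main obstacle is making the local argument in the third paragraph fully rigorous. One must rule out all possible local configurations of $B$ near $p$ (including those where $C$ has several analytic branches through $p$, or where $B$ has multiple interacting cohomology sheaves) that could conceivably produce the direct-sum restriction $\O_p \oplus \O_p[a]$ from a support curve of total intersection multiplicity $\geq 2$ with $F$ at $p$. This is naturally attacked by combining the glueing description of Lemma~\ref{lem: complexes on a surface are given by adjacent glueings} with the Lemma~\ref{lem: derived restriction to a divisor} spectral sequence; alternatively, one can perturb $F$ to a nearby anticanonical divisor and apply Lemma~\ref{lem: arbitrary anticanonical divisors} after verifying that the two-distinct-smooth-points hypothesis is witnessed by some deformed pair of intersection points.
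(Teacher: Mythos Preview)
Your overall plan is sound and the first two paragraphs are correct, but the Abel--Jacobi step is a detour that actually makes things harder: it collapses the easier case $q \neq p$ (where $\mylength_p(B|_F)=1$ and transversality is immediate from Lemma~\ref{lem: thin slices are minimal}) into the harder case $q=p$. The paper keeps both cases and dispatches $q\neq p$ first.

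The genuine gap is in your third paragraph, and it is twofold. First, the phrase ``the component $C\subset S_p$ through $p$'' is not well-defined: several irreducible components of $S_p$ may pass through $p$, and your claimed incompatibility is false in that situation (for instance, if $B$ were the direct sum of the structure sheaves of two sections through $p$, placed in degrees $0$ and $-a$, its restriction to $F$ would be exactly $\O_p\oplus\O_p[a]$ while $S_p$ has multiplicity $2$ at $p$). Second, even granting a single component, the vague ``indecomposable length-$d$ object'' heuristic is not a proof. The paper sidesteps both issues by working with the individual cohomology sheaves $\mH^i(B)$ rather than with $S_p$.

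The precise mechanism is the following. Lemma~\ref{lem: derived restriction to a divisor} gives the additivity
\[
\mylength_p(B|_F)=\sum_{i\in\Z}\mylength_p\bigl(\mH^i(B)|_F\bigr),
\]
so when $q=p$ the total is $2$. Either two distinct cohomology sheaves contribute $1$ each at $p$, and then the $q\neq p$ argument applies to either of them; or a single $\mH^n(B)$ contributes $2$. In the latter case pick an irreducible curve $C\subset\supp(\mH^n(B))$ through $p$ (Lemma~\ref{lem: splitting from single sheaf} rules out $p$ isolated). If $C$ were singular at $p$ or tangent to $F$, part~(3) of Lemma~\ref{lem: thin slices are minimal} gives $\mylength_p(L_0 j^*\mH^n(B))\geq 2$, hence $\mylength_p(\mH^n(B|_F))\geq 2$ via Lemma~\ref{lem: derived restriction to a divisor}. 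But here is the key point you gestured at without making explicit: since $a$ is odd, in particular $a\neq 0$, the two skyscrapers in $B|_F\iso\O_p\oplus\O_p[a]$ live in \emph{different} cohomological degrees, so every individual cohomology sheaf $\mH^n(B|_F)$ has length at most $1$. This contradiction forces $C$ transverse to $F$ at $p$, hence $C$ is a section.

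Your fallback via Lemma~\ref{lem: arbitrary anticanonical divisors} cannot work once you have established $p=q$: that lemma requires $j^*B$ to be supported on two \emph{distinct} smooth points, which is precisely what you have just ruled out.
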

\begin{proof}
  For any torsion object $G \in \Dperf(F)$ denote by $\mylength_p(G)$ the sum of lengths of cohomology sheaves of $G$ at the point $p \in F$. By Lemma~\ref{lem: derived restriction to a divisor} we have an equality
  \begin{equation}
    \label{eqn: additivity of lengths}
    \mylength_p(B|_F) = \sum_{i \in \Z} \mylength_p (\mH^i(B)|_F).
  \end{equation}

  From Lemma~\ref{lem: fiber and point-support} we know that if $B \neq 0$, then $B|_F$ is isomorphic to a direct sum of two skyscrapers $\O_p[0] \oplus \O_q[a]$ where $a$ is an odd integer.

  Consider first the case where the point $q$ is distinct from the point $p$. Then $\mylength_p(B|_F) = 1$. From the formula (\ref{eqn: additivity of lengths}) we see that there exists exactly one cohomology sheaf $\mH^n(B)$ which is nonzero at the point $p$, and moreover the length $\mylength_p(\mH^n(B)|_F)$ is equal to one. By Lemma~\ref{lem: splitting from single sheaf} we see that $p \in \supp(\mH^n(B))$ is not an isolated point, so there exists an irreducible curve $C \subset \supp(\mH^n(B))$ passing through $p$.

  Suppose that $C$ also intersects the fiber $F$ at some point $r \in F$ other than $p$. Then by Lemma~\ref{lem: derived restriction to a divisor} this implies that $\mH^n(B|_F)$ is not zero at both points $p$ and $r$. But this contradicts the fact that $B|_F \iso \O_p[0] \oplus \O_q[a]$ with odd $a$, and each cohomology sheaf is supported only at a single point. Therefore $C$ intersects $F$ only at the point $p$.

  Moreover, since $\mylength_p(\mH^n(B)|_F) = 1$, by Lemma~\ref{lem: thin slices are minimal} we see that $C$ intersects $F$ at $p$ transversely, i.e., with multiplicity one. Since $F$ is a fiber of the fibration $\pi\colon S \to \P^1$, this implies that the composition $C \monoarrow S \xrightarrow{\pi} \P^1$ is a morphism of curves of degree one into a smooth curve. Any such map is necessarily an isomorphism. Thus $C \iso \P^1 \subset S$ is a section of the map $\pi$, and it contains the point $p$ by construction, so the lemma is proved in the case where $q \neq p$.

  Consider now the case where $B|_F \iso \O_p[0] \oplus \O_p[a]$ is supported only at the point $p$. Here the length $\mylength_p(B|_F)$ is two. As above, from the formula (\ref{eqn: additivity of lengths}) we see that there are at most two cohomology sheaves of $B$ which are nonzero at $p$. If there are two distinct ones, $\mH^i(B)$ and $\mH^j(B)$ with $i \neq j$, then we have
  \[
    \mylength_p(\mH^i(B)|_F) = \mylength_p(\mH^j(B)|_F) = 1.
  \]
  In this case we can repeat the argument above using either of those two cohomology sheaves.

  Thus it only remains to deal with the case where only one cohomology sheaf, $\mH^n(B)$, is nonzero at $p$ and has $\mylength_p(\mH^n(B)) = 2$. Again, using Lemma~\ref{lem: splitting from single sheaf} we can pick an irreducible curve $C \subset \supp(\mH^n(B))$ containing the point $p$. It intersects $F$ only at the point $p$ since $\supp(B|_F) = \{ p \}$. If $C$ intersects $F$ at $p$ with multiplicity one, then it is a section of the fibration $\pi\colon S \to \P^1$, and we are done by the same argument as above. If the intersection multiplicity is $m > 1$, then either $C$ is singular at $p$, or it intersects $F$ non-transversely. In both those cases from the last part of Lemma~\ref{lem: thin slices are minimal} we see that the length of the nonderived restriction of $\mH^n(B)$ along the inclusion $j\colon F \monoarrow S$ is at least~$2$. From the short exact sequences in Lemma~\ref{lem: derived restriction to a divisor} we see that
  \[
    \mylength_p(\mH^n(B|_F)) \geq \mylength_p(L_0 j^* \mH^n(B)) \geq 2.
  \]
  But this is a contradiction with the fact that $B|_F \iso \O_p[0] \oplus \O_p[a]$ with odd $a \in \Z$, and each individual cohomology sheaf has length at most one. Therefore, the intersection of $C$ with $F$ is always transverse, and the lemma is proved.
\end{proof}

\statemaintheoremRES

\begin{remark}
  Note that any section is a $(-1)$-curve by the adjunction formula. Moreover, each smooth $(-1)$-curve is a section, and by Kodaira's classification of singular fibers each reducible fiber is a union of several $(-2)$-curves. In other words, the conclusion is that any phantom subcategory is supported on the union of $(-1)$-curves and $(-2)$-curves.
\end{remark}

\begin{proof}
  Let $p \in S$ be a smooth point of an irreducible fiber $F \subset S$, and suppose that $p$ does not lie on any section of $\pi\colon S \to \P^1$. Then by Lemmas~\ref{lem: fiber and point-support} and~\ref{lem: phantoms like minus one curves} we see that the projection of the skyscraper sheaf $\O_p$ to $\mB$ is zero, so $\O_p$ lies in the subcategory $\mB^\perp$. By semiorthogonality this implies that any object of $\mB$ is supported on the complement $S \setminus \{ p \}$. Since this holds for any smooth point of an irreducible fiber, we see that the support of $\mB$ is contained in the union of
  \begin{itemize}
  \item sections of the fibration $\pi\colon S \to \P^1$;
  \item reducible fibers of the fibration $\pi\colon S \to \P^1$; and
  \item singular points of irreducible fibers of $\pi$.
  \end{itemize}
  
  Note that there are finitely many irreducible singular fibers of the morphism $\pi$, and each has only a single singular point. So singular points of irreducible fibers form a discrete subset of the surface $S$. By Lemma~\ref{lem: support of a category} the support of $\mB$ does not contain any isolated points. Thus the subset $\supp(\mB)$ is in fact a union of curves, as claimed in the statement.
\end{proof}

\subsection{Admissible subcategories supported on sections}
\label{ssec: sections of anticanonical system}

Theorem~\ref{thm: rational elliptic surfaces} shows that any phantom subcategory in a rational elliptic surface is supported on the union of sections and reducible fibers. Recall that each section is a $(-1)$-curve in a rational elliptic surface, and in Section~\ref{sec: minus one curves} we have classified admissible subcategories supported on a single $(-1)$-curve. In this subsection we generalize the classification of admissible subcategories to arbitrary configurations of sections.

The main result of this subsection is Theorem~\ref{thm: admissible subcategories on sections}. We show that the only option is to choose several disjoint sections, pick a line bundle on each one, and span the subcategory by their pushforwards. As for the classification of admissible subcategories on a projective plane in Section~\ref{sec: projective plane}, the key tool is Proposition~\ref{prop: numerical lemma without numbers}. The most difficult part is to deal with configurations of two intersecting sections. We use various lemmas from Subsection~\ref{ssec: cutting lemmas} to handle this situation.

\begin{setting}
  \label{set: sections}
  Let $\pi\colon S \to \P^1$ be a rational elliptic surface. Let $\mB \subset \Dbcoh(S)$ be an admissible subcategory such that $\supp(\mB)$ is contained in the union of sections of the map $\pi$. Denote the support by $C := \supp(\mB)$. Note that by Lemma~\ref{lem: support of a category} the subset $C$ is a curve, i.e., a union of finitely many sections of the fibration $\pi$.

  Define a subset $C^\circ \subset C$ by saying that a point $c \in S$ lies in the complement $C \setminus C^\circ$ if the fiber $F_c$ of the fibration $\pi\colon S \to \P^1$ containing $c$ is singular or if it contains an intersection point of two distinct sections in $C$. Note that all points in $C^\circ$ are smooth in the curve $C$.

  \begin{figure*}[ht]
    \centering
    \def\svgwidth{0.3\columnwidth}
    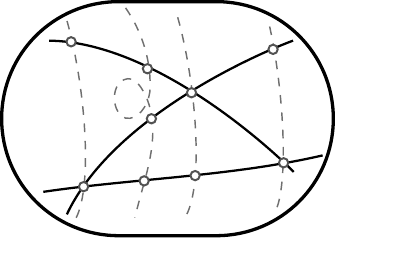
    \caption{The subset $C^\circ \subset C$ is the complement to the punctured points.}
  \end{figure*}
  
  We also fix a point $p \in C^\circ$ and denote by $S_p \subset S$ the point-support of the admissible subcategory $\mB \subset \Dbcoh(S)$ at the point $p$. Denote by $B \in \mB$ the (right) projection $\mB_R(\O_p)$ of the skyscraper sheaf at the point $p$ to the subcategory $\mB$.
\end{setting}

\begin{lemma}
  \label{lem: projection from support is nonzero}
  Let $S, \mB, p, B$ be as in Setting~\textup{\ref{set: sections}}. Then $B \neq 0$.
\end{lemma}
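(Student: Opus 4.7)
The plan is to argue by contradiction: suppose $B = \mB_R(\O_p) = 0$ and derive a contradiction with the hypothesis $p \in C = \supp(\mB)$. If $B = 0$, then its set-theoretic support $S_p = \supp(B)$ is empty, so part~(2) of Lemma~\ref{lem: properties of point-supports} gives $\O_p \in \mB^\perp$. In particular, $\RHom(F, \O_p) = 0$ for every object $F \in \mB$.

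To reach a contradiction, I would produce a single object $G \in \mB$ whose support contains $p$. This is supplied by the same device used in Lemma~\ref{lem: support of a category}: since $S$ is smooth and proper, $\Dbcoh(S)$ has a classical generator, and its (right) projection to $\mB$ is a classical generator $G \in \mB$ whose support equals $\supp(\mB) = C$. By construction $p \in C^\circ \subset C$, so $p \in \supp(G)$, and Lemma~\ref{lem: support and derived fibers} then yields $\RHom(G, \O_p) \neq 0$, contradicting the conclusion of the previous paragraph. Hence $B \neq 0$.

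There is essentially no obstacle here; the lemma is a bookkeeping statement confirming that the projection of a skyscraper over a point in the support is nontrivial, so that the analysis of $B$ via Proposition~\ref{prop: numerical lemma without numbers} and the machinery of Subsection~\ref{ssec: cutting lemmas} can proceed in the subsequent proofs of this subsection.
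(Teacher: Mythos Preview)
Your proof is correct and follows essentially the same route as the paper. Both argue that $B=0$ forces $\O_p \in \mB^\perp$, which contradicts $p \in \supp(\mB)$; the only cosmetic difference is that the paper packages the second step via Lemma~\ref{lem: maps from torsion cannot be nonzero on cohomology} (applied to the identity map on $\O_p$), whereas you unwind it directly using a classical generator and Lemma~\ref{lem: support and derived fibers}.
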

\begin{proof}
  Assume that $B$, the projection of the skyscraper sheaf $\O_p$ to $\mB$, is a zero object. Then the skyscraper sheaf $\O_p$ lies in the orthogonal subcategory $\mB^\perp$. By Lemma~\ref{lem: maps from torsion cannot be nonzero on cohomology} this implies that $\supp(\mB)$ does not contain the point $p$, but this is a contradiction, since by assumption the point $p$ lies in $C^\circ \subset C = \supp(\mB)$.
\end{proof}

\begin{lemma}
  \label{lem: at most two irreducible components}
  Let $S, \mB, C, C^\circ, p, S_p$ be as in Setting~\textup{\ref{set: sections}}. Then $S_p$ is a curve with at most two irreducible components.
\end{lemma}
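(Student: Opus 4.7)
The approach I would take is to restrict $B = \mB_R(\O_p)$ along the inclusion $j\colon F \monoarrow S$ of the fiber of $\pi$ containing $p$ and then apply Proposition~\ref{prop: numerical lemma without numbers}. By the definition of $C^\circ$ in Setting~\ref{set: sections}, the fiber $F$ is irreducible and $p$ is a smooth point of $F$, so the hypotheses of that proposition are satisfied and $j^*B$ falls into one of its five listed isomorphism classes. The strategy is then to eliminate all but the torsion options and translate the resulting constraint on $\supp(j^*B)$ into a bound on the number of sections occurring in $S_p$.

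First I would verify that $B \neq 0$ via Lemma~\ref{lem: projection from support is nonzero}, which combined with Lemma~\ref{lem: properties of point-supports} gives $p \in S_p = \supp(B)$. Since $p \in F$, the adjunction $\RHom_F(j^*B, \O_p) \cong \RHom_S(B, \O_p)$ together with Lemma~\ref{lem: support and derived fibers} forces $p \in \supp(j^*B)$, ruling out option (1) of the proposition. Next, because $S_p \subset C$ and each section of $\pi$ meets $F$ in exactly one point, the intersection $S_p \cap F$ is finite; by Lemma~\ref{lem: derived restriction to a divisor} this makes $j^*B$ a torsion object, which rules out options (4) and (5) since those have a direct summand of positive generic rank on $F$. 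Only options (2) and (3) survive, and in both $\supp(j^*B)$ consists of at most two points, say $\{p, q\}$ with possibly $q = p$.

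Finally I would deduce the bound on irreducible components. Any section $\sigma \subset S_p$ meets $F$ at a single point $r$, and by Lemma~\ref{lem: derived restriction to a divisor}(2) this point must lie in $\supp(j^*B) \subset \{p, q\}$; since distinct sections meet $F$ at distinct points, at most two sections of $\pi$ can appear as irreducible components of $S_p$. Combining this with the connectedness of $S_p$ (Lemma~\ref{lem: properties of point-supports}) and the fact that $S_p$ is a closed subset of the union of smooth rational curves $C$, the only possibilities for $S_p$ are a single point $\{p\}$, a single section, or a union of two sections meeting at a point, all of which have at most two irreducible components. I expect no serious obstacle; the only slightly delicate point is invoking Lemma~\ref{lem: derived restriction to a divisor}(2) in the ``lifting'' direction to transport information about the support of $\mH^i(B)$ along $F$ into information about $\supp(j^*B)$.
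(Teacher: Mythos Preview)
Your approach is essentially the same as the paper's: restrict $B$ to the smooth fiber $F$ through $p$, apply Proposition~\ref{prop: numerical lemma without numbers}, observe that $j^*B$ must be torsion since $\supp(B) \subset C$ meets $F$ in finitely many points, and then bound the number of sections in $S_p$ by the cardinality of $\supp(j^*B) \leq 2$.

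There is one omission. The lemma asserts that $S_p$ \emph{is a curve}, not merely that it has at most two irreducible components, and your final sentence explicitly leaves $S_p = \{p\}$ on the table. The paper closes this with a short deformation argument: if $S_p = \{p\}$ then by Lemma~\ref{lem: properties of point-supports}(2) the skyscraper $\O_p$ lies in $\mB$, and deforming it along the surface via Proposition~\ref{prop: admissible subcategories are open} forces $\supp(\mB)$ to contain a two-dimensional piece, contradicting the standing hypothesis $\supp(\mB) \subset C$. You should insert this step.

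A smaller point: when you write ``distinct sections meet $F$ at distinct points'', this is precisely the second condition in the definition of $C^\circ$ (that the fiber through $p$ contains no intersection point of two distinct sections in $C$); you invoked only the first condition (smoothness of $F$) at the outset, so it is worth citing the second one explicitly here, since for an arbitrary fiber two sections may well meet on it.
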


\begin{proof}
  By Lemma~\ref{lem: projection from support is nonzero} the projection $B$ of the skyscraper sheaf $\O_p$ to $\mB$ is nonzero and we have $S_p = \supp(B)$. Since $\supp(\mB)$ is not the whole surface $S$, the subset $S_p$ cannot be equal to the point $\{ p \}$, as in that case we could deform $B$ by moving the point $p$ along the surface, and admissible subcategories are closed under small deformations by Proposition~\ref{prop: admissible subcategories are open}. Then by Lemma~\ref{lem: properties of point-supports} we know that $\supp(B) = S_p \subset \supp(\mB)$ is a connected union of several sections of the map $\pi\colon S \to \P^1$.

  Let $F \subset S$ be the fiber of the fibration morphism $\pi\colon S \to \P^1$ containing the point $p$. By definition each section of the map $\pi$ intersects $F$ at a single point transversely. Since by assumption $p \in C^\circ$, we know that the intersection points of $F$ with sections in $C$ are all distinct. Therefore the number of irreducible components of $S_p$ equals the cardinality of the set-theoretic intersection $S_p \cap F$, which is the same as $\supp(B|_F)$. Since $p \in C^\circ$ by assumption, the curve $F$ is a reducible irreducible anticanonical divisor of the surface $S$, so by Proposition~\ref{prop: numerical lemma without numbers} we know that $B|_F$ is a cone of a map between two skyscrapers. In particular, the curve~$S_p$ has at most two irreducible components.
\end{proof}

The next several lemmas prove by contradiction that $S_p$ cannot have exactly two irreducible components. The overall plan is to deform the curve $S_p$ into a curve  that does not lie in the closed subset $\supp(\mB) \subset S$, and prove that we may also deform the object $B \in \Dbcoh(S)$, set-theoretically supported on $C_1 \cup C_2$, along with the curve. This would be a contradiction with the fact that admissible subcategories are closed under small deformations. It is impossible to control the deformations of an object in the derived category if the only thing we know about it is its set-theoretic support, so in order to realize our plan we study other properties of the object $B$ in the following lemmas.

\begin{lemma}
  \label{lem: unions of minus one curves}
  Let $S$ be a smooth projective surface. Let $C_1$ and $C_2$ be two distinct smooth $(-1)$-curves on $S$. Let $m = C_1 \cdot C_2$ be the intersection number, counted with multiplicity. Then the dimension $\dim H^0(C_1 \cup C_2, N_{C_1 \cup C_2 / S})$ of the space of global sections of the normal bundle to the union $C_1 \cup C_2$ is at least $m$.
\end{lemma}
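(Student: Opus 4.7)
The plan is to compute $\chi(N_{Z/S})$ for $Z := C_1 \cup C_2$ via Riemann--Roch on the curve $Z$, and then invoke the trivial bound $h^0 \geq \chi$. This works cleanly because $Z$ is a Gorenstein curve and $N_{Z/S}$ is a line bundle on it, so all inputs to Riemann--Roch can be read off from the intersection theory on $S$.

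First I would identify the scheme-theoretic union $Z = C_1 \cup C_2$ with the Cartier divisor $C_1 + C_2$ on $S$. Locally, if $f_1, f_2$ are irreducible equations for $C_1$ and $C_2$ in the UFD $\O_{S,p}$, they are coprime (distinct irreducibles), so $(f_1) \cap (f_2) = (f_1 f_2)$. Hence the ideal sheaf of $Z$ is $\O_S(-C_1 - C_2)$, the subscheme $Z \subset S$ is a Cartier divisor on a smooth surface (thus Gorenstein), and $N_{Z/S} \iso \O_S(Z)|_Z$ is a line bundle on $Z$.

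Next I would compute the two numerical invariants needed. Since each $C_i$ is a smooth $(-1)$-curve, adjunction gives $C_i \cdot K_S = -1$, so
\[
  \deg_Z N_{Z/S} = Z^2 = C_1^2 + 2\, C_1 \cdot C_2 + C_2^2 = 2m - 2.
\]
For the arithmetic invariant, the standard short exact sequence for a union
\[
  0 \to \O_Z \to \O_{C_1} \oplus \O_{C_2} \to \O_{C_1 \cap C_2} \to 0
\]
gives $\chi(\O_Z) = 1 + 1 - m = 2 - m$, since the scheme-theoretic intersection $C_1 \cap C_2$ has length exactly $m$ by definition of the intersection number (even when the intersection is non-transverse).

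Finally, Riemann--Roch for the line bundle $N_{Z/S}$ on the Gorenstein curve $Z$ yields
\[
  \chi(N_{Z/S}) = \deg_Z N_{Z/S} + \chi(\O_Z) = (2m - 2) + (2 - m) = m,
\]
and so $h^0(N_{Z/S}) \geq \chi(N_{Z/S}) = m$ because $h^1 \geq 0$. There is no real obstacle here; the only step requiring care is the identification of the scheme-theoretic union with the Cartier divisor $C_1 + C_2$, which in turn rests on $C_1$ and $C_2$ being distinct irreducible curves.
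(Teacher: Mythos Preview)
Your proof is correct and takes a genuinely different route from the paper. The paper argues componentwise: it restricts $N_{Z/S}$ to each $C_i \cong \P^1$, observes that the restriction has degree $m-1$ and hence $m$ global sections (for $m \geq 1$), and then bounds the kernel of the gluing map into the $m$-dimensional space $H^0(N|_{C_1 \cap C_2})$ from below by $2m - m = m$. You instead compute $\chi(N_{Z/S})$ globally via Riemann--Roch on the Gorenstein curve $Z$ and use $h^0 \geq \chi$. Both arguments ultimately rest on the same Mayer--Vietoris sequence $0 \to \O_Z \to \O_{C_1} \oplus \O_{C_2} \to \O_{C_1 \cap C_2} \to 0$, but you apply it to $\chi(\O_Z)$ while the paper applies it (tensored with $N$) to $H^0$. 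Your version has the small advantage of working uniformly for all $m \geq 0$ without the case split the paper needs, and it sidesteps the need to know $h^0$ on each component exactly. As a minor stylistic point, the sentence about adjunction giving $C_i \cdot K_S = -1$ is never used and could be dropped.
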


\begin{proof}
  If $m = 0$, there is nothing to prove, so we assume that $C_1$ and $C_2$ are not disjoint. By the adjunction formula the normal bundle $N_{C_1 \cup C_2 / S}$ to the divisor $C_1 \cup C_2 \subset S$, restricted to a subcurve $C_i$ for $i = 1, 2$, has degree $C_i \cdot (C_1 + C_2) = m-1$. Since both components of $C_1 \cup C_2$ are isomorphic to $\P^1$, on each component the dimension of the space $H^0(C_i, (N_{C_1 \cup C_2 / S})|_{C_i})$ is equal to $m$. Since the intersection multiplicity $C_1 \cdot C_2$ is $m$, there are $m$ linear conditions on those sections in order for them to glue. Thus there are at least $2m - m = m > 0$ global sections of the normal bundle $N_{C_1 \cup C_2 / S}$.
\end{proof}

Recall the notion of a thin object from Definition~\ref{def: thin objects}.

\begin{lemma}
  \label{lem: two components imply thinness}
  Let $S, \mB, C, C^\circ, p, S_p$, and $B$ be as in Setting~\textup{\ref{set: sections}}. If $S_p = C_1 \cup C_2$ with irreducible components $C_1$ and $C_2$, then $S_p$ is a nodal curve and $B$ is a thin object at each point of its support $S_p$.
\end{lemma}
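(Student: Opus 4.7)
The plan is to determine the local structure of $B$ by restricting it to fibers of $\pi$, which are the anticanonical divisors of $S$. I will invoke Proposition \ref{prop: numerical lemma without numbers} at the fiber $F_p$ through the base point $p$ and Lemma \ref{lem: arbitrary anticanonical divisors} at other fibers, then use Lemma \ref{lem: thin slices are minimal} to establish the slice bounds needed for thinness at every point and for nodality at the intersection points of $C_1$ and $C_2$.

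First, I analyze $B|_{F_p}$. Since $p \in C^\circ$, the fiber $F_p$ is smooth, irreducible, and meets $C_1 \cup C_2$ transversely in two distinct points $p$ and $q := F_p \cap C_2$. By Lemma \ref{lem: derived restriction to a divisor}, the set $\supp(B|_{F_p})$ contains both $p$ and $q$, so it is finite; this rules out the vector bundle options in Proposition \ref{prop: numerical lemma without numbers}, and the two-point support excludes the $\O_{2p}$ option, forcing $B|_{F_p} \iso \O_p[0] \oplus \O_q[a]$ for some $a \in \Z$. Taking Zariski-small neighborhoods of $p$ and $q$ inside $F_p$ produces slices of $B$ of length one at each, matching the multiplicity one of $S_p$ at these smooth points, so $B$ is thin at $p$ and at $q$.

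For any other $r \in (C_1 \cup C_2) \cap C^\circ$, I apply Lemma \ref{lem: arbitrary anticanonical divisors} to the fiber $F_r$: the hypothesis is supplied by the previous paragraph, and the two-point support $F_r \cap (C_1 \cup C_2)$ forces $B|_{F_r} \iso \O_{r_1}[a'] \oplus \O_{r_2}[b']$, yielding thinness at both matched intersection points. At an intersection point $x \in C_1 \cap C_2$, I use the fact that a section of a rational elliptic surface never passes through a singular point of any fiber (because distinct irreducible components of a fiber have zero intersection with any section), so $F_x$ is smooth at $x$ and $F_x \cap S_p = \{x\}$ set-theoretically. Lemma \ref{lem: arbitrary anticanonical divisors} classifies $B|_{F_x}$ into options of length at most two, while Lemma \ref{lem: thin slices are minimal} forces the length to be at least the multiplicity of $S_p = C_1 \cup C_2$ at $x$, which equals two regardless of tangency (the reduced defining equation $f_1 f_2$ lies in $\mathfrak{m}^2 \setminus \mathfrak{m}^3$). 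Hence $B|_{F_x}$ has length exactly two, so $F_x$ is a slice of $B$ at $x$ realizing the bound, and $B$ is thin at $x$.

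The remaining and main obstacle is the nodality of $S_p$ at $x$. I argue by contradiction: assume $C_1$ and $C_2$ share a tangent direction at $x$. Then any smooth curve $D$ through $x$ tangent to that common direction has tangent inside the tangent cone of $S_p$, so by Lemma \ref{lem: thin slices are minimal} part (2) the slice $D^*B$ satisfies $\mylength(D^*B) > 2$. I combine this with the structural description afforded by Lemma \ref{lem: thin objects} on a small neighborhood of $x$ (which is allowed since thinness has been verified at $x$ and at all nearby points of $(C_1 \cup C_2) \cap C^\circ$): each cohomology sheaf of $B$ is locally a pushforward of a torsion-free rank one sheaf from a subcurve of $S_p$. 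A local computation of how such pushforwards restrict to transverse versus tangent smooth curves through $x$ then shows that the length inflation along $D$ forced by the tangential case is incompatible with the length-two slice already established on $F_x$, contradicting the existence of such a $D$ and hence forcing $C_1$ and $C_2$ to meet transversely at $x$.
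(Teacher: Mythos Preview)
Your proof of thinness follows the paper's approach: apply Proposition~\ref{prop: numerical lemma without numbers} at the fiber through $p$, then Lemma~\ref{lem: arbitrary anticanonical divisors} at every other fiber (using that sections meet each fiber in its smooth locus, so the hypothesis of that lemma is satisfied even for singular or reducible fibers), and match the resulting length bound of $2$ against the multiplicity of $S_p$ via Lemma~\ref{lem: thin slices are minimal}. This is correct. One small omission: your case split covers points of $C^\circ$ and intersection points $x \in C_1 \cap C_2$, but you should note that the same argument handles the remaining smooth points of $S_p$ lying on singular fibers; the paper treats all fibers $F'$ uniformly.

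Your argument for nodality has a genuine gap. Suppose $C_1$ and $C_2$ meet tangentially at $x$, so that locally $S_p = \{\,y(y - x^2) = 0\,\}$ in suitable coordinates. Then for the pushforward of the structure sheaf (or indeed any torsion-free rank one sheaf on $S_p$), the slice along the transverse fiber $F_x = \{x = 0\}$ has length $2$, while the slice along a tangent curve $D = \{y = \lambda x^2\}$ with $\lambda \neq 0,1$ has length $4$. These facts are perfectly compatible, not contradictory: Lemma~\ref{lem: thin slices are minimal}(2) only asserts that tangent slices are \emph{strictly longer} than the multiplicity, and this places no constraint on the transverse slice. So the ``local computation'' you invoke cannot produce a contradiction. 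The paper's own proof is also terse on this point (multiplicity exactly $2$ does not by itself exclude a tacnode), and nodality is in fact only established in Proposition~\ref{prop: never two components}, where the bound $\dim\Ext^1(i_*L, i_*L) \leq 1$ together with Lemma~\ref{lem: unions of minus one curves} forces $C_1 \cdot C_2 = 1$. Since the intervening Lemmas~\ref{lem: two components exts}--\ref{lem: two components imply line bundle} use only thinness, not nodality, your thinness argument is enough to continue; but you should not claim to have proved nodality here.
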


\begin{proof}
  Consider the fiber $j\colon F \to S$ of the fibration $\pi\colon S \to \P^1$ which contains the point $p$. By the assumption that $p \in C^\circ$ we know that $F$ is smooth and the intersection $F \cap S_p$ is $\{ p, q \}$ for some $q \neq p$. Since $F$ is an anticanonical divisor of the surface $S$, from Proposition~\ref{prop: numerical lemma without numbers} we get that the only option for the object $j^* B$ to be supported on two smooth points of the curve $F$ is the direct sum $\O_p[0] \oplus \O_q[a]$ for some integer $a \in \Z$.

  Since both components $C_1$ and $C_2$ of $S_p$ are sections of the map $\pi\colon S \to \P^1$, this implies that the intersection of each $C_i$ with an arbitrary fiber $F^\prime \subset S$ has multiplicity one, in particular the intersection $S_p \cap F^\prime$ consists of smooth points of the fiber $F^\prime$ by Lemma~\ref{lem: thin slices are minimal}.

  Now we can apply Lemma~\ref{lem: arbitrary anticanonical divisors} to see that the restriction of $B$ to any other fiber $F^\prime$ is a torsion object of length at most two. Thus, if the fiber $F^\prime$ intersects $S_p$ in two distinct points, each intersection point has length exactly one, and if both curves $C_1$ and $C_2$ intersect $F^\prime$ at the same point, then this is a torsion object of length two.

  Since each intersection point of $C_1$ and $C_2$ has multiplicity at least $2$, Lemma~\ref{lem: thin slices are minimal} applied to the intersections $S_p \cap F^\prime$ for all fibers $F^\prime$ implies that
  \begin{enumerate}
  \item each singular point of $C_1 \cup C_2$ has multiplicity exactly $2$, i.e., it is a node;
  \item the object $B$ is thin at each point of its support $C_1 \cup C_2$.
  \end{enumerate}
  This finishes the proof of the lemma.
\end{proof}

\begin{lemma}
  \label{lem: two components exts}
  Let $S, \mB, C, C^\circ, p, S_p$, and $B$ be as in Setting~\textup{\ref{set: sections}}. If $S_p = C_1 \cup C_2$ with irreducible components $C_1$ and $C_2$, then
  \(
    \REnd(B) \iso \k[0] \oplus \k[-1].
  \)
\end{lemma}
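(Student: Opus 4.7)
The plan is to reduce the problem to a stalk computation at $p$. By Corollary~\ref{cor: universal property of projection triangles}, composition with the canonical morphism $B \to \O_p$ gives an isomorphism $\REnd(B) \isoarrow \RHom(B, \O_p)$, so the task becomes computing $\RHom(B, \O_p)$.

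To analyze $B$, I would first exploit thinness. Lemma~\ref{lem: two components imply thinness} states that $B$ is thin at every point of $S_p = C_1 \cup C_2$, so Lemma~\ref{lem: thin objects} yields a splitting $B \iso \bigoplus_n \mathcal{F}_n[-n]$, where each $\mathcal{F}_n$ is the pushforward of a torsion-free rank one sheaf from some subcurve of $S_p$. Since $p \in C^\circ$, the point $p$ lies on only one of the two sections, say $p \in C_1 \setminus C_2$. Any $\mathcal{F}_n$ whose stalk at $p$ is nonzero is then, locally near $p$, isomorphic to $(\iota_1)_*\O_{C_1}$, where $\iota_1 \colon C_1 \monoarrow S$ is the inclusion, since a torsion-free rank one sheaf on the smooth curve $C_1$ is locally free at the smooth point $p$.

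The key remaining step is to show that $\mathcal{F}_n = 0$ near $p$ for every $n \neq 0$. Let $j \colon F \monoarrow S$ be the fiber of $\pi$ through $p$. In the proof of Lemma~\ref{lem: two components imply thinness} it is established that $j^*B \iso \O_p[0] \oplus \O_q[a]$ for some $q \in C_2 \cap F$ and some $a \in \Z$, so near $p$ only $\mathcal{H}^0(j^*B) \iso \O_p$ is nonzero. Applying $j^*$ to the direct sum decomposition of $B$ gives $\mathcal{H}^i(j^*B) \iso L_0 j^* \mathcal{F}_i \oplus L_1 j^* \mathcal{F}_{i+1}$. A base-change computation using the transversality of $F$ and $C_1$ at $p$ shows that if $\mathcal{F}_n$ is nonzero at $p$, then $L_0 j^* \mathcal{F}_n \iso \O_p$ is a nonzero summand of $\mathcal{H}^n(j^*B)$; for $n \neq 0$ this contradicts the description of $j^*B$ near $p$, so only $\mathcal{F}_0$ is nonzero there.

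The final step is a local Koszul calculation. Since each $\mathcal{F}_n$ for $n \neq 0$ has support disjoint from $p$, its contribution to $\RHom(B, \O_p)$ vanishes, and we are left with $\RHom_S(\mathcal{F}_0, \O_p)$. Using the two-term resolution $0 \to \O_S(-C_1) \to \O_S \to (\iota_1)_*\O_{C_1} \to 0$ valid near $p$, and observing that multiplication by a local equation of $C_1$ acts as zero on $\O_p$, the computation collapses to $\k[0] \oplus \k[-1]$. I expect the main obstacle to be the bookkeeping in the third step, namely translating the shape of $j^*B$ into vanishing statements for the individual $\mathcal{F}_n$ via the splitting provided by thinness; once that is in place, the remaining calculation is routine.
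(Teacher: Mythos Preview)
Your proposal is correct and follows essentially the same route as the paper: reduce to $\RHom(B,\O_p)$ via Corollary~\ref{cor: universal property of projection triangles}, then use thinness to determine the local structure of $B$ at the smooth point $p$. The only difference is that the paper bypasses your global decomposition via Lemma~\ref{lem: thin objects} and instead applies Lemma~\ref{lem: equicutting lemma} directly at $p$ (a smooth point of $S_p$, so the slice has length one), which immediately gives $B \iso i_*\O_{C_1}$ in a neighborhood of $p$ and makes your third step unnecessary.
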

\begin{proof}
  By Lemma~\ref{lem: two components imply thinness} we know that the object $B$ is thin everywhere, in particular at the point $p \in S_p$. Since by assumption $p \in C^\circ$, this is a smooth point of $S_p$. Thus by Lemma~\ref{lem: equicutting lemma} in a Zariski neighborhood of the point $p \in S$ the object $B$ is isomorphic to the structure sheaf of the smooth curve passing through $p$ transversely. Using this local description, we compute
  \[
    \RHom(B, \O_p) \iso \k[0] \oplus \k[-1].
  \]

  Since the object $B$ is the (right) projection of the skyscraper sheaf $\O_p$, by the universal property of the projections (Corollary~\ref{cor: universal property of projection triangles}) we see that, as claimed in the statement,
  \begin{equation}
    \label{eqn: two components endomorphisms}
    \RHom(B, B) \iso \RHom(B, \O_p) \iso \k[0] \oplus \k[-1]. \qedhere
  \end{equation}
\end{proof}

\begin{lemma}
  \label{lem: two components implies torsion-free sheaf}
  Let $S, \mB, C, C^\circ, p, S_p$, and $B$ be as in Setting~\textup{\ref{set: sections}}. If $S_p = C_1 \cup C_2$ with irreducible components $C_1$ and $C_2$, then $B$ is isomorphic to a pushforward $i_*\mathcal{F}$ of a simple rank one torsion-free sheaf $\mathcal{F} \in \Coh(S_p)$ along the inclusion $i\colon S_p \monoarrow S$.
\end{lemma}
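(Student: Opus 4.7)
The plan is to combine the two preceding lemmas with the structural Lemma~\ref{lem: thin objects} on thin perfect complexes. First, applying that lemma to $B$---which by Lemma~\ref{lem: two components imply thinness} is thin at every point of its reduced nodal support $S_p = C_1 \cup C_2$---yields a formal decomposition
\[
  B \;\iso\; \bigoplus_{n \in \Z} (i_n)_* \mathcal{F}_n[-n],
\]
where each $\mathcal{F}_n$ is a torsion-free rank one coherent sheaf on some subcurve $C'_n \subset S_p$, necessarily one of $C_1$, $C_2$, or $S_p$ itself.

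Next I would cut down to a single summand using Lemma~\ref{lem: two components exts}. Each possible $C'_n$ is reduced and connected (since $C_1 \cap C_2 \neq \emptyset$), and a rank one torsion-free sheaf on such a curve is simple, so $\End_S((i_n)_* \mathcal{F}_n) \iso \k$ whenever $\mathcal{F}_n \neq 0$. The diagonal inclusion $\bigoplus_n \End_S((i_n)_* \mathcal{F}_n) \monoarrow \End_S(B)$ together with $\dim_\k \End_S(B) = 1$ then forces exactly one summand to be nonzero, giving $B \iso i_* \mathcal{F}[a]$ for some $i\colon C' \monoarrow S$, a simple rank one torsion-free sheaf $\mathcal{F} \in \Coh(C')$, and a shift $a \in \Z$. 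Since $\supp(B) = S_p$, the subcurve $C'$ must equal $S_p$, so $i$ is the inclusion of $S_p$ itself.

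It remains to show $a = 0$. By Corollary~\ref{cor: universal property of projection triangles} applied to $B = \mB_R(\O_p)$ together with Lemma~\ref{lem: two components exts},
\[
  \RHom_S(B, \O_p) \iso \REnd_S(B) \iso \k[0] \oplus \k[-1].
\]
Since $p \in C^\circ$ is a smooth point of $S_p$, the sheaf $\mathcal{F}$ is invertible near $p$, so $i_*\mathcal{F}$ admits a local two-term Koszul resolution on a Zariski neighborhood of $p$; a direct computation with this resolution yields $\RHom_S(i_*\mathcal{F}, \O_p) \iso \k[0] \oplus \k[-1]$. Comparing shifts with the previous display forces $a = 0$, hence $B \iso i_*\mathcal{F}$ as required; simplicity of $\mathcal{F}$ is recorded along the way.

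I do not anticipate any serious obstacle: Lemma~\ref{lem: thin objects} already supplies the structural decomposition of $B$, and the remaining ingredients---simplicity of rank one torsion-free sheaves on reduced connected curves and a local $\RHom$ calculation to pin down the shift---are routine local-algebra verifications on the smooth surface $S$. If anything requires care, it is the observation that the three possible subcurves are all connected (using $C_1\cap C_2\neq\emptyset$), without which the endomorphism-counting argument would leave additional summands unaccounted for.
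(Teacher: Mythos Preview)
Your proof is correct and follows the same approach as the paper: combine the thinness from Lemma~\ref{lem: two components imply thinness} with the structural Lemma~\ref{lem: thin objects} to get a formal direct sum of pushforwards of rank one torsion-free sheaves, then use $\REnd(B)\iso\k[0]\oplus\k[-1]$ from Lemma~\ref{lem: two components exts} to reduce to a single summand. The only differences are cosmetic: the paper argues indecomposability directly from $\End(B)\iso\k$ (rather than first invoking simplicity of each summand) and deduces simplicity of $\mathcal{F}$ a posteriori from $\End_{S_p}(\mathcal{F})\iso\End_S(i_*\mathcal{F})\iso\k$, while you additionally pin down the shift $a=0$ via the local computation of $\RHom(B,\O_p)$, a detail the paper's proof leaves implicit.
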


\begin{proof}
  Lemma~\ref{lem: two components imply thinness} shows that $B$ is thin at each point of its support. We described the general structure of thin objects in Lemma~\ref{lem: thin objects}. This lemma shows that $B$ is a formal complex and each cohomology sheaf is a rank one torsion-free sheaf on (a subcurve of) $S_p$. However, by Lemma~\ref{lem: two components exts} we know that the object $B$ is simple, in particular indecomposable. Thus there exists only one cohomology sheaf and hence $B \iso i_*\mathcal{F}$ for some torsion-free rank one sheaf on $S$. Since $i\colon S_p \monoarrow S$ is a closed embedding, we have
  \[
    \mathrm{R}^0\Hom_{S_p}(\mathcal{F}, \mathcal{F}) \iso \mathrm{R}^0\Hom_S(i_*\mathcal{F}, i_*\mathcal{F}) = \mathrm{R}^0\Hom_S(B, B) \iso \k.
  \]
  Thus the torsion-free rank one sheaf $\mathcal{F}$ on $S_p$ is simple.
\end{proof}

\begin{lemma}
  \label{lem: two components imply line bundle}
  Let $S, \mB, C, C^\circ, p, S_p$, and $B$ be as in Setting~\textup{\ref{set: sections}}. If $S_p = C_1 \cup C_2$ with irreducible components $C_1$ and $C_2$, then the category $\mB$ contains a pushforward $i_*L[0]$ of some line bundle $L \in \Pic(S_p)$ along the inclusion $i\colon S_p \to S$. Moreover, we can assume that
  \[
    \dim \Ext^1(i_*L, i_*L) \leq 1.
  \]
\end{lemma}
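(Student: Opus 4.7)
The starting point is Lemma~\ref{lem: two components implies torsion-free sheaf}, which identifies $B$ with $i_*\mathcal{F}$ for a simple rank one torsion-free coherent sheaf $\mathcal{F}$ on the nodal curve $S_p = C_1 \cup C_2$. If $\mathcal{F}$ is itself a line bundle, then setting $L := \mathcal{F}$ and combining this with Lemma~\ref{lem: two components exts}, which yields $\dim \Ext^1(B, B) = 1$, already proves the statement. The substantive case is therefore the one where $\mathcal{F}$ fails to be locally free at one or more nodes of $S_p$.

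The plan is to realize $\mathcal{F}$ as the central fiber of a flat family $\{\mathcal{F}_t\}_{t \in T}$ of simple rank one torsion-free coherent sheaves on $S_p$ over a smooth irreducible curve $T$ with basepoint $t_0$, with $\mathcal{F}_{t_0} = \mathcal{F}$ and $\mathcal{F}_t$ a line bundle on $S_p$ for $t$ in a nonempty Zariski open subset of $T$. Pushing this family forward along $i\colon S_p \monoarrow S$ gives an object $R \in \Dbcoh(S \times T)$ whose fiber at $t_0$ is $B \in \mB$. By Proposition~\ref{prop: admissible subcategories are open}, the locus $U := \{t \in T : i_*\mathcal{F}_t \in \mB\}$ is a Zariski open neighborhood of $t_0$. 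Moreover, upper semicontinuity of $\dim \Ext^1$ in flat families, combined with the value $\dim \Ext^1(B, B) = 1$ at $t_0$ from Lemma~\ref{lem: two components exts}, implies that the locus $V := \{t \in T : \dim \Ext^1(i_*\mathcal{F}_t, i_*\mathcal{F}_t) \leq 1\}$ is also a Zariski open neighborhood of $t_0$. Choosing any $t \in U \cap V$ at which $\mathcal{F}_t$ is a line bundle and setting $L := \mathcal{F}_t$ concludes the proof.

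The main technical step is the construction of the family $\{\mathcal{F}_t\}$. Its content is the assertion that $\mathcal{F}$ lies in the closure of the line bundle locus inside the moduli of simple rank one torsion-free coherent sheaves on $S_p$ with the same Euler characteristic as $\mathcal{F}$. Locally at each node $n$ where $\mathcal{F}$ is not locally free, $\mathcal{F}$ is isomorphic to the pushforward $\nu_*\mathcal{O}_{\tilde S_p}$ from the partial normalization at $n$. Simplicity of $\mathcal{F}$ forces it to be locally free at at least one node of $S_p$: were $\mathcal{F}$ non-locally-free at every node, it would be the pushforward $\tilde\nu_*(\mathcal{L}_1 \oplus \mathcal{L}_2)$ from the full normalization $C_1 \sqcup C_2$, which decomposes as $\tilde\nu_*\mathcal{L}_1 \oplus \tilde\nu_*\mathcal{L}_2$, contradicting simplicity. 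Assembling explicit local smoothings at the non-locally-free nodes and invoking the standard irreducibility of the relevant component of the compactified Jacobian of the nodal curve $S_p$ then produces the required global family $\{\mathcal{F}_t\}$; this moduli-theoretic step is the main obstacle and uses in an essential way that the restriction of the family to any one node is a smoothing from $\nu_*\mathcal{O}_{\tilde S_p}$ to the locally free structure sheaf, which can be done at each non-locally-free node independently.
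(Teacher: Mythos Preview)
Your approach is essentially the same as the paper's: start from Lemma~\ref{lem: two components implies torsion-free sheaf}, deform the simple rank one torsion-free sheaf $\mathcal{F}$ on $S_p$ to a line bundle inside the moduli of such sheaves, then use Proposition~\ref{prop: admissible subcategories are open} and semicontinuity together with Lemma~\ref{lem: two components exts}. The paper handles the moduli-theoretic step more cleanly by citing \cite[Th.~2.3 and Fact~2.2]{melo-rapagnetta-viviani} (line bundles are dense in the moduli of simple rank one torsion-free sheaves on a connected reduced curve with planar singularities, and a universal sheaf exists), which makes your detour through ``locally free at at least one node'' and ``explicit local smoothings'' unnecessary.
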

\begin{proof}
  By Lemma~\ref{lem: two components implies torsion-free sheaf} we see that $B \iso i_*\mathcal{F}$ for a simple rank one torsion-free sheaf on the curve $S_p$. Since $B \in \mB$ and admissible subcategories are closed under small deformations by Proposition~\ref{prop: admissible subcategories are open}, it is enough to deform the sheaf $\mathcal{F}$ on $S_p$ into a line bundle, and the inequality for the dimension follows from Lemma~\ref{lem: two components exts} and semicontinuity.

  Since $S_p \subset S$ is a connected reduced curve with planar singularities, by \cite[Th.~2.3]{melo-rapagnetta-viviani} the moduli space of simple rank one torsion-free sheaves on $S_p$ has a dense open subset consisting of line bundles. Moreover, by \cite[Fact~2.2]{melo-rapagnetta-viviani} there exists a universal sheaf on the moduli space of simple rank one torsion-free sheaves. Then we can find a family of sheaves on $S_p$ containing $\mathcal{F}$ such that the generic member is a line bundle. Thus the lemma is proved.
\end{proof}

\begin{proposition}
  \label{prop: never two components}
  Let $S, \mB, C, C^\circ, p, S_p$, and $B$ be as in Setting~\textup{\ref{set: sections}}. Then the curve $S_p$ is irreducible, i.e., $S_p$ is a section of the fibration $\pi\colon S \to \P^1$.
\end{proposition}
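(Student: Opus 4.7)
The plan is to argue by contradiction, assuming $S_p = C_1 \cup C_2$ has exactly two irreducible components (which is the only reducible option by Lemma~\ref{lem: at most two irreducible components}). Set $m := C_1 \cdot C_2$; connectedness of the point-support (Lemma~\ref{lem: properties of point-supports}) forces $m \geq 1$. By Lemma~\ref{lem: two components imply line bundle}, $\mB$ contains an object $i_* L$, where $i\colon S_p \hookrightarrow S$ and $L \in \Pic(S_p)$. The strategy is to produce, via Proposition~\ref{prop: admissible subcategories are open}, a one-parameter deformation of $i_*L$ inside $\Dbcoh(S)$ whose generic member has support not contained in the union of sections of $\pi$, contradicting the assumption $\supp(\mB) \subset C$.

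First I will set up a family of deformations of the curve $S_p$ inside $S$. Riemann--Roch on $S$ gives $\chi(\O_S(C_1+C_2)) = 1 + m$, and the higher cohomology vanishes: by Serre duality, $h^2(\O_S(C_1+C_2)) = h^0(\O_S(K_S - C_1 - C_2))$, and since $K_S = \O_S(-F)$ we have $(K_S - C_1 - C_2) \cdot F = -2 < 0$ for any fiber $F$, which forces this space to be zero. Hence $\dim|C_1+C_2| \geq m \geq 1$. I pick a pencil $\Lambda \iso \P^1 \subset |C_1+C_2|$ through $[S_p]$, yielding a flat family $\mathcal{C} \subset S \times \Lambda$ of curves in $S$ with $\mathcal{C}_{\lambda_0} = S_p$.

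Next I extend $L$ to a line bundle on the family. Writing $L = \O_{S_p}(\sum a_i p_i)$ with $a_i \in \Z$ and $p_i$ in the smooth locus of $S_p$ (which is possible by moving the divisor using linear equivalence on the curve), each $p_i$ is a smooth point of the total space $\mathcal{C}$ by flatness, so after passing to an \'etale neighborhood $T \to \Lambda$ of $\lambda_0$ each $p_i$ extends to a section $\tilde{p}_i\colon T \to \mathcal{C}_T$. Setting $\mathcal{L} := \O_{\mathcal{C}_T}(\sum a_i \tilde{p}_i)$ yields a line bundle on $\mathcal{C}_T$ with $\mathcal{L}|_{\mathcal{C}_{\lambda_0}} \iso L$. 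The pushforward $(i_T)_*\mathcal{L} \in \Dbcoh(S \times T)$ has fiber $i_*L \in \mB$ over $\lambda_0$, so Proposition~\ref{prop: admissible subcategories are open} yields a Zariski neighborhood $U \subset T$ of $\lambda_0$ in which every fiber lies in $\mB$.

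The main obstacle is showing that for generic $t \in U$ the curve $\mathcal{C}_t \subset S$ is not contained in the union of sections of $\pi$. Because each section is a rigid $(-1)$-curve, there are only finitely many pairs of sections whose classes sum to $[C_1+C_2]$, so reducible members of $|C_1+C_2|$ all of whose components are sections form a finite set inside the one-dimensional pencil $\Lambda$. Thus for generic $\lambda$ either $\mathcal{C}_\lambda$ is irreducible, or it has at least one irreducible component that is not a section. An irreducible member $\mathcal{C}_\lambda$ of class $[C_1+C_2]$ has arithmetic genus $m - 1$ and self-intersection $2m - 2$: for $m = 1$ it is a smooth rational curve with self-intersection zero (hence not a $(-1)$-curve and not a section), and for $m \geq 2$ it has positive arithmetic genus (hence not rational, and not a section). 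In either case $\mathcal{C}_t \not\subset \supp(\mB)$, contradicting the definition of $\supp(\mB)$ and completing the argument.
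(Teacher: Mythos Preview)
Your strategy coincides with the paper's: show the reducible case is impossible by deforming $i_*L$ along a nontrivial deformation of the support curve $C_1\cup C_2$, contradicting $\supp(\mB)\subset C$. The technical route differs. The paper first pins down $m=C_1\cdot C_2=1$ by computing
\[
0\to H^1(\O_{S_p})\to \Ext^1(i_*L,i_*L)\to H^0(N_{S_p/S})\to 0
\]
and combining $\dim H^0(N)\geq m$ (Lemma~\ref{lem: unions of minus one curves}) with the bound $\dim\Ext^1(i_*L,i_*L)\leq 1$ from Lemma~\ref{lem: two components imply line bundle}. You bypass this reduction entirely, instead using Riemann--Roch on $S$ to get $\dim|C_1+C_2|\geq m$ directly and constructing the family of line bundles by spreading out a divisor supported at smooth points. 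Your route is arguably more direct and does not need the second clause of Lemma~\ref{lem: two components imply line bundle}.

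There is one real gap. Your sentence ``Because each section is a rigid $(-1)$-curve, there are only finitely many pairs of sections whose classes sum to $[C_1+C_2]$'' is a non-sequitur: a rational elliptic surface can have infinitely many sections, and rigidity of each individual section does not bound the number of pairs with prescribed class sum. The claim is true, but it requires a lattice argument (the affine subspace $\{\beta\cdot F=1,\ \beta\cdot\alpha=m-1\}$ has negative-definite direction, so meets $\{\beta^2=-1\}$ in finitely many points). More to the point, you don't need this: what you actually need is that only finitely many members of the pencil are contained in $\supp(\mB)$, and this is immediate because $C=\supp(\mB)$ is itself a \emph{finite} union of sections (Lemma~\ref{lem: support of a category} and Setting~\ref{set: sections}). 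The paper invokes exactly this (``that subset is by assumption a union of finitely many rigid curves'').

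Two minor imprecisions. First, you assert ``the higher cohomology vanishes'' but only prove $h^2=0$; this suffices for $h^0\geq\chi=1+m$, which is all you use. Second, ``positive arithmetic genus, hence not rational'' is false in general (a nodal cubic is rational with $p_a=1$); the correct reason an irreducible member is not a section is that its self-intersection $2m-2\neq -1$.
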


\begin{proof}
  By Lemma~\ref{lem: at most two irreducible components} we know that $S_p$ has at most two irreducible components. Suppose that $S_p = C_1 \cup C_2$ with two irreducible components $C_1$ and $C_2$. The chain of lemmas above, culminating in Lemma~\ref{lem: two components imply line bundle}, shows that in this case the category $\mB$ contains a pushforward~$i_*L$ of a line bundle $L \in \Pic(S_p)$ along the inclusion $i\colon S_p \to S$. Below we construct a deformation of an object~$i_*L$ into an object whose support does not lie inside the union of the sections of the fibration $\pi\colon S \to \P^1$, and this leads to a contradiction with the assumption on the subset $\supp(\mB) \subset S$ in Setting~\ref{set: sections}.

  Let $N$ be the normal bundle to the curve $S_p = C_1 \cup C_2$ in the surface $S$. Since $i$ is the inclusion of the Cartier divisor $S_p$, by \cite[Cor.~11.4]{HuybFM}\footnote{In the reference the triangle is constructed only for smooth divisors, but the same proof works for perfect objects on an arbitrary divisor.} the object $i^*i_*L$ fits into a distinguished triangle :
  \[
    L \otimes N^\dual[1] \to i^*i_*L \to L.
  \]
  in $\Dperf(S_p)$. An application of the functor $\RHom_{S_p}(-, L)$ produces a long exact sequence of vector spaces. Consider the following fragment:
  \[
    \Ext^{-1}(L \otimes N^\dual, L) \to \Ext^1(L, L) \to \Ext^1(j_*L, j_*L) \to \Hom(L \otimes N^\dual, L) \to \Ext^2(L, L).
  \]
  Since $L$ is a coherent sheaf, there are no negative $\Ext$'s from $L \otimes N^\dual$ to $L$. Since $L$ is a line bundle, we have $\Ext^\bullet(L, L) \caniso H^\bullet(\O_{S_p})$. In particular, $\Ext^2(L, L) = 0$. Thus we obtain a short exact sequence
  \[
    0 \to H^1(\O_{S_p}) \to \Ext^1(i_*L, i_*L) \to H^0(N) \to 0.
  \]

  By Lemma~\ref{lem: unions of minus one curves} the dimension of the space $H^0(N)$ is greater or equal to the intersection multiplicity $C_1 \cdot C_2$. However, by Lemma~\ref{lem: two components imply line bundle} we have the bound $\dim \Ext^1(i_*L, i_*L) \leq 1$. Since the curves $C_1$ and $C_2$ are not disjoint, we see that the only option is $C_1 \cdot C_2 = 1$. This may only happen if there is a single intersection point, and the intersection is transverse.

  Since there is only one intersection point and each irreducible component is a rational curve, we see that $\Pic(S_p) \iso \Z \oplus \Z$, and a line bundle is uniquely determined by the degrees of its restrictions to the components $C_1, C_2 \subset S_p$. By Lemma~\ref{lem: unions of minus one curves} we know that $H^0(N) \neq 0$. Since $S_p$ is a curve in a smooth surface, there are no obstructions to deformations of $S_p$. We can deform the divisor of $L$ together with the curve:
  
  \begin{figure*}[ht]
    \centering
    \def\svgwidth{0.3\columnwidth}
    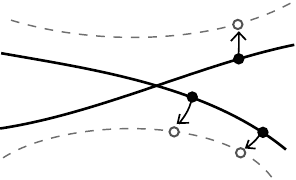
    \caption{Deforming the divisor in a family of curves.}
  \end{figure*}

  By Proposition~\ref{prop: admissible subcategories are open} admissible subcategories are closed under small deformations. Since $\mB$ contains $i_* L$, it also contains the pushforward of a line bundle from a small deformation of the curve $S_p$. This deformation is clearly not supported in $\supp(\mB)$, since that subset is by assumption a union of finitely many rigid curves. This is a contradiction, and therefore the situation where $S_p$ has two irreducible components is impossible.
\end{proof}

\begin{remark}
  After establishing that $C_1 \cdot C_2 = 1$ in Proposition~\ref{prop: never two components} above, one may use the classification of rank one torsion-free sheaves on nodal curves \cite[Prop.~10.1]{oda-seshadri} to prove that in fact the object $B$ itself is a pushforward of a line bundle from $S_p = C_1 \cup C_2$, but we do not need this.
\end{remark}

\statemaintheoremSections

\begin{proof}
  Let $E$ be a section of $\pi$ contained in the subset $C := \supp(\mB)$. Let $C^{\circ}$ be the open subset of $C$ defined as in Setting~\ref{set: sections}. Since by definition $C \setminus C^\circ$ is a finite set of points, the intersection $E \cap C^\circ$ is nonempty. By Lemma~\ref{lem: projection from support is nonzero} the projection of any point $p \in E \cap C^\circ$ is nonzero, and from Proposition~\ref{prop: never two components} we know that the support of the projection $B := \mB_R(\O_p)$ of the skyscraper sheaf can only be the curve $E$. Thus the point-support $S_p$ at the point $p$ is equal to $E$.

  Suppose $E, E^\prime$ are two distinct sections of $\pi$ contained in $\supp(\mB)$. By the argument above we can find points $p, p^\prime$ in $E \cap C^\circ$ and $E^\prime \cap C^\circ$, respectively, such that $S_p = E$ and $S_{p^\prime} = E^\prime$. If the curves $E$ and $E^\prime$ are not disjoint, then they intersect along a zero-dimensional subset. But the point-supports of distinct points $p, p^\prime$ cannot have zero-dimensional intersections by Lemma~\ref{lem: skyscrapers and intersections}. Thus $E \cap E^\prime = \emptyset$.

  We conclude that $\supp(\mB)$ is a disjoint union of several sections of the morphism $\pi$. The objects supported at disjoint curves are completely orthogonal to each other, so $\mB$ splits into an orthogonal sum of subcategories, where for each section $E \subset \supp(\mB)$ we have a nonzero subcategory $\mB_E$ supported on $E$. Since each section in a rational elliptic surface is a $(-1)$-curve by the adjunction formula, the possible options for subcategories $\mB_E$ are classified in Proposition~\ref{prop: new local classification on blow-ups}. This finishes the proof of the theorem.
\end{proof}

\subsection{Non-existence of phantoms in del Pezzo surfaces}
\label{ssec: no phantoms in del pezzos}

In the previous two subsections we have studied rational elliptic surfaces. In this subsection we apply the results of the preceding subsections to del Pezzo surfaces. The upshot is that there are no phantom subcategories in del Pezzo surfaces.

\begin{lemma}
  \label{lem: del pezzo and rational elliptic surfaces}
  Let $Y$ be a del Pezzo surface. Then there exists a blow-up $f\colon Y^\prime \to Y$ of several distinct points such that $Y^\prime$ is a rational elliptic surface whose fibration map $\pi\colon Y^\prime \to \P^1$ has no reducible fibers.
\end{lemma}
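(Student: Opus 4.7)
The plan is to construct $Y'$ as the blow-up of $Y$ at the base locus of a suitably chosen pencil $\Lambda \subset |{-K_Y}|$. If $\Lambda$ has reduced base locus consisting of $d := K_Y^2$ distinct points and no reducible members, then the blow-up $f\colon Y' \to Y$ at these points satisfies $-K_{Y'} = f^*(-K_Y) - \sum E_i$, which coincides with the class of the proper transform of any member of $\Lambda$; the induced morphism $\pi\colon Y' \to \P^1$ is then the anticanonical fibration, realizing $Y'$ as a rational elliptic surface, and its fibers (being proper transforms of irreducible members of $\Lambda$) are irreducible.

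The case $d = 1$ is handled by ampleness alone. Here $|{-K_Y}|$ is itself a pencil with a single reduced base point. If $D_1 + D_2 \in |{-K_Y}|$ were reducible with each $D_i$ effective and nonzero, ampleness of $-K_Y$ would give $(-K_Y) \cdot D_i \geq 1$, forcing $(-K_Y)^2 = (-K_Y)\cdot D_1 + (-K_Y) \cdot D_2 \geq 2$, contradicting $d = 1$. So the anticanonical pencil automatically has no reducible members, and blowing up its unique base point gives the desired $Y'$.

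For $d \geq 2$, I would realize $Y$ as a blow-up $g\colon Y \to \P^2$ at $9-d$ points in del Pezzo general position (the case $Y \iso \P^1 \times \P^1$ reduces to this via a further blow-up and blow-down, since $\operatorname{Bl}_{pt}(\P^1 \times \P^1) \iso \operatorname{Bl}_{2\,pts}\P^2$). Then I would choose $d$ additional points on $\P^2$ such that (i) all $9$ points lie on a pencil of cubics, a codimension-one condition, and (ii) no three of the $9$ points are collinear and no six lie on a conic—Zariski-open conditions that are already satisfied by the initial $9-d$ points. The blow-up $Y'$ of $Y$ at the $d$ new points is then the blow-up of $\P^2$ at all $9$ points, and the corresponding pencil of cubics is exactly $|{-K_{Y'}}|$, defining $\pi\colon Y' \to \P^1$. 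Any reducible cubic in this pencil would split as a line plus a conic or as three lines, accommodating respectively at most $2+5 = 7$ or $2+2+2 = 6$ of the nine base points by (ii), a contradiction.

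The main obstacle is showing that conditions (i) and (ii) can be simultaneously satisfied for some placement of the $d$ additional points given the fixed initial $9-d$ points. Here the key observation is that the pencils of cubics through the $9-d$ initial points form a Grassmannian-type variety of dimension $16 - 2(9-d) = 2d-2$ (since each fixed base point imposes two linear conditions on pencils in $G(2, 10)$), which is nonempty for all $d \geq 2$, while (ii) defines a dense open subset of the base-locus-parametrizing space. The intersection is nonempty because (ii) fails only on a finite union of proper closed subvarieties (one for each triple of points to be checked for collinearity and each sextuple for conic containment), none of which contain the stratum cut out by (i).
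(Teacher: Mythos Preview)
Your $d=1$ case is exactly the paper's argument. For $d \geq 2$, however, the paper takes a much shorter route than you do: it simply observes that blowing up $d-1$ general points on a del Pezzo surface of degree $d$ yields a del Pezzo surface of degree $1$, and then applies the ampleness argument you already gave. Composing with the final blow-up of the unique anticanonical base point gives $Y' \to Y$ as a blow-up of $d$ distinct points (the base point cannot lie on an exceptional divisor $E$, since $-K$ restricts to $\O_E(1)$ on it). This avoids any explicit analysis of cubic pencils on $\P^2$.

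Your alternative route via pencils of cubics is sound in outline, but the last paragraph has a real gap. You assert that the codimension-one locus cut out by (i) is not contained in any of the closed loci where (ii) fails, yet you give no justification. Dimension alone does not settle this: each ``bad'' component (for instance, ``one new point lies on the line through two fixed old points'') is itself codimension one in the space of $d$-tuples, so (i) could a priori coincide with such a component. To close the gap you must exhibit at least one pencil through the fixed $9-d$ points whose nine base points are distinct, with no three collinear and no six conconic. The cleanest way to produce such a pencil is to first extend the $9-d$ points to $8$ points in del Pezzo position by adding $d-1$ general points, and then invoke your own $d=1$ ampleness argument to see that the resulting pencil has no reducible member (hence in particular its nine base points satisfy (ii), since a violation would force a $(-2)$-curve into a fiber). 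But that is precisely the paper's reduction, so your longer path through $\P^2$ ultimately rests on the same step the paper takes directly.
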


\begin{proof}
  By blowing up some general points we may assume that $Y$ is a del Pezzo surface of degree one. Then the anticanonical system has a single base point, and its blow-up is a rational elliptic surface $Y^\prime$. The fibers of the fibration map $Y^\prime \to \P^1$ are isomorphic to anticanonical divisors in $Y$. Suppose that some anticanonical divisor of $Y$ is reducible. Then we can write $(-K_Y)$ as a sum $C_1 + C_2$ with two effective curves $C_1, C_2$. Then
  \[
    (-K_Y) \cdot (-K_Y) = (-K_Y \cdot C_1) + (-K_Y \cdot C_2)
  \]
  Since $Y$ is a del Pezzo surface of degree one, the left hand side is equal to one. However, since the line bundle $(-K_Y)$ is ample, the intersection with each effective divisor is positive, so the right hand side is at least two. This is a contradiction. Therefore each anticanonical divisor in a del Pezzo surface of degree one is irreducible, and the map $\pi\colon Y^\prime \to \P^1$ has only irreducible fibers.
\end{proof}

\statemaintheoremDelPezzo

\begin{proof}
  Let $Y$ be a del Pezzo surface, and assume that $\mB \subset \Dbcoh(Y)$ is a phantom subcategory. By Lemma~\ref{lem: del pezzo and rational elliptic surfaces} there exists a blow-up $f\colon Y^\prime \to Y$ of several distinct points which is a rational elliptic surface with no reducible fibers. Consider the category $f^*\mB \subset \Dbcoh(Y^\prime)$ consisting of the pullbacks of objects in $\mB$. By Orlov's theorem on the semiorthogonal decomposition of the blow-up~\cite{orlov93} the category $f^*\mB$ is equivalent to $\mB$, and it is an admissible subcategory of~$\Dbcoh(Y^\prime)$. The pullback $f^*$ induces an isomorphism of Grothendieck groups $K_0(\mB) \to K_0(f^*\mB)$, hence $f^*\mB$ is a phantom subcategory of $\Dbcoh(Y^\prime)$.

  By Theorem~\ref{thm: rational elliptic surfaces} we see that the support of $f^*\mB$ is contained in the union of sections of the anticanonical fibration $\pi\colon Y^\prime \to \P^1$ and reducible fibers of this map. By construction $Y^\prime$ has no reducible fibers, so $f^*\mB$ is supported on the union of sections. Admissible subcategories like that are completely classified in Theorem~\ref{thm: admissible subcategories on sections}, and in particular there are no phantom subcategories of this kind. This is contradiction. Therefore there are no phantoms in the derived category of $Y$.
\end{proof}

It would be interesting to improve this result to the full classification of admissible subcategories, like we have managed to do for the projective plane in Section~\ref{sec: projective plane}. Note that exceptional objects and exceptional collections on del Pezzo surfaces have been studied in the paper \cite{kuleshov-orlov}, but the classification there is not as explicit as the one for the projective plane in \cite{gorodentsev-rudakov}. Another obstacle is that, unlike the case of the projective plane, a semiorthogonal decomposition on a del Pezzo surface is not uniquely determined by the restriction of a projection of a skyscraper sheaf at a single point to a single anticanonical divisor.


\printbibliography[title={References}]

\end{document}